\newtheorem{theorem}{Theorem}[section]
\newtheorem{proposition}[theorem]{Proposition}
\newtheorem{lemma}[theorem]{Lemma}
\newtheorem{claim}[theorem]{Claim}
\newtheorem{question}[theorem]{Question}
\newtheorem{conjecture}[theorem]{Conjecture}
\theoremstyle{definition}
\newtheorem{remark}[theorem]{Remark}
\theoremstyle{definition}
\newtheorem*{remark*}{Remark}
\newtheorem*{proposition*}{Proposition}
\theoremstyle{definition}
\newtheorem{example}[theorem]{Example}
\newcommand{\mrm}[1]{\mathrm{#1}}
\newcommand{\skipline}{$\phantom{}$}
\newcommand{\integers}{\mathbb{Z}}
\newcommand{\pintegers}{\mathbb{N}}
\newcommand{\reals}{\mathbb{R}}
\newcommand{\spm}{\left\{-1,1\right\}}
\newcommand{\seq}{\subseteq}
\newcommand{\sm}{\setminus}
\newcommand{\defeq}{\mathrel{\mathop:}=}
\newcommand{\eqdef}{=\mathrel{\mathop:}}
\newcommand{\re}{\mrm{Re}}
\newcommand{\one}{\mathbbm{1}}
\newcommand{\eps}{\epsilon}
\newcommand{\sgn}{\mrm{sgn}}
\newcommand{\pr}{\Pr}
\newcommand{\given}[2]{#1\,\middle|\, #2}
\DeclareMathOperator*{\be}{\mathbb{E}}
\DeclareMathOperator*{\var}{\mrm{Var}}
\newcommand{\andd}{\wedge}
\newcommand{\norm}[1]{\left\Vert {#1}\right\Vert}
\newcommand{\li}{\left}
\newcommand{\ri}{\right}
\newcommand{\hleft}{\li\langle}
\newcommand{\hright}{\ri\rangle}
\newcommand{\hseg}[2]{\hleft {#1}, {#2} \hright}
\newcommand{\hpr}[3]{\Pr \li[ {#1} \in \hseg{#2}{#3} \ri]}
\newcommand{\cc}{\colon}
\newcommand{\func}[3]{{#1}\cc {#2} \to {#3}}
\newcommand{\isleq}{\stackrel{?}{\leq}}
\newcommand{\isgeq}{\stackrel{?}{\geq}}
\newcommand{\set}[2]{\left\{ \given{#1}{#2}\right\}}
\newcommand{\dd}{\mrm{d}}
\newcommand{\restrict}[2]{{
		\left.\kern-\nulldelimiterspace 
		#1 
		\vphantom{\big|} 
		\right|_{#2} 
}}
\titlespacing{\paragraph}{%
	0pt}{
	0.5\baselineskip}{
	1em}
\newcommand{\tops}[1]{\texorpdfstring{#1}{}}
\definecolor{ohadcolor}{RGB}{0, 127, 255}
\begin{document}
\title{
Proof of Tomaszewski's Conjecture on Randomly Signed Sums}

	
	\author{Nathan Keller\thanks{Department of Mathematics, Bar Ilan University, Ramat Gan, Israel.
			\texttt{nkeller@math.biu.ac.il}. Research supported by the Israel Science Foundation (grant no. 1612/17) and by the Binational US-Israel Science Foundation (grant no. 2014290).} \mbox{ }and  Ohad Klein\thanks{Department of Mathematics, Bar Ilan University, Ramat Gan, Israel. \texttt{ohadkel@gmail.com}. Research supported by the Clore Scholarship Programme.}}
	
	\maketitle
	
	\begin{abstract}
    We prove the following conjecture, due to Tomaszewski (1986): Let $X= \sum_{i=1}^{n} a_{i} x_{i}$, where $\sum_i a_i^2=1$ and each $x_i$ is a uniformly random sign. Then $\Pr[|X|\leq 1] \geq 1/2$.

    Our main novel tools are local concentration inequalities and an improved Berry-Esseen inequality for Rademacher sums.
	\end{abstract}
	

\section{Introduction}

\subsection{Background}

In the April 1986 issue of \emph{The American Mathematical Monthly}, Richard Guy~\cite{Guy86} presented an open question, attributed to Boguslav Tomaszewski:
\begin{question}
Consider $n$ real numbers $a_1,a_2,\ldots,a_n$ such that $\sum_i a_i^2=1$. Of the $2^n$ expressions $|\epsilon_1 a_1 + \ldots + \epsilon_n a_n|$, with $\epsilon_i=\pm 1$, $1 \leq i \leq n$, can there be more with value $>1$ than with value $\leq 1$?
\end{question}
In the first paper studying the problem, Holzman and Kleitman~\cite{HK92} presented several equivalent formulations:
\begin{itemize}
\item \emph{Sum partitions:} Let $a_1,\ldots,a_n$ be real numbers with $\sum_i a_i^2=1$. Is it true that in at least half of the partitions of $\sum a_i$ into two sums, the sums differ by at most $1$?

\item \emph{Chebyshev-type inequality:} Let $X=\sum_i a_i x_i$, where $\{x_i\}$ are uniformly distributed in $\{-1,1\}$ and independent. Is it true that $\Pr[|X| \leq \sqrt{\var(X)}] \geq 1/2$? Note that Chebyshev's inequality yields a lower bound of $0$ for this probability.

\item \emph{A ball and a cube:} Consider an $n$-dimensional ball and a smallest $n$-dimensional cube containing it. Is it true that for any pair of parallel supporting hyperplanes of the ball, at least half the vertices of the cube lie between (or on) the two hyperplanes?
\end{itemize}
In the sequel, we use the probabilistic notation $X=\sum a_i x_i$, normalizing such that $\sum a_i^2=1$. Holzman and Kleitman~\cite{HK92} proved the lower bound $\Pr[|X|<1] \geq 3/8$, which is tight for $X=\frac{1}{2}(x_1+x_2+x_3+x_4)$, and were not able to prove a stronger lower bound for $\Pr[|X| \leq 1]$. They conjectured that $\Pr[|X|\leq 1] \geq 1/2$, which would be tight, e.g., for $X=a_1 x_1+a_2 x_2$ with any $|a_1|,|a_2|<1$. This conjecture became known as \emph{Tomaszewski's conjecture} or \emph{Tomaszewski's problem}.

As is suggested by its equivalent formulations, Tomaszewski's problem and its variants naturally appear in diverse fields, including probability theory~\cite{BD15,Pinelis12}, geometric analysis~\cite{KR20}, optimization and operation research~\cite{BNR02,So09}, statistics~\cite{Pinelis94}, and theoretical computer science~\cite{DDS16,DG20,Tan12}. It became well-known, and was mentioned in lists of open problems in various fields (e.g.,~\cite{F+14,Hiriart09}).

A number of works obtained partial results toward Tomaszewski's conjecture. Ben-Tal et al.~\cite{BNR02} (who were not aware of the previous work on the conjecture and arrived to it independently, from applications to optimization) proved that $\Pr[|X|\leq 1] \geq 1/3$. Shnurnikov~\cite{Shnurnikov12} improved their lower bound to $0.36$. Boppana and Holzman~\cite{BH17} were the first to cross the $3/8$ barrier, proving a lower bound of $0.406$, and Boppana et al.~\cite{BHZ20} further improved the lower bound to $0.428$. Very recently, Dvo{\v{r}}{\'{a}}k et al.~\cite{DHT20} proved a lower bound of $0.46$, which is the best currently known bound.

Several other authors proved the conjecture in special cases: Bentkus and Dzinzalieta~\cite{BD15} proved it in the case $\max |a_i| \leq 0.16$, Hendriks and van Zuijlen~\cite{HZ17a} proved it for $n \leq 9$ (a proof-sketch in that case, using different methods, was presented earlier by von Heymann~\cite{Heymann12}), van Zuijlen~\cite{vanZuijlen11} proved it when all $a_i$'s are equal, and Toufar~\cite{Toufar18} extended his result to the case where all $a_i$'s but one are equal. In another direction, De et al.~\cite{DDS16} presented an algorithm that allows approximating $\min \{\Pr[|X|\leq 1]|X=\sum a_i x_i\}$ up to an additive error of $\epsilon$. Unfortunately, the complexity of the algorithm is $\exp(O(1/\epsilon^6))$.

\subsection{Our results}

In this paper we prove Tomaszewski's conjecture.
\begin{theorem}\label{thm:main}
Let $X=\sum_{i=1}^n a_i x_i$, where $\sum_{i=1}^{n} a_i^2=1$ and $\{x_i\}$ are independent and uniformly distributed in $\{-1,1\}$. Then
\begin{equation}\label{eq:main}
\Pr[|X|\leq 1] \geq 1/2.
\end{equation}
\end{theorem}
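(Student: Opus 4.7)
The plan is to fix an ordering $a_1 \geq a_2 \geq \cdots \geq a_n > 0$, set $Y = \sum_{i=2}^n a_i x_i$, and condition on $x_1$. Since $Y$ is symmetric, the conditional probabilities $\Pr[|Y+a_1|\leq 1]$ and $\Pr[|Y-a_1|\leq 1]$ coincide, so \eqref{eq:main} is equivalent to the one-sided tail estimate
\[
(\star) \qquad \Pr[Y > 1-a_1] + \Pr[Y > 1+a_1] \leq \tfrac{1}{2}.
\]
I would then prove $(\star)$ by splitting on the size of $a_1$ into three regimes, each handled by different tools.

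\textbf{Small $a_1$.} If $a_1 \leq \alpha$ for a small absolute constant $\alpha$, then $Y/\sqrt{1-a_1^2}$ is a Rademacher sum of unit variance with all coefficients of size $O(\alpha)$. The improved Berry--Esseen bound advertised in the abstract approximates both tail probabilities in $(\star)$ by Gaussian tails; already at $a_1 = 0$ the Gaussian value of the left-hand side equals $2(1-\Phi(1)) \approx 0.317$, leaving a comfortable gap below $1/2$. Continuity in $a_1$ together with the Berry--Esseen error keep us safely under $1/2$ throughout this regime.

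\textbf{Large $a_1$.} If $a_1 \geq \beta$ for some $\beta$ close to $1$, then $\var(Y) = 1-a_1^2$ is small, so $\Pr[Y > 1+a_1]$ is negligible by Hoeffding. The threat is the tight example $a_1 = a_2 = 1/\sqrt{2}$, where $\Pr[Y > 1-a_1] = 1/2$ exactly. Here I would peel off the second-largest coefficient $a_2$ as well: conditioning on $x_2$ reduces matters to the residual sum $\sum_{i\geq 3} a_i x_i$, whose remaining variance $1-a_1^2-a_2^2$ is either very small (handled by direct case analysis) or drops into the small-$a_1$ regime after rescaling.

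\textbf{Intermediate $a_1$ --- the main obstacle.} The hardest range is moderate $a_1$, where neither Berry--Esseen nor tail arguments alone are sharp enough. Here I would combine the improved Berry--Esseen bound with a local concentration (L\'evy-type) inequality for Rademacher sums that bounds $\Pr[Y \in [t, t+2a_1]]$ uniformly in $t$. Together with an inductive bound on $\Pr[Y > 1]$, this localises the relevant mass in the window $(1-a_1, 1+a_1]$ separating the two tails in $(\star)$. The principal obstacle is the tightness of the constant $1/2$: it forces the underlying Berry--Esseen and local concentration estimates to be sharp in their leading constants, and designing these sharp inequalities --- together with the bookkeeping needed to merge the three regimes with matching constants at the thresholds --- is the technical heart of the proof.
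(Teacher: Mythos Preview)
Your high-level architecture --- split on the size of $a_1$, use an improved Berry--Esseen bound for small $a_1$, and combine local concentration with some form of induction elsewhere --- matches the paper's overall strategy. The reduction to $(\star)$ is exactly Lemma~\ref{lem:nm} with $m=1$. But what you have written is a plan, not a proof, and the gap is precisely in the places you flag as ``the technical heart.''

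Concretely: the intermediate regime is not one case but five (the paper splits $a_1 \in (0.31,0.5)$ according to whether $a_1+a_2 \geq 1$, whether $a_1+a_2+a_3 \lessgtr 1$, and further thresholds at $0.387$ and $0.55$), each requiring elimination of two to five variables and a bespoke combination of a \emph{segment comparison} theorem (proved via explicit bijections on the cube, not a generic L\'evy inequality) with a \emph{refined Chebyshev} inequality. Your sketch of ``bound $\Pr[Y \in [t,t+2a_1]]$ uniformly in $t$'' is in the right spirit but is not sharp enough: the examples $X=\tfrac12\sum_{i=1}^4 x_i$ and $X=\tfrac13\sum_{i=1}^9 x_i$ have $\Pr[|X|<1]=3/8$ and $\approx 0.493$ respectively, so any argument that does not distinguish $\Pr[|X|\leq 1]$ from $\Pr[|X|<1]$ (as a uniform local bound would not) is doomed near $a_1=1/3$ and $a_1=1/2$. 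Your ``large $a_1$'' treatment also does not work as stated: peeling off $a_2$ and invoking Hoeffding does not handle the genuinely tight case $a_1+a_2 \geq 1$, for which the paper uses a delicate stopping-time \emph{semi-inductive} argument (Proposition~\ref{prop:semi-inductive}) that reduces to Tomaszewski's assertion on strictly fewer variables --- and is only useful once the complementary case $a_1+a_2<1$ has been settled independently. Finally, the Berry--Esseen regime requires Prawitz' smoothing inequality specialised to Rademacher sums to reach $a_1 \leq 0.31$; the classical Berry--Esseen constant only gives $a_1 \leq 0.16$, which is not enough to meet the other cases.
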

Our result may be interpreted within the general context of \emph{tail bounds for Rademacher sums}. A Rademacher sum is a random variable $X=\sum a_i x_i$, where the $x_i$'s are i.i.d. Rademacher random variables (i.e., are uniformly distributed in $\{-1,1\}$). Estimates on Rademacher sums were studied in numerous papers, both for their own sake (e.g.,~\cite{BGH01,Veraar08}) and for the sake of applications to statistics~\cite{Efron69} and to optimization~\cite{BNR02}. A main direction in this study is obtaining upper and lower bounds on the tail probability $\Pr[|X| > t]$, aiming at showing that the tail of any such $X$ behaves `similarly' to the tail of a Gaussian random variable with the same variance
(see, e.g.,~\cite{BD15,Eaton70,Pinelis12} for upper bounds and~\cite{HK94,MS90,Oles96} for lower bounds).

One of the upper bounds on the tail, proved by Dzindzalieta~\cite{Dzindzalieta14a}, asserts that
\[
\Pr[X>t] \leq \frac{1}{4} + \frac{1}{8}(1-\sqrt{2-\frac{2}{t^2}}), \qquad \forall 1<t \leq \sqrt{2}.
\]
when $\var(X)=1$.
Theorem~\ref{thm:main} improves over this result, showing that
\begin{equation}
\Pr[X>t] \leq 1/4, \qquad \forall t \geq 1,
\end{equation}
which is tight for all $t<\sqrt{2}$, due to $X=\frac{1}{\sqrt{2}}x_1+\frac{1}{\sqrt{2}}x_2$.

\medskip Furthermore, as was observed by Dzindzalieta~\cite{Dzindzalieta14a}, Theorem~\ref{thm:main} implies the more general:
\begin{equation}\label{eq:dzin-intro1}
\Pr[|X|<t] \geq \Pr[|X|>1/t], \qquad \forall t>0,
\end{equation}
and notably, an improvement of~\eqref{eq:main},
$
	\Pr[|X| < 1] + \frac{1}{2}\Pr[|X|=1] \geq \frac{1}{2},
$
being tight for $X= 1\cdot x_1$.


\subsection{Our tools}

The proof of Theorem~\ref{thm:main} uses four main tools.

\paragraph{A new local concentration inequality for Rademacher sums.} In~\cite{KK17}, the authors introduced several local concentration inequalities that allow comparing the probabilities $\Pr[X \in I]$ and $\Pr[X \in J]$, for segments (or rays) $I,J$. We enhance some techniques of~\cite{KK17} and prove:
\begin{theorem}[segment comparison]\label{thm:seg_compare_intro}
		Let $X = \sum_{i=1}^{n} a_{i} x_{i}$ be a Rademacher sum, and write $M=\max_i |a_i|$. For any $A, B, C, D \in \reals$ with $|A| \leq \min(B,C)$, $2M \leq C-A$ and $D-C + \min(2M, D-B) \leq B-A$, we have
		\begin{equation}
			\pr[X \in (C,D)] \leq \pr[X \in (A,B)].
		\end{equation}
\end{theorem}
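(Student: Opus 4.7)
My plan is to symmetrize over the coordinate of largest weight, reduce to a probability inequality for the Rademacher sum on the remaining coordinates, and close the gap by a set-theoretic containment that consumes the length hypothesis. Assume $M > 0$ (else the claim is trivial), pick $i^* \in [n]$ with $|a_{i^*}| = M$, and by replacing $x_{i^*} \to -x_{i^*}$ if needed (which preserves the distribution of $X$) take $a_{i^*} = M$. Let $\sigma$ denote the global negation $x \mapsto -x$. Since $C \ge |A| \ge 0$, the intervals $(C, D)$ and $(-D, -C)$ are disjoint, and applying $\sigma$ to the sub-event $\{X \in (C, D),\, x_{i^*} = -1\}$ identifies it with $\{X \in (-D, -C),\, x_{i^*} = 1\}$, yielding
\[
\Pr[X \in (C, D)] = \Pr\bigl[X \in (C, D) \cup (-D, -C),\ x_{i^*} = 1\bigr].
\]
A parallel symmetrization for $(A, B)$, using inclusion--exclusion to absorb the overlap $(A, -A)$ that appears when $A < 0$, gives $\Pr[X \in (A, B)] \ge \Pr[X \in (A, B) \cup (-B, -A),\ x_{i^*} = 1]$.

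Conditioning on $x_{i^*} = 1$ and writing $X = M + Y$ with $Y = \sum_{j \ne i^*} a_j x_j$ a Rademacher sum whose max coefficient is at most $M$, the task reduces to showing
\[
\Pr[Y \in L'] \le \Pr[Y \in R'],
\]
where $L' = (C - M, D - M) \cup (-D - M, -C - M)$ and $R' = (A - M, B - M) \cup (-B - M, -A - M)$. In the easy regime $D \le B$, the bounds $A \le C$ and $D \le B$ immediately give $L' \subseteq R'$ as subsets of $\mathbb{R}$, and the inequality follows by monotonicity of measure.

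I expect the main obstacle to be the regime $D > B$, where $L'$ overshoots $R'$ on both sides and direct set inclusion fails. Here the length hypothesis $D - C + \min(2M, D - B) \le B - A$ enters decisively: when $D \le B + 2M$ it simplifies to $2(D - B) \le C - A$, and when $D > B + 2M$ it simplifies to $D - C + 2M \le B - A$. In each sub-case, the overshoot of $L'$ past $R'$ is precisely balanced by the unused portion of $R'$, and I expect to close the gap by an additional sign-flip injection on the signings of $Y$ --- itself a Rademacher sum of max coefficient at most $M$ --- in the spirit of the local concentration techniques from \cite{KK17} referenced in the introduction; the length condition is calibrated exactly so that this injection succeeds.
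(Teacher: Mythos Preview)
Your symmetrization over the largest coordinate is valid, but it does not actually advance the argument. In the regime $D\le B$ you end up proving $L'\subseteq R'$, yet this case is already trivial without any symmetrization: from $|A|\le C$ we have $A\le C$, so $(C,D)\subseteq(A,B)$ directly. All of the content of the theorem lives in the regime $D>B$, and there your proposal stops at ``I expect to close the gap by an additional sign-flip injection on the signings of $Y$ \ldots\ in the spirit of~\cite{KK17}.'' That is not a proof; it is a restatement of the problem. After your reduction, the inequality you still need for $Y$ is no simpler than the original one for $X$ --- the intervals have merely been shifted by $-M$, the maximal coefficient of $Y$ is still $\le M$, and you now have to compare \emph{unions} of two intervals rather than single intervals, so neither set inclusion nor an obvious induction on $n$ applies (the hypothesis $|A'|\le\min(B',C')$ can fail for the shifted pair $A'=A-M$, $B'=B-M$, $C'=C-M$).

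The paper's proof does not condition on a single coordinate at all. It splits directly into the two sub-cases determined by the $\min$, and for each constructs an explicit injection from $\{v:X(v)\in[C,D]\}$ into $\{v:X(v)\in[A,B]\}$. In the sub-case $D-C+2M\le B-A$ (Lemma~\ref{lem:seg-compare1}) a single \emph{prefix flip} $\PF_{a,Q}$ with $Q=D-B$ suffices: one flips the shortest prefix whose partial sum reaches $Q/2$, which drops $X(v)$ by an amount in $[Q,Q+2M)$. The sub-case $2(D-B)\le C-A$ (Lemma~\ref{lem:seg-compare2}) is substantially harder and requires a new bijection, the \emph{recursive flip} $\RF$, which is not in~\cite{KK17}: one partitions the coordinates into ``large'' ($a_i\ge(D-B)/2$) and ``small'', applies $\RF$ to the large coordinates, and then, depending on information recoverable from the image, possibly applies a prefix flip on the small coordinates. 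It is precisely this two-step, data-dependent construction that makes the map injective, and your proposal contains no analogue of it.
\paragraph{Rigorous verification of the bound.}

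In the program, we have to upper bound the quantity $\pr[Z < x] - \pr[X < x]$ via the RHS of~\eqref{Eq:Our-Prawitz}, with a precision rate of $\pm 10^{-5}$. We use a combination of two straightforward methods.

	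\medskip
	Our first method involves computing the derivative of the integrands appearing in~\eqref{Eq:Our-Prawitz} to bound the additive error of a Riemann sum approximation for the involved integrals. See~\eqref{eq:integral} for more details.

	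\medskip
	Our second method estimates the integrals by evaluating them on finer and finer subdivisions of the integration range, and halting once the results of subsequent evaluations are sufficiently close. This is quick, but lacks rigor.

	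\medskip
	To combine both ease and rigor, we use the second method to obtain an initial estimate, and then verify it using the first method. The second method being quick is essential because our proof of Lemma~\ref{lem:ad-hoc} requires evaluating~\eqref{Eq:Our-Prawitz} for $94$ different sets of parameters, and the first method requires a Riemann sum with millions of terms to achieve the desired precision, taking several minutes.




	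\begin{proposition}\label{prop:intro-BE}
		Let $X = \sum_i a_i x_i$ be a Rademacher sum with $\max_i a_i \leq 0.22$ and $\var(X) = 1$. Then, for any $x \geq 0$, we have
		\begin{equation}\label{eq:intro-BE}
		\pr[X \leq x] \geq \pr[Z \leq x] - 0.088,
		\end{equation}
		where $Z \sim N(0,1)$ is a standard Gaussian variable.
	\end{proposition}
	\begin{proof}
	Follows from Proposition~\ref{prop:ad-hoc2}, as $\pr[|Z| < 0.22]/2 < 0.088$.
	\end{proof}

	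We call Proposition~\ref{prop:intro-BE} a `Berry-Esseen type inequality', noting that the analogous inequality $\pr[X \leq x] \geq \pr[Z \leq x] - 0.124$ follows from a direct application of the Berry-Esseen theorem. While our improvement of the constant may look mild, it is crucial for our proof.
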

We prove Theorem~\ref{thm:seg_compare_intro} by constructing an explicit injection.

\paragraph{An improved Berry-Esseen inequality for Rademacher sums.} The classical Berry-Esseen theorem~(\cite{Berry41,Esseen42}) allows approximating a sum of independent random variables by a Gaussian. If $X=\sum a_i x_i$ where all $a_i$'s are `sufficiently small', this allows deducing~\eqref{eq:main}, since for a Gaussian $Z\sim N(0,1)$ we have $\Pr[|Z|\leq 1] \approx 0.68$. Specifically, as was observed by Bentkus and Dzindzalieta~\cite{BD15}, Tomaszewski's conjecture in the case $\max |a_i| \leq 0.16$ follows from the Berry-Esseen bound.

We show that for Rademacher sums, improved Berry-Esseen type bounds (i.e., a tighter approximation by a Gaussian) can be obtained. 
We prove these new bounds via a method proposed by Prawitz~\cite{pra72} which employs the characteristic functions of $X$ and of a Gaussian to bound the difference between the distributions. Our key observation here is that in the case of Rademacher sums, Prawitz' method can be refined, yielding significantly stronger bounds. Then, we use these bounds to deduce Theorem~\ref{thm:main} in the range $\max |a_i| \leq 0.31$ (compared to $0.16$ of~\cite{BD15}) and in cases where the second-largest or the third-largest among the $a_i$'s is `sufficiently small'. We conjecture that these bounds can be improved further; see Section~\ref{sec:open}.

\paragraph{A `semi-inductive' approach, using a stopping time argument.} In their proof of the lower bound $\Pr[|X|\leq 1] \geq 1/3$, Ben-Tal et al.\cite{BNR02} introduced a `stopping time' argument, which treats $X$ as a sum $X=X'+X''$ where $X'=\sum_{i=1}^k a_i x_i$ and $X''=\sum_{i=k+1}^n a_i x_i$, and shows that if the partial sum $X'$ is `a little less than $1$', then the final sum $X$ has a decent chance of remaining less than $1$ in absolute value. Variants of this argument (which was attributed in~\cite{BNR02} to P. Van der Wal) were used in all subsequent proofs of lower bounds for Tomaszewski's problem (i.e.,~\cite{BHZ20,BH17,DHT20,Shnurnikov12}).

In our proof, we also employ a `stopping time' argument, but in a somewhat different manner. We use it to show that the statement of Theorem~\ref{thm:main} regarding $X=\sum_{i=1}^n a_i x_i$, with $a_1+a_2\geq 1$, follows from the assertion of Theorem~\ref{thm:main} for $Z=\sum_{i=1}^{m} b_i x_i$, with properly chosen $m<n$ and $\{b_i\}$. However, as it might be that $b_1+b_2 < 1$, this argument is not really inductive, and is completely useless, unless we can prove~\eqref{eq:main} in the case $a_1+a_2<1$ by different tools.

The semi-inductive approach is used once again, when treating the case $a_1 \in [0.31,0.387] \wedge a_1+a_2+a_3 \geq 1$ of Theorem~\ref{thm:main}.

\paragraph{A refinement of Chebyshev's inequality.} We make repeated use of the following, rather standard, refinement of Chebyshev's inequality:

\begin{proposition}\label{prop:Intro-Chebyshev}
Let $X$ be a symmetric (around $0$) random variable with $\var(X) = 1$, and let $c_0,\ldots,c_n,d_1,\ldots,d_m \in \mathbb{R}$ be such that
		\[
		0 = c_0 \leq c_1 \leq \ldots \leq c_n = 1 = d_0 \leq d_1 \leq \ldots \leq d_m.
		\]
Then
        \begin{equation}
		\sum_{i=0}^{n-1} (1-c_i^2) \pr[X \in (c_i, c_{i+1}]] \geq \sum_{i=1}^{m} (d_i^2-d_{i-1}^2) \pr[X \geq d_i].
		\end{equation}
\end{proposition}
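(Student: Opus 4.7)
My plan is to prove the inequality by bracketing both sides by the common quantity $\tfrac{1}{2}\be[(1-X^2)_+] = \tfrac{1}{2}\be[(X^2-1)_+]$. The equality here is the content of the variance hypothesis: symmetry gives $\be[X]=0$, hence $\be[X^2] = \var(X) = 1$, hence $\be[1 - X^2] = 0$. Splitting this expectation according to the sign of $1 - X^2$ produces the identity $\be[(1-X^2)_+] = \be[(X^2-1)_+]$. The goal then reduces to two essentially symmetric step-function comparisons.

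For the LHS, on each piece $\{X \in (c_i, c_{i+1}]\}$ of the partition of $(0,1]$ one has $X^2 \geq c_i^2$, so pointwise $(1-c_i^2)\one_{X \in (c_i, c_{i+1}]} \geq (1-X^2)\one_{X \in (c_i, c_{i+1}]}$. Summing over $i$ and taking expectations gives
$$\text{LHS} \geq \be\li[(1-X^2)\one_{X \in (0, 1]}\ri] = \tfrac{1}{2}\be[(1-X^2)_+],$$
where the last equality uses the symmetry of both $X$ and the integrand around $0$ to fold $[-1, 0)$ onto $(0, 1]$.

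For the RHS, I would swap sum and expectation and analyze the step function $S(X) \defeq \sum_{i=1}^m (d_i^2 - d_{i-1}^2) \one_{X \geq d_i}$. Telescoping, $S(X) = d_k^2 - 1$ on $\{d_k \leq X < d_{k+1}\}$ for $1 \leq k \leq m$ (with $d_{m+1} = \infty$) and $S(X) = 0$ for $X < d_1$, so in every case $S(X) \leq (X^2-1)\one_{X \geq 1}$. Hence
$$\text{RHS} = \be[S(X)] \leq \be[(X^2-1)\one_{X \geq 1}] = \tfrac{1}{2}\be[(X^2-1)_+],$$
again by symmetry. Chaining these two bounds through the variance identity closes the argument.

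The only real technical nuisance is careful bookkeeping around the atoms at $X = 0$ and $X = \pm 1$, which a Rademacher sum can genuinely carry (e.g.\ $X = \tfrac{1}{\sqrt{2}}(x_1+x_2)$ has mass $\tfrac{1}{2}$ at $0$). The symmetric folding step assigns the atom at $0$ to the two sides $[-1,0)$ and $(0,1]$, so one has to be careful that the half-open convention in the statement is interpreted consistently — this is the step where I expect most of the bookkeeping care to be needed, but it does not affect any of the algebra above.
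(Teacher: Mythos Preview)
Your approach is exactly the paper's: both pivot on the identity $\be[(1-X^2)_+]=\be[(X^2-1)_+]$ (from $\be[X^2]=1$) and then sandwich via step-function approximations. The paper works with $|X|$ and intervals $[c_i,c_{i+1})$; you work with $X$ on $(0,1]$ and fold by symmetry. Same idea, same algebra.

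However, the atom at $0$ is not mere bookkeeping---it is a genuine gap, and in fact the Proposition \emph{as literally stated} is false. Take $\pr[X=0]=3/4$, $\pr[X=\pm 2]=1/8$ (symmetric, $\var(X)=1$), with $n=m=1$, $c_1=1$, $d_1=2$: then the left side is $\pr[X\in(0,1]]=0$ while the right side is $3\cdot\tfrac{1}{8}=\tfrac{3}{8}$. The leak in your argument is precisely the folding step: by symmetry
\[
\be\bigl[(1-X^2)\one_{X\in(0,1]}\bigr]=\tfrac{1}{2}\be[(1-X^2)_+]-\tfrac{1}{2}\pr[X=0],
\]
not $\tfrac{1}{2}\be[(1-X^2)_+]$, so the chain of inequalities loses $\tfrac{1}{2}\pr[X=0]$. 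The paper's working version (Lemma~\ref{lem:cheby}) avoids this by using the half-open convention $\hpr{X}{c_i}{c_{i+1}}$, which at $i=0$ counts half of the atom at $0$; equivalently, its proof uses $\pr[|X|\in[c_i,c_{i+1})]$, which includes the atom. So your write-up is correct in spirit but needs either that endpoint convention or an explicit $\tfrac{1}{2}\pr[X=0]$ correction to close.
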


\paragraph{Proof outline.} In the proof of Theorem~\ref{thm:main}, we consider several cases, according to the sizes of the $a_i$'s, and prove the assertion in each of them separately using the tools described above. In particular, the case $a_1+a_2 \geq 1$ is covered by the aforementioned semi-inductive argument, the case $\max |a_i| \leq 0.31$ is proved by the refined Berry-Esseen inequality, and the cases `in the middle' are proved via various combinations of the segment comparison argument and the refined Chebyshev inequality, sometimes using also the refined Berry-Esseen bound.

Unfortunately, this part of the proof requires somewhat grueling computations, including two cases in which a light computer-aided check is applied. For the sake of readability, we divide the proofs into their `essential' part and their `calculation' part, and relay the calculations to the appendices. 

\paragraph{Organization of the paper.}
In Section~\ref{sec:preliminaries} we introduce notation, a basic lemma, and a more detailed outline of the proof. The segment comparison argument (i.e., Theorem~\ref{thm:seg_compare_intro}) is presented in Section~\ref{sec:comparison}. In Section~\ref{sec:BE} we present the refined Berry-Esseen bounds. The semi-inductive argument for the case $a_1+a_2\geq 1$ is given in Section~\ref{sec:induction}. The rest of the cases are presented in Sections~\ref{sec:055}--\ref{sec:remaining} and Appendices~\ref{sec:31big}--\ref{sec:0.31<a_1<0.387}. We conclude with several open problems in Section~\ref{sec:open}.

\section{Preliminaries and Structure of the Proof}
\label{sec:preliminaries}

This section presents notation, a basic lemma, and the structure of the proof in more detail.

\subsection{Notation}

\paragraph{Standard notation.} \skipline

\medskip For $n \in \mathbb{N}$, $[n]$ denotes $\{1,2,\ldots,n\}$.

\medskip For $A,B,C \in \mathbb{R}$, $A+[B,C]$ denotes the segment $[A+C,B+C]$.

\medskip The shorthand $A_1,A_2,\ldots,A_m=x_1,x_2,\ldots,x_m$  denotes the $m$ equalities $A_i=x_i$, $i\in [m]$.

\medskip The shorthands LHS and RHS denote the left hand side and the right hand side of an equation (or an inequality).

\paragraph{Setting.} Throughout the paper, $X$ denotes a normalized Rademacher sum, that is, $X=\sum_{i=1}^n a_i x_i$, where $\sum_{i=1}^{n} a_i^2=1$ and $\{x_i\}$ are independent and uniformly distributed in $\{-1,1\}$. Without loss of generality, we always assume
\[
a_1 \geq a_2 \geq \ldots \geq a_n > 0.
\]
Note that sometimes, we pass from $X$ to an auxiliary random variable, which we denote by $X'=\sum_{i=1}^m b_i x_i$. In such cases, nothing is assumed on the $\{b_i\}$, unless stated otherwise explicitly.

\paragraph{Precision.} The paper contains explicit real numbers, which we present in decimal expansion. When we write these, we mean the exact value we write; we never write a rounded value and mean `a close' number. We make such roundings by writing, e.g.,  $\pi=3.1416 \pm 10^{-5}$.
		
\paragraph{Notation for segments.} We use the somewhat non-standard notation
\begin{equation}\label{eq-def:segment}
\hpr{X}{a}{b} =
\begin{cases}
\pr[X \in [a,b]] - \pr[X=a]/2 - \pr[X=b]/2,		& a<b \\
0,								                & \mrm{Otherwise}
\end{cases}.
\end{equation}
Note that we have
\begin{equation}\label{eq:additivity}
A \leq B \leq C \implies \hpr{X}{A}{C} = \hpr{X}{A}{B} + \hpr{X}{B}{C}.
\end{equation}
Similarly, for $a\leq b$ we denote $\Pr[X \in \langle a,b]] = \Pr[X \in [a,b]]-\Pr[X=a]/2$, and $\Pr[X \in [a,b \rangle]=\Pr[X \in [a,b]] - \Pr[X=b]/2$.

\subsection{A basic lemma -- elimination of variables}

The following lemma allows eliminating several $x_i$'s, by taking into account each possible value of these $x_i$'s separately. The lemma exchanges Tomaszewski's conjecture~\eqref{eq:main} by inequalities which are occasionally easier to approach.
\begin{lemma}\label{lem:nm}
Let $X = \sum_{i=1}^{n} a_i x_i$ be a Rademacher sum with  $\var(X) = 1$ and let $m < n$. Write
\begin{equation}\label{Eq:Elimination1}
\sigma = \sqrt{1-\sum_{i=1}^{m} a_i^2} \qquad \mbox{and} \qquad X' = \sum_{i=m+1}^{n} a_i' x_i, \qquad \mbox{with} \qquad a_i' = \frac{a_i}{\sigma}.
\end{equation}
Tomaszewski's assertion~\eqref{eq:main} is equivalent to
\begin{equation}\label{Eq:Elimination2}
\sum_{j=0}^{2^m-1} \pr[X' > T_j] \leq 2^{m-2},
\end{equation}
where $\{T_j\} = (1 \pm a_1 \pm \cdots \pm a_m)/\sigma$ ranges over all $2^m$ options.
\end{lemma}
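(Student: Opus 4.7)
The approach is to condition on the first $m$ signs $(x_1, \ldots, x_m)$ and exploit the symmetry of the tail $X'$. For a sign pattern $\epsilon = (\epsilon_1, \ldots, \epsilon_m) \in \{-1,1\}^m$, write $s_\epsilon = \sum_{i=1}^{m} \epsilon_i a_i$. By the independence of the $x_i$'s, conditionally on $(x_1, \ldots, x_m) = \epsilon$ we have $X = s_\epsilon + \sigma X'$, where $X'$ is as defined in~\eqref{Eq:Elimination1} (and has $\var(X') = 1$). First I would rewrite Tomaszewski's assertion in its equivalent form $\Pr[|X| > 1] \leq 1/2$ and then expand
\[
\Pr[|X| > 1] = 2^{-m} \sum_{\epsilon \in \{-1,1\}^m} \Pr\bigl[|s_\epsilon + \sigma X'| > 1\bigr].
\]

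The event $\{|s_\epsilon + \sigma X'| > 1\}$ splits into the two disjoint events $\{X' > (1 - s_\epsilon)/\sigma\}$ and $\{X' < -(1 + s_\epsilon)/\sigma\}$, and since $X'$ is a Rademacher sum and hence symmetric around $0$, the second probability equals $\Pr[X' > (1 + s_\epsilon)/\sigma]$. (The use of strict inequalities on both sides is what makes this symmetry step clean, regardless of any atoms $X'$ may possess.) Therefore
\[
\Pr[|X|>1] = 2^{-m} \sum_{\epsilon} \Bigl( \Pr\bigl[X' > (1-s_\epsilon)/\sigma\bigr] + \Pr\bigl[X' > (1+s_\epsilon)/\sigma\bigr] \Bigr).
\]

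Finally I would observe that the involution $\epsilon \mapsto -\epsilon$ on $\{-1,1\}^m$ exchanges the multisets $\{(1-s_\epsilon)/\sigma\}_\epsilon$ and $\{(1+s_\epsilon)/\sigma\}_\epsilon$, so each is exactly the multiset of thresholds $\{T_j\}_{j=0}^{2^m-1} = \{(1 \pm a_1 \pm \cdots \pm a_m)/\sigma\}$. Consequently
\[
\Pr[|X|>1] = 2^{1-m} \sum_{j=0}^{2^m-1} \Pr[X' > T_j],
\]
and the bound $\Pr[|X|>1] \leq 1/2$ becomes exactly~\eqref{Eq:Elimination2}.

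This proof is essentially bookkeeping, and I do not expect any serious obstacle. The only point that requires care is the consistent use of strict versus non-strict inequalities: the symmetry $\Pr[X' < -t] = \Pr[X' > t]$ holds for the open events but fails if one replaces one side by a closed event when $X'$ has an atom at $\pm t$. Starting from the strict form $\Pr[|X|>1]$ of Tomaszewski's conjecture avoids this pitfall, which is why~\eqref{Eq:Elimination2} is stated in terms of $\Pr[X' > T_j]$ rather than $\Pr[X' \geq T_j]$.
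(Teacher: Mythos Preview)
Your proof is correct and follows essentially the same approach as the paper: condition on the first $m$ signs and use symmetry. The paper's version is marginally shorter because it applies the symmetry of $X$ \emph{before} conditioning (rewriting~\eqref{eq:main} as $\Pr[X>1]\le 1/4$), whereas you condition first and then apply the symmetry of $X'$ together with the involution $\epsilon\mapsto-\epsilon$; the two routes are equivalent.
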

	
\begin{proof}
As $X$ is symmetric,~\eqref{eq:main} is equivalent to $\pr[X > 1] \leq 1/4$. By the law of total probability,
$
2^{m} \pr[X > 1] = \sum_{j} \pr[X' > T_j],
$
implying the lemma.
\end{proof}

Lemma~\ref{lem:nm} is simple yet useful. For example, applying the lemma with $m=1$, one can see that~\eqref{eq:main} is equivalent to the inequality
\begin{equation}\label{eq:nm1}
\pr[X' \in \hleft 0, t\ri] ] \geq \pr[X' > 1/t],
\end{equation}
where
\begin{equation}\label{Eq:Prelim-Eliminate-one}
\sigma = \sqrt{1-a_1^2}, \qquad X'=\sum_{i=2}^n \frac{a_i}{\sigma}x_i, \qquad \mbox{and} \qquad t=\sqrt{\frac{1-a_1}{1+a_1}}.
\end{equation}
This readily implies the aforementioned observation of Dzindzalieta~\cite{Dzindzalieta14a} that Tomaszewski's conjecture is equivalent to the more general inequality~\eqref{eq:dzin-intro1}.


\subsection{Hard cases for the proof}\label{ssec:hard-cases}

There are several classes of Rademacher sums which are `hard to handle' with our tools. These classes motivate the partition into cases used in the proof.

One of the obstacles we have to overcome, is the difficulty in distinguishing between the probabilities $\pr[|X| \leq 1]$ and $\pr[|X| < 1]$. This obstacle appeared in previous works as well, and is probably the reason for which the `barrier' of $3/8$ (which is the tight lower bound for $\Pr[|X|<1]$) was not beaten for almost 25 years, until the work of Boppana and Holzman~\cite{BH17}.

As a result, `hard' cases for our proof are not only tightness examples for the conjecture, but also $X$'s for which $\Pr[|X|<1]$ is small.
	We list three such examples of Rademacher sums:
	\begin{enumerate}
		\item $X=(x_1+x_2)/\sqrt{2}$, and more generally, $X=\sum a_i x_i$ with $\var(X)=1$ and
		\begin{equation}\label{Eq:Aux-Hard-case}
		a_1 + \min_{x \in \spm^{[n]\sm \{1\}}} \left|\sum_{i=2}^{n} a_i x_i\right| > 1.
		\end{equation}
				\item $X = \frac{1}{2}\sum_{i=1}^{4} x_i$.
		\item $X = \frac{1}{3}\sum_{i=1}^{9} x_i$.
	\end{enumerate}
	Rademacher sums $X$ that belong to the first class are tightness examples for Theorem~\ref{thm:main}, in the sense that $\pr[|X|\leq 1]=1/2$. These are the only tightness examples we are aware of.
	The two latter classes are not tightness cases of the conjecture, but rather satisfy $\pr[|X| < 1] < 1/2$, and inevitably complicate the proof. For example, in the third case, $\pr[|X| < 1]=\frac{63}{128} < 0.493$, which demonstrates that the improved Berry-Esseen bound we prove in Section~\ref{sec:BE} is almost optimal. Indeed, while the bound implies that $\pr[|X| < 1] \geq 1/2$ holds whenever $\max_i |a_i| \leq 0.31$, the example shows that this assertion fails when $\max |a_i|=1/3$.

\subsection{Structure of the proof}

The proof of Theorem~\ref{thm:main} is split into seven cases, which we shortly overview:


\paragraph{Case~1: $a_1 \leq 0.31$.} This case is covered by the improved Berry-Esseen bound for Rademacher sums presented in Section~\ref{sec:BE}.

\paragraph{Case~2: $a_1 + a_2 \geq 1$.} This case is covered by the semi-inductive argument (which uses the `stopping time' method), and is presented in Section~\ref{sec:induction}.

\paragraph{Case~3: $a_1 \geq 0.55$ and $a_1 + a_2 \leq 1$.} The proof in this case combines the `segment comparison' argument (i.e., Theorem~\ref{thm:seg_compare_intro}) with Chebyshev's inequality. The proof in this case is a simple example of the proof strategy in some the following cases, and so we slightly detail about it.

By Lemma~\ref{lem:nm}, applied with $m=1$, in order to verify $\pr[|X| \leq 1] \geq 1/2$, it suffices to show
\begin{equation}\label{Eq:Prelim-Aux1}
\pr[|X'| \leq t] > \pr[|X'| > 1/t],
\end{equation}
 where $X',t$ are as defined in~\eqref{Eq:Prelim-Eliminate-one}. Since $a_2'$ -- the largest weight of $X'$ -- satisfies $a_2' \leq t$ (which follows from the condition $a_1+a_2 \leq 1$), Theorem~\ref{thm:seg_compare_intro} implies that $\pr[|X'| \leq t] = \Omega(t)$. As Chebyshev's inequality yields $\pr[|X'| > 1/t] \leq t^2$,~\eqref{Eq:Prelim-Aux1} follows if $t$ is sufficiently small, that is, if $a_1$ is sufficiently large. This argument applies when $a_1 \geq 0.55$, and is presented in Section~\ref{sec:055}.

\paragraph{Case~4: $a_1 \in [0.5,0.55]$ and $a_1 + a_2 \leq 1$.} The proof in this case splits according to whether $a_2$ is `large' or `small'. If $a_2$ is small, then the method of the previous case is sufficient. If $a_2$ is large, we eliminate two variables (by applying Lemma~\ref{lem:nm} with $m=2$) and prove the resulting inequality using Theorem~\ref{thm:seg_compare_intro} and the refined Chebyshev inequality (i.e., Proposition~\ref{prop:Intro-Chebyshev}). This case is presented in Section~\ref{sec:0.5<a_1<0.55}.

\paragraph{Case~5: $a_1 \in [0.31, 0.5]$ and $a_1 + a_2 + a_3 \leq 1$.} The proof in this case splits according to the sizes of $a_2$ and $a_3$.

If either of $a_2$ or $a_3$ is sufficiently small, then after elimination of one or two variables (respectively), the problem is reduced to a probabilistic inequality concerning a Rademacher sum with `sufficiently small' weights, and follows from the improved Berry-Esseen inequality.

Otherwise, we eliminate three variables and prove the assertion using Theorem~\ref{thm:seg_compare_intro} and Proposition~\ref{prop:Intro-Chebyshev}.
This case is demonstrated in Section~\ref{sec:remaining} and treated in detail in Appendix~\ref{sec:31big}.

\paragraph{Case~6: $a_1 \in [0.387, 0.5]$ and $a_1 + a_2 + a_3 \geq 1$.} The proof method in this case is superficially similar to that of the previous case. However, a subtle difference in the details makes this case simpler to handle. The proof in this case is demonstrated in Section~\ref{sec:remaining} and treated in detail in Appendix~\ref{sec:39geq}.

\paragraph{Case~7: $a_1 \in [0.31, 0.387]$ and $a_1 + a_2 + a_3 \geq 1$.} The proof in this case splits according to whether there exists some $k\geq 4$ with a `medium-sized' $a_k$.

If there is no such $a_k$, then the weights are partitioned into `large' ones and `small' ones. If there are at most four large weights, then by eliminating the four variables with largest weight, we reduce the problem to a probabilistic inequality concerning a Rademacher sum with `sufficiently small' weights that can be handled easily. Otherwise, we eliminate five variables and show that the assertion follows from  Proposition~\ref{prop:Intro-Chebyshev} and a `light' semi-inductive argument. The latter part is however somewhat cumbersome, as after eliminating five variables, we have to deal with $2^5=32$ summands simultaneously.

In the case where there exists a medium-sized $a_k$, we use an explicit bijection to prove a special segment comparison lemma that holds for it, which in turn allows deducing the assertion using Theorem~\ref{thm:seg_compare_intro} and Proposition~\ref{prop:Intro-Chebyshev}. The proof in this case is demonstrated in Section~\ref{sec:remaining} and treated in detail in Appendix~\ref{sec:0.31<a_1<0.387}.

\medskip \noindent Combination of the seven cases with induction over $n$ (where the inductive assumption is used in Cases 2 and 7), completes the proof of Theorem~\ref{thm:main}.


\section{Local Concentration Inequalities for Rademacher Sums}\label{sec:comparison}

In this section we present two concentration inequalities, that allow comparing the probabilities $\pr[X \in I]$ and $\pr[X \in J]$, where $X$ is a Rademacher sum and $I,J$ are segments or rays. Both results are used extensively throughout the paper. The first is a local concentration inequality:

\begin{theorem}[segment comparison]\label{thm:seg_compare}
	Let $X = \sum_{i=1}^{n} a_{i} x_{i}$, and write $M=\max_i |a_i|$. For all $A, B, C, D \in \reals$ such that
		\begin{equation}\label{Eq:Comparison_condition_general}
		|A| \leq \min(B,C), \qquad 2M \leq C-A, \qquad \mbox{and} \qquad D-C + \min(2M, D-B) \leq B-A,
		\end{equation}
	one has
		\begin{equation}\label{eq:seg_compare}
		\pr[X \in \hseg{C}{D}] \leq \pr[X \in \hseg{A}{B}].
		\end{equation}
\end{theorem}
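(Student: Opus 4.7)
The plan is to prove Theorem~\ref{thm:seg_compare} by exhibiting an explicit injection
\[
\phi\colon\{\epsilon\in\spm^n:X(\epsilon)\in\hseg{C}{D}\}\longrightarrow\{\epsilon\in\spm^n:X(\epsilon)\in\hseg{A}{B}\},
\]
obtained by flipping one or two coordinates of $\epsilon$. Counting then yields~\eqref{eq:seg_compare}, with the half-weight convention at the endpoints of $\hseg{\cdot}{\cdot}$ handled by a dedicated boundary matching (or, equivalently, by first proving the open-interval version and passing to closures).

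The geometric reasoning guiding the construction is as follows. Any sign vector $\epsilon$ contributing to the left-hand side has $X(\epsilon)\geq C\geq|A|\geq 0$, so $\epsilon$ contains at least one $+1$ coordinate. Flipping a single $+1$ coordinate $i$ decreases $X$ by $2a_i\in[0,2M]$. The first hypothesis $2M\leq C-A$ says that such a flip cannot overshoot $A$: the new value remains in $[A,X(\epsilon))$. The other hypothesis, $D-C+\min(2M,D-B)\leq B-A$, encodes the available length budget and splits naturally into two regimes according to which term attains the minimum. When $D-B\leq 2M$ a single flip is always enough: one aims to land in $\hseg{A}{B}$ by choosing $a_{i}$ in the window $\bigl[(v-B)/2,(v-A)/2\bigr]$, whose length $(B-A)/2$ is at least $(D-C+D-B)/2$ by hypothesis. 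When $D-B>2M$, no single flip from $v\approx D$ can reach $\hseg{A}{B}$, and the rule must flip two coordinates, the second of which is close in magnitude to $M$; the stronger form $D-C+2M\leq B-A$ is exactly the budget needed to fit the result inside $\hseg{A}{B}$.

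Given this geometric template, the injection is built by a canonical greedy rule. In the single-flip regime, given $\epsilon$ with $v=X(\epsilon)\in(C,D)$, one takes $\phi(\epsilon)$ to be $\epsilon$ with a specific coordinate $i^{\ast}=i^{\ast}(\epsilon)$ flipped, chosen so that $\epsilon_{i^{\ast}}=+1$ and $2a_{i^{\ast}}\in[v-B,v-A]$ (for instance, the smallest index with these two properties). In the double-flip regime the rule picks two coordinates in an analogous fashion, the second one chosen canonically so that the resulting $X$-value is pinned down modulo a prescribed offset. The inverse of $\phi$ is then recovered by reading $v'=X(\phi(\epsilon))$, running the rule backwards to identify which coordinates of $\phi(\epsilon)$ carry the telltale $-1$'s planted by the flip, and flipping them back.

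The main obstacle, I expect, is not the geometric picture but the careful verification that the chosen greedy rule satisfies both \emph{existence} (for every admissible $\epsilon$ a coordinate $i^{\ast}$, or pair, with the required properties actually exists inside $\epsilon$) and \emph{injectivity} (no two $\epsilon$'s are sent to the same $\phi(\epsilon)$). Both verifications are delicate because they depend on the ordering of the $(a_{i})$ and on the precise location of $v$ inside $\hseg{C}{D}$; this is exactly where the two hypotheses $2M\leq C-A$ and $D-C+\min(2M,D-B)\leq B-A$ are used in full strength, the latter in its two forms $D-C+(D-B)\leq B-A$ and $D-C+2M\leq B-A$ respectively. The endpoint half-weights in $\hseg{\cdot}{\cdot}$ are a secondary technical issue handled by tracking preimages of $C$ and $D$ separately.
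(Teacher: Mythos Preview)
Your overall strategy—build an explicit injection by flipping signs—is correct and is what the paper does. But the specific rule you propose (flip one coordinate in the regime $D-B\le 2M$, two coordinates otherwise) does not work: already the \emph{existence} step fails.

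Concretely, take $a=(0.5,\underbrace{0.1,\ldots,0.1}_{15})$, so $M=0.5$, and $A,B,C,D=-0.1,\,0.6,\,0.9,\,1.1$. All three hypotheses of the theorem hold (with equality in the second and third). For $\epsilon=(-1,+1,\ldots,+1)$ one has $X(\epsilon)=1.0\in(C,D)$, but every coordinate with $\epsilon_i=+1$ has $2a_i=0.2$, and flipping it yields $X=0.8\notin[A,B]$; the only large coordinate has $\epsilon_1=-1$. So no single flip lands in $\hseg{A}{B}$. Two flips do not help either: flipping two $+1$ coordinates gives $0.6$, and the budget inequality is tight, so you cannot afford the endpoint loss for injectivity; more to the point, nothing in your rule forces you into a two-flip branch here since $D-B=0.5\le 2M=1$. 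The image of this $\epsilon$ under the paper's map actually flips nine coordinates.

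There is a second, independent gap: even when a suitable coordinate exists, your inversion sketch (``read $v'$, run the rule backwards'') does not work, because the window $[v-B,v-A]$ you used to select $i^\ast$ depends on $v$, which is not recoverable from $\phi(\epsilon)$ alone.

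The paper's construction avoids both issues by flipping a variable-length \emph{set} of coordinates chosen so that the set is determined by the output. In the case $D-C+2M\le B-A$ it uses a \emph{prefix flip}: negate $\epsilon_1,\ldots,\epsilon_k$ for the minimal $k$ with $\sum_{j\le k}a_j\epsilon_j\ge(D-B)/2$; this $k$ can be read off from the partial sums of the image. In the case $2(D-B)\le C-A$ it first separates coordinates into ``large'' ($a_i\ge(D-B)/2$) and ``small'', applies a \emph{recursive flip} (a bijective extension of a single-coordinate flip) on the large part, and then, conditionally on information visible in the large part of the image, applies a prefix flip on the small part. The point is that each piece is a bijection whose inverse is computable from the output, and the decision of which piece to apply is encoded in the output as well. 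Your geometric reading of the hypotheses is right, but the injection has to be built from these self-inverting primitives rather than from ad hoc coordinate choices.
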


The assertion of Theorem~\ref{thm:seg_compare} holds for other types of segments as well; see Appendix~\ref{app:sub:other-types}.

\medskip We note that inequalities of the same type were obtained by the authors in~\cite{KK17}, where they were used to obtain an alternative proof of another local tail inequality due to Devroye and Lugosi~\cite{DL08} and to study analytic properties of linear threshold functions. While the inequalities in~\cite{KK17} are qualitative (i.e., of the form $\pr[X \in \hseg{C}{D}] \leq c \pr[X \in \hseg{A}{B}]$ for some non-optimal constant $c$), for our purposes here a more exact inequality is required. Such an inequality is given in Theorem~\ref{thm:seg_compare}, which strictly supersedes~\cite[Lemma 3.1]{KK17}.

\medskip The usefulness of `segment comparison' in the proof of Tomaszewski's conjecture is apparent, as the conjecture itself can be rephrased as a segment comparison inequality:
\begin{equation}\label{Eq:Toma-aux}
\hpr{X}{1}{\infty} \leq \hpr{X}{0}{1}.
\end{equation}
However, Theorem~\ref{thm:seg_compare} alone is not sufficient for our needs since it allows deducing $\pr[X \in I] \leq \pr[X \in J]$ only for segments $I,J$ that satisfy, in particular, $|I| \leq |J|$.

\medskip
The second result we present is a simple-yet-powerful generalization of the classical Chebyshev's inequality, which allows handling cases where $|I|>|J|$. For example, it enables to deduce $\hpr{X}{0}{1} \geq \hpr{X}{\sqrt{2}}{\infty}$, which reminds of~\eqref{Eq:Toma-aux} but is of course much weaker.
	\begin{lemma}\label{lem:cheby}
		Let $X$ be a symmetric (around $0$) random variable with $\var(X) = 1$, and let
		\[
			0 = c_0 \leq c_1 \leq \ldots \leq c_n = 1 = d_0 \leq d_1 \leq \ldots \leq d_m \leq d_{m+1}=\infty.
		\]
		Then
		\begin{equation}\label{eq:cheby_our}
		\sum_{i=0}^{n-1} (1-c_i^2) \hpr{X}{c_i}{c_{i+1}} \geq \sum_{i=1}^{m} (d_i^2-d_{i-1}^2) \pr[X \geq d_i],
		\end{equation}
		and similarly,
		\begin{equation}\label{eq:cheby_our2}
		\sum_{i=0}^{n-1} (1-c_i^2) \hpr{X}{c_i}{c_{i+1}} \geq \sum_{i=1}^{m} (d_i^2-1) \hpr{X}{d_i}{d_{i+1}}.
		\end{equation}
	\end{lemma}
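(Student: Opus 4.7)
The plan is to apply the identity $\be[X^2 - 1] = 0$ (which follows from $\var(X) = 1$) after splitting the integration domain at $|X| = 1$, and thereby relate a ``small-$|X|$'' quantity to a ``large-$|X|$'' quantity. To absorb atoms at endpoints without loss, I work with the half-weighted integral
\[
\langle f; [a, b] \rangle \;:=\; \int_{(a,b)} f(X)\,dP \;+\; \tfrac{1}{2}f(a)\pr[X = a] \;+\; \tfrac{1}{2}f(b)\pr[X = b],
\]
which is additive over abutting intervals (i.e., $\langle f; [a,c]\rangle = \langle f; [a,b]\rangle + \langle f; [b,c]\rangle$ for $a \leq b \leq c$) and satisfies $\langle 1; [a,b]\rangle = \hpr{X}{a}{b}$. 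Since $X$ is symmetric around $0$, for any even function $f$ one has $2\langle f; [0, \infty) \rangle = \be[f(X)]$. Applying this with $f \equiv 1$ and with $f(x) = x^2$, and using $\be[X^2] = 1$, yields $\langle X^2 - 1; [0, \infty) \rangle = 0$; additivity at $1$ then gives the core identity
\[
\langle 1 - X^2; [0, 1] \rangle \;=\; \langle X^2 - 1; [1, \infty) \rangle.
\]

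Next I would bound the left-hand side of this identity from above by the left-hand side of~\eqref{eq:cheby_our}: on each $[c_i, c_{i+1}] \subseteq [0,1]$ one has $X \geq c_i \geq 0$, hence $1 - X^2 \leq 1 - c_i^2$ and
\[
\langle 1 - X^2; [c_i, c_{i+1}]\rangle \;\leq\; (1 - c_i^2)\,\langle 1; [c_i, c_{i+1}]\rangle \;=\; (1 - c_i^2)\, \hpr{X}{c_i}{c_{i+1}};
\]
summing via additivity shows $\langle 1 - X^2; [0, 1]\rangle \leq \sum_{i=0}^{n-1}(1-c_i^2)\hpr{X}{c_i}{c_{i+1}}$.

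For the right-hand sides of~\eqref{eq:cheby_our}--\eqref{eq:cheby_our2}, inequality~\eqref{eq:cheby_our2} follows from the analogous bound $X^2 - 1 \geq d_i^2 - 1$ on $[d_i, d_{i+1}]$, which after summing over $i = 0, \ldots, m$ (the $i=0$ term vanishing since $d_0 = 1$) yields $\langle X^2 - 1; [1, \infty)\rangle \geq \sum_{i=1}^m (d_i^2 - 1)\hpr{X}{d_i}{d_{i+1}}$. For the stronger inequality~\eqref{eq:cheby_our}, I would instead use $\langle X^2 - 1; [1, \infty)\rangle = \be[(X^2 - 1)\one_{X > 1}]$ (the boundary atom at $X=1$ contributes $0$), decompose $[1, \infty)$ as the disjoint union of the half-open intervals $[d_j, d_{j+1})$ for $j = 0, \ldots, m$, bound $X^2 - 1 \geq d_j^2 - 1$ on each piece, and then switch the order of summation using $d_j^2 - 1 = \sum_{i=1}^j (d_i^2 - d_{i-1}^2)$ and $\sum_{j \geq i}\pr[X \in [d_j, d_{j+1})] = \pr[X \geq d_i]$ to obtain exactly $\sum_{i=1}^m (d_i^2 - d_{i-1}^2)\pr[X \geq d_i]$. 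The only real subtlety is the bookkeeping of half-masses at interval endpoints, which the half-weighted integral absorbs seamlessly.
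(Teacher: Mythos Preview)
Your proof is correct and follows essentially the same route as the paper: both start from the identity $\be[(1-X^2)\one_{|X|<1}] = \be[(X^2-1)\one_{|X|>1}]$ (your core identity is just the one-sided version obtained via symmetry), then bound the left side above and the right side below using step-function approximations on the partitions $\{c_i\}$ and $\{d_j\}$, with an Abel summation to obtain \eqref{eq:cheby_our}. Your half-weighted integral $\langle f;[a,b]\rangle$ is a clean bookkeeping device for the atoms at endpoints that the paper handles more implicitly by appealing to symmetry at the end.
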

While we did not find this result in the literature, it is presumably known or even folklore. We note that a variant of Lemma~\ref{lem:cheby} was used in the recent work of Dvo{\v{r}}{\'{a}}k et al.~\cite{DHT20}.

\medskip Neither Theorem~\ref{thm:seg_compare} nor Lemma~\ref{lem:cheby} is sufficient for tackling Tomaszewski's conjecture. However, a combination of these tools allows proving the conjecture in several significant cases.

\paragraph{Organization.}
The proof of Theorem~\ref{thm:seg_compare} uses an explicit injection, which maps the event on the left hand side of~\eqref{eq:seg_compare} to the event corresponding to the right hand side. We present several auxiliary bijections that will be used to construct our injection in Section~\ref{ssec:inj-maps}, and then we prove Theorem~\ref{thm:seg_compare}
in Section~\ref{ssec:seg-compare}. The simple and standard proof of Lemma~\ref{lem:cheby} is presented in Appendix~\ref{ssec:cheby}.

\subsection{Auxiliary bijections}\label{ssec:inj-maps}
\newcommand{\PF}{\mathrm{PF}}
\newcommand{\SF}{\mathrm{SF}}
\newcommand{\RF}{\mathrm{RF}}
\newcommand{\DOM}{\mathrm{Domain}}

In this subsection we present three bijections on the discrete cube $\{-1,1\}^n$ (i.e., bijections from the discrete cube to itself) that satisfy certain desired properties. The first two bijections were already introduced in~\cite{KK17} and are presented here for the sake of completeness.

\subsubsection{Prefix flip}

\begin{lemma}[Prefix flip]\label{lem:prefix-flip}
		Let $a = (a_1, \ldots, a_n)$ be a sequence of positive real numbers, let $M=\max_i a_i$ and let $Q \geq 0$. There exists a bijection $\func{\PF_{a, Q}}{\spm^n}{\spm^n}$ such that for any $v \in \spm^n$ with
		\begin{equation}\label{eq:pf-assumption}
		X(v)\defeq \sum_{i=1}^{n} a_i v_i \geq Q/2,
		\end{equation}
		the image $w=\PF_{a, Q}(v)$ satisfies
		\begin{equation}\label{Eq:Comp-Aux1}
		X(w)=\sum_{i=1}^{n} a_i w_i \in (X(v)-Q-2M,X(v)-Q].
		\end{equation}
\end{lemma}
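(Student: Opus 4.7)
The plan is to realize $\PF_{a,Q}$ as a prefix-flip whose length depends on a stopping time of the partial sums of $v$, in the spirit of the classical reflection principle. For $v \in \spm^n$, write $T_k(v) := \sum_{i=1}^{k} a_i v_i$, so that $T_0(v)=0$ and $T_n(v)=X(v)$. For $v$ with $X(v)\geq Q/2$, let $k(v)$ be the smallest $k\geq 0$ for which $T_k(v)\geq Q/2$; such a $k$ exists because $T_n(v)=X(v)\geq Q/2$. Define $\PF_{a,Q}(v)$ to be the vector $w$ obtained from $v$ by negating its first $k(v)$ coordinates. A direct computation then gives $X(w)=X(v)-2T_{k(v)}(v)$.

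The verification of~\eqref{Eq:Comp-Aux1} is immediate from the minimality of $k(v)$. If $k(v)\geq 1$, then $T_{k(v)-1}(v)<Q/2\leq T_{k(v)}(v)$, which forces the jump $a_{k(v)}v_{k(v)}$ to be positive, so $v_{k(v)}=+1$ and $T_{k(v)}(v)<Q/2+a_{k(v)}\leq Q/2+M$. If $k(v)=0$ (which can only occur when $Q=0$), then $T_{k(v)}(v)=0\in[Q/2,Q/2+M)$ as well. In either case $T_{k(v)}(v)\in[Q/2,Q/2+M)$, and substituting yields $X(w)\in(X(v)-Q-2M,\,X(v)-Q]$, as claimed.

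It remains to extend $\PF_{a,Q}$ to a bijection of $\spm^n$. Writing $G:=\{v:X(v)\geq Q/2\}$, it suffices to show that the map defined above is injective on $G$: then $|G|=|\PF_{a,Q}(G)|$ gives $|G^c|=|\PF_{a,Q}(G)^c|$, so any bijection between these complements completes the construction. For injectivity, suppose $\PF_{a,Q}(v)=\PF_{a,Q}(v')=w$ for some $v,v'\in G$, and set $k:=k(v),\ k':=k(v')$, assuming without loss of generality $k\leq k'$. For every index $i\leq k$, both $v$ and $v'$ have their $i$-th coordinate negated in producing $w$, so $v_i=-w_i=v'_i$, whence $T_k(v)=T_k(v')$. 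But $T_k(v)\geq Q/2$ by definition of $k(v)$, while $T_k(v')<Q/2$ whenever $k<k'$ by definition of $k(v')$, a contradiction. Hence $k=k'$, and then comparing coordinates on all indices yields $v=v'$.

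The main place requiring care is the edge case $k(v)=0$, which only arises when $Q=0$; the choice of the bijection on $G^c$ is arbitrary and introduces no difficulty. I do not expect any serious obstacle beyond bookkeeping, since the whole argument reduces to a clean reflection-type calculation once the stopping time $k(v)$ is chosen correctly.
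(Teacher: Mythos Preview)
Your proof is correct and follows essentially the same construction as the paper: define the stopping time $k(v)$ as the first index where the partial sum reaches $Q/2$, flip the corresponding prefix, and verify the range of $X(w)$ via $T_{k(v)}(v)\in[Q/2,Q/2+M)$. The only cosmetic differences are that you allow $k(v)=0$ (handling $Q=0$ slightly more cleanly, though this case never occurs in the applications) and prove injectivity directly by comparing stopping times, whereas the paper writes down an explicit inverse formula for $k$ in terms of $w$.
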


\paragraph{Construction of $\PF_{a, Q}$.} Let $v \in \spm^n$ satisfy~\eqref{eq:pf-assumption}. Let $k>0$ be minimal so that $\sum_{j=1}^{k} a_j v_j \geq Q/2$. Define $w=\PF_{a, Q}(v)$ by $w_i = -v_i$ for $i \leq k$, and $w_i = v_i$ for $i > k$.

\medskip \noindent We prove that $\PF_{a, Q}$ satisfies the requirements of the lemma in Appendix~\ref{ssec:prefix-flip}.
We call the function $\PF_{a, Q}$ a \emph{prefix flip}.

\subsubsection{Single coordinate flip}

	\begin{lemma}[Single coordinate flip]\label{lem:single-flip}
		Let $a = (a_1, \ldots, a_n)$ be a sequence of positive real numbers and let $M=\max_i a_i$ and $m = \min_i a_i$. There exists a bijection $\func{\SF_{a}}{\spm^n}{\spm^n}$ such that for any $v \in \spm^n$  with
		\begin{equation}\label{eq:sf-assumption}
		X(v)\defeq \sum_{i=1}^{n} a_i v_i > 0,
		\end{equation}
		the image $w=\SF_{a}(v)$ is obtained from $v$ by flipping a single coordinate from $1$ to $(-1)$, and in particular, satisfies
		\[
		X(w)=\sum_{i=1}^{n} a_i w_i \in [X(v)-2M,X(v)-2m].
		\]
	\end{lemma}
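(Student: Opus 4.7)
The condition $X(w) \in [X(v) - 2M, X(v) - 2m]$ is automatic once $w$ arises from $v$ by flipping a single coordinate from $+1$ to $-1$, since then $X(w) = X(v) - 2 a_i$ for $a_i \in [m, M]$. Hence it suffices to exhibit an injection $f \colon S^+ \to \spm^n$, where $S^+ := \{v : X(v) > 0\}$, such that each $f(v)$ is obtained from $v$ by such a down-flip; one then extends $f$ to a bijection on $\spm^n$ by matching the leftover parts of the domain and codomain arbitrarily (they have equal size).

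My plan is to obtain $f$ via Hall's marriage theorem, applied to the bipartite graph $G$ whose left side is $S^+$, right side is $\spm^n$, and whose edges $(v,w)$ are precisely the down-flips. The key step is verifying Hall's condition: $|N_G(T)| \geq |T|$ for every $T \subseteq S^+$, where $N_G(T)$ denotes the down-flip shadow. I argue by contradiction on a minimal violator $T$. Standard reasoning forces $|N_G(T)| = |T| - 1$ and, since removing any single $v$ from $T$ cannot change $N_G(T)$, every $w \in N_G(T)$ must admit at least two preimages in $T$, yielding the \emph{doubling} property.

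Doubling conflicts with the structure of $S^+$. Any two distinct vectors $v, v' \in S^+$ have intersecting $+1$-supports: the inequality $X(v) > 0$ is equivalent to $\sum_{i : v_i = +1} a_i > \frac{1}{2}\sum_i a_i$, and disjoint supports $\{i : v_i = +1\}, \{i : v'_i = +1\}$ would force $\sum_i a_i > \sum_i a_i$, a contradiction. In particular, $(-1)^n \notin S^+$, at most one weight-$1$ vector lies in $S^+$, and more generally the layer-$k$ portion of $S^+$ is always an intersecting $k$-uniform family. Analyzing $T$ from its lowest nonempty weight layer upward, the doubling condition fails: the weight-$1$ case is excluded immediately (the only possible preimage of $(-1)^n$ cannot be doubled), and at layer $2$ the intersecting family is either a \emph{star} -- in which case doubling at a peripheral singleton $e_j \in N_G(T)$ fails because only one element of $T \cap L_2$ contains $j$ -- or a \emph{triangle}, forcing $T$ to extend to higher layers and the argument to continue recursively.

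The main obstacle is closing this recursive analysis at layers $k \geq 3$, where the intersecting $k$-uniform family no longer decomposes neatly into stars or triangles. I expect to handle this by showing that the doubling condition -- namely, that every $(k-1)$-subset in the shadow of $T \cap L_k$ admits at least two $k$-extensions inside $T \cap L_k$ -- is eventually incompatible with the intersecting constraint as $k$ grows, using Kruskal--Katona type shadow bounds for intersecting families.
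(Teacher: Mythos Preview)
Your reduction is fine: once you have an injection from $S^+=\{v:X(v)>0\}$ into $\spm^n$ by single down-flips, extending to a bijection is trivial, and the range $X(w)\in[X(v)-2M,X(v)-2m]$ is automatic. The gap is that you have not verified Hall's condition. The minimal-violator and doubling steps are correct, but the layer analysis does not close: you handle layers $1$ and $2$ and then defer layers $k\ge 3$ to an unspecified ``Kruskal--Katona type'' bound for intersecting families. No such bound is cited or proved, and I do not see one that finishes the job. Doubling on the shadow of an intersecting $k$-uniform family is a weak hypothesis (already the triangle at $k=2$ survives it), and even if you controlled each layer of $T$ separately you would still have to reassemble the layer contributions into the single relation $|N_G(T)|=|T|-1$, which your outline never addresses. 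As written, this is a plan, not a proof.

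The paper bypasses Hall with an explicit construction. After ordering $a_1\ge\cdots\ge a_n$, it ignores the weights and looks only at the \emph{unweighted} partial sums $S_i(v)=\sum_{j\le i}v_j$; take $k$ to be the \emph{first} index in $[n]$ at which $S_i(v)$ attains its maximum, and flip coordinate $k$. Abel summation,
\[
0<X(v)=\sum_{i=1}^n (a_i-a_{i+1})\,S_i(v)\le a_1\max_{i\in[n]} S_i(v),
\]
forces $\max_i S_i(v)\ge 1$, whence $v_k=+1$. Injectivity is a one-liner: flipping $v_k$ lowers $S_i$ by $2$ for all $i\ge k$ and leaves it unchanged for $i<k$, so $k-1$ is exactly the \emph{last} index in $\{0,\dots,n-1\}$ at which $S_i(w)$ is maximal, and $k$ is read straight off $w$. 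This is both shorter and constructive --- the latter matters because $\SF_a$ is later composed with other explicit maps to build $\RF_a$.
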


\paragraph{Construction of $\SF_{a}$.} Let $v \in \spm^n$ satisfy~\eqref{eq:sf-assumption}. Further assume that $a_1 \geq \ldots \geq a_n > 0$. Let $k > 0$ be the minimal value that maximizes the quantity $\sum_{j=1}^{k} v_j$. Define $w=\SF_{a}(v)$ by $w_i = v_i$ for all $i \neq k$, and $w_k = -v_k$.
	
\medskip \noindent We prove that $\SF_{a}$ satisfies the requirements of the lemma in Appendix~\ref{ssec:single-flip}. We call the function $\SF_{a}$ a \emph{single coordinate flip}.

\subsubsection{Recursive flip}
	\begin{lemma}[Recursive flip]\label{lem:recursive-flip}
		Let $a = (a_1, \ldots, a_n)$ be a sequence of positive real numbers and let $M=\max_i a_i$ and $m = \min_i a_i$. There exists a bijection $\func{\RF_{a}}{\spm^n}{\spm^n}$ such that any $v \in \spm^n$ and its image $w = \RF_a(v)$ satisfy:
		
		\medskip\indent \textbf{1)} If $X(v) > 0$ then $X(w) \in [X(v)-2M, X(v)-2m]$;

		\medskip\indent \textbf{2)} If $X(v) \leq 0$, then either $w=-v$ (and $X(w)=-X(v)$) or $X(w) \in [X(v) -2M, 0)$,
	
		\medskip \noindent where $X(u) \defeq \sum_{i=1}^{n} a_i u_i$ for $u \in \spm^n$.
	\end{lemma}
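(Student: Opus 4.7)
The plan is to construct $\RF_a$ explicitly, by extending the single coordinate flip $\SF_a$ of Lemma~\ref{lem:single-flip} from the set of positive-$X$ vectors to the entire cube. Assume without loss of generality that $a_1 \ge \cdots \ge a_n > 0$, and write $R := \{v : X(v) > 0\}$ and $N := \{v : X(v) \le 0\}$. On $R$ I set $\RF_a(v) := \SF_a(v)$, which immediately satisfies property~(1) by Lemma~\ref{lem:single-flip}. Since $\SF_a$ is injective on $R$, its image $S := \SF_a(R)$ satisfies $|S| = |R|$, and consequently $|S^c| = |N|$. The remaining task is to construct a bijection $\RF_a|_N \colon N \to S^c$ satisfying property~(2).

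For each $v \in N$ I consider two candidate targets. If $-v \in S^c$, I set $\RF_a(v) := -v$, which satisfies the ``$w = -v$'' branch of property~(2) trivially. In the remaining case $-v \in S$, write $-v = \SF_a(u)$ for the unique $u \in R$, and set $\RF_a(v) := -u$. Applying Lemma~\ref{lem:single-flip} to $u$, we have $X(u) = X(-v) + 2a_i = -X(v) + 2a_i$ for some index $i$, so $X(-u) = X(v) - 2a_i \in [X(v) - 2M,\, X(v) - 2m] \subseteq [X(v) - 2M,\, 0)$, verifying property~(2) in this case as well.

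The main obstacle is to show that this recipe really gives a bijection $N \to S^c$. Injectivity is straightforward (distinct $v$'s produce distinct images within each case, and the two cases land on opposite sides of $0$), but one must ensure each target lies in $S^c$ so as not to clash with the image of $\RF_a|_R$. If the proposed target $-u$ happens itself to lie in $S$, my plan is to iterate: replace $-u$ by $-\SF_a^{-1}(-u)$ and repeat until landing in $S^c$. Termination on the finite cube would follow from a monovariant along the relevant $\SF_a$-trajectories, and preservation of the bound $X(\RF_a(v)) \ge X(v) - 2M$ through the iterations would follow from a telescoping computation exploiting the alternation between reflections and backward $\SF_a$-steps. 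If the iterative analysis turns out to be technically delicate (e.g.\ because structural short cycles of the iteration map arise), a clean alternative is to encode property~(2) as a bipartite graph between $N$ and $S^c$ and deduce the existence of the bijection from Hall's theorem, verifying Hall's condition via a counting argument that leverages the $X$-value structure.
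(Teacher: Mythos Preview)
Your construction is essentially the paper's: on $R$ use $\SF_a$, and on $N$ iterate $v \mapsto -\SF_a^{-1}(-v)$ until the negated result leaves $S$, then output its negation. The paper packages the iteration via the auxiliary injection $F(v) := -\SF_a(v)$ and sets $\RF_a(v) = -(F^{-1})^k(v)$ for the minimal $k$ with $(F^{-1})^k(v) \notin \mathrm{Image}(F)$; this is precisely your recursive recipe. Two of your planned sub-arguments are slightly off, though neither is fatal.

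First, termination does not need a monovariant: since $X(v) \le 0$ we have $v \notin \DOM(F)$, so if the backward $F$-orbit ever returned to a previously visited point it would form a cycle entirely inside $\DOM(F)$, contradicting that $v$ is on the orbit. This is exactly the Cantor--Schr\"oder--Bernstein style argument the paper invokes. Second, and more importantly, your ``telescoping computation'' for the bound $X(w) \ge X(v) - 2M$ when $k \ge 2$ does not work as stated: the relations $X(u_{i-1}) + X(u_i) \in [2m,2M]$ along the orbit $u_0 = v,\, u_i = F^{-1}(u_{i-1})$ alternate in sign under any telescoping, and do not collapse to a bound on $X(u_0) + X(u_k)$. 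The correct argument (which the paper gives) uses only the final step: for $k \ge 1$ one has $u_{k-1}, u_k \in \DOM(F)$, hence $X(u_{k-1}) > 0$, and combining with $X(u_{k-1}) + X(u_k) \le 2M$ yields $X(u_k) < 2M$, so $X(w) = -X(u_k) > -2M \ge X(v) - 2M$. Your Hall's-theorem fallback is unnecessary; the explicit map is fine once you fix these two details, and injectivity across different $k$-values follows by applying $F$ the smaller number of times to reduce to the base case (as in the paper).
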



\paragraph{Construction of $\RF_a$.}
Define the auxiliary injection $\func{F}{\set{v\in\spm^n}{X(v)>0}}{\spm^n}$ by
\[
F(v)=-\SF_{a}(v),
\]
and note that $F$ is not defined for $\set{v}{X(v) \leq 0}$.
Define $\RF_{a}:\spm^n \to \spm^n$ by
\[
\RF_{a}(v) =
\begin{cases}
\phantom{-}\SF_{a}(v), & \quad X(v)>0 \\
-(F^{-1})^{k}(v), & \quad X(v)\leq 0
\end{cases},
\]
where $k\geq 0$ is minimal such that $(F^{-1})^{k+1}(v)$ does not exist, i.e., $(F^{-1})^{k}(v) \notin \mathrm{Image}(F)$.

\medskip
Informally, after defining $\RF_a(v)$ for all $\set{v}{X(v)>0}$ using a single coordinate flip, we would like to define $\RF_a(v)$ for any other $v$ as a simple negation: $\RF_a(v)=-v$. However, this may breach the injectivity if there exists some $w$ such that $X(w)>0$ and $\SF_a(w)=-v$. In such a case, we define $v'=\SF_a^{-1}(-v)$ and check whether $-v'$ is `vacant' (i.e., does not collide with any $\SF_a(z)$). If it is vacant, we set $\RF_a(v)=-v'$; otherwise, we continue applying $\SF_a^{-1}$ and negating until we reach a vacant value. The auxiliary function $F$ combines application of $\SF_a$ with negation, and hence, in each step we apply $F^{-1}$, as stated in the definition. We note that this construction is reminiscent of the way of constructing a bijection from two injections, used in the classical proof of the Cantor-Schr\"oder-Bernstein theorem by J. K\"onig.

\medskip 
We call the function $\RF_{a}$ a \emph{recursive flip}.	A concrete example that demonstrates the way $\RF_a$ works is presented in Appendix~\ref{app:misc:RF}, and the proof that $\RF_{a}$ satisfies the assertion of the lemma is presented in Appendix~\ref{ssec:recursive-flip}.


	\subsection{Proof of Theorem~\ref{thm:seg_compare}}\label{ssec:seg-compare}
	The proof of Theorem~\ref{thm:seg_compare} is split into two different arguments, corresponding to how the condition $D-C + \min(D-B, 2M) \leq B-A$ in~\eqref{Eq:Comparison_condition_general} is realized -- either as $D-C+2M \leq B-A$, or as $D-C + (D-B) \leq B-A$. We formulate these two cases as two separate lemmas.
	\begin{lemma}\label{lem:seg-compare1}
		Let $X = \sum_{i=1}^{n} a_{i} x_{i}$ be a Rademacher sum, and write $M=\max_i a_i$. For any $A, B, C, D \in \reals$ such that
		\begin{equation}\label{Eq:Aux-Compare-proof1}
		\min(|A|, |B|) \leq C \qquad \mbox{and} \qquad D-C+2M \leq B-A,
		\end{equation}
		we have
		\begin{equation}\label{eq:seg-compare1}
			\pr[X \in \hseg{C}{D}] \leq \pr[X \in \hseg{A}{B}].
		\end{equation}
	\end{lemma}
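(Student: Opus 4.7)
The plan is to build an explicit injection from $\{v \in \spm^n : X(v) \in [C, D]\}$ into $\{w \in \spm^n : X(w) \in [A, B]\}$ by applying a single prefix flip $\PF_{a,Q}$ from Lemma~\ref{lem:prefix-flip} for an appropriately chosen $Q \geq 0$, and then converting the resulting inclusion bound into the $\hseg{\cdot}{\cdot}$-form of~\eqref{eq:seg-compare1} by averaging two half-open variants.

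First, using the symmetry of Rademacher sums ($X$ and $-X$ have the same law) we have $\pr[X \in \hseg{A}{B}] = \pr[X \in \hseg{-B}{-A}]$, and the hypothesis $\min(|A|,|B|) \leq C$ then lets us reduce to the case $|A| \leq C$, i.e., $-C \leq A \leq C$. The trivial case $C \geq D$ may be discarded. I would then set $Q := \max(0, D - B) \geq 0$ and verify its admissibility. Rewriting the hypothesis as $D - B \leq C - A - 2M$, either (a) $C - A - 2M < 0$, in which case $D < B$ combined with $C \geq A$ yields $[C, D] \subseteq [A, B]$ and the inequality is immediate, or (b) $C - A - 2M \geq 0$, in which case $Q \in [0, C - A - 2M]$ and, using $A \geq -C$, also $Q \leq 2C - 2M \leq 2C$.

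In case (b), since $Q \leq 2C$, every $v$ with $X(v) \in [C, D]$ satisfies $X(v) \geq Q/2$, so Lemma~\ref{lem:prefix-flip} applies and $X(\PF_{a,Q}(v)) \in (X(v) - Q - 2M,\, X(v) - Q]$. From the bijectivity of $\PF_{a,Q}$ I would extract two inclusion inequalities: the restriction to $\{v : X(v) \in (C, D]\}$ has image inside $(C - Q - 2M, D - Q] \subseteq (A, B]$, giving $\pr[X \in (C, D]] \leq \pr[X \in (A, B]]$; while the restriction to $\{v : X(v) \in [C, D)\}$ has image in $(A, B)$, since every image value is strictly less than $D - Q \leq B$ (because $X(v) < D$ for $v$ in that set). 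Averaging these two inequalities collapses into
\[
\hpr{X}{C}{D} \;\leq\; \pr[X \in (A, B)] + \tfrac12 \pr[X = B] \;\leq\; \hpr{X}{A}{B},
\]
which is the desired inequality.

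The main technical obstacle I anticipate is the endpoint bookkeeping inherent in the notation $\hseg{\cdot}{\cdot}$: a single application of the injection to $[C,D]$ yields only $\pr[X \in [C, D]] \leq \pr[X \in (A, B]]$, which is insufficient after the half-weight discount at the four endpoints $A, B, C, D$. The averaging trick between the $(C, D]$ and $[C, D)$ variants is what neutralizes this discrepancy. A secondary difficulty is the interval arithmetic across all sign regimes of $A$ and $B$, which is what motivates the preliminary symmetry reduction to $|A| \leq C$; that reduction is also exactly what guarantees $Q \leq 2C$, keeping the prefix flip applicable on all of $[C, D]$.
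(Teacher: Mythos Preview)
Your proposal is correct and follows essentially the same approach as the paper: reduce by symmetry to $|A|\leq C$, handle the trivial containment case, and then apply the prefix flip $\PF_{a,Q}$ with $Q=D-B$ to inject $\{X\in[C,D]\}$ into $\{X\in(A,B]\}$. Your endpoint bookkeeping via averaging the $(C,D]$ and $[C,D)$ inequalities is a minor but equivalent variant of the paper's observation that $X(w)=B$ can occur only when $X(v)=D$, which directly yields $\pr[X\in[C,D\rangle]\leq\pr[X\in(A,B\rangle]$.
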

	
	\begin{lemma}\label{lem:seg-compare2}
		Let $X = \sum_{i=1}^{n} a_{i} x_{i}$ be a Rademacher sum, and write $M=\max_i a_i$. For any $A, B, C, D \in \reals$ such that
		\begin{equation}\label{Eq:Aux-Compare-proof2}
		|A| \leq C \qquad \mbox{and} \qquad 2\max(M, D-B) \leq C-A,
		\end{equation}
		we have
		\begin{equation}\label{eq:seg-compare2}
			\pr[X \in \hseg{C}{D}] \leq \pr[X \in \hseg{A}{B}].
		\end{equation}
	\end{lemma}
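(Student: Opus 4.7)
The plan is to construct an explicit injection $\Phi\colon\{v\in\spm^n : X(v)\in\hseg{C}{D}\}\hookrightarrow\{w\in\spm^n : X(w)\in\hseg{A}{B}\}$, in the spirit of the proof of Lemma~\ref{lem:seg-compare1}. Assume without loss of generality that $a_1\ge a_2\ge\cdots\ge a_n>0$.

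The first step is to apply the prefix flip $\PF_{a,Q}$ with $Q = C - A$. The hypothesis $|A|\le C$ ensures $X(v)\ge C\ge(C-A)/2 = Q/2$ throughout the domain, so Lemma~\ref{lem:prefix-flip} applies and gives $X(\PF(v))\in(X(v) - Q - 2M,\, X(v) - Q]\subseteq(A - 2M,\, A + (D-C)]$. The hypothesis $2(D-B)\le C-A$ implies $D - C \le B - A$, so $A + (D-C) \le B$ and every image with $X(\PF(v)) \ge A$ already lies in $[A, B]$; these are the ``good'' images. The remaining images have $X(\PF(v))\in(A - 2M,\, A)$ and must be redirected into $[A, B]$.

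The rescue step would exploit the symmetry embodied in $|A|\le C$: for each bad image $w = \PF(v)$, I would apply an auxiliary involution (for instance, negation $v\mapsto-v$, or augmenting the flipped prefix by one further coordinate), aiming to place the redirected image in the ``unused'' portion $(A+(D-C),\, B]$ of $[A, B]$, which has length $(C-A)-(D-B)\ge(C-A)/2$. The length bounds $a_{k+1}\le M\le(C-A)/2$ and $D - B\le(C-A)/2$ should control the magnitudes of the resulting shifts so that the redirected images fall into the target region.

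The main technical obstacle is ensuring the composite map remains injective, since rescued bad images may collide with good images in the regime $2M+(D-B) > C-A$ specific to Lemma~\ref{lem:seg-compare2} (where the bad region's length $2M$ can exceed the unused slot). I would resolve this via a Cantor--Schr\"oder--Bernstein-style alternating construction, modelled closely on the derivation of $\RF_a$ from $\SF_a$ in Lemma~\ref{lem:recursive-flip}: starting from a bad $v$, iteratively apply $\PF^{-1}$ composed with the auxiliary involution, traversing an orbit in the finite cube $\spm^n$ until a slot in $\hseg{A}{B}$ not already occupied by a good image becomes free. Finiteness of $\spm^n$ guarantees termination, and the bulk of the proof is the bookkeeping verifying that every orbit terminates inside $\hseg{A}{B}$ and that distinct orbits terminate at distinct points.
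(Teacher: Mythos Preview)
Your plan assembles the right tools (prefix flip, a CSB-style iteration modelled on $\RF_a$) but in the wrong order, and the gap is real. The paper's proof does \emph{not} apply a single prefix flip to all coordinates and then repair. Instead it first partitions the index set into $L=\{i:a_i\ge(D-B)/2\}$ and $S=[n]\setminus L$, applies the recursive flip $\RF_{a_L}$ only to the $L$-coordinates (yielding a shift in $[D-B,2M]$ when $a_L\cdot v_L>0$, and a controlled alternative otherwise), and then, if needed, applies a prefix flip $\PF_{a_S,Q_v}$ to the $S$-coordinates with a \emph{data-dependent} parameter $Q_v$ determined by what the first stage achieved. The point of the partition is that a single large-coordinate flip already moves $X$ by at least $D-B$, while the small-coordinate prefix flip overshoots by at most $2\max_{i\in S}a_i<D-B$; together these land in $[A,B]$ without any further repair. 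Injectivity is immediate because the two stages act on disjoint coordinate blocks and $Q_v$ is recoverable from $w_L$ alone. The CSB machinery is thus confined to the $L$-block, where the minimum step size $D-B$ gives it traction.

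Your first step, a global prefix flip with $Q=C-A$, overshoots by up to $2M$, and $2M$ can be as large as $C-A$ itself, so the bad region is not small relative to the target. Neither candidate ``auxiliary involution'' repairs this: negating a bad image $w$ with $X(w)\in(A-2M,A)$ gives $X(-w)\in(-A,2M-A)$, which need not meet $(A+(D-C),B]$ at all (take any $A\ge0$); flipping one further coordinate $k{+}1$ shifts by $\pm2a_{k+1}$, an amount with uncontrolled sign and magnitude. More fundamentally, a CSB iteration needs a concrete second injection whose domain contains the bad points and whose range lies in $[A,B]$; you have not exhibited one. And finiteness of $\spm^n$ alone does not force termination: in the paper's $\RF_a$ the orbit cannot loop because the starting point lies \emph{outside} $\DOM(F)$, whereas your bad $v$'s lie squarely inside $\DOM(\PF_{a,C-A})$. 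The large/small split is the structural idea your sketch is missing.
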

	The combination of Lemmas~\ref{lem:seg-compare1} and~\ref{lem:seg-compare2} immediately implies Theorem~\ref{thm:seg_compare}. Notice that the assumptions in both lemmas are slightly weaker than the assumptions in Theorem~\ref{thm:seg_compare}; this weakening of the assumptions will be needed in the sequel.
	
\begin{remark}
	The assertions of Lemmas~\ref{lem:seg-compare1} and~\ref{lem:seg-compare2} hold for other types of segments as well; see Appendix~\ref{app:sub:other-types}.
\end{remark}	
	
\paragraph{Notation.}
		Given a Rademacher sum $X = \sum_{i=1}^{n} a_i x_i$ and a quadruple $A,B,C,D\in \reals$ that satisfies either~\eqref{Eq:Aux-Compare-proof1} or~\eqref{Eq:Aux-Compare-proof2} (so that either Lemma~\ref{lem:seg-compare1} or Lemma~\ref{lem:seg-compare2} can be applied), or $D \leq C$, we write $\hseg{C}{D} \prec_X \hseg{A}{B}$. Using this notation, the lemmas can be rewritten as the deduction
		\[
			\hseg{C}{D} \prec_X \hseg{A}{B} \qquad  \implies \qquad \hpr{X}{C}{D} \leq \hpr{X}{A}{B}.
		\]
		
\subsubsection{Proof of Lemma~\ref{lem:seg-compare1}}
	
	\begin{proof}[Proof of Lemma~\ref{lem:seg-compare1}]
		We first show that we may assume $|A| \leq |B|$, so that~\eqref{Eq:Aux-Compare-proof1} is upgraded to
		\begin{equation}\label{Eq:Aux-Compare-proof3}
			|A| \leq C \qquad \mbox{and} \qquad D-C+2M \leq B-A.
		\end{equation}

\medskip\noindent\textbf{Upgrading to~\eqref{Eq:Aux-Compare-proof3}.}
		If $A > B$, Lemma~\ref{lem:seg-compare1} is vacant. Otherwise, if $|A| \leq |B|$ we are done. In the remaining case, we exchange $A,B,C,D$ by $-B, -A, C, D$, so that~\eqref{Eq:Aux-Compare-proof3} (and hence~\eqref{Eq:Aux-Compare-proof1}) are satisfied by the new quadruple, and conclude~\eqref{Eq:Aux-Compare-proof1} by noting $\hpr{X}{A}{B} = \hpr{X}{-B}{-A}$.
		

		

\medskip\noindent\textbf{Proving~\eqref{eq:seg-compare1}.}
		We assume $A,B,C,D$ satisfy~\eqref{Eq:Aux-Compare-proof3} and prove~\eqref{eq:seg-compare1}.

		We may also assume $Q \defeq D-B>0$, as otherwise $D\leq B$, and $[C,D] \seq [A,B]$ implies~\eqref{eq:seg-compare1}.
		
		Consider the prefix flip map $\PF_{a,Q}$. For any $v \in \spm^n$ with $X(v) \in [C,D]$, we have $X(v) \geq Q/2$, since by~\eqref{Eq:Aux-Compare-proof3},
		\[
		Q =D-B \leq D-B + 2M \leq C-A \leq 2C.
		\]
		Hence, by Lemma~\ref{lem:prefix-flip}, for any such $v$, the image $w=\PF_{a, Q}(v)$ satisfies
		\[
		X(w) \in (X(v)-Q-2M, X(v)-Q].
		\]
		As $C \leq X(v) \leq D$ and $Q=D-B$, this implies $X(w) \in (A, B]$. Finally, since $X(w)=B$ may occur only if $X(v)=D$, the injectivity of $\PF_{a, Q}$ implies
		\begin{equation}\label{eq:semi-compare}
			\pr[X \in \left[C,D\hright] \leq \pr[X \in \left(A, B\hright],
		\end{equation}
		which is even slightly stronger than the assertion~\eqref{eq:seg-compare1}. This completes the proof.
	\end{proof}

\subsubsection{Proof of Lemma~\ref{lem:seg-compare2}}

The proof of the lemma uses an explicit injection which we hereby describe.

\paragraph{Notation.}
Let $X(x)=\sum_{i=1}^n a_i x_i$ be a Rademacher sum, and $A,B,C,D,M$ be real numbers that satisfy the conditions of Lemma~\ref{lem:seg-compare2}. Note that we may assume $D>B$, as otherwise, the assertion of the lemma holds trivially.

We partition the coefficients $\{a_i\}$ into `large' and `small' ones. Let
\[
L = \li\{ \given{i}{a_{i} \geq (D-B)/2} \ri\}, \qquad S = [n] \sm L.
\]
Note that this partition depends only on the fixed parameters $\{a_i\}$ and $A,B,C,D$.

In addition, for a subset $I \subseteq [n]$ and for $x \in \spm^{n}$, we write $x_I \defeq \restrict{x}{I}$ and $a_I \defeq \restrict{a}{I}$, and correspondingly, $a_I \cdot x_I = \sum_{i \in I} a_{i} x_{i}$.

\theoremstyle{definition}
\newtheorem*{definition*}{Definition}

\begin{definition*}
Let $v \in \spm^n$ be such that $X(v) \in [C,D]$. We define $u=f(v) \in \spm^n$ in two steps. First, we denote
\[
w=g(v)=(\RF_{a_L}(v_L), v_S) \qquad \mbox{and} \qquad Q_{v} = (D-B)-(X(v)-X(w)).
\]
Then, we set
\[
u =f(v)=
\begin{cases}
w, & \quad Q_v \leq 0, \\
(w_L, \PF_{a_S, Q_{v}}(w_S)), & \quad Q_v>0.
\end{cases}
\]
\end{definition*}

\paragraph{Motivation.} We prove Lemma~\ref{lem:seg-compare2} by showing that $f\cc v\mapsto u$ injectively maps any $v$ with $X(v) \in [C,D]$ to some $u$ with $X(u) \in [A,B]$.
The single-coordinate-flip map of the large coordinates, $\SF_{a_L}$, seemingly has the same property, that is, maps any $v'$ with $X(v') \in [C,D]$ to some $w'$ with $X(w') \in [A,B]$.

However, $\SF_{a_L}$ might fail to flip a single coordinate when $a_L \cdot v'_{L} \leq 0$. In such a case, we have $a_S \cdot v'_{S} \geq C \geq (D-B)/2$, and a prefix-flip map $\PF_{a_S, D-B}\cc v' \mapsto u'$ does satisfy $X(u') \in [A,B]$.
This reasoning, of mapping $v'$ into either $w'$ or $u'$, proves that
\[
\hpr{X}{C}{D} \leq 2\cdot\hpr{X}{A}{B},
\]
as it results in a 2-to-1 map (each of $v'\mapsto w'$ and $v'\mapsto u'$ is injective).

We wish to show the stronger inequality $\hpr{X}{C}{D} \leq \hpr{X}{A}{B}$. For this, we construct $f$ in two steps. At the first step we apply on the large coordinates the recursive-flip $\RF_{a_L}$ (which is closely related to $\SF_{a_L}$) to obtain $w$. Then, at the second step, conditioned on data available in $w_L$, we choose whether to apply a prefix flip on the small coordinates or not. This results in a single 1-to-1 map. The usage of $\RF_{a_L}$ instead of $\SF_{a_L}$ is important in order for $w_L$ to have guaranteed properties, even when $a_L \cdot v'_L \leq 0$.

\begin{proof}[Proof of Lemma~\ref{lem:seg-compare2}]
		Let $X = \sum_{i=1}^{n} a_i x_i$ and $A,B,C,D,M$ be as in the statement of the lemma, and define the function $f$ on the set $\set{v}{X(v) \in [C,D]}$ as described above.
		
\paragraph{Why is $f$ injective?}
		To compute the inverse map $u \mapsto v$, first consider $u_L$ and apply the inverse recursive flip $\RF_{a_L}^{-1}$ to recover $v_L$. Then, compute $Q_{v}$ (which depends only on $v_L$ and not on $v_S$). Now, it is clear from the definition of $f$ that we have
		\[
		v =
		\begin{cases}
		(\RF_{a_L}^{-1}(u_L), u_S), & \quad Q_v \leq 0, \\
		(\RF_{a_L}^{-1}(u_L), \PF_{a_S, Q_{v}}^{-1}(u_S)), & \quad Q_v>0.
		\end{cases}
		\]

\paragraph{Why is $f$ into $[A,B]$?}
		Let $v$ satisfy $X(v) \in [C,D]$. We want to show that $u=f(v)$ satisfies $X(u) \in [A,B]$.
		We consider two cases.
		
		\paragraph{Case~1:  $a_L\cdot v_L > 0$.} In this case, since for all $i \in L$ we have $ (D-B)/2 \leq a_i \leq M$, the first property of the recursive flip (presented in Lemma~\ref{lem:recursive-flip}) implies
		\begin{equation}\label{Eq:Aux-Compare-proof4}
		X(w) \in [X(v) - 2M, X(v) - (D-B)].
		\end{equation}
		In particular, we have $Q_{v} = (D-B)-(X(v)-X(w)) \in [D-B-2M,0]$, and thus, by the definition of $f$, we set $u=f(v)=w$. By~\eqref{Eq:Aux-Compare-proof4}, we have
		\[
			X(u) = X(w) \in [X(v) - 2M, X(v) - (D-B)] \subseteq [C-2M,D-(D-B)] \subseteq [A,B],			
		\]
		where the last inclusion uses the assumption $2M \leq C-A$.
		
		\paragraph{Case~2:  $a_L\cdot v_L \leq 0$.} In this case, the second property of $\RF_{a_L}$ (in Lemma~\ref{lem:recursive-flip}) implies
		\begin{equation}\label{Eq:Aux-Compare-proof4.5}
		a_L\cdot w_L \in [a_L\cdot v_L-2M, -a_L\cdot v_L],
		\end{equation}
		and hence,
		\[
		Q_v=(D-B)-(X(v)-X(w) \in (D-B) - [2 a_L \cdot v_L, 2M] = [D-B-2M, D-B-2a_L \cdot v_L].
		\]
		We further subdivide this case into two sub-cases.
		
		\paragraph{Case~2a: $Q_v\leq 0$.} In this case, we have $Q_v \in [D-B-2M,0]$, and hence,
		\[
		X(v)-X(w) = (D-B)-Q_v \in [D-B,2M].
		\]
		As $X(v) \in [C,D]$, this implies
		\[
		X(w) \in [C-2M,D-(D-B)] \subseteq [A,B],
		\]
		where the last inclusion follows from the assumption $2M \leq C-A$.
		
		By the definition of the function $f$, we have $u=w$, and so, $X(u)=X(w) \in [A,B]$.

		\paragraph{Case~2b: $Q_v> 0$.} In this case, by the definition of $f$, we have $u=f(v)=(w_L, \PF_{a_S, Q_{v}}(w_S))$, and hence, we would like to apply
		Lemma~\ref{lem:prefix-flip} to the function $\PF_{a_S,Q_v}:w_S \mapsto u_S$. To this end, we have to prove
		\begin{equation}\label{Eq:Aux-Compare-proof5}
		a_S \cdot w_S \geq Q_{v}/2.
		\end{equation}
		To prove~\eqref{Eq:Aux-Compare-proof5}, note that the assumptions $2(D-B) \leq C-A$ and $|A| \leq C$ imply
		\begin{equation}\label{Eq:Aux-Compare-proof6}
		2C \geq C-A \geq 2(D-B) \geq D-B.
		\end{equation}	
		In addition, by~\eqref{Eq:Aux-Compare-proof4.5} we have
		\begin{equation}\label{Eq:Aux-Compare-proof7}
		X(v)-X(w)=a_L \cdot v_L -a_L \cdot w_L \geq 2a_L \cdot v_L.
		\end{equation}
		Since $a_S \cdot w_S = a_S \cdot v_S$ and $a_S \cdot v_S + a_L \cdot v_L = X(v) \in [C,D]$,~\eqref{Eq:Aux-Compare-proof6} and~\eqref{Eq:Aux-Compare-proof7} imply
		\[
		a_S \cdot w_S = X(v)- a_L \cdot v_L \geq C - a_L\cdot v_L \geq (D-B - (X(v)-X(w)))/2=Q_v/2,
		\]
		proving~\eqref{Eq:Aux-Compare-proof5}.
		
		\medskip As stated above,~\eqref{Eq:Aux-Compare-proof5} allows us to apply Lemma~\ref{lem:prefix-flip} to the function $\PF_{a_S,Q_v}:w_S \mapsto u_S$. Since $M'=\max_{i \in S} a_i \leq (D-B)/2$, the lemma implies
		\begin{align*}
		X(u)-X(w) &= a_S \cdot u_S - a_S \cdot w_S	\in (-2M'-Q_{v}, -Q_{v}] \\
		&\seq ((X(v)-X(w)) - 2(D-B), (X(v)-X(w)) - (D-B)] \\
		&= (X(v)-X(w)) + (-2(D-B), -(D-B)].
		\end{align*}
		Therefore,
		\[
		X(u) = X(v) + (X(w)-X(v)) + (X(u)-X(w)) \in X(v) + (- 2(D-B), -(D-B)].
		\]
		Since $X(v) \in [C,D]$, we have
		\[
		X(u) \in (C-2(D-B),D-(D-B)] \subseteq (A,B],
		\]
		where the last inclusion follows from the assumption $2(D-B)\leq C-A$.

		\paragraph{What happens to the endpoints?}
		Notice that in all cases, $X(u)=A$ may hold only if $X(v)=C$, and $X(u)=B$ may hold only if $X(v)=D$. Hence, the assertion of the lemma:
		\[
		\pr[X \in \hseg{C}{D}] \leq \pr[X \in \hseg{A}{B}].
		\]
		follows from the injectivity of $f\cc v \mapsto u$.
	\end{proof}

	
\section{Improved Berry-Esseen Type Inequalities for Rademacher Sums: Proving Theorem~\ref{thm:main} for \tops{$a_1 \leq 0.31$}}\label{sec:BE}
	A natural approach toward proving Tomaszewski's conjecture in the case where all the coefficients $a_i$ are small, is using the  classical Berry-Esseen theorem~(\cite{Berry41,Esseen42}), which allows approximating a sum of independent random variables by a Gaussian (i.e., a normally distributed random variable).
	\begin{theorem}[Berry-Esseen]
		Let $X_1,X_2,\ldots,X_n$ be independent random variables, such that $\forall i:\mathbb{E}[|X_i|^3]<\infty$. Let $X=\sum_{i=1}^n X_i$. Then for all $x$,
		\begin{equation}\label{Eq:Aux-BE1}
		\left|\Pr\left[\frac{X}{\sqrt{\var(X)}} \leq x\right]-\Pr[Z \leq x]\right| \leq C \cdot \frac{\sum_{i=1}^n \mathbb{E}|X_i|^3}{\var(X)^{3/2}},
		\end{equation}
		where $Z \sim N(0,1)$ is a standard Gaussian and $C$ is an absolute constant.
	\end{theorem}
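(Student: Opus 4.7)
Since this is the classical Berry--Esseen theorem, not a new result of the paper, my plan is to sketch the standard proof via the Fourier-analytic route (Esseen smoothing), which is the most transparent and is also the one the authors will refine in their subsequent improvements. Without loss of generality I normalize so that $\mathrm{Var}(X)=1$ and write $\beta = \sum_{i} \mathbb{E}|X_i|^3$; the goal then is $\sup_x |F(x)-\Phi(x)| \leq C\beta$, where $F$ is the CDF of $X$ and $\Phi$ is the standard Gaussian CDF.

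The first step is Esseen's smoothing inequality, which converts a pointwise bound on CDF's to an integral bound on characteristic functions: for any $T>0$,
\[
\sup_x |F(x)-\Phi(x)| \;\leq\; \frac{1}{\pi}\int_{-T}^{T} \left|\frac{\phi_X(t)-e^{-t^2/2}}{t}\right|\,dt \;+\; \frac{c_0}{T},
\]
where $\phi_X$ is the characteristic function of $X$ and $c_0$ is an absolute constant coming from $\sup_x |\Phi'(x)|$. I will choose $T$ of order $1/\beta$ at the end to balance the two terms.

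The second step is to bound $|\phi_X(t)-e^{-t^2/2}|$ in the range $|t|\leq T$. Using $\mathbb{E}[X_i]=0$ (WLOG, by centering) and the Taylor expansion $\phi_{X_i}(t) = 1 - \sigma_i^2 t^2/2 + r_i(t)$ with $|r_i(t)| \leq \tfrac{1}{6}|t|^3 \mathbb{E}|X_i|^3$, I will combine this with the inequality $|e^{-\sigma_i^2 t^2/2} - (1-\sigma_i^2 t^2/2)| \leq \tfrac{1}{8}\sigma_i^4 t^4$ to get $|\phi_{X_i}(t)-e^{-\sigma_i^2 t^2/2}| \leq C_1 |t|^3 \mathbb{E}|X_i|^3$ for $|t|$ not too large. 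Then a telescoping identity of the form
\[
\Bigl|\prod_{i} u_i - \prod_{i} v_i\Bigr| \;\leq\; \sum_{i} |u_i - v_i|
\]
(valid whenever $|u_i|,|v_i|\leq 1$), applied with $u_i=\phi_{X_i}(t)$ and $v_i=e^{-\sigma_i^2 t^2/2}$, yields $|\phi_X(t) - e^{-t^2/2}| \leq C_1 |t|^3 \beta$. Dividing by $|t|$ and integrating over $[-T,T]$ with $T \asymp 1/\beta$ gives the first term of order $T^3 \beta \asymp 1/T \asymp \beta$, matching the second term.

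The technical subtlety — and what I expect to be the main obstacle — is ensuring that the individual factors $\phi_{X_i}(t)$ actually stay bounded by $1$ (and away from cancellations) on the whole integration range $|t|\leq T$. The Taylor bound above is only directly useful when $|t|\sigma_i$ is small; handling $i$'s with $\sigma_i^2 t^2$ of order $1$ requires the crude bound $|\phi_{X_i}(t)| \leq 1 - c\,\sigma_i^2 t^2 + C|t|^3 \mathbb{E}|X_i|^3$, combined with the inequality $1-x \leq e^{-x}$, so that $\prod_i |\phi_{X_i}(t)| \leq e^{-c t^2 + C \beta |t|^3}$, which remains dominated by $e^{-t^2/2}$-type decay for $|t| \ll 1/\beta$. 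Putting these ingredients together, optimizing $T$, and absorbing constants gives the claimed bound with a universal $C$.
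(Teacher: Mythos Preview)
The paper does not prove this statement: it is quoted as the classical Berry--Esseen theorem with citations to \cite{Berry41,Esseen42}, purely as background before the paper develops its own refined bounds for Rademacher sums (Proposition~\ref{prop:Prawitz-Rademacher}). There is thus no in-paper proof to compare your sketch against.

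Your outline is the standard Fourier-analytic route and is essentially correct, but there is an arithmetic slip worth flagging. You write that integrating $C_1|t|^2\beta$ over $[-T,T]$ with $T\asymp 1/\beta$ gives a contribution ``of order $T^3\beta \asymp 1/T \asymp \beta$''; in fact $T^3\beta \asymp 1/\beta^2$, which blows up. The fix is the very ingredient you mention in your last paragraph but do not feed back into the main estimate: the telescoping bound must retain the exponential damping, i.e.\ one shows
\[
\bigl|\phi_X(t)-e^{-t^2/2}\bigr| \;\le\; C\,\beta\,|t|^3\,e^{-t^2/4}
\qquad\text{for } |t|\le c/\beta,
\]
using $|\phi_{X_i}(t)|\le e^{-c\sigma_i^2 t^2}$ on each surviving factor in the telescope. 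Then $\int_{-T}^{T}\frac{|\phi_X(t)-e^{-t^2/2}|}{|t|}\,dt \le C\beta\int_{\reals} t^2 e^{-t^2/4}\,dt = O(\beta)$, which balances the $c_0/T$ term as intended.
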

\noindent	For a Rademacher sum $X=\sum a_i x_i$ with $\var(X)=1$, the theorem yields
	\[
	\left|\Pr[X \leq x]-\Pr[Z \leq x]\right| \leq C \cdot \sum_i a_i^3 \leq C \cdot \max_i |a_i| \sum_i a_i^2 =  C \cdot \max_i |a_i|,
	\]
	and consequently,
	\begin{equation}\label{Eq:Aux-BE2}
	\left|\Pr[|X| \leq 1]-\Pr[|Z| \leq 1]\right| \leq 2C \cdot \max_i |a_i|.
	\end{equation}
	The best currently known upper bound on the constant $C$ in~\eqref{Eq:Aux-BE1} is $C \leq 0.56$, obtained in~\cite{Shevtsova10}. Plugging it into~\eqref{Eq:Aux-BE2} and noting that
	$\Pr[|Z|\leq 1] \geq 0.682$,~\eqref{Eq:Aux-BE2} implies Tomaszewski's conjecture in the range $\max_i |a_i| \leq 0.162$, as was noted by Bentkus and Dzindzalieta~\cite{BD15}.
	
	Using merely the general form of the Berry-Esseen theorem, this result cannot be improved much. Indeed, it was shown by Esseen~\cite{Esseen56} that the constant $C$ in~\eqref{Eq:Aux-BE1} satisfies $C>0.409$, and hence, the best one can hope for by plugging an improved $C$ into~\eqref{Eq:Aux-BE2} is extending the range to $\max_i |a_i| \leq 0.182/0.818 \leq 0.223$.
	
	\medskip In this section we show that refined Berry-Esseen type bounds can be obtained in the special case where $X$ is a Rademacher sum.
	
	Our starting point is a smoothing inequality of Prawitz~\cite{pra72} which allows obtaining bounds on the cumulative distribution function of a random variable $X$ (i.e., $\Pr[X \leq x]$), given partial knowledge of its characteristic function $\varphi_X(t)=\mathbb{E}[e^{itX}]$. Prawitz' inequality has many applications (see Section~\ref{sec:sub:Prawitz}). In particular, Prawitz himself suggested using his inequality to bound $|\Pr[X < x]-\Pr[Z < x]|$, and Shevtsova~\cite{Shevtsova10} used his strategy to prove the best currently known bound on the constant $C$ in the Berry-Esseen theorem.
	
	When $X$ is a Rademacher sum, the strategy of Prawitz can be refined, yielding significantly better bounds than in the general case. Specifically, we obtain the following technical result.
	\begin{proposition}\label{prop:Prawitz-Rademacher}
	Let $X = \sum_i a_i x_i$ be a Rademacher sum with $a_1 \geq a_2 \geq \ldots \geq a_n>0$ and $\var(X)=1$. Then for any $T>0$, $q \in [0,1]$, and $x \in \reals$ we have
	\begin{align}\label{Eq:Our-Prawitz}
		\begin{split}
		\pr[Z < x] - \pr[X < x] &\leq
		\int_{0}^{q} \left|k(u,x,T)\right| g(Tu) \dd{u} +
		\int_{q}^{1} \left|k(u,x,T)\right| h(Tu) \dd{u} + \\
		&+ \int_{0}^{q} k(u,x,T) \exp(-(Tu)^2/2) \dd{u} +
		\int_{0}^{x} \frac{1}{\sqrt{2\pi}}\exp(-u^2/2) \dd{u},
		\end{split}
	\end{align}
where $k(u,x,T) = \frac{(1-u)\sin(\pi u - T u x)}{\sin(\pi u)} - \frac{\sin(T u x)}{\pi}$,
\[
g(v) =
\begin{cases}
\exp(-v^{2}/2) - \cos(a_1 v) ^ {1/a_1^2}, & a_1 v \leq \frac{\pi}{2}\\
\exp(-v^{2}/2)+1, & \mrm{otherwise}
\end{cases}
, \quad h(v) =
\begin{cases}
\exp(-v^{2}/2),					& a_1 v \leq \theta\\
(-\cos(a_1 v))^{1/a_1^2},	    & \theta \leq a_1 v \leq \pi\\
1,								& \mrm{otherwise}
\end{cases},
\]
$Z \sim N(0,1)$ is a standard Gaussian and $\theta=1.778 \pm 10^{-4}$ is the unique root of the function $y \mapsto \exp(-y^2/2)+\cos(y)$ in the interval $[0,\pi]$.	
\end{proposition}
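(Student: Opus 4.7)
The plan is to start from Prawitz's smoothing inequality of 1972 (the same tool used by Shevtsova in her sharp Berry-Esseen constant). For any random variable $Y$ with characteristic function $\varphi_Y$ and any $T>0$, Prawitz's inequality gives a one-sided bound on $\pr[Y<x]$ as an integral of $\varphi_Y(Tu) e^{-iTux}$ against specific kernels on $u \in [0,1]$, whose real- and imaginary-part combinations produce precisely the kernel $k(u,x,T)$ in the statement. I would apply this with $Y=X$ and $\varphi_X(t)=\prod_i \cos(a_i t)$, then add and subtract the Gaussian characteristic function $e^{-(Tu)^2/2}$ inside the integrand, thereby converting the lower bound on $\pr[X<x]$ into an upper bound on $\pr[Z<x]-\pr[X<x]$. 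The constant-in-$u$ Gaussian piece reorganizes into the term $\int_0^x \frac{1}{\sqrt{2\pi}} e^{-u^2/2}\,\dd u$, which is exactly $\pr[Z<x]-\tfrac12$.

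Next I would split $[0,1]$ at $u=q$ and treat the factor $\varphi_Z(Tu)-\varphi_X(Tu)$ asymmetrically in the two ranges. On $[0,q]$ (the ``close-to-Gaussian'' range) I estimate $|k(u,x,T)|\cdot|\varphi_Z-\varphi_X|$ by $|k|\cdot g(Tu)$, while separately keeping the signed $k\cdot e^{-(Tu)^2/2}$ piece; these produce the first and third integrals in~\eqref{Eq:Our-Prawitz}. On $[q,1]$ (the ``tail'' range), where $\varphi_Z(Tu)$ is very small but $|\varphi_X(Tu)|$ may be only slightly less than $1$, lumping them into $|\varphi_Z-\varphi_X|$ would be wasteful; instead I bound $|k\varphi_X|$ alone by $|k|\cdot h(Tu)$, giving the second integral, while the small Gaussian contribution on $[q,1]$ is absorbed cleanly (using $h(v)\geq e^{-v^2/2}$ where needed).

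The heart of the proof is the pointwise bounds on $\varphi_X(v)=\prod_i \cos(a_i v)$ that underlie $g$ and $h$. I would establish three elementary cosine facts: \textbf{(i)} $\cos y \leq e^{-y^2/2}$ for $y\in[0,\pi/2]$, by convexity of $y\mapsto \log\sec y + y^2/2$; \textbf{(ii)} $\cos(by)\geq (\cos y)^{b^2}$ for $b\in(0,1]$, $y\in(0,\pi/2]$, by showing that $z\mapsto -\log\cos z/z^2$ is nondecreasing on $(0,\pi/2)$ (which reduces to $z\tan z\geq 2\log\sec z$, verified via $2z\geq\sin(2z)$); \textbf{(iii)} the analogous $|\cos(by)|\leq |\cos y|^{b^2}$ for $b\in[0,1]$, $y\in[\theta,\pi]$, by the same kind of monotonicity argument on a different window of $y$. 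Applied coordinate-wise with $b=a_i/a_1\in(0,1]$ and $y=a_1 v$, together with $\sum a_i^2=1$, these yield: $\varphi_X(v)\leq e^{-v^2/2}$ from (i); $\varphi_X(v)\geq \cos(a_1 v)^{1/a_1^2}$ from (ii) when $a_1 v\leq \pi/2$; and $|\varphi_X(v)|\leq (-\cos(a_1 v))^{1/a_1^2}$ from (iii) when $a_1 v\in[\theta,\pi]$. The trivial bounds $|\varphi_X|\leq 1$ and $0\leq\varphi_X$ handle the leftover regions, producing exactly the piecewise formulas for $g$ and $h$. The constant $\theta$ enters as the largest $y$ for which $|\cos y|\leq e^{-y^2/2}$, i.e., the unique root of $e^{-y^2/2}+\cos y$ in $[0,\pi]$.

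The main obstacle I expect is the bookkeeping in the first step. Prawitz's inequality is typically stated as a two-sided bound with a symmetric error of the form $\int_0^1 |k||\varphi|\,\dd u$, and obtaining the asymmetric form in the conclusion---with $g$, $h$, and the signed $k\cdot e^{-(Tu)^2/2}$ term all playing distinct roles, and with the Gaussian absolute-value contribution on $[q,1]$ disappearing---requires revisiting the actual derivation of Prawitz rather than invoking it as a black box. Once that organization is in place, the remaining work reduces to the cosine inequalities (i)--(iii), which are short analytic exercises.
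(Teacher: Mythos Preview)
Your proposal is correct and follows essentially the same route as the paper: Prawitz's one-sided bound for $\pr[X<x]$, the add-and-subtract of $\varphi_Z=e^{-(Tu)^2/2}$, the split at $u=q$ into the four pieces $S_1,\dots,S_4$, and the cosine monotonicity facts (your (i)--(iii)) that produce the piecewise bounds $g$ and $h$. One small clarification: in the paper's organization $\varphi_Z$ is added and subtracted \emph{only on $[0,q]$}, so the integrand on $[q,1]$ is simply $k\cdot\varphi_X$ from the start---there is no ``Gaussian contribution on $[q,1]$'' to absorb, and the inequality $h(v)\ge e^{-v^2/2}$ is not needed.
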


While in this paper we bound ourselves to proving Proposition~\ref{prop:Prawitz-Rademacher}, it appears that one can obtain more general estimates about Rademacher sums using similar strategies; see Section~\ref{sec:open}.


\medskip Our first application of Proposition~\ref{prop:Prawitz-Rademacher} validates Tomaszewski's conjecture in the range $\max_i a_i \leq 0.31$ (compared to $0.16$ that can be obtained by the general Berry-Esseen bound):
\begin{proposition}\label{lem:be_toma}
	Let $X = \sum_i a_i x_i$ be a Rademacher sum with $\max_i a_i \leq 0.31$ and $\var(X)=1$. Then
	\begin{equation}\label{eq:small-toma2}
	\pr[X < 1] \geq \pr[Z < 1] - 0.09115,
	\end{equation}
	and consequently,
	\begin{equation}\label{eq:small-toma}
	\pr[X < 1] \geq 0.7501, \qquad \pr[|X| < 1] \geq 0.5002.
	\end{equation}
\end{proposition}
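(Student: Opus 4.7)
The plan is to deduce Proposition~\ref{lem:be_toma} directly from Proposition~\ref{prop:Prawitz-Rademacher} by applying the latter at $x = 1$ with carefully chosen values of the free parameters $T$ and $q$, and then bounding each of the four integrals on the right-hand side of~\eqref{Eq:Our-Prawitz} uniformly over $a_1 \in (0, 0.31]$. Once inequality~\eqref{eq:small-toma2} is established, a direct numerical evaluation of the Gaussian CDF gives $\Pr[Z<1] = \Phi(1) > 0.84134$, hence $\Pr[X<1] > 0.75019 > 0.7501$. The second conclusion of~\eqref{eq:small-toma} then follows from the symmetry of $X$ around $0$: because $\Pr[X\leq -1] = \Pr[X \geq 1] = 1 - \Pr[X<1]$, one has
\begin{equation*}
\Pr[|X|<1] = \Pr[X<1] - \Pr[X \leq -1] = 2\Pr[X<1] - 1 \geq 0.5002.
\end{equation*}
Thus the entire proof reduces to verifying~\eqref{eq:small-toma2}.

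For the bound on the four integrals, I would first fix $T$ and $q$ judiciously. A natural choice is to take $T$ of order $\pi/a_1$, so that the breakpoints of the piecewise definitions of $g$ and $h$ (occurring at $Tu = \pi/(2 a_1)$, $Tu = \theta/a_1$, and $Tu = \pi/a_1$) land at controlled values of $u$, and to place $q$ near the crossover where the $g$-bound, valid on $[0,q]$, ceases to outperform the $h$-bound used on $[q,1]$. With $T$ and $q$ fixed (either as explicit functions of $a_1$, or piecewise-constant on a finite partition of $(0, 0.31]$), each integral is split into subintervals matching the cases in the definitions of $g$ and $h$; on each subinterval the integrand becomes an elementary expression in $u$ and $a_1$. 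These explicit integrals are then bounded rigorously by discretizing $u$ on a fine mesh, replacing the integrand by its supremum on each cell (controlled via monotonicity or a crude derivative estimate), and summing the contributions. Uniformity in $a_1 \in (0, 0.31]$ is handled either by showing that the aggregated bound is monotone in $a_1$, so that the worst case is $a_1 = 0.31$, or by partitioning $(0, 0.31]$ into finitely many subintervals and verifying the bound on each; a light computer check is natural for this step.

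The main obstacle is the tightness of the bookkeeping. As noted in Section~\ref{ssec:hard-cases}, the threshold $a_1 \leq 0.31$ is close to optimal: the Rademacher sum $X = \frac{1}{3}\sum_{i=1}^{9} x_i$, for which $a_1 = 1/3$ is only slightly above $0.31$, already satisfies $\Pr[|X|<1] < 0.493$. Consequently the slack in inequality~\eqref{eq:small-toma2} amounts to only a few thousandths, and each of the four integrals must be estimated to several digits of precision. The parameters $T$ and $q$ must therefore be tuned with care (possibly as piecewise-constant functions of $a_1$), and an overly crude discretization or a suboptimal choice would fail to close the gap. Once the parameters are set well, the remainder of the argument is essentially a mechanical verification.
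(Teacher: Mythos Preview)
Your proposal is correct and follows essentially the same approach as the paper. The paper simplifies your scheme by observing that the right-hand side of~\eqref{Eq:Our-Prawitz} is monotone increasing in $a_1$, so it fixes $T=10$, $q=0.4$ and performs a single numerical evaluation at $a_1=0.31$; no piecewise choices of parameters or partition of $(0,0.31]$ are needed.
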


Proposition~\ref{lem:be_toma} follows from Proposition~\ref{prop:Prawitz-Rademacher} directly, by substitution of suitable parameters. We note that the condition $\max_i a_i \leq 0.31$ cannot be relaxed significantly. Indeed, this is demonstrated by $X = \frac{1}{3} \sum_{i=1}^{9} x_{i}$ having $\max_i a_i =1/3$ and $\pr[|X| < 1] = \frac{63}{128} < 0.493$.

\medskip  Our second application of Proposition~\ref{prop:Prawitz-Rademacher} is a concrete estimate which we shall use in the proof of Tomaszewski's conjecture in the range $\max_i a_i \in (0.31, 0.5)$.
	\begin{proposition}\label{prop:ad-hoc2}
		Let $X = \sum_i a_i x_i$ be a Rademacher sum with $\max_i a_i \leq 0.22$ and $\var(X) = 1$. Then, for any $x \geq 0$, we have
		\begin{equation}\label{eq:ad-hoc2}
			\pr[X \leq x] \geq \pr[Z \leq x] - \max(0.084, \pr[|Z| \leq a_1]/2),
		\end{equation}
		where $Z \sim N(0,1)$ is a standard Gaussian variable.
	\end{proposition}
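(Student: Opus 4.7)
The plan is to split the claim on the size of $x$: a short symmetry argument handles $x \leq a_1$, and Proposition~\ref{prop:Prawitz-Rademacher} handles $x > a_1$.

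For $0 \leq x \leq a_1$, I would use only the symmetry of $X$ about $0$, which yields $\pr[X \leq 0] \geq 1/2$ and hence $\pr[X \leq x] \geq 1/2$. Consequently
\[
\pr[Z \leq x] - \pr[X \leq x] \leq \pr[Z \leq x] - \tfrac12 = \pr[0 \leq Z \leq x] \leq \pr[0 \leq Z \leq a_1] = \tfrac12 \pr[|Z| \leq a_1],
\]
which is at most $\max(0.084,\, \pr[|Z| \leq a_1]/2)$. This case uses neither the bound $a_1 \leq 0.22$ nor the characteristic-function machinery.

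For $x > a_1$, I would invoke Proposition~\ref{prop:Prawitz-Rademacher}. Since $\pr[Z = x] = 0$ and $\pr[X \leq x] \geq \pr[X < x]$,
\[
\pr[Z \leq x] - \pr[X \leq x] \;\leq\; \pr[Z < x] - \pr[X < x] \;\leq\; \text{RHS of \eqref{Eq:Our-Prawitz}}.
\]
The task is to choose parameters $T > 0$ and $q \in [0, 1]$ (depending on $x$ and possibly on $a_1$) so that the four integrals on the right sum to at most $0.084$ for every $a_1 \in (0, 0.22]$. I would follow the template used for Proposition~\ref{lem:be_toma}: fix a moderate constant $T$ and a rule $q = q(x)$, then estimate each integrand explicitly. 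The design of $k(u, x, T)$ causes the third integral $\int_0^q k(u,x,T)\, e^{-(Tu)^2/2}\,\dd u$ to nearly cancel the fourth term $\int_0^x \phi(u)\,\dd u = \Phi(x) - 1/2$, while the first two integrals, which carry the deviation of $X$ from Gaussian through $g$ and $h$, are small because $a_1 \leq 0.22$ forces $g(Tu)$ and $h(Tu)$ to stay close to $e^{-(Tu)^2/2}$ for the relevant range of $u$. For $x$ beyond some threshold (say $x \geq 3$), a Hoeffding-type tail bound $\pr[X > x] \leq e^{-x^2/2}$ combined with $\pr[Z > x] \leq 0.002$ already gives a bound well below $0.084$, so only a bounded range of $x$ needs the Prawitz estimate.

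The main obstacle is the uniformity: because both $a_1$ and $x$ are continuous parameters, the verification reduces to a two-dimensional numerical check on $(0, 0.22] \times (a_1, 3]$, and promoting pointwise bounds to a uniform bound requires either fine-grained monotonicity of each integrand in $a_1$ and $x$, or a grid argument equipped with explicit Lipschitz constants. This is the same ad hoc computational flavor appearing in Proposition~\ref{lem:be_toma}, but the free cutoff $x$ (rather than the fixed value $x = 1$) makes the bookkeeping heavier and is the place where most of the work resides.
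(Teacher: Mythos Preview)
Your split and your handling of $x \leq a_1$ match the paper exactly. Your plan for large $x$ is also what the paper does: it packages the Prawitz computation into Lemma~\ref{lem:ad-hoc}, which verifies \eqref{eq:ad-hoc} for all $x \geq 0.35$ by evaluating Proposition~\ref{prop:Prawitz-Rademacher} at a finite grid (with fixed $T=14.5$, $q=0.4$) and interpolating by monotonicity. The paper also uses the trivial bound $\pr[X\leq x]\geq 1/2$ on $[0,0.2]$, since $\pr[Z\leq 0.2]\leq 0.58$.

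The gap is the intermediate window $x \in [\max(a_1,0.2),\,0.35]$. Here the trivial bound already fails (e.g.\ $\pr[Z\leq 0.22]-\tfrac12 \approx 0.087 > 0.084$), and the paper does \emph{not} push Prawitz down into this range---the grid in Lemma~\ref{lem:ad-hoc} is already very fine near $x\approx 0.44$, indicating the Prawitz bound is nearly tight against $0.084$ with little room to extend downward. Instead the paper uses a segment-comparison step you did not anticipate: applying Lemma~\ref{lem:seg-compare2} with $A,B,C,D=-x,x,x,2x$ (valid because $x\geq a_1$) gives $\pr[|X|\leq x]\geq \pr[X\in(x,2x]]$, hence
\[
\pr[X\in\langle 0,x]] \;\geq\; \tfrac{1}{3}\,\pr[X\in\langle 0,2x]],
\]
and now $2x\geq 0.4>0.35$ lands in the range where Lemma~\ref{lem:ad-hoc} applies. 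A short calculation then closes the case. So the missing idea is this bootstrapping via segment comparison; a purely Prawitz-based attack on $(a_1,0.35]$ would at minimum require a separate, more delicate numerical verification than you have sketched, and may simply not reach $0.084$.
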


\paragraph{Organization.}
In Section~\ref{sec:sub:Prawitz} we describe the inequality of Prawitz~\cite{pra72} and apply it to Rademacher sums, proving Proposition~\ref{prop:Prawitz-Rademacher}. In Section~\ref{sec:sub:BE-applications} we prove Propositions~\ref{lem:be_toma} and~\ref{prop:ad-hoc2}.
In Appendix~\ref{app:no-numeric} we show how to practically evaluate the bound~\eqref{Eq:Our-Prawitz} to a required precision.

\subsection{The smoothing inequality of Prawitz applied to Rademacher sums}
\label{sec:sub:Prawitz}

\subsubsection{Prawitz' inequality}

In~\cite{pra72}, H\r{a}kan Prawitz proposed a way for bounding the cumulative distribution function of a random variable $X$, in terms of partial information on its characteristic function $\varphi_X(t)=\mathbb{E}[e^{itX}]$. The main result of~\cite{pra72} (specifically,~\cite[(1b)]{pra72}) reads:
\begin{theorem}\label{thm:prawitz}
	Let $X$ be a real-valued random variable, and assume that the characteristic function $\varphi_X(t)=\be[\exp(i t X)]$ is given for $|t| \leq T$. Then for any $x \in \mathbb{R}$,
\begin{equation}\label{eq:prawitz1}
\pr[X < x] \geq \frac{1}{2} - v.p.\int_{-T}^{T} e^{-ixu} \frac{1}{T}  K(-u/T) \varphi_X(u) \dd{u},
\end{equation}
where
\begin{equation}\label{eq:Ku}
K(u) = \frac{1-|u|}{2} + \frac{i}{2} \li( (1-|u|) \cot(\pi u) + \frac{\sgn(u)}{\pi} \ri).
\end{equation}
\end{theorem}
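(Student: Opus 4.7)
The plan is to reduce Prawitz' inequality to a pointwise lower bound on an explicit band-limited function and then verify that bound. For the reduction, I would substitute $\varphi_X(u)=\be[e^{iuX}]$ into the right-hand side of~\eqref{eq:prawitz1}, apply Fubini to exchange the principal-value integration with the expectation, and change variable by $v=-u/T$. Using the symmetries $\mathrm{Re}\,K(-v)=\mathrm{Re}\,K(v)$ and $\mathrm{Im}\,K(-v)=-\mathrm{Im}\,K(v)$, the right-hand side simplifies to $\tfrac12-\be[M(X-x)]$, where
\[
M(s) \;=\; \int_{0}^{1}\!\Big[(1-v)\cos(Tvs)+\bigl((1-v)\cot(\pi v)+\tfrac1\pi\bigr)\sin(Tvs)\Big]\,dv .
\]
This $M$ is an honest Lebesgue integral because near $v=0$ the combination $(1-v)\cot(\pi v)+1/\pi$ has a $1/(\pi v)$ singularity whose product with $\sin(Tvs)$ remains bounded. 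The theorem is thereby equivalent to the pointwise claim
\[
M(s)\;\geq\;\tfrac12-\one_{s<0}\qquad\text{for every }s\in\reals,
\]
i.e., $M(s)\geq \tfrac12$ for $s\geq 0$ and $M(s)\geq -\tfrac12$ for $s<0$.

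The specific form of $K$ is engineered so that $\tfrac12-M$ plays the role of an extremal band-limited one-sided approximant to the Heaviside function $\one_{s\geq 0}$, in the spirit of the Beurling-Selberg extremal majorants. The Fej\'er-type real part $(1-|v|)/2$ gives a non-negative averaging kernel, while the imaginary part carrying $\cot(\pi v)$ is responsible for reproducing the Heaviside jump of size $\tfrac12$ at $s=0$: the leading $1/(\pi v)$ behavior of $(1-v)\cot(\pi v)+1/\pi$ near $v=0$ produces $\int_0^1\sin(Tvs)/(\pi v)\,dv\to \pm\tfrac12$ as $s\to\pm\infty$, while the remaining contributions decay by Riemann-Lebesgue. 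Thus one confirms $M(0)=\tfrac12$ and $M(\pm\infty)=\pm\tfrac12$, so the required inequality holds at the boundary values. To upgrade this to a global pointwise bound, I would locate the zeros of $M(s)\mp\tfrac12$ on the lattice $\{\pi k/T:k\in\integers\}$ and use an interpolation/sign-constancy argument of Beurling-Selberg type to rule out sign changes between consecutive lattice points.

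The main obstacle is the global pointwise verification of $M(s)\geq \tfrac12-\one_{s<0}$ for intermediate $s$: the Fubini reduction and boundary computations are routine, but controlling the cancellation between the oscillatory cosine and sine integrals so that they never push $M$ below the required threshold is delicate. In particular, precise handling of the singular contribution from $\cot(\pi v)$ near $v=0$ and the attendant principal-value cancellation is the technical heart of Prawitz'~\cite{pra72} construction, and it is where nearly all of the work of the proof concentrates.
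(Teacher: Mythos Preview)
The paper does not prove Theorem~\ref{thm:prawitz}; it is quoted verbatim from Prawitz~\cite{pra72} as a known result (the sentence introducing it reads ``The main result of~\cite{pra72} (specifically,~\cite[(1b)]{pra72}) reads:''), and the paper only \emph{applies} it in Section~\ref{sec:sub:Prawitz}. So there is no proof in the paper to compare your proposal against.

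On the merits of your sketch: the reduction is correct. Exchanging expectation with the principal-value integral and rewriting in terms of $s=X-x$ does reduce the theorem to the pointwise claim $M(s)\geq \tfrac12-\one_{s<0}$, and your formula for $M(s)$ is right. The observation that the $1/(\pi v)$ singularity of $(1-v)\cot(\pi v)+1/\pi$ is absorbed by $\sin(Tvs)$, turning $M(s)$ into an honest Lebesgue integral, is exactly the technical point needed to justify Fubini here. Your identification of $\tfrac12-M$ with a Beurling--Selberg one-sided minorant of the Heaviside function is also the modern way to understand Prawitz' kernel (cf.\ the survey~\cite{Bobkov16}).

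What is missing is the actual verification of the pointwise inequality, which you yourself flag as ``where nearly all of the work of the proof concentrates.'' Checking the boundary values $M(0)=\tfrac12$ and $M(\pm\infty)=\pm\tfrac12$ and then invoking ``an interpolation/sign-constancy argument of Beurling--Selberg type'' is not a proof; it is a description of what a proof would have to accomplish. Prawitz' own argument proceeds by evaluating $M(s)$ in closed form (via contour integration/partial fractions for $\cot$) and reading off the inequality directly; alternatively one can quote the extremal property of the Beurling function, but that is itself a nontrivial theorem. Either way, the substantive step is absent from your proposal, so what you have is a correct outline rather than a proof.
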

The notation $v.p. \int_{-T}^{T}$, which is an abbreviation for \emph{valeur principale} integral, has the meaning $\lim_{\eps \to 0^{+}} \li[ \int_{-T}^{-\eps} + \int_{\eps}^{T} \ri]$. It is required, as $K(u) = \frac{i}{2u\pi}+O(1)$ around $u=0$, and a naive integral would diverge. Once using	$v.p. \int_{-T}^{T} = \lim_{\eps \to {0}} \li[ \int_{-T}^{-\eps} + \int_{\eps}^{T} \ri]$, the integral in~\eqref{eq:prawitz1} converges, as $K$ satisfies $K(-u) + K(u) = O(1)$, and is multiplied by a function which is $C+O(u)$ around $u=0$.

\medskip
Theorem~\ref{thm:prawitz} has numerous applications in probability theory (e.g.,~\cite{BG97,GZ14}) and in statistics (e.g.,~\cite{BGZ97,BG02,HW04}). In particular, already in~\cite[Sec.~10]{pra72}, Prawitz suggested using his method for obtaining approximation by a Gaussian variable in terms of distance between characteristic functions. A few decades later, this strategy was used by Shevtsova (\cite{Shevtsova10}, see also~\cite{KS10}) to obtain the best currently known bound on the constant $C$ in the Berry-Esseen theorem.

For more information on Theorem~\ref{thm:prawitz} and its applications, see the survey~\cite{Bobkov16}.

\subsubsection{A refined inequality for Rademacher sums}

For a Rademacher sum $X = \sum_{i=1}^{n} a_{i} x_{i}$, the characteristic function $\varphi_X$ has the convenient form \begin{equation}\label{Eq:Aux-Prawitz1}
\varphi_{X}(t) = \mathbb{E}[e^{itX}] =\prod_{i=1}^{n} \cos(a_{i} t).
\end{equation}
Since~\eqref{Eq:Aux-Prawitz1} is defined for any $t \in \mathbb{R}$, for any $T>0$ we can substitute in~\eqref{eq:prawitz1} $u/T$ in place of $u$:
\begin{equation}\label{eq:prawitz}
\pr[X < x] \geq \frac{1}{2} - v.p.\int_{-1}^{1} e^{-ixuT} K(-u) \varphi_X(uT) \dd{u}.
\end{equation}
Due to the symmetry of $X$ around $0$, we have $\varphi_{X}(-u) = \varphi_{X}(u) \in \reals$. This, together with $\overline{K(-u)}=K(u)$, implies
	\[
		\pr[X < x] \geq \frac{1}{2} - 2\int_{0}^{1} \re \li( e^{ixuT} K(u) \ri) \varphi_{X}(uT) \dd{u}.
	\]
Hence, for $Z \sim N(0,1)$, and for any $T > 0$, $q \in [0,1]$, we have
	\begin{equation}\label{eq:prawitz_renew}
	\begin{aligned}
	\pr[Z < x]- \pr[X < x]
	\leq
	& \li(\pr[Z < x] - \frac{1}{2} \ri)
	+ \int_{0}^{1} 2\re \li( e^{iTux} K(u) \ri) \varphi_{X}(uT) \dd{u} = \\
	=
	& \int_{0}^{q} 2\re \li( e^{iTux} K(u) \ri) \li( \varphi_{X}(uT) - \varphi_{Z}(uT) \ri) \dd{u} + \\
	& +\int_{q}^{1} 2\re \li( e^{iTux} K(u) \ri) \varphi_{X}(uT) \dd{u} + \\
	& +\int_{0}^{q} 2 \re \li( e^{iTux} K(u) \ri) \varphi_{Z}(uT) \dd{u} + \\
	& +\li( \pr[Z < x] - \frac{1}{2} \ri) \leq \\
	\leq
	(S_{1}=) & \int_{0}^{q} \li| 2\re \li( e^{iTux} K(u) \ri) \ri| \li| \varphi_{X}(uT) - \varphi_{Z}(uT) \ri| \dd{u} + \\
	+(S_{2}=) & \int_{q}^{1} \li| 2\re \li( e^{iTux} K(u) \ri) \ri| \li| \varphi_{X}(uT) \ri| \dd{u} + \\
	+(S_{3}=) & \int_{0}^{q} 2 \re \li( e^{iTux} K(u) \ri) \varphi_{Z}(uT) \dd{u} + \\
	+(S_{4}=) & \li( \pr[Z < x] - \frac{1}{2} \ri).
	\end{aligned}
	\end{equation}
	Notice that given $x,T,$ and $q$, the quantities $S_{3}, S_{4}$ are two constants which are, in principle, easy to compute, as $\varphi_{Z}(v)=\exp(-v^2/2)$. In the following, we obtain bounds on $S_{1}$ and $S_{2}$, and deduce Proposition~\ref{prop:Prawitz-Rademacher}.
	
	\paragraph{Bounding $\li| \varphi_{X}(uT) \ri|$.} We claim that
	\begin{equation}\label{eq:be_fx}
	\li| \varphi_{X}(v) \ri| \leq
	\begin{cases}
	\exp(-v^{2}/2),					& 0\leq a_1 v \leq \theta\\
	(-\cos(a_1 v))^{1/a_1^2},	    & \theta \leq a_1 v \leq \pi\\
	1,								& \mrm{Otherwise}
	\end{cases},
	\end{equation}
	where $\theta$ is the unique root of
	\[
	\exp(-x^2/2)+\cos(x)=0
	\]
	in the interval $[0,\pi]$. Its numerical value is $1.778\pm 10^{-4}$.
	
	\medskip \noindent
	To see this, recall that $\varphi_{X}(v) = \prod_{i} \cos(a_i v)$. Clearly, $|\varphi_{X}(v)| \leq 1$. Moreover, it is easy to check by differentiation that as long as $a_i v \leq \theta$, we have $|\cos(a_i v)| \leq \exp(-(a_i v)^2/2)$. As $a_1=\max_i a_i$, it follows that if $a_1 v \leq \theta$, then
	\begin{equation}\label{eq:aux-prawitz}
	|\varphi_{X}(v)| = |\prod_{i} \cos(a_i v)| \leq \exp(-\sum_i a_i^2 v^2 / 2) = \exp(-v^2/2).
	\end{equation}
	To handle the remaining case, $a_1 v \in (\theta, \pi]$, note that
	\begin{equation}\label{Eq:Aux-Prawitz2}
		|\varphi_X(v)| = \exp\li( \sum_i \log|\cos(a_i v)| \ri) = \exp\li( \sum_i a_i^2 \frac{\log|\cos(a_i v)|}{a_i^2} \ri)
		\leq \exp\li( \max_i \frac{\log|\cos(a_i v)|}{a_i^2} \ri),
	\end{equation}
	where the ultimate inequality holds since $\sum a_i^2 = 1$. The right hand side is maximized at $i=1$. Indeed,
	\begin{itemize}
		\item For $i$ with $a_i v \leq \theta$, by the previous case we have $\log|\cos(a_i v)|/a_i^2 \leq -v^2/2$.
		
		\item For $i$ with $a_i v \in  [\theta, \pi]$, as the function $\psi: a \mapsto \log|\cos(a)|/a^2$ increases in the range $a \in [\theta, \pi]$, we get $\psi(a_i v) \leq \psi(a_1 v)$.
	\end{itemize}
	Since $a_1=\max_i a_i$ satisfies $a_1 v \in (\theta, \pi]$, a combination of the two cases gives
	\[
	\forall i: \qquad \log|\cos(a_i v)|/a_i^2 \leq \max(-v^2/2, \log|\cos(a_1 v)|/a_1^2) = \log|\cos(a_1 v)|/a_1^2,
	\]
	yielding $\varphi_X(v) \leq |\cos(a_1 v)|^{1/a_1^2}$, as we claimed.
	
	\paragraph{Bounding $\li| \varphi_{X}(uT) - \varphi_{Z}(uT) \ri|$.} We claim that
	\begin{equation}\label{eq:be_fxfz}
	\li| \varphi_{X}(v) - \varphi_{Z}(v) \ri| \leq
	\begin{cases}
	\exp(-v^{2}/2) - \cos(a_1 v) ^ {1/a_1^2}, & 0\leq a_1 v \leq \frac{\pi}{2}\\
	\exp(-v^{2}/2)+1, & \mrm{Otherwise}
	\end{cases}.
	\end{equation}
	To see this, recall that $\varphi_{Z}(v) = \exp(-v^2/2)$, and hence, for any $v \in \mathbb{R}$,
	\[
	\li| \varphi_{X}(v) - \varphi_{Z}(v) \ri| \leq \exp(-v^2/2)+1.
	\]
	In the case $0 \leq a_1 v \leq \pi / 2$, on the one hand, similarly to~\eqref{eq:aux-prawitz},
	\[
	\varphi_X(v) = \prod_i \cos(a_i v) \leq \exp(-\sum_i a_i^2 v^2/2) = \exp(-v^2/2)=\varphi_Z(v).
	\]
	On the other hand, by virtue of~\eqref{Eq:Aux-Prawitz2} and $\cos(a_i v) \geq 0$,
	\[
	\prod_i \cos(a_i v) \geq \exp(\min_i (\log(\cos(a_i v)) / a_i^2)).
	\]
	As the function $a \mapsto \log(\cos(a))/a^2$ decreases in the range $a \in (0, \pi/2)$, one has that for all $i$, $\log(\cos(a_i v)) / a_i^2 \geq \log(\cos(a_1 v)) / a_1^2$, and thus, \[
	\varphi_Z(v) \geq \varphi_X(v)=\prod_i \cos(a_i v) \geq \exp(\min_i (\log(\cos(a_i v)) / a_i^2)) = \cos(a_1 v)^{1/a_1^2},
	\]
	yielding~\eqref{eq:be_fxfz}.
	
	\paragraph{Simplifying $2\re \li( e^{iTux} K(u) \ri)$.}
	We claim that for all $u \in (0,1)$,
	\begin{equation}\label{eq:k_def}
		k(u,x,T) \defeq 2\re \li( e^{iTux} K(u) \ri) = \frac{(1-u)\sin(\pi u - T u x)}{\sin(\pi u)} - \frac{\sin(T u x)}{\pi}.
	\end{equation}
	To verify~\eqref{eq:k_def}, notice that by definition, $K(u) = \frac{1-|u|}{2} + \frac{i}{2} \li( (1-|u|) \cot(\pi u) + \frac{\sgn(u)}{\pi} \ri)$ and $\exp(iTux) = \cos(Tux)+i\sin(Tux)$, and thus, for $u \in (0,1)$ we have
	\[
		k(u,x,T) = \frac{2\cos(Tux)}{2}(1-u) - \frac{2\sin(Tux)}{2}((1-u)\cot(\pi u)+1/\pi).
	\]
	Substituting $\cot(\pi u) = \cos(\pi u) / \sin(\pi u)$, we get
	\[
		k(u,x,T) = (1-u)\li(\cos(Tux) - \frac{\sin(Tux)\cos(\pi u)}{\sin(\pi u)}\ri) - \sin(Tux)/\pi.
	\]
	Using the identity $\sin(\alpha-\beta)=\sin(\alpha)\cos(\beta)-\sin(\beta)\cos(\alpha)$, we derive~\eqref{eq:k_def}.
	
	\paragraph{Combining the bounds.} Substituting the bounds~\eqref{eq:be_fx} and~\eqref{eq:be_fxfz} and the simplification~\eqref{eq:k_def} into~\eqref{eq:prawitz_renew}, we obtain~\eqref{Eq:Our-Prawitz}, namely, the assertion of Proposition~\ref{prop:Prawitz-Rademacher}.

	\subsection{Applications of the refined Berry-Esseen type inequalities}
	\label{sec:sub:BE-applications}
	
	\subsubsection{Tomaszewski's conjecture for \tops{$a_1 \leq 0.31$}}
	
	We prove Proposition~\ref{lem:be_toma}, which implies Tomaszewski's conjecture in the range $a_1 \leq 0.31$. 		
	\begin{proof}[Proof of Proposition~\ref{lem:be_toma}]
		Consider first the case $a_1=0.31$. Applying Proposition~\ref{prop:Prawitz-Rademacher} with $a_1=0.31$, $x=1$, $T=10$ and $q=0.4$, we obtain
		\begin{equation}\label{eq:0.31}
			\pr[Z < 1] - \pr[X < 1] \leq 0.09114 \pm 10^{-5} \leq 0.09115.
		\end{equation}
		Consequently, $\pr[X < 1] \geq \pr[Z < 1] - 0.09115 > 0.7501$, and since $X$ is a symmetric random variable,
		\[
		\pr[|X| < 1] = 2\pr[X < 1] - 1 \geq 0.5002,
		\]
		as asserted.
		
		To handle the case $a_1 < 0.31$, note that the bound in the right hand side of~\eqref{Eq:Our-Prawitz} is increasing in $a_1$, and thus, an application of Proposition~\ref{prop:Prawitz-Rademacher} with $a_1<0.31$ and the same values $x,T,q$ as above, leads to a stronger lower bound on $\pr[|X| < 1]$. This completes the proof.
	\end{proof}
	
	\subsubsection{A Berry-Esseen type inequality for Rademacher sums with \tops{$a_1 \leq 0.22$}}
	To prove Proposition~\ref{prop:ad-hoc2}, we use the following lemma, whose proof is given in Appendix~\ref{ssec:ad-hoc}.
	\begin{lemma}
		\label{lem:ad-hoc}
		Let $X = \sum_i a_i x_i$ be a Rademacher sum with $\max_i a_i \leq 0.22$ and $\var(X) = 1$. Then for every $x \geq 0.35$, we have
		\begin{equation}\label{eq:ad-hoc}
		\pr[X \leq x] \geq \pr[Z \leq x] - 0.084,
		\end{equation}
		where $Z \sim N(0,1)$ is a standard Gaussian variable.
	\end{lemma}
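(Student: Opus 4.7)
The strategy is to apply Proposition~\ref{prop:Prawitz-Rademacher} uniformly in $x$ with the worst-case parameter $a_1 = 0.22$, complemented by a tail bound for large $x$. Concretely, I split the range $[0.35, \infty)$ into a bounded part and a tail.

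First, I would choose a cutoff $x_0 \geq \sqrt{-2\ln 0.084} \approx 2.23$ and dispose of the tail $x \geq x_0$ using Hoeffding's inequality for Rademacher sums: since
\[
\pr[Z \leq x] - \pr[X \leq x] = \pr[X > x] - \pr[Z > x] \leq \pr[X > x] \leq \exp(-x^2/2) \leq 0.084
\]
for all $x \geq x_0$, the inequality~\eqref{eq:ad-hoc} follows immediately in this range.

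For $x \in [0.35, x_0]$, I would apply Proposition~\ref{prop:Prawitz-Rademacher} with $a_1 = 0.22$ and carefully chosen parameters $T$ and $q$ (in the style of $T = 10$, $q = 0.4$ used in Proposition~\ref{lem:be_toma}), possibly subdividing $[0.35, x_0]$ into sub-intervals with tailored $(T, q)$ on each. The right-hand side of~\eqref{Eq:Our-Prawitz} is then a concrete function of $x$: its $x$-dependence enters only through the kernel $k(u, x, T)$ and through the Gaussian terms $S_3, S_4$. A direct computation gives
\[
|\partial_x k(u, x, T)| \leq T u\li[\frac{1-u}{\sin(\pi u)} + \frac{1}{\pi}\ri],
\]
whose right-hand side is integrable in $u$ on $(0,1)$, so the Prawitz bound is Lipschitz in $x$ with an explicit modest constant. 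Verifying that this bound is $\leq 0.084$ on $[0.35, x_0]$ thus reduces to evaluation on a sufficiently dense finite grid of $x$-values, using the numerical machinery of Appendix~\ref{app:no-numeric}; the details belong to Appendix~\ref{ssec:ad-hoc}. Finally, as already observed in the proof of Proposition~\ref{lem:be_toma}, the right-hand side of~\eqref{Eq:Our-Prawitz} is non-decreasing in $a_1$, so establishing the bound at $a_1 = 0.22$ yields it for every $a_1 \leq 0.22$.

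The main obstacle is the \emph{uniformity} in $x$: unlike Proposition~\ref{lem:be_toma}, which needs only a single-point estimate at $x = 1$, here the Prawitz bound must be controlled throughout the whole interval $[0.35, x_0]$. This forces a grid-based rigorous verification supported by the Lipschitz estimate, and in all likelihood a partition of $[0.35, x_0]$ into several sub-intervals with different $(T, q)$ on each piece so that the Prawitz bound stays uniformly below $0.084$. No conceptually new ingredient beyond Proposition~\ref{prop:Prawitz-Rademacher} and a standard tail bound is required.
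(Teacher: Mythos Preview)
Your proposal is correct and follows essentially the same approach as the paper: apply Proposition~\ref{prop:Prawitz-Rademacher} with $a_1=0.22$ on a finite grid of $x$-values in a bounded interval, and handle the tail separately. The implementation details differ slightly: the paper takes the cutoff $x_0=1.65$ (disposed of by a single Prawitz evaluation giving $\pr[X<1.65]>0.919$, rather than Hoeffding), uses a fixed pair $(T,q)=(14.5,0.4)$ throughout, and bridges between grid points via the monotonicity bound $\pr[Z\leq x]-\pr[X\leq x]\leq \pr[Z<x_{i+1}]-\pr[X<x_i]$ for $x\in[x_i,x_{i+1}]$, which is a bit cleaner than your Lipschitz estimate on the Prawitz integrand.
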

	The proof of Lemma~\ref{lem:ad-hoc} proceeds by applying Proposition~\ref{prop:Prawitz-Rademacher} to $X$, with suitably chosen parameters $T,q$, and obtaining a slightly stronger version of~\eqref{eq:ad-hoc} for a finite set of $x$'s. Choosing this set of $x$'s fine enough,~\eqref{eq:ad-hoc} follows for all $x \geq 0.35$, by the monotonicity of $x\mapsto \pr[X \leq x]$.
	
	\medskip
	
	Proposition~\ref{prop:ad-hoc2} follows from Lemma~\ref{lem:ad-hoc} and Lemma~\ref{lem:seg-compare2}.
	
	\begin{proof}[Proof of Proposition~\ref{prop:ad-hoc2}]
		The proof is split according to the value of $x$.
		For $x \in [0, a_1)$ we have
		\[
		\pr[X \leq x] \geq 1/2 \geq \pr[Z \leq x] - \pr[|Z| \leq a_1]/2,
		\]
		implying~\eqref{eq:ad-hoc2}. For $x \in [0, 0.2]$, we have $\pr[Z \leq x] \leq 0.58$, and thus, $\pr[X \leq x] \geq 1/2 \geq \pr[Z \leq x] - 0.084$, as asserted. For $x \geq 0.35$, the assertion~\eqref{eq:ad-hoc2} follows directly from Lemma~\ref{lem:ad-hoc}.
		
		\medskip Hence, it is left to prove the assertion for $x \in [\max(a_1, 0.2), 0.35]$. We show that in this range, $\pr[X \leq x] \geq \pr[Z \leq x] - 0.084$.
		
		Indeed, applying Lemma~\ref{lem:seg-compare2} to $X$, with the parameters $A,B,C,D,M = -x, x, x+\eps, 2x+\eps/2, a_1$ and letting $\eps\to 0^{+}$, we obtain
		$
		\pr[|X| \leq x] \geq \pr[X \in (x, 2x]],
		$
		and thus,
		\[
		\pr[X \in \langle 0,x ] ] \geq \frac{1}{3} \pr[X \in \langle 0,2x] ].
		\]
		(Note that the parameters $A,B,C,D,M$ satisfy the assumptions of Lemma~\ref{lem:seg-compare2} since $x \geq a_1$.)
		Applying Lemma~\ref{lem:ad-hoc} with the parameter $2x$ (which can be done, as by assumption, $2x \geq 0.4> 0.35$), and using the symmetry of $X$, we get
		\[
		\pr[X \leq x] \geq 1/2 + \frac{1}{3}\pr[X \in \hleft 0, 2x\ri]] \geq \frac{1}{2} + \frac{1}{3}(\pr[Z \leq 2x] - 0.584).
		\]
		Therefore, in order to complete the proof it is sufficient to show that
		\begin{equation}\label{eq:ad-aux}
		\frac{1}{2} + \frac{1}{3}(\pr[Z \leq 2x] - 0.584) \geq \pr[Z \leq x] - 0.084,
		\end{equation}
		The inequality~\eqref{eq:ad-aux} indeed holds for all $x \in [0.2, 0.35]$. To see this, note that the function $x \mapsto \pr[Z \leq 2x] / 3 - \pr[Z \leq x]$ is decreasing in $[0.2,0.35]$, and thus, it suffices to verify~\eqref{eq:ad-aux} for $x=0.35$. At $x=0.35$, the inequality holds, completing the proof.
	\end{proof}


\section{Theorem~\ref{thm:main} for \tops{$a_1 + a_2 \geq 1$}, via a Semi Inductive Argument}\label{sec:induction}

In this section we prove the following result.
\begin{proposition}\label{prop:semi-inductive}
	Let $3 \leq n \in \mathbb{N}$. Assume that for any Rademacher sum $Z=\sum_{i=1}^m a_i x_i$ with $m < n$ and $\var(Z)=1$, we have $\pr[|Z|\leq 1] \geq 1/2$.
	
	Let $X=\sum_{i=1}^n a_i x_i$ be a Rademacher sum, such that $\var(X)=1$ and $a_1+a_2 \geq 1$. Then $\pr[|X|\leq 1] \geq 1/2$.
\end{proposition}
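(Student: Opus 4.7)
I would condition on $(x_1,x_2)$ to express $\Pr[|X|\le 1]$ in terms of the tail sum $R=\sum_{i\ge 3}a_i x_i$, exploit the inductive hypothesis applied to augmented versions of $R$ to obtain a family of tail bounds, and close the remaining inequality using the segment comparison of Theorem~\ref{thm:seg_compare_intro} and the refined Chebyshev of Lemma~\ref{lem:cheby}.

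\textbf{Reduction.} Writing $s=a_1+a_2\ge 1$ and $d=a_1-a_2\in[0,1]$, conditioning on $(x_1,x_2)$ and using the symmetry of $R$ yields
\[
\Pr[|X|\le 1]=\tfrac12\bigl(\Pr[R\in[s-1,s+1]]+\Pr[R\in[d-1,d+1]]\bigr),
\]
so the target becomes $\Pr[R\in[s-1,s+1]]+\Pr[R\in[d-1,d+1]]\ge 1$. Using inclusion-exclusion together with the ordering $s-1\le 1-d\le d+1\le s+1$ (which follows from $a_1\le 1$), this is equivalent to
\[
\Pr[R\in(s-1,1-d]]\ge \Pr[R\in(d+1,s+1]]+2\Pr[R>s+1].
\]

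\textbf{Induction and tails.} Let $W=R/\sigma$ with $\sigma=\sqrt{1-a_1^2-a_2^2}$; then $W$ is a normalized Rademacher sum on $n-2$ variables. Applying the inductive hypothesis to augmented sums $Y=\alpha y_0+\sqrt{1-\alpha^2}\,W$ for $\alpha\in[0,1)$ (each of which has $n-1<n$ variables and variance $1$), and then using Lemma~\ref{lem:nm} to eliminate $y_0$, gives the family of tail bounds
\[
\Pr[W>\tau]+\Pr[W>1/\tau]\le \tfrac12\qquad\text{for every }\tau\in(0,1],
\]
which, translated back to $R$, controls in particular $\Pr[R>s+1]$ in terms of $\Pr[R>\sigma^2/(s+1)]$.

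\textbf{Main obstacle.} The hardest step is the comparison between the short ``left'' segment $(s-1,1-d]$ of width $2(1-a_1)$ near the origin, and the long ``right'' segment $(d+1,s+1]$ of width $2a_2\ge 2(1-a_1)$ far from the origin. A single application of Theorem~\ref{thm:seg_compare_intro} with $A=s-1$, $B=1-d$, $C=d+1$, $D=s+1$ fails: the width condition $D-C+\min(2M,D-B)\le B-A$ unpacks (with $M=a_3$) to $a_1+a_2+a_3\le 1$, which \emph{contradicts} $a_1+a_2\ge 1$ whenever $a_3>0$. Thus the long segment must be split into sub-segments of width at most $2(1-a_1)$, each sub-segment compared individually to a portion of the short one under parameters that do meet the hypotheses of Theorem~\ref{thm:seg_compare_intro}, and the residual mass absorbed via Lemma~\ref{lem:cheby} together with the tail bounds above. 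The careful handling of endpoint atoms --- indispensable because $\Pr[|X|\le 1]=1/2$ is tight at examples such as $X=(x_1+x_2)/\sqrt{2}$ --- and the verification that the sub-segment comparisons succeed uniformly in the range $a_1+a_2\ge 1$ is where the bulk of the technical work lies. The argument is ``semi-inductive'' because the reduced sum $W$ need not satisfy $b_1+b_2\ge 1$, so the case hypothesis cannot be self-iterated; induction is strictly on $n$.
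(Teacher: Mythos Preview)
Your reduction (conditioning on $(x_1,x_2)$) is equivalent to the paper's Lemma~\ref{cor:nm2}, and your observation that the single-shot segment comparison fails because its width hypothesis unpacks to $a_1+a_2+a_3\le 1$ is exactly right. But the remedy you propose---splitting the long segment $(d+1,s+1]$ into sub-pieces and matching each to a portion of $(s-1,1-d]$---cannot close the gap. The obstruction scales: if you split both segments into $N$ pieces, each right piece has width $2a_2/N$ and each left piece has width $2(1-a_1)/N$, and since $a_1+a_2\ge 1$ forces $a_2\ge 1-a_1$, the width condition of Theorem~\ref{thm:seg_compare_intro} fails for \emph{every} matched pair, regardless of $N$. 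Moreover the anchoring condition $|A|\le C$ and $2M\le C-A$ prevents you from freely subdividing the left segment into useful targets. Absorbing ``residual mass'' with Lemma~\ref{lem:cheby} and the one-variable augmented tail bound $\Pr[W>\tau]+\Pr[W>1/\tau]\le 1/2$ does not rescue this: that bound is what \emph{every} instance of the induction hypothesis yields after one elimination, and it is strictly weaker than the two-sided inequality $\Pr[X'\in[L_1,L_2]]\ge \Pr[X'>R_1]+\Pr[X'>R_2]$ you actually need. The segment-comparison-plus-Chebyshev machinery is exactly what the paper deploys in the complementary regimes ($a_1+a_2<1$, Sections~\ref{sec:055}--\ref{sec:remaining}); the case $a_1+a_2\ge 1$ is singled out precisely because it does not yield to that machinery.

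The idea you are missing is a \emph{stopping time}. The paper writes $X'=Y'+Z'$, where $Y'=\sum_{i=3}^{k}a_i'x_i$ with $k$ the first index at which the partial sum reaches $L_1=(a_1+a_2-1)/\sigma$, and $Z'$ is the tail. Conditioning on $(k,Y')$, the target inequality for $X'$ becomes an inequality of the same \emph{shape} for $Z'/s$ (with $s^2=\var(Z')$), but with shifted thresholds $L_i'=(L_i-Y')/s$. The crucial step is to manufacture new coefficients $b_1,b_2$ (with $b_1^2+b_2^2<1$) so that $L_1',L_2'$ are \emph{exactly} the thresholds arising from eliminating $b_1,b_2$ in a Rademacher sum $Z=b_1 z_1+b_2 z_2+(\sigma'/s)Z'$ on $n-k+2<n$ variables; one then checks that the corresponding $R_1',R_2'$ are no larger than the shifted $(R_i-Y')/s$, using the overshoot bound $Y'(Y'-L_1)\le 1-s^2$ that the stopping time provides. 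The induction hypothesis applied to $Z$ then gives the conditional inequality directly. This two-variable re-synthesis---not segment comparison---is what makes the case $a_1+a_2\ge 1$ go through.
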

There is no restriction in assuming $n \geq 3$, as the assertion of Theorem~\ref{thm:main} (namely, $\pr[|X|\leq 1] \geq 1/2$) holds trivially for Rademacher sums with $n \leq 2$. Proposition~\ref{prop:semi-inductive} is only `semi-inductive' in the sense that it assumes that Theorem~\ref{thm:main} holds for all $m <n$, with no restriction on the $a_i$'s, and deduces Theorem~\ref{thm:main} for $m=n$ only in the case $a_1+a_2 \geq 1$. Hence, it can be used only if we resolve the case $a_1+a_2<1$ by a different argument, as we do in the following sections.

The proof relies on a `stopping time' argument that is reminiscent of the stopping time argument of Ben-Tal et al.~\cite{BNR02} used in all recent works on Tomaszewski's conjecture.

\paragraph{Elimination of two variables.} We start with the following variant of Lemma~\ref{lem:nm}.
\begin{lemma}\label{cor:nm2}
	Let $X = \sum_{i=1}^{n} a_{i} x_i = a_1 x_1 + a_2 x_2 + \sigma X'$ with $\var(X) = \var(X') = 1$ (so that $\sigma=\sqrt{1-a_1^2-a_2^2}$). The assertion
	\[
	\pr[|X| \leq 1] \geq 1/2
	\]
	is equivalent to the following inequality involving $X'$:
	\begin{equation}\label{eq:gt1_ineq}
	\pr\li[ X' \in [L_{1}, L_{2}] \ri] \geq \pr \li[ X' > R_{1} \ri] + \pr \li[ X' > R_{2} \ri],
	\end{equation}
	where
	\[
	L_1, L_2 = \frac{a_1+a_2-1}{\sigma}, \frac{1-a_1+a_2}{\sigma},
	\]
	and
	\[
	R_1, R_2 = \frac{1+a_1-a_2}{\sigma}, \frac{1+a_1+a_2}{\sigma}.
	\]
\end{lemma}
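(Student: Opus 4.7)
The plan is to derive this as an immediate corollary of Lemma~\ref{lem:nm} applied with $m=2$, together with the symmetry of $X'$. That lemma says that Tomaszewski's assertion $\pr[|X|\leq 1]\geq 1/2$ is equivalent to
\[
\sum_{j=0}^{3}\pr[X'>T_j]\leq 2^{m-2}=1,
\]
where the four values $T_j$ are the expressions $(1\pm a_1\pm a_2)/\sigma$. First I would match these four numbers with the four quantities defined in the statement: the sign choices $(-,-)$ and $(-,+)$ in front of $a_1,a_2$ give $-L_1=(1-a_1-a_2)/\sigma$ and $L_2=(1-a_1+a_2)/\sigma$, while $(+,-)$ and $(+,+)$ give $R_1=(1+a_1-a_2)/\sigma$ and $R_2=(1+a_1+a_2)/\sigma$.

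Next, I would use that $X'$, being a Rademacher sum, is symmetric around $0$. This gives $\pr[X'=-L_1]=\pr[X'=L_1]$ and hence
\[
\pr[X'>-L_1]=\pr[X'<L_1]=1-\pr[X'\geq L_1].
\]
Substituting this into the displayed sum and rearranging yields
\[
\pr[X'\geq L_1]-\pr[X'>L_2]\;\geq\;\pr[X'>R_1]+\pr[X'>R_2].
\]
Finally, since $a_1\leq 1$ we have $L_1\leq L_2$, and the telescoping identity
\[
\pr[X'\geq L_1]=\pr\!\left[X'\in[L_1,L_2]\right]+\pr[X'>L_2]
\]
shows that the left-hand side is exactly $\pr[X'\in[L_1,L_2]]$. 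This delivers the claimed equivalence~\eqref{eq:gt1_ineq}.

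There is no genuine obstacle here; this is bookkeeping on top of Lemma~\ref{lem:nm}. The only point that needs a little care is ensuring that the strict/non-strict inequalities and the atoms at $\pm L_1, L_2, R_1, R_2$ line up correctly when passing through the symmetry $x\mapsto -x$, but that is handled automatically by the symmetry identity $\pr[X'=-L_1]=\pr[X'=L_1]$ (and the fact that the $R_i$ already appear as strict tail probabilities in the conclusion of Lemma~\ref{lem:nm}).
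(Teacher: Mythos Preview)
Your proof is correct and follows essentially the same route as the paper's: apply Lemma~\ref{lem:nm} with $m=2$ to get $\sum_j \pr[X'>T_j]\leq 1$, identify the four $T_j$'s as $-L_1,L_2,R_1,R_2$, and use the symmetry of $X'$ to rewrite $1-\pr[X'>-L_1]-\pr[X'>L_2]$ as $\pr[X'\in[L_1,L_2]]$. The paper records this last step as a single identity, whereas you unfold it via $\pr[X'>-L_1]=1-\pr[X'\geq L_1]$ and then telescope, but the content is identical.
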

\begin{proof}
	The assertion follows immediately from Lemma~\ref{lem:nm} with $m=2$, since  $\pr[X' \in [L_1, L_2]] = 1 - \pr[X' > -L_1] - \pr[X' > L_2]$.
\end{proof}

\noindent Note that if $a_1 \geq a_2$ and $a_1+a_2 \geq 1$ then the parameters $L_1,L_2,R_1,R_2$ satisfy
\begin{equation}\label{Eq:Aux-Induction1}
0 \leq L_1 < L_2 \leq R_1 < R_2.
\end{equation}
This sequence of inequalities will be used several times in the sequel.

\paragraph{The semi-inductive stopping time argument}

Let $X = \sum_{i=1}^{n} a_i x_i$ (with $n \geq 3$) be a Rademacher sum with $\var(X)=1$, and write $X = a_1 x_1 + a_2 x_2 + \sigma X'$, as in Lemma~\ref{cor:nm2}. For $i \geq 3$, write $a_i' = a_i / \sigma$ so that $X' = \sum_{i=3}^{n} a_i' x_i$. By Lemma~\ref{cor:nm2}, in order to deduce $\pr[|X|\leq 1]\geq 1/2$, it suffices to verify~\eqref{eq:gt1_ineq}.

Write $X'$ as the sum of two random variables $X'=Y'+Z'$, as follows. Let the random variable $k$ to be the minimal index with $\sum_{i=3}^{k} a_{i}'x_{i} \geq L_{1}$, and set
\[
Y' \defeq \sum_{i=3}^{k} a_{i}'x_{i}, \qquad \mbox{and} \qquad Z' \defeq \sum_{i=k+1}^n a_{i}' x_{i}.
\]
If no such index exists, let $k=n$ and set $Z'=0$.

\medskip By lemma~\ref{cor:nm2}, in order to prove $\pr[|X| \leq 1] \geq 1/2$, it suffices to prove~\eqref{eq:gt1_ineq}, that is
\begin{equation}\label{eq:gt11}
\pr\li[ Y'+Z' \in [L_{1}, L_{2}] \ri] \geq \pr \li[ Y'+Z' > R_{1} \ri] + \pr \li[ Y'+Z' > R_{2} \ri].
\end{equation}
We shall show that~\eqref{eq:gt11} holds even if we condition on any possible value of $k,Y'$. This is clearly sufficient, due to the law of total probability.

For any specific assignment of $k$ and $Y'$, the above inequality is a probabilistic inequality involving the random variable $Z'$. We consider two cases:
\begin{itemize}
	\item \emph{Case~1: $Z' \equiv 0$.} We show that in this case,~\eqref{eq:gt11} holds as its right hand side is $0$.
	
	\item \emph{Case~2: $Z' \not \equiv 0$.} In this case, we show that~\eqref{eq:gt11} follows from the inequality~\eqref{eq:gt1_ineq} applied to the Rademacher sum $Z'/\sqrt{\var(Z')}$. To show that the latter inequality holds, we note that when $k$ is fixed, $Z'$ is a Rademacher sum on the $n-k$ variables $x_{k+1},\ldots,x_n$. Hence, by applying Lemma~\ref{cor:nm2} in the inverse direction, we may infer~\eqref{eq:gt1_ineq} for $Z'$ from the assertion $\Pr[|Z|\leq 1]\geq 1/2$ for an appropriate Rademacher sum $Z$ on $n-k+2$ variables, which holds due to the inductive hypothesis, since $k \geq 3$.
\end{itemize}

\noindent For the proof of~\eqref{eq:gt11}, we observe the following relation between $Y'$ and $\var(Z' | Y')$.
\begin{claim}
	\label{claim:aux-induction}
	Let $X',Y',Z'$ be as defined above, assume $Y' \geq L_1$ and let $s=(1-\sum_{i=3}^k (a'_i)^2)^{1/2}$. We have
	\begin{equation}\label{eq:gt1_sy}
	Y'(Y' - L_{1}) \leq 1 - s^2.
	\end{equation}
\end{claim}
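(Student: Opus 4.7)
I would start by decomposing $Y'$ according to its last summand: write $S' = \sum_{i=3}^{k-1} a'_i x_i$ (with the convention that $S'=0$ when $k=3$), so that $Y' = S' + a'_k x_k$ and $1 - s^2 = \sum_{i=3}^k (a'_i)^2$. The plan is to first locate $Y' - L_1$ inside a short interval using the stopping rule, and then to compare $Y' \cdot a'_k$ termwise with the sum of squares.

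The first step exploits the minimality of $k$. For $k \geq 4$ the stopping rule forces $S' < L_1$, while for $k=3$ we have $S' = 0 \leq L_1$ since the assumption $a_1 + a_2 \geq 1$ gives $L_1 = (a_1+a_2-1)/\sigma \geq 0$. Combined with $Y' \geq L_1$ and $a'_k > 0$, this rules out $x_k = -1$ (otherwise $Y' = S' - a'_k < S' \leq L_1$, a contradiction), so $x_k = +1$ and
\[
0 \leq Y' - L_1 = S' + a'_k - L_1 \leq a'_k.
\]
In particular $Y' \geq L_1 \geq 0$, so multiplying by $Y'$ preserves the inequality:
\[
Y'(Y' - L_1) \leq a'_k \cdot Y' = \sum_{i=3}^k (a'_k a'_i)\, x_i.
\]

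For the second step I would use the global ordering $a'_3 \geq a'_4 \geq \cdots \geq a'_n > 0$ inherited from the setting of the paper. Bounding each summand separately, $(a'_k a'_i) x_i \leq a'_k a'_i \leq (a'_i)^2$: the second inequality holds because $a'_k \leq a'_i$ for $i \leq k$, and the first is trivial for either sign of $x_i$. Summing over $i=3,\ldots,k$ yields
\[
Y'(Y' - L_1) \leq \sum_{i=3}^k (a'_i)^2 = 1 - s^2,
\]
which is the desired inequality.

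The argument is essentially a short bookkeeping exercise; the only subtlety to watch out for is the edge case $k=3$ where the partial sum $S'$ is empty, and making sure that the minimality of $k$ together with the hypothesis $Y' \geq L_1 \geq 0$ is used correctly to pin down $x_k = +1$. No nontrivial obstacle is anticipated.
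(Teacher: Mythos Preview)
Your proof is correct and follows essentially the same route as the paper: both use the stopping rule to obtain $Y'-L_1\le a'_k$ and then bound $a'_k\cdot Y'\le\sum_{i=3}^{k}(a'_i)^2$ via the monotonicity $a'_k\le a'_i$. The paper phrases the second step as $\sum (a'_i)^2\ge a'_k\sum a'_i\ge (Y'-L_1)Y'$ and does not bother to single out $x_k=+1$, but this is only a cosmetic difference.
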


\begin{proof}
By the definition of $k$, we have $Y'-L_{1} \leq a_{k}' \leq a_{k-1} \leq \ldots \leq a_3'$. Thus,
\[
1-s^{2} = \sum_{i=3}^{k} (a'_{i})^{2} \geq \min_{3 \leq i \leq k} \li\{ a_{i}'\ri\} \sum_{i=3}^{k} a_{i}' \geq (Y'-L_{1})
\sum_{i=3}^{k} a'_{i} x_{i} = (Y'-L_{1}) Y',
\]
as asserted.
\end{proof}

\paragraph{The case $Z' \equiv 0$.} We observe that in this `singular' case, the right hand side of~\eqref{eq:gt11} is zero, and hence the inequality trivially holds.
Indeed, $Z' \equiv 0$ occurs in one of two cases:
\begin{itemize}
	\item There does not exist $k$ such that $Y' = \sum_{i=3}^{k} a_{i}'x_{i} \geq L_{1}$. In this case, $Y'+Z'=\sum_{i=3}^{n} a_{i}'x_{i} < L_1$, and thus, the right hand side of~\eqref{eq:gt11} is clearly equal to zero by~\eqref{Eq:Aux-Induction1}.
	
	\item The minimal $k$ such that $Y' \defeq \sum_{i=3}^{k} a_{i}'x_{i} \geq L_{1}$ is $k=n$.
	By~\eqref{Eq:Aux-Induction1}, in order to show that the r.h.s. of~\eqref{eq:gt11} is equal to zero, it is sufficient to prove that $Y'=Y'+Z' < R_1$. To see this, observe that by Claim~\ref{claim:aux-induction}, $Y'(Y'-L_1) \leq 1$, and hence $Y' < R_1$ follows from $R_1(R_1 - L_1) > 1$ (recall $0 \leq L_1 \leq R_1$ by~\eqref{Eq:Aux-Induction1}). This latter inequality reads as $2(1+a_1-a_2)(1-a_2)/\sigma^2 > 1$, where $\sigma^2=1-a_1^2-a_2^2$ and $a_1 \geq a_2$, and follows by
	\[
		2(1+a_1-a_2)(1-a_2)=\sigma^2+(a_1-a_2)(2+a_1-a_2)+(2a_2-1)^2/2+1/2 > \sigma^2.
	\]
\end{itemize}

\paragraph{The case $Z' \not \equiv 0$.} Denote $s = (\var(Z'))^{1/2} = (\sum_{i=k+1}^n (a'_i)^2)^{1/2}$, so that $\frac{1}{s} Z'$ is a Rademacher sum with variance~1 on $n-k$ variables. We have to prove~\eqref{eq:gt11} which reads as
\begin{equation}\label{eq:gt1_need}
\pr\li[ \frac{Z'}{s} \in \li[ \frac{L_{1}-Y'}{s}, \frac{L_{2}-Y'}{s}\ri] \ri] \geq \pr \li[ \frac{Z'}{s} > \frac{R_{1}-Y'}{s} \ri] + \pr \li[ \frac{Z'}{s} > \frac{R_{2}-Y'}{s} \ri].
\end{equation}
(Note that we assume $k,Y'$ are fixed, and hence, the probabilities in~\eqref{eq:gt1_need} depend only on $Z'$.)
We would like to deduce~\eqref{eq:gt1_need} from the assertion $\Pr[|Z|\leq 1]\geq 1/2$ for an auxiliary Rademacher sum $Z$ on $n-k+2$ variables, which holds due to the inductive hypothesis (since $n-k+2<n$).

\medskip To this end, we pick $b_1, b_2 \in \mathbb{R}$ such that
\begin{equation}\label{Eq:Aux-Induction3}
\frac{L_{1}-Y'}{s}, \frac{L_{2}-Y'}{s} \eqdef L_{1}', L_{2}' = \frac{b_1+b_2-1}{\sigma'}, \frac{1-b_1+b_2}{\sigma'},
\end{equation}
where $\sigma' = \sqrt{1-b_1^2 -b_2^{2}}$. (Concrete values of the possibly negative $b_1, b_2$ are given below.) We define a Rademacher sum $Z = b_1 z_1 + b_2 z_2 + \frac{\sigma'}{s} Z'$, let
\[
R_{1}', R_{2}' = \frac{1+b_1-b_2}{\sigma'}, \frac{1+b_1+b_2}{\sigma'},
\]
and show that $R'_1$ and $R'_2$ satisfy
\begin{equation}\label{eq:gt1_RR}
R_{1}' \leq \frac{R_{1}-Y'}{s} \qquad \mbox{and} \qquad R_{2}' \leq \frac{R_{2}-Y'}{s}.
\end{equation}
As $Z$ is a Rademacher sum with variance~1 on $n-k+2<n$ variables, the induction hypothesis together with Lemma~\ref{cor:nm2} yields
\[
\pr\li[\frac{Z'}{s} \in [L'_{1}, L'_{2}] \ri] \geq \pr \li[\frac{Z'}{s} > R'_{1} \ri] + \pr \li[\frac{Z'}{s} > R'_{2} \ri],
\]
which implies~\eqref{eq:gt1_need} via~\eqref{eq:gt1_RR}. So, it is only left to show that $Z$ is well defined and that~\eqref{eq:gt1_RR} holds.

\paragraph{Why is $Z$ well-defined?}

To show that there exist $b_1,b_2 \in \mathbb{R}$ that satisfy~\eqref{Eq:Aux-Induction3}, let
\[
b_1 = \frac{2 + 2L_1' L_2'}{2 + (L'_1)^2 + (L'_2)^2} \qquad  \mbox{and} \qquad b_2 = \frac{(L'_2)^2 - (L'_1)^2}{2 + (L'_1)^2 + (L'_2)^2}.
\]
A direct computation shows that
\begin{equation}\label{Eq:Aux-Induction4}
\sigma' = \sqrt{1-b_1^2-b_2^2} = \frac{2(L'_2-L'_1)}{2+(L'_1)^2+(L'_2)^2},
\end{equation}
and in particular, $b_1^2+b_2^2 < 1$ (via~\eqref{Eq:Aux-Induction1} and the definition of $L_1', L_2'$ in~\eqref{Eq:Aux-Induction3}).
A further direct computation shows that the right equality in~\eqref{Eq:Aux-Induction3} holds as well.

\paragraph{Proving~\eqref{eq:gt1_RR}.}
To verify the two inequalities in~\eqref{eq:gt1_RR}, notice that by adding $L_{2}'$ to both sides of the first, and $-L_1'$ to both sides of the second, and recalling the definitions of $L_1,L_2,R_1,R_2$ and $L_1', L_2', R_1', R_2'$, these inequalities are respectively equivalent to:
\[
\frac{s}{\sigma'} + Y' \leq \frac{1}{\sigma} \qquad \mbox{and} \qquad \frac{s}{\sigma'} \leq \frac{1}{\sigma}.
\]
As $Y' \geq L_1 \geq 0$ by~\eqref{Eq:Aux-Induction1}, the former inequality clearly implies the latter; hence, we focus only on it. Note that similarly to~\eqref{Eq:Aux-Induction4}, we have
\[
\sigma = \frac{2(L_2-L_1)}{2 + L_1^2 + L_2^2}.
\]
In addition, by~\eqref{Eq:Aux-Induction3}, we have $L'_2-L'_1 = (L_2-L_1)/s$. Using this and substituting the values of $\sigma,\sigma'$, the inequality we seek to prove $\frac{s}{\sigma'} + Y' \leq \frac{1}{\sigma}$, reads as
\[
\frac{s^{2}(2+L_1'^2+L_2'^2)}{2(L_2-L_1)} + Y' \leq \frac{(2+L_1^2+L_2^2)}{2(L_2-L_1)}.
\]
Multiplying by $2(L_2-L_1)$ and substituting the values of $L_1', L_2'$ from~\eqref{Eq:Aux-Induction3}, we reduce to showing
\[
2s^2 + (L_1-Y')^2 + (L_2-Y')^2 + 2Y'(L_2-L_1) \leq 2 + L_1^2 + L_2^2,
\]
Since $s^2 \leq 1-Y'(Y' - L_{1})$ by Claim~\ref{claim:aux-induction}, it is sufficient to prove that
\[
2 - 2Y'(Y'-L_1) + (L_1-Y')^2 + (L_2-Y')^2 + 2Y'(L_2-L_1) \leq 2+L_1^2+L_2^2.
\]
Simplifying this inequality, one sees it is equivalent to the inequality $2Y' \cdot L_{1} \geq 0$, which indeed holds since by~\eqref{Eq:Aux-Induction1}, we have $0 \leq L_{1} \leq Y'$. This completes the proof of Proposition~\ref{prop:semi-inductive}.


\section{Theorem~\ref{thm:main} for \tops{$a_1 \geq 0.55$}, \tops{$a_1+a_2 < 1$}}\label{sec:055}
	In the previous sections we handled Theorem~\ref{thm:main} in the cases $a_1 \leq 0.31$ (Section~\ref{sec:sub:BE-applications}), and $a_1+a_2 \geq 1$ (Section~\ref{sec:induction}, semi-inductively). In this section we handle the case $(a_1 \geq 0.55) \wedge (a_1 + a_2 < 1)$.
	
	
	\paragraph{Elimination step.} Let $X = \sum_{i=1}^{n} a_i x_i$ be a Rademacher sum with $\var(X)=1$, $a_1 \geq (1+\sqrt{8})/7$, and $a_1+a_2 < 1$. (Note that $1+\sqrt{8}/7 \leq 0.55$.) 
	We want to prove $\pr[|X| \leq 1] \geq 1/2$.
	
	\medskip By the case $m=1$ of Lemma~\ref{lem:nm}, it is sufficient to prove that
	\begin{equation}\label{Eq:Aux6.1}
	\pr[X' \in \hleft 0, t\ri] ] \geq \pr[X' > 1/t],
	\end{equation}
	where
	\[
	\sigma = \sqrt{1-a_1^2}, \qquad a'_i = \frac{a_i}{\sigma}, \qquad X'=\sum_{i=2}^n a'_i x_i, \qquad \mbox{and} \qquad t=\sqrt{\frac{1-a_1}{1+a_1}}=\frac{1-a_1}{\sigma}
	\]
	(see~\eqref{eq:nm1} and~\eqref{Eq:Prelim-Eliminate-one} above). As we assume $a_1 + a_2 < 1$, we have
	\begin{equation}\label{Eq:Aux6.2}
	\forall i\cc a_i' < t.
	\end{equation}
	
	\paragraph{Segment comparison step.}
	
	\begin{claim}\label{Cl:Aux6.1}
		Let $X',t$ be as defined above. Then:
		\begin{enumerate}
			\item[(a)] $\hpr{X'}{t}{2t} \leq 2\hpr{X'}{0}{t},$ and
			
			\item[(b)] $\forall k \geq 2 \cc \hpr{X'}{kt}{(k+1)t} \leq 4\hpr{X'}{0}{t}$.
		\end{enumerate}
	\end{claim}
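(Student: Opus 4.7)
The plan is to derive both inequalities directly from the segment-comparison tool developed in Section~\ref{sec:comparison}, specifically from Lemmas~\ref{lem:seg-compare1} and~\ref{lem:seg-compare2}. The crucial input is the strict bound $M \defeq \max_i a_i' < t$ recorded in~\eqref{Eq:Aux6.2}, which follows from $a_1 + a_2 < 1$. Combined with the symmetry of $X'$ around $0$, this bound is tight enough that both parts of the claim should fall out of a single application of a segment-comparison lemma, followed by a symmetry/additivity splitting.

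For part (a), I would invoke Lemma~\ref{lem:seg-compare2} with the quadruple $(A, B, C, D) = (-t, t, t, 2t)$. The hypothesis $|A| \leq C$ holds with equality, and $2\max(M, D-B) = 2\max(M, t) = 2t$ matches $C - A = 2t$; so the lemma gives $\hpr{X'}{t}{2t} \leq \hpr{X'}{-t}{t}$. By the symmetry of $X'$ together with the additivity property~\eqref{eq:additivity}, one has $\hpr{X'}{-t}{t} = \hpr{X'}{-t}{0} + \hpr{X'}{0}{t} = 2\hpr{X'}{0}{t}$, which yields (a).

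For part (b), fixing $k \geq 2$, I would invoke Lemma~\ref{lem:seg-compare1} with the quadruple $(A, B, C, D) = (-t, 2t, kt, (k+1)t)$. Here $\min(|A|, |B|) = t \leq kt = C$, and $D - C + 2M = t + 2M \leq 3t = B - A$ because $M < t$; hence the lemma yields $\hpr{X'}{kt}{(k+1)t} \leq \hpr{X'}{-t}{2t}$. Splitting the right-hand side via~\eqref{eq:additivity}, using symmetry to replace $\hpr{X'}{-t}{0}$ by $\hpr{X'}{0}{t}$, and invoking the bound $\hpr{X'}{t}{2t} \leq 2\hpr{X'}{0}{t}$ from part (a), gives
\[
\hpr{X'}{-t}{2t} = \hpr{X'}{-t}{0} + \hpr{X'}{0}{t} + \hpr{X'}{t}{2t} \leq \hpr{X'}{0}{t} + \hpr{X'}{0}{t} + 2\hpr{X'}{0}{t} = 4\hpr{X'}{0}{t},
\]
completing (b).

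I do not anticipate any genuine obstacle, since once the two quadruples are chosen, the hypotheses of Lemmas~\ref{lem:seg-compare1} and~\ref{lem:seg-compare2} are verified by inspection. The only minor point of design is the selection of the comparison segment $[A, B]$ in part (b): it must have width large enough to accommodate $D - C + 2M \leq B - A$ \emph{uniformly} in $k$, yet its own probability must already be a priori controlled by $\hpr{X'}{0}{t}$. The interval $[-t, 2t]$ of width $3t$ is the natural choice satisfying both constraints, and its three unit-$t$ pieces are bounded by $\hpr{X'}{0}{t}$, $\hpr{X'}{0}{t}$ and $2\hpr{X'}{0}{t}$ respectively, summing to the desired factor of $4$.
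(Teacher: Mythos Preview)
Your proof is correct and essentially identical to the paper's: you apply Lemma~\ref{lem:seg-compare2} with $(A,B,C,D)=(-t,t,t,2t)$ for part~(a) and Lemma~\ref{lem:seg-compare1} with $(A,B,C,D)=(-t,2t,kt,(k+1)t)$ for part~(b), exactly as the authors do. The only cosmetic difference is that the paper splits $\hpr{X'}{-t}{2t}$ as $\hpr{X'}{-t}{t}+\hpr{X'}{t}{2t}$ rather than into three pieces, but the bound obtained is the same.
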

	
	\begin{proof}\skipline

\noindent	(a) Applying Lemma~\ref{lem:seg-compare2} to $X'$, with the parameters $A,B,C,D,M=-t,t,t,2t,a_2'$, we get
		\[
		\hpr{X'}{t}{2t} \leq \hpr{X'}{-t}{t}.
		\]
		(Note that $2M\leq C-A$ follows from~\eqref{Eq:Aux6.2}). By the symmetry of $X'$, this implies~(a).
	
\medskip \noindent
		(b) Applying Lemma~\ref{lem:seg-compare1} to $X'$, with the parameters $A,B,C,D,M=-t, 2t, kt, (k+1)t, a_2'$, we get $\hpr{X'}{kt}{(k+1)t} \leq \hpr{X'}{-t}{2t}$. Using (a) as
		\[
		\hpr{X'}{-t}{2t} = \hpr{X'}{-t}{t} + \hpr{X'}{t}{2t} \leq 4 \hpr{X'}{0}{t},
		\]
		the assertion (b) follows.
	\end{proof}
	
\paragraph{Chebyshev-type inequality step.} Applying the inequality~\eqref{eq:cheby_our} to $X'$, with
	\[
		c_0, c_1, c_2, c_3, \ldots, c_{\lceil 1/t \rceil} = 0, t, 2t, 3t, \ldots, 1 \qquad \mbox{and} \qquad d_0, d_1 = 1, 1/t,
	\]
	we obtain
	\[
	\sum_{k=0}^{\lceil 1/t \rceil-1} (1-(k t)^2) \hpr{X'}{k t}{(k+1)t} \geq \Big(\frac{1}{t^2}-1 \Big) \pr[X' \geq 1/t].
	\]
	By Claim~\ref{Cl:Aux6.1}, this implies
	\[
		\Big( 1 + 2(1-t^2) + 4\sum_{k=2}^{\lceil 1/t \rceil -1} \li( 1-(kt)^2\ri) \Big) \hpr{X'}{0}{t}
		\geq
		\Big( \frac{1}{t^2} - 1 \Big) \pr[X' \geq 1/t].
	\]
	Dividing both sides by $1/t^2-1$, we obtain an inequality of the form
	\[
	C_{t} \cdot \hpr{X'}{0}{t} \geq \pr[X' \geq 1/t].
	\]
	This inequality implies~\eqref{Eq:Aux6.1}, provided  $C_{t} \leq 1$. Hence, it is left to verify:
	\begin{equation}\label{eq:055fin}
		C_{t} \isleq 1, \quad \text{with} \quad C_{t} = \frac{1 + 2(1-t^2) + 4\sum_{k=2}^{\lceil 1/t \rceil-1} \li( 1-(kt)^2\ri)}{(1/t^2) - 1}, \quad 0 < t \leq \sqrt{1-2^{-1/2}}.
	\end{equation}
	(Note that the condition $0< t \leq (1-2^{-1/2})^{1/2}$ follows from the assumption $a_1 \in \li[ (1+\sqrt{8})/7, 1\ri)$, via $t=\sqrt{(1-a_1)/(1+a_1)}$. This is the only place where the assumption $a_1 \geq 0.55$ is used.) The derivation of~\eqref{eq:055fin} is presented in Appendix~\ref{app:055fin}.


\section{Theorem~\ref{thm:main} for \tops{$0.5 \leq a_1 \leq 0.55$}, \tops{$a_1 + a_2 < 1$}}\label{sec:0.5<a_1<0.55}
	The proof of Theorem~\ref{thm:main} in this range is split into two cases: Small $a_2$ and large $a_2$. The threshold between the cases depends on $a_1$, being  $(a_1-3+\sqrt{25+10a_1-63a_1^2})/8$.
	
	\subsection{The case \tops{$a_2 \leq (a_1-3+\sqrt{25+10a_1-63a_1^2})/8$}}
	
	Let $X = \sum a_i x_i$ be a Rademacher sum with $\var(X)=1$, $a_1 \in [0.5, 0.55]$, $a_1+a_2 < 1$, and $a_2 \leq (a_1-3+\sqrt{25+10a_1-63a_1^2})/8$. The proof that $\pr[|X| \leq 1] \geq 1/2$ is almost identical to the argument of Section~\ref{sec:055}, as we explain below.
	
	\paragraph{Elimination step.} Like in Section~\ref{sec:055}, we note that it is sufficient to prove $\pr[X' \in \hleft 0, t\ri] ] \geq \pr[X' > 1/t]$, where $X',t$ are defined as in~\eqref{Eq:Aux6.1}.
	
	\paragraph{Segment comparison step.} Instead of Claim~\ref{Cl:Aux6.1}, we use the following comparisons:	
	\begin{equation}\label{eq:05rep}
	\hpr{X'}{t}{3t/2} \leq \hpr{X'}{0}{t} \quad \mbox{and} \quad \hpr{X'}{3t/2}{1} \leq \hpr{X'}{0}{t}.
	\end{equation}
	The first inequality follows from Lemma~\ref{lem:seg-compare2}, applied to $X'$ with the parameters $A,B,C,D,M = 0, t, t, 3t/2, a_2/\sigma$, and the second inequality follows from Lemma~\ref{lem:seg-compare1}, applied to $X'$ with the parameters $A,B,C,D,M = 0, t, 3t/2, 1, a_2/\sigma$. To show that Lemmas~\ref{lem:seg-compare2} and~\ref{lem:seg-compare1} indeed can be applied (i.e., that the assumptions of the lemmas are satisfied), it is sufficient to verify:
	\begin{equation}\label{eq:05saux}
	\begin{gathered}
	\frac{2a_2}{\sigma} \isleq t, \qquad \quad (1 - 3t/2) + \frac{2a_2}{\sigma} \isleq t\\
	\text{with:}\qquad a_1 \in [0.5, 0.55],\quad\sigma = \sqrt{1-a_1^2},\quad\\ t=\sqrt{\frac{1-a_1}{1+a_1}},\quad a_2 \leq \frac{a_1-3+\sqrt{25+10a_1-63a_1^2}}{8}.
	\end{gathered}
	\end{equation}
	These inequalities are proved in Appendix~\ref{app:05saux}.
	
	\paragraph{Chebyshev-type inequality step.} Note that as $a_1 \geq 1/2$, we have $t=\sqrt{(1-a_1)/(1+a_1)} \leq 1/\sqrt{3}< 2/3$.
	Applying the inequality~\eqref{eq:cheby_our} to $X'$, with
	\[
	c_0, c_1, c_2, c_3 = 0, t, 3t/2, 1 \qquad \mbox{and} \qquad d_0, d_1 = 1, 1/t,
	\]
	and using~\eqref{eq:05rep}, we obtain
	\[
	(1 + (1-t^2) + (1-(3t/2)^2)) \hpr{X'}{0}{t} \geq \left(\frac{1}{t^2}-1\right) \pr[X' \geq 1/t].
	\]	
	Hence, the task of deducing~\eqref{Eq:Aux6.1} boils down to verifying:
	\begin{equation}\label{eq:05rep2}
	C_t' \isleq 1, \qquad \text{with:}\qquad C_t' = \frac{1 + (1-t^2) + (1-(3t/2)^2)}{(1/t^2)-1},\quad t^2 \in (0, 1/3].
	\end{equation}
	This verification is done in Appendix~\ref{app:05fin2}.
	
	\subsection{The case \tops{$a_2 \geq (a_1-3+\sqrt{25+10a_1-63a_1^2})/8$}}
	
	Let $X = \sum a_i x_i$ be a Rademacher sum with $\var(X)=1$, $a_1 \in [0.5, 0.55]$, $a_1+a_2 < 1$, and $a_2 \geq (a_1-3+\sqrt{25+10a_1-63a_1^2})/8$. The proof of $\pr[|X| \leq 1] \geq 1/2$ is similar to the above strategy, but this time, two variables are eliminated.
	
	\paragraph{Elimination step.} By Lemma~\ref{cor:nm2}, it it sufficient to show that
	\begin{equation}\label{eq:05need}
		\hpr{X'}{0}{-L_1} + \hpr{X'}{0}{L_2} \geq \pr[X' > R_1] + \pr[X' > R_2],
	\end{equation}
	where $X', \sigma, L_1, L_2, R_1, R_2$ are as defined in Lemma~\ref{cor:nm2}. (But this time, since $a_1+a_2<1$ we have $L_1<0$). Note that unlike the previous subsection, $X'$ depends on $n-2$ variables.
	
	\paragraph{Auxiliary estimates.} We use several auxiliary estimates on $L_2,R_1,R_2$:
	\begin{equation}\label{eq:05fin}
	\begin{gathered}
	L_2 \isgeq \frac{2}{3}, \qquad R_1 \isgeq \max(\sqrt{3 - L_2^2},\sqrt{2}), \qquad R_2 \isgeq \max(\sqrt{5 - 2L_2^2},\sqrt{3}),\\
	\text{with:}\qquad L_2 = \frac{1-a_1+a_2}{\sigma},\quad R_1 = \frac{1+a_1-a_2}{\sigma},\quad R_2 = \frac{1+a_1+a_2}{\sigma}, \\
	\sigma = \sqrt{1-a_1^2-a_2^2},\quad
	a_1 \in [0.5, 0.55],\quad
	a_1+a_2 < 1,\\
	a_2 \geq \li( a_1-3+\sqrt{25+10a_1-63a_1^2}\ri)/8.
	\end{gathered}
	\end{equation}
	These inequalities are proved in Appendix~\ref{app:05fin}.
	
	\paragraph{The sub-case $L_2 \geq 1$.} Applying the Chebyshev-type inequality~\eqref{eq:cheby_our} to $X'$, with $c_0,c_1=0,1$ and $d_1,d_2,d_3=1,\sqrt{2},\sqrt{3}$, we obtain
	\[
	\hpr{X'}{0}{1} \geq \pr[X' \geq \sqrt{2}] + \pr[X' \geq \sqrt{3}],
	\]
	which implies~\eqref{eq:05need} via~\eqref{eq:05fin}.
	
	\paragraph{The sub-case $L_2 < 1$.} Applying Lemma~\ref{lem:seg-compare2} to $X'$, with the parameters $A,B,C,D,M = 0, L_2, L_2, 1, a_{3}/\sigma$, we obtain
	\begin{equation}\label{Eq:Aux7.1}
	\hpr{X'}{L_2}{1} \leq \hpr{X'}{0}{L_2}.
	\end{equation}
	Notice that the assumptions of Lemma~\ref{lem:seg-compare2} are satisfied, as $2(D-B)=2(1-L_2)\leq L_2=C-A$ by~\eqref{eq:05fin}, and
	\[
	2M \leq \frac{2a_2}{\sigma} < \frac{1-a_1+a_2}{\sigma} = L_2 =C-A,
	\]
	by the assumption $a_1+a_2 < 1$.
	
	\medskip Applying the Chebyshev-type inequality~\eqref{eq:cheby_our} to $X'$, with $c_0,c_1,c_2=0,L_2,1$ and $d_1,d_2,d_3=1,\sqrt{3-L_2^2},\sqrt{5-2L_2^2}$, we obtain
	\begin{equation}\label{eq:05cheby}
	\begin{split}
		\hpr{X'}{0}{L_2} &+ (1-L_2^2)\hpr{X'}{L_2}{1} \geq \\
		&\geq (2-L_2^2)\pr\left[X'\geq \sqrt{3-L_2^{2}}\right] + (2-L_2^2)\pr\left[X' \geq \sqrt{5-2L_2^{2}}\right].
	\end{split}
	\end{equation}
	By~\eqref{Eq:Aux7.1}, this implies
	\[
	\hpr{X'}{0}{L_2} \geq \pr\left[X'\geq \sqrt{3-L_2^{2}}\right] + \pr\left[X' \geq \sqrt{5-2L_2^{2}}\right],
	\]
	which, in turn, implies~\eqref{eq:05need} via~\eqref{eq:05fin}. This completes the proof.

\section{The remaining case: $a_1 \in (0.31, 0.5)$}\label{sec:remaining}

Our proof is most involved in this range, although there are no remarkable tightness examples with $a_1 \in (0.31, 0.5)$ (except for $X = \frac{1}{3} \sum_{i=1}^9 x_i$, being the `lightest' appearing in Section~\ref{ssec:hard-cases}).

The full proof in this range is deferred to appendices~\ref{sec:31big},~\ref{sec:39geq}, and~\ref{sec:0.31<a_1<0.387}, corresponding to three main subcases. The general structure of the proof is to condition on the values of a few largest weights (say, $a_1, a_2, a_3$), usually through elimination (Lemma~\ref{lem:nm}), and to prove Tomaszewski's assertion~\eqref{eq:main} regardless of the values we condition on.
While the careful proof is somewhat cumbersome, it was plotted by considering several specific fixings of the large weights (say, $a_1,a_2,a_3 = 0.4, 0.3, 0.2$) and proving the assertion~\eqref{eq:main} under these fixings. Then, the proof was generalized to capture any such fixing.

We demonstrate the proof by presenting several (not fully) representative such fixings, and proving~\eqref{eq:main} under them.

\subsection{Subcase $a_1 + a_2 + a_3 \leq 1$}\label{ssec:rem-case1}
The following example concisely demonstrates all the steps in the proof of this case (given in Appendix~\ref{sec:31big}).

Assume
\[
	a_1, a_2, a_3 = 0.36, 0.2, 0.15,
\]
so that 3-elimination (Lemma~\ref{lem:nm}) reduces us to proving the following inequality for all Rademacher sums $X'$ with $\var(X')=1$, whose largest weight is $\leq 0.15/(1-0.36^2-0.2^2-0.15^2)^{1/2} < 0.17$:
\begin{equation}\label{eq:demo1}
\begin{gathered}
	\hpr{X'}{0}{0.32} + \hpr{X'}{0}{0.65} + \hpr{X'}{0}{0.76} + \hpr{X'}{0}{1.1}
	\\ \geq \\
	\pr[X' > 1.12] + \pr[X' > 1.45] + \pr[X' > 1.56] + \pr[X' > 1.9]
\end{gathered}
\end{equation}
Using Chebyshev's inequality~\eqref{eq:cheby_our}, we lower bound an expression similar to the LHS of~\eqref{eq:demo1}:
\begin{equation}\label{eq:demo1-cheby}
\begin{gathered}
	\hpr{X'}{0}{0.32} + 0.9\hpr{X'}{0.32}{0.65} +\\
	0.6\hpr{X'}{0.65}{0.76} + 0.5\hpr{X'}{0.76}{1}
	\\ \geq \\
	(0.25\pr[X' > 1.12] + 0.25\pr[X' > 1.23]) + 0.5\pr[X' > 1.42] +\\
	(0.4\pr[X' > 1.56]  + 0.1\pr[X' > 1.6]) + 0.5\pr[X' > 1.75].
\end{gathered}
\end{equation}
Can we deduce~\eqref{eq:demo1} from~\eqref{eq:demo1-cheby}? Not immediately. If we multiply~\eqref{eq:demo1-cheby} by $2$ and see what is `missing' in order to deduce~\eqref{eq:demo1}, we get an inequality weaker than:
\[
	\hpr{X'}{0}{0.32} + \hpr{X'}{0}{0.65} \geq 0.5\hpr{X'}{1.12}{1.23} + 0.2\hpr{X'}{1.56}{1.6}.
\]
Recall that the largest weight of $X'$ is at most $0.17$, so using segment comparison (specifically, Lemma~\ref{lem:seg-compare1}), we can prove this inequality by showing $\hpr{X'}{1.12}{1.23} \leq \hpr{X'}{0}{0.65}$ and $\hpr{X'}{1.56}{1.6} \leq \hpr{X'}{0}{0.65}$.

\subsection{Subcase $a_1 + a_2 + a_3 \geq 1$ and $a_1 \in (0.387, 0.5)$}\label{ssec:rem-case2}
The proof in this range is similar to that of Section~\ref{ssec:rem-case1}, but the details are much simpler. We again give a demonstration that captures the essence of the proof (given in Appendix~\ref{sec:39geq}).

Assume
\[
	a_1, a_2, a_3 = 0.4, 0.35, 0.3,
\]
so that 3-elimination (Lemma~\ref{lem:nm}) reduces us to prove the following inequality for all Rademacher sums $X'$ with $\var(X')=1$, whose largest weight is $\leq 0.3/(1-0.4^2-0.35^2-0.3^2)^{1/2} < 0.38$:
\begin{equation}\label{eq:demo2}
\begin{gathered}
	\hpr{X'}{0.07}{0.69} + \hpr{X'}{0}{0.82} + \hpr{X'}{0}{0.94}
	\\ \geq \\
	\pr[X' > 1.57] + \pr[X' > 1.7] + \pr[X' > 1.83] + \pr[X' > 2.58]
\end{gathered}
\end{equation}
We prove~\eqref{eq:demo2} even without the $\hpr{X'}{0.07}{0.69}$ term. Using Chebyshev's inequality~\eqref{eq:cheby_our}:
\begin{equation}\label{eq:demo2-cheby}
\begin{gathered}
	\hpr{X'}{0}{0.82} + 0.33\hpr{X'}{0.82}{0.94} + 0.12\hpr{X'}{0.94}{1}
	\\ \geq \\
	0.7 \pr[X' > 1.31] + 0.7 \pr[X' > 1.55] + 0.7 \pr[X' > 1.77] + 0.7 \pr[X' > 1.95].
\end{gathered}
\end{equation}
Can we deduce~\eqref{eq:demo2} from~\eqref{eq:demo2-cheby}? Not immediately. If try to deduce~$0.7\cdot$\eqref{eq:demo2} from~\eqref{eq:demo2-cheby} we see that what is `missing', is weaker than:
\[
	0.4\hpr{X'}{0}{0.82}-0.12\hpr{X'}{0.94}{1} \geq 0.
\]
Using segment comparison, and specifically Lemma~\ref{lem:seg-compare1}, barely applicable since $(1-0.94) + 2\cdot 0.38 = 0.82$, we conclude $\hpr{X'}{0.94}{1} \leq \hpr{X'}{0}{0.82}$. Also, we are equally satisfied with
\[
	\hpr{X'}{0.94}{1} \leq \hpr{X'}{-0.82}{0.82} \leq 2\hpr{X'}{0}{0.82},
\]
which liberally follows from Lemma~\ref{lem:seg-compare1} (and whose analog holds true in the entire range).

\subsection{Subcase $a_1 + a_2 + a_3 \geq 1$ and $a_1 \in (1/3, 0.387)$}

The proof in this case is a bit more complicated than in the other cases, and the demonstration we give does not capture the entire set of arguments we use. However it represents perhaps the most exotic argument, and features a semi-inductive argument along with a 5-elimination.

Assume
\[
a_1 = a_2 = a_3 = a_4 = a_5 = 0.34,
\]
so that 5-elimination (Lemma~\ref{lem:nm}) reduces us to proving the following inequality for all Rademacher sums $X'$ with $\var(X')=1$, whose largest weight is $\leq 0.34/(1-5\cdot 0.34^2)^{1/2} < 0.6$:
\begin{equation}\label{eq:demo3}
\begin{gathered}
	4\hpr{X'}{0}{1.01} + 5\hpr{X'}{0.04}{1.01} + \hpr{X'}{1.08}{2.06}
	\\ \geq \\
	\pr[X' > 1.01] + 9\pr[X' > 2.06] + 5\pr[X' > 3.1] + \pr[X' > 4.15].
\end{gathered}
\end{equation}
We prove~\eqref{eq:demo3} even with only the first term on the LHS. Using Chebyshev's inequality~\eqref{eq:cheby_our}:
\begin{equation}\label{eq:demo3-cheby}
	3\hpr{X'}{0}{1} \geq 9\pr[X' > 2] + 15\pr[X' > 3].
\end{equation}
In order to deduce~\eqref{eq:demo3} from~(\ref{eq:demo3-cheby}), we have to show
\begin{equation}\label{eq:demo3-final}
	\hpr{X'}{0}{1.01} \geq \pr[X' > 1.01].
\end{equation}
This inequality is actually not easy to prove. However, it is implied by Tomaszewski's assertion~\eqref{eq:main} for the variable $X'$ which depends on $n-5$ weights (where $n$ is the number of weights in the original Rademacher sum $X$ we discuss)! We comment that in general we should use~\eqref{eq:dzin-intro1} instead of~\eqref{eq:main} to prove~\eqref{eq:demo3-final}, which is possible as~\eqref{eq:dzin-intro1} for $X'$ is implied from Tomaszewski's assertion~\eqref{eq:main} on $n-4$ weights.

\section{Open Problems}\label{sec:open}

As was mentioned in the introduction, we believe the methods developed in this paper can be applied to obtain further results on the distribution of Rademacher sums. We conclude the paper with several related open questions.

\paragraph{Tail bounds for Rademacher sums.} Consider the following general problem.
\newtheorem{problem}[theorem]{Problem}
\begin{problem}
Let $\mathcal{X}$ be the class of all Rademacher sums with variance $1$. Characterize the following function, defined for all $x \in \reals$:
\[
	F(x) = \sup_{X \in \mathcal{X}} \pr[X > x].
\]
\end{problem}
\noindent While different sub-cases of this problem were studied in many papers (see, e.g.,~\cite{Dzindzalieta14,Pinelis12} and the numerous references therein),
only a few exact results on it are known (e.g.,~\cite{BD15,Pinelis12}). Theorem~\ref{thm:main} continues the series of exact results, showing that $F(x) = 1/4$ for all $x \in [1, \sqrt{2})$.

A well-known conjecture, due to Hitczenko and Kwapie\'{n}~\cite{HK94}, concerns $F(x)$ for $x = -1$.
\begin{conjecture}[\cite{HK94}]\label{conj:olesz}
	Let $X = \sum a_i x_i$ be a Rademacher sum with $\var(X) = 1$. Then
	\[
		\pr[X \geq 1] \geq \frac{7}{64}.
	\]
\end{conjecture}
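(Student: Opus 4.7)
The plan is built around the presumed extremal example $X_\star = \tfrac{1}{\sqrt{6}}\sum_{i=1}^{6} x_i$, for which $\pr[X_\star \geq 1] = \pr[\sum_i x_i \geq 4] = (\binom{6}{5} + \binom{6}{6})/2^6 = 7/64$. Since $a_1(X_\star) = 1/\sqrt{6} \approx 0.408$ satisfies $a_1 + a_2 + a_3 = \sqrt{3/2} > 1$ while $a_1 + a_2 < 1$, the extremum sits in an intermediate regime, and any proof must be tight there. I would mimic the case-analysis structure of the present paper, splitting according to the partial sums of the (decreasingly ordered) weights.

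If $a_1 + a_2 \geq 1$, conditioning on $x_1 = x_2 = +1$ gives $X = (a_1 + a_2) + Y \geq 1 + Y$, where $Y = \sum_{i \geq 3} a_i x_i$ is symmetric around $0$, so $\pr[Y \geq 0] \geq 1/2$. Hence $\pr[X \geq 1] \geq \tfrac{1}{4} \cdot \tfrac{1}{2} = 1/8 > 7/64$, with room to spare. The critical case is $a_1 + a_2 + a_3 \geq 1$ but $a_1 + a_2 < 1$, which contains $X_\star$. I would apply Lemma~\ref{lem:nm} with $m=3$ to reduce the target to $\sum_{j=0}^{7} \pr[X' \geq T_j] \geq 7/8$, where $T_j = (1 \pm a_1 \pm a_2 \pm a_3)/\sigma$. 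Of the eight thresholds, the one corresponding to all $-$ signs is $\leq 0$ and contributes at least $1/2$; the three one-$-$ choices are moderate; and the remaining four are large. The moderate ones would be controlled by combining segment comparison (Theorem~\ref{thm:seg_compare}) with the refined Chebyshev inequality (Lemma~\ref{lem:cheby}), in the spirit of Sections~\ref{sec:055}--\ref{sec:0.5<a_1<0.55}, and a delicate quantitative computation would close the gap.

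If $a_1 + a_2 + a_3 < 1$, the weights are comparatively small and a Berry-Esseen bound should suffice. Applying Proposition~\ref{prop:Prawitz-Rademacher} to $-X$ in place of $X$ yields a one-sided bound of the form $\pr[X \geq 1] \geq \pr[Z \geq 1] - \mathrm{err}(a_1)$. Since $\pr[Z \geq 1] \approx 0.1587$ exceeds $7/64 \approx 0.1094$ by $\approx 0.049$, the task reduces to driving the one-sided Prawitz error below $0.049$; this requires re-optimizing the parameters $T, q$ in~\eqref{Eq:Our-Prawitz} for the one-sided target at $x = -1$, rather than the two-sided problem to which they were tuned in Propositions~\ref{lem:be_toma}--\ref{prop:ad-hoc2}. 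When $a_1$ is not very small but still $a_1 + a_2 + a_3 < 1$ (so $a_2, a_3$ are small), an additional sub-case that first conditions on $x_1$ and then bounds the remaining near-Gaussian tail should take care of the regime.

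The main obstacle is the meeting of the two regimes. The elimination-based argument for the $a_1 + a_2 + a_3 \geq 1$ range forces $a_1 \geq 1/3$, whereas the Berry-Esseen argument is comfortably effective only for $a_1 \lesssim 0.30$. Closing the gap $a_1 \in [0.30, 0.40]$ (with $a_1 + a_2 + a_3 < 1$) will likely require, as in Case~7 of the proof of Theorem~\ref{thm:main}, a further sub-case analysis on $a_2, a_3$ mixing elimination, segment comparison, refined Chebyshev, and the improved Berry-Esseen bound. The genuinely hardest point is expected to be exactly at $a_1 \approx 1/\sqrt{6}$, the location of $X_\star$: this is where the slack in the Chebyshev step vanishes and where a sharper segment-comparison lemma calibrated to near-equal weights (or possibly a conceptually new ingredient beyond the present toolkit) may be needed.
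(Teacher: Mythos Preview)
This statement is Conjecture~\ref{conj:olesz}, which the paper presents as an \emph{open problem}; there is no proof in the paper to compare against. The paper explicitly reports (Section~\ref{sec:open}) that the best known bound is $\pr[X\geq 1]\geq 6/64$, obtained in~\cite{DK21} using precisely the toolkit you propose --- Proposition~\ref{prop:Prawitz-Rademacher}, Lemma~\ref{lem:nm} with $m\leq 3$, and ``additional tools'' --- yet still falling short of $7/64$.

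Your plan is the natural adaptation of the paper's strategy for Theorem~\ref{thm:main}, and the easy case $a_1+a_2\geq 1$ is correctly handled. But the plan is not a proof: in the critical regime containing $X_\star$, you defer to ``a delicate quantitative computation'' and yourself concede that ``a conceptually new ingredient beyond the present toolkit may be needed.'' That concession is accurate and is the whole difficulty. After $3$-elimination at $X_\star$ one has $\sum_j\pr[X'\geq T_j]=7/8$ exactly, so any subsequent bound via segment comparison and refined Chebyshev must be \emph{sharp} there --- something those tools, which carry slack in Theorem~\ref{thm:main}, have not been shown to achieve. Likewise, the Berry--Esseen route needs one-sided Prawitz error below $\approx 0.049$ up to $a_1$ near $1/3$, whereas the paper's optimized bound already gives error $\approx 0.091$ at $a_1=0.31$; re-tuning $T,q$ for the reversed inequality is plausible but would have to roughly halve the error, and the paper gives no evidence this is attainable. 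In short, you have outlined the program that~\cite{DK21} executed to reach $6/64$; closing the final $1/64$ is the open problem.
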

Conjecture~\ref{conj:olesz} is an evident counterpart of Tomaszewski's conjecture -- while the latter states that $\pr[X > 1]$ must be somewhat small, the former declares that $\pr[X \geq 1]$ must be somewhat large.

The best currently known result toward Conjecture~\ref{conj:olesz} is $\pr[X > 1] \geq 1/20$ (whenever $X \not \equiv x_1$), proved by Oleszkiewicz~\cite{Oles96} more than 20 years ago. Using our methods (specifically, Proposition~\ref{prop:Prawitz-Rademacher} and Lemma~\ref{lem:nm} with $m \leq 3$) and additional tools, Dvo{\v{r}}{\'{a}}k and the second author~\cite{DK21} proved the stronger bound $\pr[X \geq 1] \geq 6/64$, along with the sharp bound $\pr[X > 1] \geq \frac{1}{16}$.

\paragraph{Improved Berry-Esseen type bounds for Rademacher sums.}
Proposition~\ref{prop:ad-hoc2} shows that for a Rademacher sum $X=\sum a_i x_i$ with $\var(X) = 1$ and $0 \leq a_i\leq a_1=0.22$, and for any $x \geq 0$,
\[
\pr[X \leq x] \geq \pr[Z \leq x] - \pr[Z \in \langle 0, a_1]],
\]
where $Z$ is a standard Gaussian. The following conjecture is a natural extension:
\begin{conjecture}
	Let $X = \sum a_i x_i$ be a Rademacher sum with $\var(X) = 1$ and $\forall i \cc 0 < a_i \leq a_1$. Let $Z \sim N(0, 1)$ be a standard Gaussian, and $x \in \reals$. Then
	\begin{equation}\label{eq:be_conj}
		|\pr[X \leq x] - \pr[Z \leq x]| \leq \pr[Z \in (0, a_1)] < \frac{1}{\sqrt{2\pi}} a_1.
	\end{equation}
\end{conjecture}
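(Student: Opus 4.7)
The analytic estimate $\pr[Z\in(0,a_1)]<a_1/\sqrt{2\pi}$ is immediate from the bound $\phi(0)=1/\sqrt{2\pi}$ on the Gaussian density, so all content lies in the Berry--Esseen-type inequality. The plan is to sharpen the strategy of Proposition~\ref{prop:ad-hoc2}, replacing its constant error term $0.084$ by the tight $a_1$-linear bound $\pr[Z\in(0,a_1)]$, by combining the refined Prawitz analysis of Proposition~\ref{prop:Prawitz-Rademacher} with the segment comparison tools of Theorem~\ref{thm:seg_compare}. The conjectured constant is essentially the amplitude of oscillation of the step function $F_X-F_Z$, so tightness will stem from the combinatorial atom structure of $X$ rather than from smooth approximation error.

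By the symmetry of $X$ and $Z$ around $0$, the identity $(\pr[X\leq x]-\pr[Z\leq x])+(\pr[X\leq -x]-\pr[Z\leq -x])=\pr[X=x]$ links the bounds at $\pm x$, so I would prove each one-sided inequality directly for $x\geq 0$. In the subrange $x\in[0,a_1)$ the lower bound is immediate since $\pr[X\leq x]\geq 1/2$ and $\pr[Z\leq x]\leq 1/2+\pr[Z\in(0,a_1)]$, while the matching upper bound reduces (at the worst-case $x$, namely an atom of $X$) to an atom bound of the form $\pr[X=x]\leq 2\pr[Z\in(0,a_1)]$, which should be obtainable via Theorem~\ref{thm:seg_compare} combined with Lemma~\ref{lem:cheby}. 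For $x\geq a_1$ I would split on the size of $a_1$: in the small regime $a_1\leq a_0$ (say $a_0\approx 1/4$), apply Proposition~\ref{prop:Prawitz-Rademacher} with parameters $T,q$ scaled to $a_1$ rather than the fixed $T=10$, $q=0.4$ of Proposition~\ref{lem:be_toma}; using the Taylor expansion $\log\cos(a_iv)=-a_i^2v^2/2-a_i^4v^4/12-\dotsb$ together with $\sum a_i^2=1$ and $\sum a_i^4\leq a_1^2$ yields $|\varphi_X(v)-\varphi_Z(v)|=O(a_1^2v^4 e^{-v^2/2})$, so the resulting Prawitz error is $O(a_1^2)$, comfortably below $\pr[Z\in(0,a_1)]=\Theta(a_1)$. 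In the large regime $a_1>a_0$, iterate Lemma~\ref{lem:nm} to eliminate one or two of the largest weights, reducing to a sum $X'$ with smaller maximum coefficient, and close using segment comparison together with the now-proven Theorem~\ref{thm:main}.

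The main obstacle is matching the constant $\pr[Z\in(0,a_1)]$ exactly near the tightness case $a_1=1/3$ witnessed by $X=\tfrac{1}{3}\sum_{i=1}^9 x_i$, where the bound is essentially saturated. Unlike in the proof of Theorem~\ref{thm:main}, where case analysis has some slack, here the bound must be tight uniformly in $a_1$, so the Prawitz and segment-comparison arguments must be carefully coordinated, and the atom bound $\pr[X=x]\leq 2\pr[Z\in(0,a_1)]$ must be proved sharply. This bound is already essentially tight, e.g., at $X=(x_1+x_2)/\sqrt{2}$ and $x=0$, where both sides are approximately $1/2$, and proving it in full generality will likely require a variant of the segment comparison injection of Section~\ref{ssec:inj-maps} that exchanges atom mass at $x$ with continuous mass in a neighboring interval of length $a_1$.
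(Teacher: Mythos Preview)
The paper does not prove this statement: it is presented explicitly as an open \emph{conjecture} in Section~\ref{sec:open}. The paper only remarks that ``it appears that our methods can be used to prove the conjecture in part of the range, namely, $\pr[Z\leq x]-\pr[X\leq x]\leq\pr[Z\in(0,a_1)]$ for all $x\geq 0$, in a way similar to the proof of Proposition~\ref{prop:ad-hoc2}.'' So there is no paper proof to compare against, and your proposal should be read as an attack on an open problem rather than a reconstruction.

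As a strategy, your outline is broadly in the spirit the authors suggest, and the symmetry reduction and the $x\in[0,a_1)$ lower-bound step are correct. However, there are genuine gaps you have not closed. First, your claimed reduction of the upper bound on $[0,a_1)$ to the atom inequality $\pr[X=x]\leq 2\pr[Z\in(0,a_1)]$ is not justified: for $x\in(0,a_1)$ one has $\pr[X\leq x]-\pr[Z\leq x]=\tfrac{1}{2}\pr[X=0]+\pr[X\in(0,x]]-\pr[Z\in(0,x]]$, which involves cumulative mass in $(0,x]$, not a single atom; you would need a sharper interval bound, and the atom inequality itself (with that exact constant) is already a nontrivial Littlewood--Offord-type statement you have not proved. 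Second, your Prawitz heuristic yielding $O(a_1^2)$ error is plausible for the lower bound $\pr[Z\leq x]-\pr[X\leq x]$ but says nothing about the reverse direction $\pr[X\leq x]-\pr[Z\leq x]$, for which Prawitz' inequality~\eqref{eq:prawitz1} gives no direct control; the paper's machinery is one-sided in exactly this way. Third, the tightness is achieved at every odd $n$ in $X=\frac{1}{\sqrt{n}}\sum x_i$ with $x=a_1-\eps$, so there is no slack at any scale of $a_1$; your ``small $a_1$ / large $a_1$'' dichotomy cannot have a comfortable overlap, and the $O(a_1^2)$ Prawitz bound must in fact hit the constant $\pr[Z\in(0,a_1)]\sim a_1/\sqrt{2\pi}$ exactly in the limit, which your Taylor argument does not achieve.
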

Note that first inequality in~\eqref{eq:be_conj} is tight, as is demonstrated by $X = \sum_{i=1}^{n} x_i / \sqrt{n}$ for $n$ odd, and $x=1/\sqrt{n}-\eps$.
We note that it follows from~\cite[Theorem~1.3 and Remark~1.4(c)]{MS18} that for $X = \sum_{i=1}^{n} x_i / \sqrt{n}$, this inequality holds for any $n,x$.

It appears that our methods can be used to prove the conjecture in part of the range, namely,
$\pr[Z \leq x] - \pr[X \leq x] \leq \pr[Z \in (0, a_1)]$ for all $x \geq 0$, in a way similar to the proof of Proposition~\ref{prop:ad-hoc2}.

\section*{Acknowledgements}

We thank Tom Kalvari, Jiange Li, Lunz Mattner, and Jeremy Schiff, and especially Ron Holzman, for inspiring discussions and useful suggestions.

\begin{appendices}

\section{Proofs for Section~\ref{sec:comparison}}

\subsection{Proof of Lemma~\ref{lem:prefix-flip}}\label{ssec:prefix-flip}
\begin{proof}
		We define $\PF_{a, Q}(v)$ for all $v\in \spm^n$ that satisfy $X(v) \geq Q/2$, and show that the map is injective, into $\spm^n$, and satisfies~\eqref{Eq:Comp-Aux1}. Then, $\PF_{a, Q}$ can be arbitrarily completed to a bijection on $\spm^n$ that satisfies the assertion of the lemma.

		Given $v$, consider the partial sums $s_k(v)=\sum_{j=1}^{k} a_j v_j$, and let $k \in [n]$ be minimal such that $s_k(v) \geq Q/2$. Note that $k$ is well-defined, since $s_n(v)= X(v) \geq Q/2$. Define $w = \PF_{a, Q}(v)$ by
		\[
			w_i =
			\begin{cases}
				-v_i, & \quad i \leq k \\
	  \phantom{-}v_i, & \quad i > k
			\end{cases}.
		\]
		It turns out that $k$, and hence $v$, can be recovered from $w$ (which implies that $\PF_{a, Q}$ is injective):
		\[
		k = \min\set{i\in [n]}{\sum_{j=1}^{i} a_j w_j \leq -Q/2}, \qquad \mbox{and} \qquad
		v_i =
			\begin{cases}
				-w_i, & \quad i \leq k \\
	  \phantom{-}w_i, & \quad i > k
			\end{cases}.
		\]
		Hence, to complete the proof we have to show that for all $v$ and $w=\PF_{a, Q}(v)$, we have $X(w) \in (X(v) - Q - 2M, X(v) - Q]$.
		
		To see this, note that $X(v) - X(w) = 2s_k(v)$. As $k$ is minimal with the property $s_k(v) \geq Q/2$, and as $M=\max_i a_i$, we have $s_k(v)\in [Q/2, Q/2+M)$,
		and hence,
		\[
		X(w) = X(v)-2s_k(v) \in (X(v) - Q - 2M, X(v) - Q],
		\]
		as asserted.
	\end{proof}

\subsection{Proof of Lemma~\ref{lem:single-flip}}\label{ssec:single-flip}
\begin{proof}
		We define $\SF_{a}(v)$ for all $v\in \spm^n$ that satisfy $X(v) > 0$, and show that the map is injective, into $\spm^n$, and that $v$ and $w=\SF_{a}(v)$ differ in exactly one coordinate. Then, $\SF_{a}$ can be arbitrarily completed to a bijection on $\spm^n$ that satisfies the assertion of the lemma. Without loss of generality, assume $a_1 \geq a_2 \geq \ldots \geq a_n > 0$.

		Given $v$, consider its partial sums $S_i(v) = \sum_{j=1}^{i} v_j$, where $S_0(v)=0$. (Note that these partial sums differ from the sums $s_i(v)$ considered in Lemma~\ref{lem:prefix-flip}). Let $k \in [n]$ be minimal such that $S_k(v) \geq S_j(v)$, for all $j \in [n]$ (i.e., the lowest amongst the indices in which the maximal partial sum is attained). Then, define $w = \SF_{a}(v)$ by
		\[
			w_i =
			\begin{cases}
				-v_i, & \quad i = k \\
	  \phantom{-}v_i, & \quad i \neq k
			\end{cases}.
		\]
		Let us temporarily suppose that $S_k(v) > 0$; we prove this assertion at the end of the proof.
		
		First, we observe that $\SF_{a}(v)$ is obtained from $v$ by flipping a `positive' coordinate (i.e., $v_k=1$). To see this, notice that if $k>1$, then $v_k=1$, as otherwise $S_{k-1}(v) > S_{k}(v)$, contradicting the definition of $k$. If $k=1$, one has $v_1 = S_1(v) > 0$, yielding, once again, $v_k=1$.
		
		Second, we observe that $k$, and hence $v$, can be recovered from $w$, via the formula
			\begin{equation}\label{eq:kformula}
			k = 1 + \max\set{0 \leq i < n}{\forall j \in [n] \cc S_{i}(w) \geq S_{j}(w)}, \mbox{ and} \quad
			v_i =
			\begin{cases}
				-w_i, & \quad i = k \\
	  \phantom{-}w_i, & \quad i \neq k
			\end{cases}.
			\end{equation}
		To verify~\eqref{eq:kformula}, we consider two cases.
	\begin{itemize}
		\item If $k>1$, then since $v_k=1$, we have
		\[
		S_{k-1}(w) = S_{k-1}(v) = S_{k}(v)-1.
		\]
		Moreover, for all $i \geq k$ one has $S_i(w) = S_i(v)-2 < S_{k-1}(w)$. Similarly, by the definition of $k$, we have $S_{i}(v) < S_{k}(v)$ for all $i < k$, and thus, $S_{i}(w) = S_{i}(v) \leq S_{k}(v) - 1 = S_{k-1}(v)$. Hence, $(k-1)$ is indeed the highest index $i$ in which $S_{i}(w)$ is maximal, as asserted in~\eqref{eq:kformula}.
		
		\item If $k=1$, then by the definition of $k$, for any $i \in [n]$, we have $S_i(v)\leq S_1(v)=v_1=1$. Thus, for any $i \geq 1$, we have $S_i(w)=S_i(v)-2<0$. Thus, $0=S_0(w)>S_i(w)$ for all $i$, which again confirms~\eqref{eq:kformula}.
	\end{itemize}
		Hence, $\SF_{a}(v)$ is an injection, as asserted.
		
		\medskip To conclude the proof, we show that for the chosen $k$, we have $S_{k}(v) > 0$. By the definition of $k$, this assertion is equivalent to $\max_{i\in[n]} S_i(v) > 0$. The latter follows from Abel's summation formula using the assumptions $X(v)=\sum_{i=1}^{n} a_i v_i > 0$ and $a_{i}-a_{i+1} \geq 0$, with $a_{n+1}=0$:
		\[
			0 < X(v) = \sum_{i=1}^{n} a_i v_i = \sum_{i=1}^{n} (a_{i}-a_{i+1}) S_i(v) \leq a_1 \max_{i\in[n]} S_i(v).
		\]
		This completes the proof.
	\end{proof}

\subsection{Proof of Lemma~\ref{lem:recursive-flip}}\label{ssec:recursive-flip}
\begin{proof}[Proof of Lemma~\ref{lem:recursive-flip}]
		Let $\RF_a$ and $F$ be defined as in Section~\ref{ssec:inj-maps}.

		\paragraph{Why is $\RF_{a}$ well defined?} To show $\RF_{a}$ is well-defined, we have to prove that for any $v$ with $X(v)\leq 0$, there exists $k \geq 0$ such that $F^{-k}(v) \notin \mathrm{Image}(F)$.
		
		Indeed, if there was no such $k$, we would enter a loop (as $\spm^n$ is finite), that is, $(F^{-1})^{k}(v) = (F^{-1})^{k'}(v)$ for some $k < k'$.
		Applying $F$ on both sides, $k$ times, we would get $v = (F^{-1})^{k''}(v)$,
		with $k''>0$, contradicting the assumption $v \notin \DOM(F)$.
		
		\paragraph{Why is $\RF_a$ a bijection?}
		It is sufficient to show $\RF_{a}$ is an injection. Suppose $\RF_a(v)=\RF_a(u)$. We consider three cases.
		\begin{itemize}
			\item If $X(v) > 0$ and $X(u) > 0$, then $v=u$ because $\SF_a$ is injective, due to Lemma~\ref{lem:single-flip}.
			
			\item If $X(v) \leq 0$ and $X(u) > 0$, then we have
			$-(F^{-1})^{k}(v) = \RF_a(v) = \RF_a(u) = -F(u)$, with $k$ as above. In
			particular, $(F^{-1})^{k}(v) = F(u)$, and so $(F^{-1})^{k+1}(v)$
			exists, contradicting the definition of $k$.
			
			\item If $X(v) \leq 0$ and $X(u) \leq 0$, then we have $(F^{-1})^{k}(v) = (F^{-1})^{k'}(u)$. Without loss of generality, assume $k' \geq k$, and thus, after applying $F$ to both sides, $k$ times, we obtain $v =
			(F^{-1})^{k''}(u)$, with $k'' \geq 0$. If $k''=0$ then $v=u$ and we
			are done. Otherwise, we get a contradiction, since $v \notin
			\DOM(F)$.
		\end{itemize}
		
		\paragraph{Why does $\RF_{a}$ satisfy the asserted properties?}
			The first property of $\RF_{a}$ holds directly by Lemma~\ref{lem:single-flip}. We show that the second property holds as well. Let $v$ be such that $X(v) \leq 0$, and so, $w \defeq \RF_a(v) = -(F^{-1})^{k}(v)$ for some $k \geq 0$. We consider three cases.
		\begin{itemize}
			\item If $k=0$, then $w= -v$ and $X(w) = -X(v)$.
			
			\item If $k = 1$, then $-w=F^{-1}(v)$, and so $X(-w) > 0$ and
			\[
			\RF_a(-w) = \SF_a(-w) = -F(-w)= -v.
			\]
			Thus, from the \emph{first property} of $\RF_a$ (i.e., Lemma~\ref{lem:single-flip}), we have
			\[
			-X(v) = X(-v)=X(\RF_a(-w)) \in \li[ X(-w) -2M, X (-w) - 2m \ri].
			\]
			This implies
			$X(v) \in [X(w)+2m,X(w)+2M]$, and hence,
			\[
			X(w) \in [X(v) - 2M, X(v) - 2m].
			\]
			
			\item If $k > 1$, let $u = (F^{-1})^{k-1}(v)$, so that $-w=F^{-1}(u)$. Clearly, $u \in \DOM(F)$ and so $X(u) > 0$.
			By the \emph{first property} of $\RF_a$ (i.e., Lemma~\ref{lem:single-flip}), we have
			\[
				0 > - X(u) = X(-u) \geq X(\RF_a^{-1}(-u))-2M = X(F^{-1}(u))-2M = X(-w)-2M.
			\]
			Moreover, $(-w) \in \DOM(F)$ implies $X(-w) > 0$. Finally, since $X(w)=-X(-w)$, we have $X(w) \in (-2M, 0)$. This completes the proof.
			\end{itemize}
\end{proof}

\subsection{An example of the bijection Recursive Flip}
\label{app:misc:RF}


\begin{example}
	Let $X(v)=a_1 v_1+a_2 v_2 +a_3 v_3$, where $a_1 \geq a_2 \geq a_3>0$ and $a_1>a_2+a_3$. Clearly, $M=\max_i a_i=a_1$ and $m=\min_i a_i=a_3$. The set $\{v:X(v)>0\}$ consists of
	\[
	v^1=(1,1,1), \quad v^2=(1,1,-1), \quad v^3=(1,-1,1), \quad v^4=(1,-1,-1).
	\]
	For these vectors, we define $\RF_a(v)=\SF_a(v)$ and $F(v)=-\SF_a(v)$, and so, we have
	\[
	\begin{gathered}
	\RF_a(v^1)=(1,1,-1),\quad \RF_a(v^2)=(1,-1,-1), \\
	\RF_a(v^3)=(-1,-1,1),\quad \RF_a(v^4)=(-1,-1,-1),
	\end{gathered}
	\]
	and
	\[
	F(v^1)=(-1,-1,1), \quad  F(v^2)=(-1,1,1), \quad F(v^3)=(1,1,-1), \quad F(v^4)=(1,1,1).
	\]
	Now, we consider the four remaining vectors:
	\[
	w^1=(-1,-1,-1), \quad w^2=(-1,-1,1), \quad w^3=(-1,1,-1), \quad w^4=(-1,1,1).
	\]
	We have $w^1,w^3 \notin \mathrm{Image}(F)$, and hence, we set
	\[
	\RF_a(w^1)=-w^1=(1,1,1), \quad \mbox{and} \quad \RF_a(w^3)=-w^3=(1,-1,1).
	\]
	For $w^2$, we have $F^{-1}(w^2)=v^1=(1,1,1) \in \mathrm{Image}(F)$, and thus, we move forward to $F^{-2}(w^2)=F^{-1}(v^1)=v^4=(1,-1,-1) \notin \mathrm{Image}(F)$. Hence, we define
	\[
	\RF_a(w^2)=-F^{-2}(w^2)=(-1,1,1).
	\]
	Similarly, for $w^4$ we have $F^{-2}(w^4) = (1,-1,1) \notin \mathrm{Image}(F)$, and thus, we define
	\[
	\RF_a(w^4)=-F^{-2}(w^4)=(-1,1,-1).
	\]
	It is easy to see that $\RF_a$ is indeed a bijection that satisfies the assertion of Lemma~\ref{lem:recursive-flip}.
\end{example}

\subsection{Proof of Lemma~\ref{lem:cheby}}\label{ssec:cheby}

	Let $X$ be a symmetric (around $0$) random variable with $\var(X)=1$. Observe that:
	\begin{equation}\label{eq:basic_cheby}
		\be \li[ (1-X^{2}) \one_{\{|X| < 1\}} \ri] = \be \li[ (X^{2}-1) \one_{\{|X| > 1\}} \ri].
	\end{equation}
	The equality~\eqref{eq:basic_cheby} holds, since by linearity of expectation,
	\[
	\be \li[ (1-X^{2}) \one_{\{|X| < 1\}} \ri] - \be \li[ (X^{2}-1) \one_{\{|X| > 1\}} \ri] = \be \li[ (1-X^{2}) \one_{\{|X| < 1\} \cup \{|X| > 1\}} \ri] = \be \li[ (1-X^{2})\ri] = 0,
	\]
	where the ultimate equality is satisfied as $\be[X^2]=\var(X) + \be[X]^2=1+0^2=1$.
	
	Notice that~\eqref{eq:basic_cheby} immediately implies Chebyshev's inequality. Indeed, by upper bounding the left hand side of~\eqref{eq:basic_cheby} with $\pr[|X| < 1]$ and lower bounding the right hand side of~\eqref{eq:basic_cheby} by $(t^2-1) \pr[|X| \geq t]$ for any $t \geq 1$, one obtains $\pr[|X| < 1] \geq (t^2-1) \pr[|X| \geq t]$, or equivalently,
		\[
			(t^2-1)\pr[|X| \geq t] + \pr[|X| \geq 1] \leq 1,
		\]
	yielding the Chebyshev bound $\pr[|X| \geq t] \leq 1/t^2$.
	
	\medskip It is however clear that there is extra freedom in the derivation of this inequality. Specifically, for any set of real numbers
		\[
		0 = c_0 \leq c_1 \leq \ldots \leq c_n = 1 = d_0 \leq d_1 \leq \ldots \leq d_m \leq d_{m+1}=\infty,
		\]
		we can upper bound the left hand side of~\eqref{eq:basic_cheby} as
		\begin{equation}\label{eq:cheby_lhs}
			\be \li[ (1-X^{2}) \one_{\{|X| < 1\}} \ri] \leq \sum_{i=0}^{n-1} (1-c_i^2) \pr[|X| \in [c_i, c_{i+1})],
		\end{equation}
		and lower bound the right hand side as
		\begin{equation}\label{eq:cheby_rhs}
			\be \li[ (X^{2}-1) \one_{\{|X| > 1\}} \ri] \geq \sum_{i=1}^{m} (d_i^2-1) \pr[|X| \in [d_i,d_{i+1})] = \sum_{i=1}^{m} (d_i^2-d_{i-1}^2) \pr[|X| \geq d_i].
		\end{equation}
		Using the symmetry of $X$, we may deduce
		\begin{equation}
		\sum_{i=0}^{n-1} (1-c_i^2) \hpr{X}{c_i}{c_{i+1}} \geq \sum_{i=1}^{m} (d_i^2-1) \hpr{X}{d_i}{d_{i+1}},
		\end{equation}
		and similarly,
		\begin{equation}
		\sum_{i=0}^{n-1} (1-c_i^2) \hpr{X}{c_i}{c_{i+1}} \geq \sum_{i=1}^{m} (d_i^2-d_{i-1}^2) \pr[X \geq d_i],
		\end{equation}
		proving Lemma~\ref{lem:cheby}.

\subsection{Comparison lemmas for other types of segments}
\label{app:sub:other-types}

One may consider $9$ types of segments $T(a,b)$, characterized by having each of its ends $a,b$ open, closed, or semi-open (the latter meaning that the probability at that end counts as $1/2$, which we denote by $\langle$ or $\rangle$).
		
\begin{itemize}
	\item It turns out that if $\hseg{C}{D} \prec_X \hseg{A}{B}$ (that is, if $A,B,C,D$ satisfy the assumptions of either Lemma~\ref{lem:seg-compare1} or Lemma~\ref{lem:seg-compare2}), then for any segment type $T$ we have
			\[
			\pr[X \in T(C,D)] \leq \pr[X \in T(A,B)].
			\]
			To see this, one can track the proofs of Lemmas~\ref{lem:seg-compare1} and~\ref{lem:seg-compare2} and notice that all injections we use, map $v$ to $w$ in such a way that if $X(v) \in (C,D)$ then $X(w) \in (A,B)$, if $X(v)=C$ then $X(w)\in [A,B)$, and if $X(v)=D$ then $X(w) \in (A,B]$. The only exception is in Lemma~\ref{lem:seg-compare1} in the case $A < B < -A$, which needs a separate handling. In this case, the proof yields that if $X(v) \in [C,D)$ then $X(w) \in (A,B)$ and if $X(v)=D$ we might have $X(w) = A$. This can be fixed by tweaking $\PF_{a,Q}$ to flip the first prefix \emph{strictly} exceeding $Q/2$ (instead of being greater or equal).
			
			In this paper, we use the current observation only through Lemma~\ref{lem:seg-compare2} to assert $\pr[X \in (C,D)] \leq \pr[X \in (A,B)]$ in the proof of Lemma~\ref{lem:mid-ai} (in particular, the `exception case' is not used in the paper).
			
			\item Sometimes, we even have
			\[
			\pr[X \in S(C,D)] \leq \pr[X \in T(A,B)]
			\]
			for different segment types $S,T$, as is demonstrated in~\eqref{eq:semi-compare}.

		\end{itemize}

\section{Proofs for Section~\ref{sec:BE}}
\label{app:no-numeric}

\subsection{Proof of Lemma~\ref{lem:ad-hoc}}\label{ssec:ad-hoc}
	\begin{proof}[Proof of Lemma~\ref{lem:ad-hoc}]
		Since in the proof we use the value of $a_1$ only through substitution in Proposition~\ref{prop:Prawitz-Rademacher}, we may assume $a_1 = 0.22$ rather than $a_1 \leq 0.22$ (as otherwise, the lower bound only gets better).
		
		Consider first the range $x \geq 1.65$. Applying Proposition~\ref{prop:Prawitz-Rademacher} with $a_1=0.22$, $x=1.65$, $T = 14.5$, and $q=0.4$, we obtain
		\begin{equation}\label{eq:165}
			\pr[Z < 1.65] - \pr[X < 1.65] < 0.0314,
		\end{equation}
		and consequently, $\pr[X < 1.65] > 0.919$. In particular, for all $x \geq 1.65$ we have
		\[
		\pr[X \leq x] > 1 - 0.084 \geq \pr[Z \leq x] - 0.084,
		\]
		as claimed in the lemma.
		
		Now, let $x \in [0.35, 1.65]$. Instead of delicately analysing the bound~\eqref{Eq:Our-Prawitz} as $x$ varies, it is sufficient to show that for some finite sequence $\{x_i\}_{i=0}^{m}$ with $0.35 = x_0 < x_1 < \ldots < x_m = 1.65$,~\eqref{Eq:Our-Prawitz} implies $\pr[Z < x_{i+1}] - \pr[X < x_i] < 0.084$. Indeed, this clearly gives
		\[
		\pr[Z \leq x] - \pr[X \leq x] < 0.084, \qquad  \forall x \in [x_i, x_{i+1}].
		\]
		Since
		\begin{equation}\label{eq:ad-hoc-proof}
		\pr[Z < x_{i+1}] - \pr[X < x_i] = \pr[Z \in [x_i, x_{i+1})] + (\pr[Z < x_i] - \pr[X < x_i]),
		\end{equation}
		we may construct such a sequence $\{x_i\}_{i=0}^{m}$ by starting with $x_0=0.35$, and for each $i \geq 0$, choosing $x_{i+1}>x_i$ sufficiently close to $x_i$, so that
		\begin{equation}\label{eq:ad-hoc-proof2}
		\pr[Z \in [x_i, x_{i+1})] < 0.084 - (\pr[Z < x_i] - \pr[X < x_i]).
		\end{equation}
		This would readily imply $\pr[Z < x_{i+1}] - \pr[X < x_i] < 0.084$ by~\eqref{eq:ad-hoc-proof}.
		
		\medskip
		A computer check\footnote{A computer program verifying~\eqref{eq:ad-hoc-proof2} for the 93 elements of the sequence $\{x_i\} $, as well as the inequalities~\eqref{eq:0.31} and~\eqref{eq:165}, is provided in  \url{https://github.com/IamPoosha/tomaszewski-problem/blob/master/formal_verification.py}. All inequalities hold with a spare of at least $2\cdot 10^{-5}$, allowing tolerating precision errors of up to $\pm 10^{-5}$.},
		in which $\pr[Z < x_i] - \pr[X < x_i]$ is bounded from above using Proposition~\ref{prop:Prawitz-Rademacher}, applied with $a_1=0.22$, $T=14.5$, $q=0.4$, and $x=x_i$, shows that~\eqref{eq:ad-hoc-proof2} is satisfied for the sequence
		\begin{equation}
		\begin{aligned}
			\{x_i\}_{i=0}^{93} =
			(
			& 0.35, 0.358, 0.366, 0.374, 0.38, 0.386, 0.39, 0.395, 0.399, 0.403, 0.406, 0.409, 0.412, \\
			& 0.415, 0.417, 0.419, 0.421, 0.423, 0.425, 0.427, 0.428, 0.429, 0.43, 0.431, 0.432, 0.433, \\
			& 0.434, 0.435, 0.436, 0.437, 0.438, 0.439, 0.44, 0.441, 0.442, 0.443, 0.444, 0.445, 0.446, \\
			& 0.447, 0.448, 0.449, 0.45, 0.451, 0.452, 0.453, 0.454, 0.455, 0.456, 0.457, 0.458, 0.459, \\
			& 0.46, 0.461, 0.462, 0.463, 0.464, 0.466, 0.468, 0.47, 0.472, 0.474, 0.476, 0.478, 0.481, \\
			& 0.484, 0.487, 0.49, 0.494, 0.499, 0.504, 0.51, 0.517, 0.526, 0.537, 0.55, 0.567, 0.589, \\
			& 0.61, 0.63, 0.65, 0.67, 0.69, 0.71, 0.73, 0.76, 0.8, 0.85, 0.91, 0.98, 1.07, 1.2, 1.38, 1.65
			).
		\end{aligned}
		\end{equation}
		This completes the proof.
		\end{proof}


\subsection{Numeric integration in our proofs}

In the proofs of Proposition~\ref{lem:be_toma} and of Lemma~\ref{lem:ad-hoc}, we evaluate multiple times the right hand side of~\eqref{Eq:Our-Prawitz}, with various parameters. The proof requires precision of $\pm 10^{-5}$ in the results of the evaluations.

The \texttt{Python3} program we used for the evaluation (provided in  \url{https://github.com/IamPoosha/tomaszewski-problem/blob/master/formal_verification.py}) uses the open-source procedure \texttt{scipy.integrate.quad}, wrapping standard adaptive integrators from the \textsc{QUADPACK} library, which estimates the total integration error as being below $10^{-10}$. In addition, there are implicit numerical errors of order $2^{-53} \approx 10^{-16}$, caused by the finite precision of double-precision floating point numbers. These estimates are well below the allowed $\pm 10^{-5}$ error, which we obtain by requiring all inequalities to hold with a `safety margin' of $2\cdot 10^{-5}$. As the functions involved in the numerical integration do not oscillate excessively in the integration range, relying on numeric integration in our scenario is standard.

Nevertheless, since the numeric integration is not a 100\% rigorous proof, the accompanied program makes also a slower, more straightforward evaluation of the integrals via Riemann sums. In this subsection we analyze the error of this evaluation and show it is bounded, as required, by $10^{-5}$.
	
	\medskip
	Given a piecewise-differentiable function $f(u)$ defined on a finite domain $(a,b)$, together with a bound on its derivative $|f'(u)| \leq B$ for all $u\in (a,b)$, it is easy to check that for any $N \in \pintegers$,
	\begin{equation}\label{eq:integral}
	\li| \int_{a}^{b} f(u) \dd{u} - \frac{b-a}{N}\cdot \sum_{k=1}^{N} f\li(a + \frac{2k-1}{2N}(b-a)\ri) \ri| \leq \frac{B(b-a)^2}{4N}.
	\end{equation}
	By choosing $N$ large enough (as a function of $B$), one may evaluate $\int_{a}^{b} f(u) \dd{u}$ using~\eqref{eq:integral}, with any required precision rate. We henceforth compute a bound $B$ associated with the integrals involved in~\eqref{Eq:Our-Prawitz}, and subsequently, re-compute the evaluations of~\eqref{Eq:Our-Prawitz}, using~\eqref{eq:integral} with a sufficiently large $N$.
	
	\medskip
	For the sake of convenience, let us recall~\eqref{Eq:Our-Prawitz}. We have to evaluate the right hand side of the inequality	\begin{equation}\label{eq:prawitz_again}
	\begin{aligned}
	\pr[Z < x] - \pr[X < x] \leq
	& (S_1' = ) \int_{0}^{q} |k(u,x,T)| g(Tu) \dd{u} + \\
	& (S_2' = ) \int_{q}^{1} |k(u,x,T)| h(Tu) \dd{u} + \\
	& (S_3' = ) \int_{0}^{q} k(u,x,T) \exp(-(Tu)^2/2) \dd{u} + \\
	& (S_4' = ) \int_{0}^{x} \frac{1}{\sqrt{2\pi}}\exp(-u^2/2) \dd{u},
	\end{aligned}
	\end{equation}
	where $k(u,x,T) = \frac{(1-u)\sin(\pi u - T u x)}{\sin(\pi u)} - \frac{\sin(T u x)}{\pi}$,
	\[
	g(v) =
	\begin{cases}
	\exp(-v^{2}/2) - \cos(a_1 v) ^ {1/a_1^2}, & a_1 v \leq \frac{\pi}{2}\\
	\exp(-v^{2}/2)+1, & \mrm{otherwise}
	\end{cases}
	, \quad h(v) =
	\begin{cases}
	\exp(-v^{2}/2),					& a_1 v \leq \theta\\
	(-\cos(a_1 v))^{1/a_1^2},	    & \theta \leq a_1 v \leq \pi\\
	1,								& \mrm{otherwise}
	\end{cases}.
	\]
	In the proof of Proposition~\ref{lem:be_toma}, we substitute into~\eqref{eq:prawitz_again} the parameters $a_1=0.31$, $x=1$, $T=10$, and $q=0.4$.	In the proof of Lemma~\ref{lem:ad-hoc}, the parameters are $a_1 = 0.22$, $T=14.5$, $q=0.4$, and various values $x \in [0, 1.65]$. In order to estimate the integrals $S_1', S_2', S_3', S_4'$ by the method described above, we upper bound the absolute value of the derivatives of the functions appearing in~\eqref{eq:prawitz_again}, for our choices of the parameters. The derivative bound corresponding to $S_i'$ is called $B_i$.
	
	\paragraph{Regarding $B_4$} One can easily check that the derivative $\frac{\dd{}}{\dd{u}} \exp(-u^2/2)/\sqrt{2\pi}$ is negative for $u \geq 0$, and is minimized at $u=1$. This gives the bound
	\[
	B_4 \leq \exp(-1/2)/\sqrt{2\pi} < 1/4.
	\]
	
	\paragraph{Regarding $B_3$} In order to give a bound $B_3$ on the derivative, we use $(fg)' = f' g + f g'$, implying $|(fg)'| \leq |f'||g| + |f||g'|$. We clearly have $|\exp(-(Tu)^2/2)| \leq 1$, and by the argument of the previous case (applied with $Tu$ in place of $u$),
	\[
	\Big|\frac{\dd{}}{\dd{u}} \exp(-(Tu)^2/2)\Big| \leq \exp(-1/2)T \leq 2T/3.
	\]
	To bound $k(u,x,T)$, recall the simplification~\eqref{eq:k_def} and notice that as $|\sin(v)| \leq |v|$ for all $v \in \reals$, and $\pi u (1-u) \leq \sin(\pi u)$ when $u \in [0, 1]$, we have
	\[
	|k(u, x, T)| \leq \frac{|\pi - T x|}{\pi} + \frac{T u x}{\pi} \leq 1 + 2Tx/\pi.
	\]
	To bound $|\frac{d}{du} k(u, x, T)|$, we write $s=\pi - Tx$, and obtain
	\begin{align}\label{Eq:Aux-Numeric1}
	\begin{split}
	\frac{\dd{}}{\dd{u}} k(u, x, T) + \frac{Tx}{\pi}\cos(T u x) =
	\ &
	\frac{(1-u) s \cos(s u)}{\sin(\pi u)} - \frac{(1-u) \pi \cos(\pi u) \sin(s u)}{\sin(\pi u)^2} - \frac{\sin(s u)}{\sin(\pi u)}
	= \\
	&=
	\frac{u(1-u)}{\sin(\pi u)} \bigg( s^2 \cdot \frac{s u \cos(s u) - \sin(s u)}{(s u)^2} + \\
	& \qquad +
	s \cdot \frac{\sin(s u)}{s u}\cdot \frac{(1-2u)\sin(\pi u) - \pi u(1-u) \cos(\pi u)}{u(1-u)\sin(\pi u)} \bigg).
	\end{split}
	\end{align}
	It can be verified that for any $u \in [0,1]$ we have
	\[
	|(1-2u)\sin(\pi u) - \pi u(1-u) \cos(\pi u)| \leq u(1-u)\sin(\pi u) \qquad \mbox{and} \qquad \pi u (1-u) \leq \sin(\pi u),
	\]
	and for all $v \in \reals$, we have
	\[
	|v\cos(v)-\sin(v)| \leq v^2 / 2 \qquad \mbox{and} \qquad |\sin(v)| \leq |v|.
	\]	
	Plugging these inequalities into~\eqref{Eq:Aux-Numeric1}, we obtain the bound
	\[
	\Big|\frac{\dd{}}{\dd{u}} k(u, x, T)\Big| \leq \frac{1}{\pi}(|s|^2/2 + |s| + Tx) \leq \frac{1}{\pi}((Tx)^2/2 + \pi^2) = (Tx)^2/(2\pi) + \pi.
	\]
	We conclude
	\begin{align*}
	B_3 &\leq \max_{u \in [0,q]} \left(|k(u,x,T)| \Big|\frac{d}{du}\exp(-(Tu)^2/2)\Big| + \Big|\frac{d}{du}k(u,x,T)\Big| \exp(-(Tu)^2/2) \right) \leq  \\
	&\leq
	(1+2Tx/\pi) \cdot 2T/3 + ((Tx)^2/(2\pi) + \pi) \cdot 1.
	\end{align*}
	
	\paragraph{Regarding $B_2$} Using the previous bounds, together with the bounds $|h(Tu)| \leq 1$ and
	\[
	\frac{\dd{}}{\dd{u}}h(v) =
	\begin{cases}
	-v \exp(-v^2/2),								& a_1 v < \theta\\
	(-\cos(a_1 v))^{1/(a_1^2)-1} \sin(a_1 v) / a_1,	& \theta < a_1 v < \pi\\
	0,												& \pi < a_1 v
	\end{cases}
	\qquad \implies \qquad \li|\frac{\dd{}}{\dd{u}}h(Tu)\ri| \leq T,
	\]
	we conclude
	\[
	B_2 \leq (1+2Tx/\pi) \cdot T + ((Tx)^2/(2\pi) + \pi) \cdot 1.
	\]
	The inequality $|\cos(u)|^{1/(a_1^2)-1} \sin(u) \leq 1$ we use, holds for any $u \in \reals$ and $a_1 \in (0,1)$. This can be checked easily for $a_1 \in \{0.22, 0.31\}$, which are the only cases we need.
	
	\paragraph{Regarding $B_1$} Using the previous bounds, together with the bounds
	\[
	|g(Tu)| \leq 1+\exp(-(\pi/(2a_1))^2/2) < 1.1
	\]
	and
	\[
	\frac{\dd{}}{\dd{u}}g(v) =
	\begin{cases}
	\cos(a_1 v)^{1/(a_1^2)-1} \sin(a_1 v) / a_1-v \exp(-v^2/2),	& a_1 v < \pi / 2\\
	-v \exp(-v^2/2),				& \pi / 2 < a_1 v\\
	\end{cases}
	\quad \implies \quad \li|\frac{\dd{}}{\dd{u}}g(Tu)\ri| \leq T,
	\]
	we conclude
	\[
	B_1 \leq (1+2Tx/\pi) \cdot T + ((Tx)^2/(2\pi) + \pi) \cdot 1.1.
	\]

	\medskip The aforementioned \texttt{Python3} program  verifies~\eqref{eq:0.31},~\eqref{eq:165}, and the inequality~\eqref{eq:ad-hoc-proof2} for the 93 elements of the sequence $\{x_i\} $, via~\eqref{eq:integral} with the bounds $B_1, B_2, B_3, B_4$ given above. In all cases, the inequalities hold with a safety margin (i.e., difference between the two sides) of at least $2 \cdot 10^{-5}$. This allows the program to estimate the integrals to within $10^{-5}$ (total) additive error, while leaving a spare error bound of $10^{-5}$ for any other numeric inaccuracies occurring outside of the numeric integration procedure \texttt{integrate} implementing~\eqref{eq:integral}. The running time of this verification is less than 10 minutes on a modern machine.

\section{Theorem~\ref{thm:main} for \tops{$a_1 \in (0.31, 0.5)$}, \tops{$a_1+a_2+a_3 \leq 1$}}\label{sec:31big}

The proof of Theorem~\ref{thm:main} in this range splits into three cases. If $a_2$ is sufficiently small, elimination of one variable and the improved Berry-Esseen bound of Section~\ref{sec:BE} are sufficient for proving the assertion. If $a_3$ is sufficiently small, elimination of two variables and the improved Berry-Esseen bound do the work. The hard case is when both $a_2$ and $a_3$ are not very small. To handle this case, we eliminate three variables and combine segment comparison with a Chebyshev-type inequality. Unfortunately, the proof is quite long and rather cumbersome.

\subsection{The case \tops{$a_2 \leq 0.19$}}
	We begin with eliminating one variable (i.e., applying Lemma~\ref{lem:nm} with $m=1$). The lemma implies that in order to derive $\pr[|X|\leq 1] \geq 1/2$, it is sufficient to prove
	\begin{equation}\label{Eq:Aux8.1}
	\pr[X'>t] + \pr[X' > 1/t] \leq 1/2,
	\end{equation}
	where $X',t$ are defined as in~\eqref{Eq:Aux6.1}.
	
	The maximal weight of $X'$ is $a_2'$, which satisfies $a_2' \leq 0.19 / \sqrt{3/4} < 0.2195$. Hence, applying Proposition~\ref{prop:intro-BE} to $X'$ gives $\pr[X' > x] \leq \pr[Z > x] + 0.088$ for any $x \geq 0$. Thus,~\eqref{Eq:Aux8.1} follows from
	\begin{equation}\label{eq:31a2s}
	\begin{gathered}
		\pr[Z > t] + \pr[Z > 1/t] \isleq 1/2 - 2\cdot 0.088,\\
		\text{with:}\qquad a_1 \in (0.31, 0.5), \quad t=\sqrt{\frac{1-a_1}{1+a_1}},\quad Z\sim N(0,1).
	\end{gathered}
	\end{equation}
	This inequality is verified in Appendix~\ref{app:31a2s}. Note that the proof in this case does not use the assumption $a_1+a_2+a_3 \leq 1$. Thus, the assertion holds in the case $(a_1 \in (0.31,0.5)) \wedge (a_2 \leq 0.19) \wedge (a_1+a_2+a_3 > 1)$ as well.

\subsection{The case \tops{$a_2 \geq 0.19$}, \tops{$a_3 \leq 0.15$}}\label{ssec:a3015}
	We begin with eliminating two  variables. By Lemma~\ref{lem:nm}, applied with $m=2$, in order to prove $\pr[|X| \leq 1] \geq 1/2$ for $X = \sum_{i=1}^{n} a_{i} x_{i}$, it is sufficient to verify
	\begin{equation}\label{eq:E2}
		\sum_{k=1}^{4} \pr[X' > T_k] \leq 1,
	\end{equation}
	with $X' = \sum_{i=3}^{n} a_i' x_i$, $\sigma=\sqrt{1-a_1^2-a_2^2}$, $a_i' = a_i / \sigma$ and $T_k = (1\pm a_1 \pm a_2) / \sigma$ as in Lemma~\ref{lem:nm}.
	The maximal weight of $X'$ is $a_3'$, which satisfies $a_3' \leq 0.15 / \sqrt{1/2} = 0.15 \sqrt{2}$. Hence, an application of Proposition~\ref{prop:ad-hoc2} to $X'$ gives
	\begin{align*}
	\pr[X' \leq x] &\geq \pr[Z \leq x] - \max(0.084, \pr[|Z| \leq a'_3]/2) \\
	&\geq \pr[Z \leq x] - \max(0.084, \pr[|Z| \leq 0.15 \sqrt{2}]/2) \\
	& = \pr[Z \leq x] - 0.084,
	\end{align*}
	and thus, $\pr[X' > x] \leq \pr[Z > x] + 0.084$, for all $x \geq 0$.
	As $T_k \geq 0$ for all $k$ (since $a_1+a_2 \leq 2a_1 \leq 1$),~\eqref{eq:E2} follows from the inequality
	\begin{equation}\label{eq:31a3s}
	\begin{gathered}
		\sum_{k=1}^{4} \pr[Z > T_k] \isleq 1 - 4\cdot 0.084, \\
		\text{with:}\quad Z\sim N(0,1), \quad 0.19 \leq a_2 \leq a_1 \leq 1/2, \quad \sigma = \sqrt{1-a_1^2-a_2^2}, \quad T_k = \frac{1\pm a_1 \pm a_2}{\sigma}.
	\end{gathered}
	\end{equation}
	This inequality is verified in Appendix~\ref{app:31a3s}. Note that like in the previous case, the proof does not use the assumption $a_1+a_2+a_3 \leq 1$. Thus, the assertion holds in the case $(a_1 \in (0.31,0.5)) \wedge (a_2 \geq 0.19) \wedge (a_3 \leq 0.15) \wedge (a_1+a_2+a_3 > 1)$ as well.

\subsection{The case \tops{$a_2 \geq 0.19$}, \tops{$a_3 \geq 0.15$}}
	
	\paragraph{Elimination step.} We begin with a variant of the `variable elimination lemma', eliminating three variables. This lemma will be used several more times in the sequel.
	Note that in our case, as $a_1 < 1/2$, we have $n > 4$, and thus, we may eliminate three variables and have $\sigma > 0$.
	
	\begin{lemma}\label{cor:nm3}
		Let $n \geq 4$. Let $X = \sum_{i=1}^{n} a_{i} x_i$ be a Rademacher sum with $\var(X)=1$, and write $X = a_1 x_1 + a_2 x_2 + a_3 x_3 + \sigma X'$, such that $\var(X') = 1$ (hence, $\sigma=\sqrt{1-a_1^2-a_2^2-a_3^2}$). The assertion
		\[
			\pr[|X| \leq 1] \geq 1/2
		\]
		is equivalent to the following inequality involving $X'$:
		\begin{equation}\label{eq:nm3}
			\begin{split}
			\pr[X' \in [-L_1, L_2]]
			& + \pr[X' \in [-L_3, L_4]] \\
			& \geq \\
			\pr\li[ X' > R_1\ri] + \pr\li[ X' > R_2\ri]
			&+ \pr\li[ X' > R_3\ri] + \pr\li[ X' > R_4\ri],
			\end{split}	
		\end{equation}
		where
		\[
			L_1, L_2, L_3, L_4 = \frac{1-a_1-a_2-a_3}{\sigma},
			\frac{1-a_1-a_2+a_3}{\sigma}, \frac{1-a_1+a_2-a_3}{\sigma}, \frac{1-|a_1-a_2-a_3|}{\sigma}
		\]
			and
		\[
			R_1, R_2, R_3, R_4 = \frac{1+|a_1-a_2-a_3|}{\sigma}, \frac{1+a_1-a_2+a_3}{\sigma},
			\frac{1+a_1+a_2-a_3}{\sigma}, \frac{1+a_1+a_2+a_3}{\sigma}.
		\]
	\end{lemma}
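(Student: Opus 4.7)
The plan is to apply Lemma~\ref{lem:nm} with $m=3$, which rephrases Tomaszewski's assertion as
\[
\sum_{\epsilon \in \{\pm 1\}^3} \pr[X' > T_\epsilon] \leq 2, \qquad T_{\epsilon_1\epsilon_2\epsilon_3} = \frac{1+\epsilon_1 a_1+\epsilon_2 a_2+\epsilon_3 a_3}{\sigma},
\]
and then to convert this tail-sum bound into the asymmetric form~\eqref{eq:nm3} by invoking the symmetry of $X'$.

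The first step is to match the eight threshold values $T_\epsilon$ with the labels $L_1,\ldots,L_4,R_1,\ldots,R_4$. Six of the matchings read off directly from the definitions: $L_1=T_{---}$, $L_2=T_{--+}$, $L_3=T_{-+-}$, $R_2=T_{+-+}$, $R_3=T_{++-}$, and $R_4=T_{+++}$. The remaining two values $T_{-++}=(1-a_1+a_2+a_3)/\sigma$ and $T_{+--}=(1+a_1-a_2-a_3)/\sigma$ sum to $2/\sigma$ and differ by $2(a_1-a_2-a_3)/\sigma$, so they coincide with $L_4=(1-|a_1-a_2-a_3|)/\sigma$ and $R_1=(1+|a_1-a_2-a_3|)/\sigma$ regardless of the sign of $a_1-a_2-a_3$. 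Hence the eight $T_\epsilon$ and the eight $L_i,R_i$ coincide as multisets.

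The second -- and key -- step is to use the symmetry of $X'$ to fold the four $L$-tail probabilities into interval probabilities. Since $X'$ and $-X'$ share a distribution, $\pr[X'>L_i]=\pr[X'<-L_i]$, so whenever $-L_i\leq L_j$ one has
\[
\pr[X'>L_i] + \pr[X'>L_j] = 1 - \pr[X' \in [-L_i, L_j]].
\]
Applying this identity to the pairs $(i,j)=(1,2)$ and $(i,j)=(3,4)$ and substituting into the tail-sum bound from Lemma~\ref{lem:nm} yields exactly~\eqref{eq:nm3}.

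The only genuine subtlety -- and thus the main obstacle -- is verifying the hypotheses $-L_1\leq L_2$ and $-L_3\leq L_4$ that legitimize the interval identity. The first is equivalent to $a_1+a_2\leq 1$, which holds throughout the applications of the lemma in this paper (in each of the three subsequent subcases, $a_1<1/2$, hence $a_1+a_2<1$). The second, after a case split on the sign of $a_1-a_2-a_3$, simplifies to $a_3\leq 1$ or to $a_1\leq 1+a_2$, both trivial. With those checks in place, the remainder of the argument is a one-line algebraic substitution.
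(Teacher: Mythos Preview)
Your proof is correct and follows essentially the same route as the paper: apply Lemma~\ref{lem:nm} with $m=3$ and then use the symmetry of $X'$ to fold the four $L$-tail probabilities into the two interval probabilities. The paper compresses this into the single identity $\pr[X' \in [-L_1, L_2]] + \pr[X' \in [-L_3, L_4]] = 2 - \sum_{j=1}^4 \pr[X' > L_j]$, whereas you spell out the threshold matching and, commendably, make explicit the side condition $-L_1\le L_2$ (i.e., $a_1+a_2\le 1$) that the paper's proof uses without comment.
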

	\begin{proof}
		The assertion follows from Lemma~\ref{lem:nm}, applied to $X$ with the parameter $m=3$, and the equality
		\[
			\pr[X' \in [-L_1, L_2]] + \pr[X' \in [-L_3, L_4]]  = 2 - \sum_{j=1}^{4} \pr[X' > L_j].
		\]
	\end{proof}
	
	As we assume $a_1+a_2+a_3 \leq 1$, we have $L_1, L_2, L_3, L_4 \geq 0$, and thus, by symmetry of $X'$, the inequality~\eqref{eq:nm3} we have to prove follows from
	\begin{equation}\label{eq:sum3_less_1_req}
		\sum_{i=1}^{4} \hpr{X'}{0}{L_i} \geq \sum_{i=1}^{4} \hpr{X'}{R_i}{\infty} = \sum_{i=1}^{4} i \cdot \hpr{X'}{R_i}{R_{i+1}},
	\end{equation}
	with $R_5=\infty$.

	\paragraph{Auxiliary estimates.} To proceed, we use the following auxiliary estimates on $L_i,R_i$.
	\begin{equation}\label{eq:31check-cheby}
	\begin{gathered}
	L_3 \isgeq 0, \qquad L_4 \isgeq \sqrt{1/2},\qquad \forall i \in \{1,2,3,4\}\cc R_i \isgeq \sqrt{(1+i)/2},
	\\
	\text{with:}
	\qquad a_1 \in [0.31, 0.5],
	\quad a_2 \in [0.19, a_1],
	\quad a_3 \in [0.15, a_2],
	\quad a_1+a_2+a_3 \leq 1,
	\\
	\sigma = \sqrt{1-a_1^2-a_2^2-a_3^2},
	\\
	L_3 = \frac{1-a_1+a_2-a_3}{\sigma},
	\quad L_4 =	\frac{1-|a_1-a_2-a_3|}{\sigma},
	\quad R_1 = \frac{1+|a_1-a_2-a_3|}{\sigma},
	\\
	R_2 = \frac{1+a_1-a_2+a_3}{\sigma}
	\quad R_3 = \frac{1+a_1+a_2-a_3}{\sigma},
	\quad R_4 = \frac{1+a_1+a_2+a_3}{\sigma},
	\end{gathered}
	\end{equation}
	These inequalities are proved in Appendix~\ref{app:31check-cheby}.
	
	\paragraph{Chebyshev-type inequality step.} We reduce~\eqref{eq:sum3_less_1_req} into a more convenient inequality, by applying a Chebyshev-type inequality. Denote $\bar{L}_3=\min(L_3,1)$ and $\bar{L}_4=\min(L_4,1)$. Let
	\begin{equation}\label{eq:31cds}
	\begin{gathered}
		c_0, c_1, c_2, c_3, c_4 = 0, \bar{L}_3, \max(\bar{L}_3, \sqrt{1/2}), \bar{L}_4, 1
		\\
		d_0, d_1, d_2, d_3, d_4 = 1, R_1, \max(R_1, \sqrt{3/2}), R_2, \max(R_2, \sqrt{4/2})
		\\
		d_5, d_6, d_7, d_8, d_9 = R_3, \max(R_3, \sqrt{5/2}), R_4, \max(R_4, \sqrt{6/2}), \infty.
	\end{gathered}
	\end{equation}
	Note that by the definition of $\bar{L}_3,\bar{L}_4$ and~\eqref{eq:31check-cheby}, we have
	\[
	0 = c_0 \leq c_1 \leq c_2 \leq c_3 \leq c_4 = 1 = d_0 \leq d_1 \leq \ldots \leq d_8 \leq d_{9}=\infty.
	\]
	Hence, we may apply the Chebyshev-type inequality~\eqref{eq:cheby_our2} to $X'$, with the parameters $c_0,\ldots,c_4$ and $d_0,\ldots,d_9$, to obtain
	\begin{equation}\label{eq:31schebyshev}
	\begin{gathered}
	1 \cdot \hpr{X'}{0}{\bar{L}_3} +
	(1- \bar{L}_3^2 )\hpr{X'}{\bar{L}_3}{\max(\bar{L}_3,\sqrt{1/2})} + \\
	+(1-\max(\bar{L}_3,\sqrt{1/2})^2) \hpr{X'}{\max(\bar{L}_3,\sqrt{1/2})}{\bar{L}_4} + (1-\bar{L}_4^2)\hpr{X'}{\bar{L}_4}{1} \\ \geq \\
	\sum_{i=1}^4 \Big( (R_i^2-1)\hpr{X'}{R_i}{\max(R_i,\sqrt{(i+2)/2})} + \\
	+\big(\max \big(R_i,\sqrt{(i+2)/2}\big)^2-1\big) \hpr{X'}{\max(R_i,\sqrt{(i+2)/2})}{R_{i+1}} \Big),
	\end{gathered}
	\end{equation}
	with $R_5=\infty$. We now claim that the following inequality, coupled with~\eqref{eq:31schebyshev}, implies~\eqref{eq:sum3_less_1_req}.
	\begin{equation}\label{eq:31sneed}
	\begin{gathered}
	\hpr{X'}{0}{L_1} + \hpr{X'}{0}{L_2} + \\
	(2L_3^2-1)\hpr{X'}{L_3}{\sqrt{1/2}} +
	(2L_4^2-2)\hpr{X'}{L_4}{\sqrt{1}} + \\
	\geq \\
	(3-2R_1^2)\hpr{X'}{R_1}{\sqrt{3/2}} +
	(4-2R_2^2)\hpr{X'}{R_2}{\sqrt{2}} + \\
	(5-2R_3^2)\hpr{X'}{R_3}{\sqrt{5/2}} +
	(6-2R_4^2)\hpr{X'}{R_4}{\sqrt{3}}.
	\phantom{..}
	\end{gathered}
	\end{equation}
	To see this, note that if
	\begin{equation}\label{Eq:Aux8.2}
	0 \leq L_3 \leq \sqrt{1/2} \leq L_4 \leq 1 \leq R_1 \leq \sqrt{3/2} \leq R_2 \leq \sqrt{4/2} \leq R_3 \leq \sqrt{5/2} \leq R_4 \leq \sqrt{6/2},
	\end{equation}
	then~\eqref{eq:31schebyshev} reads
	\begin{align*}
	1\cdot \hpr{X'}{0}{L_3} +
	(1- L_3^2 & )\hpr{X'}{L_3}{\sqrt{1/2}} +
	\\
	\frac{1}{2}\hpr{X'}{\sqrt{1/2}}{L_4} +
	(& 1-L_4^2)\hpr{X'}{L_4}{1}
	\\ &\geq \\
	(R_1^2-1)     \hpr{X'}{R_1}{\sqrt{3/2}} &+
	\frac{1}{2}   \hpr{X'}{\sqrt{3/2}}{R_2} +
	\\
	(R_2^2-1)     \hpr{X'}{R_2}{\sqrt{2}} &+
	1             \hpr{X'}{\sqrt{2}}{R_3} +
	\\
	(R_3^2-1)     \hpr{X'}{R_3}{\sqrt{5/2}} &+
	\frac{3}{2}   \hpr{X'}{\sqrt{5/2}}{R_4} +
	\\
	(R_4^2-1)     \hpr{X'}{R_4}{\sqrt{3}} &+
	2             \hpr{X'}{\sqrt{3}}{\infty},
	\end{align*}
	and thus, we have
	\[
	2\eqref{eq:31schebyshev}+\eqref{eq:31sneed}=\eqref{eq:sum3_less_1_req}.
	\]
	If~\eqref{Eq:Aux8.2} is not satisfied, then $2\eqref{eq:31schebyshev}+\eqref{eq:31sneed}$ is even stronger than~\eqref{eq:sum3_less_1_req}, and implies it. To see this, observe that~\eqref{Eq:Aux8.2} can fail in three possible ways:
	\begin{itemize}
		\item \emph{$R_i > \sqrt{(i+2)/2}$ for some $1 \leq i \leq 4$.} In this case, the contribution of the region $\langle R_i, R_{i+1} \rangle$ to the right hand side of $2\eqref{eq:31schebyshev}+\eqref{eq:31sneed}$ becomes
		\begin{align*}
		2(R_i^2-1) \hpr{X'}{R_i}{R_{i+1}} &\geq 2((\sqrt{(i+2)/2})^2-1) \hpr{X'}{R_i}{R_{i+1}} \\
		&= i \cdot \hpr{X'}{R_i}{R_{i+1}},
		\end{align*}
		compared to $i \cdot \hpr{X'}{R_i}{R_{i+1}}$ in the right hand side of~\eqref{eq:sum3_less_1_req}, while the contribution of the other regions remains unchanged. Hence, in this case the inequality $2\eqref{eq:31schebyshev}+\eqref{eq:31sneed}$ implies~\eqref{eq:sum3_less_1_req}.
		
		\item \emph{$L_3>1$ or $L_4>1$.} If $L_3 > 1$, then the left hand side of $2\eqref{eq:31schebyshev}+\eqref{eq:31sneed}$ becomes
		\[
		\hpr{X'}{0}{L_1}+\hpr{X'}{0}{L_2}+2\hpr{X'}{0}{1},
		\]
		which is not larger than the left hand side of~\eqref{eq:sum3_less_1_req}, being $\sum_{i=1}^4 \hpr{X'}{0}{L_i}$. As the right hand side is unchanged,~\eqref{eq:sum3_less_1_req} follows in this case as well.

		\medskip Similarly, if $L_3 \leq 1<L_4$, then the contribution of the region $\langle \max(L_3,\sqrt{1/2}), L_4 \rangle$ to the left hand side of $2\eqref{eq:31schebyshev}+\eqref{eq:31sneed}$ becomes
		\[
		2(1-\max(L_3,\sqrt{1/2})^2) \hpr{X'}{\max(L_3,\sqrt{1/2})}{1},
		\]
		compared to
		$\hpr{X'}{\max(L_3,\sqrt{1/2})}{L_4}$ in the left hand side of~\eqref{eq:sum3_less_1_req}. The former is no larger than the latter, since $2(1-\max(L_3,\sqrt{1/2})^2) \leq 1$ and $1 \leq L_4$. The contribution of the other regions remains unchanged, so again~\eqref{eq:sum3_less_1_req} follows.
		
		\item $\sqrt{1/2} \leq L_3 \leq L_4 \leq 1.$
		In this case, the contribution of the region $\langle L_3, L_4 \rangle$ to the left hand side of $2\eqref{eq:31schebyshev}+\eqref{eq:31sneed}$ becomes
		\[
		2(1-L_3^2) \hpr{X'}{L_3}{L_4} \leq \hpr{X'}{L_3}{L_4},
		\]
		compared to
		$\hpr{X'}{L_3}{L_4}$ in the left hand side of~\eqref{eq:sum3_less_1_req}. The former is no larger than the latter, since $2(1-L_3^2) \leq 1$. The contribution of the other regions remains unchanged, so~\eqref{eq:sum3_less_1_req} follows.
	\end{itemize}
	Therefore, in order to prove~\eqref{eq:sum3_less_1_req}, it is sufficient to show~\eqref{eq:31sneed}, which we rewrite in the form
	\begin{equation}\label{eq:31sneed2}
		\frac{1}{2}\hpr{X'}{-L_1}{L_1} + \hpr{X'}{0}{L_2} \geq \sum_{i=1}^{6} c_i \hpr{X'}{d_i}{e_i},
	\end{equation}
	with \[d_1,d_2,d_3,d_4,d_5,d_6=L_3,L_4,R_1,R_2,R_3,R_4, \qquad c_i = i-2d_i^2, \qquad e_i=\sqrt{i/2}.
	\]
	Notice we might have $e_i \leq d_i$, in which case $\hpr{X'}{d_i}{e_i}=0$.

	\paragraph{Segment comparison step.} In order to prove~\eqref{eq:31sneed2}, we use the following lemma, which is proved in Appendix~\ref{app:31sfin}:
	\begin{lemma}\label{lem:31sfin}
		Let $X',L_i,R_i, c_i,d_i,e_i$ be defined as above. We have:
		\begin{enumerate}
			\item $\forall i\cc \hseg{d_i}{e_i} \prec_{X'} \hseg{0}{L_2}$, and thus Theorem~\ref{thm:seg_compare} implies: $\hpr{X'}{d_i}{e_i} \leq \hpr{X'}{0}{L_2}$.
			\item $\sum_{i=1}^{6} \max(c_i, 0) \leq 3/2$.
			\item $\sum_{i\in B(X')} \max(c_i, 0) \leq 1$, where $B(X') = \li\{\given{i \in [6]}{\hseg{d_i}{e_i} \not \prec_{X'} \hseg{-L_1}{L_1}}\ri\}$.
		\end{enumerate}
	\end{lemma}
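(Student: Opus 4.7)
The plan is to verify each of the three claims by explicit calculation, leveraging the formulas for $L_j, R_j$ in terms of $a_1, a_2, a_3, \sigma$ and the bounds in~\eqref{eq:31check-cheby}. Throughout, let $M' \leq a_3/\sigma$ denote the largest weight of $X'$. For Claim~1, I would fix $i \in [6]$ and apply Theorem~\ref{thm:seg_compare} to $(A,B,C,D) = (0, L_2, d_i, e_i)$; we may assume $e_i > d_i$, else the claim is vacuous. The position condition $|A| \leq \min(B, C)$ reduces to $d_i \geq 0$, which follows from~\eqref{eq:31check-cheby}. For the length condition, when $i \in \{1,2\}$, so that $d_i \in \{L_3, L_4\}$ is comparable in size to $L_2$, I would check $e_i - d_i + 2M' \leq L_2$ and invoke Lemma~\ref{lem:seg-compare1}; when $i \in \{3,4,5,6\}$, so that $d_i \in \{R_1, R_2, R_3, R_4\} \geq 1$, I would check $2\max(M', e_i - L_2) \leq d_i$ and invoke Lemma~\ref{lem:seg-compare2}. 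Both reduce to algebraic inequalities in $a_1, a_2, a_3$ to be verified under the parameter constraints.

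For Claim~2, the bounds $R_j^2 \geq (1+j)/2$ from~\eqref{eq:31check-cheby} give $c_{j+2} \leq 1$ for $j \in [4]$, and similarly $c_1, c_2 \leq 1$, so each $\max(c_i, 0)$ lies in $[0, 1]$. To sharpen the trivial total of $6$ to $3/2$, my plan is to pair indices and exploit the identities $L_4^2 + R_1^2 = 2(1 + (a_1-a_2-a_3)^2)/\sigma^2$, $L_3^2 + R_2^2 = 2(1 + (a_1-a_2+a_3)^2)/\sigma^2$, and $R_3^2 + R_4^2 = 2((1+a_1+a_2)^2 + a_3^2)/\sigma^2$. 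Combined with $\sigma^2 = 1 - a_1^2 - a_2^2 - a_3^2$, these yield a closed-form expression for $\sum_i c_i$, which I would bound under the parameter constraints and then upgrade to a bound on $\sum_i \max(c_i, 0)$ by identifying the few indices that can simultaneously have $c_i > 0$ in a given parameter regime.

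For Claim~3, I would characterise $B(X')$ as the set of indices $i$ for which the length condition $e_i - d_i + \min(2M', e_i - L_1) \leq 2L_1$ fails; the position condition $L_1 \leq d_i$ always holds by~\eqref{eq:31check-cheby}. Heuristically, $i \in B(X')$ forces $L_1$ to be small, i.e., $a_1+a_2+a_3$ close to $1$, which in turn inflates the corresponding $d_i$'s and depresses $c_i$. I would make this trade-off precise index by index, showing that in every regime where $B(X')$ is large, the contributions $c_i$ for $i \in B(X')$ are correspondingly small, yielding the required bound $\sum_{i \in B(X')} \max(c_i, 0) \leq 1$. The main obstacle, in both Claims~2 and~3, will be the delicate case analysis near the boundary $a_1 + a_2 + a_3 = 1$ and around the sign change of $a_1 - a_2 - a_3$, which flips the explicit form of $L_4$ and $R_1$; I expect the parameter space to require splitting into several sub-regions according to these features, with each sub-case being elementary but tedious.
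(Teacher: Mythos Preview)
Your overall plan (verify each claim by direct segment-comparison and algebraic case analysis) is the same as the paper's, but two of your tactical choices will not go through as written.

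\textbf{Claim 1: the branch allocation is inverted.} You propose the $2M'$ condition of Lemma~\ref{lem:seg-compare1} for $i\in\{1,2\}$ and the $(D-B)$ condition of Lemma~\ref{lem:seg-compare2} for $i\in\{3,4,5,6\}$. Both fail on admissible parameters. For $i=1$, your condition unpacks to $\sqrt{1/2}\,\sigma\le 2-2a_1-2a_3$, which is false at $(a_1,a_2,a_3)=(0.5,0.25,0.25)$; the paper instead uses the $(D-B)$ branch, checking $\sqrt{2}\,\sigma\le 3-3a_1-a_2+a_3$, which holds throughout. For $i=6$, your condition $2(e_6-L_2)\le R_4$ becomes $2\sqrt{3}\,\sigma\le 3-a_1-a_2+3a_3$ and fails at $(0.5,0.35,0.15)$; the paper uses the $2M'$ branch, giving simply $\sqrt{3}\,\sigma\le 2$. (Similar failures occur for $i=2$ when $a_3\ge 1/5$ and for $i=3$.) The heuristic ``$d_i$ small $\Rightarrow$ use $2M'$'' is exactly backward: when $d_i\in\{L_3,L_4\}$ is small, $2M'\le 2a_3/\sigma$ can be as large as $L_2$ itself, so only the $(D-B)$ branch survives; when $d_i=R_j$ is large, it is $e_i-L_2$ that is large, so only the $2M'$ branch survives.

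\textbf{Claim 3: the structural reduction is missing.} The bound in Claim~2 is nearly tight (e.g.\ $\sum_i\max(c_i,0)\approx 1.35$ at $(0.31,0.19,0.15)$), so an index-by-index trade-off of the kind you sketch cannot bring the sum below $1$ without first excluding at least one positive term from $B(X')$. The paper's proof hinges on a short exclusion lemma: for every $(i,j)\in\{1,3\}\times\{2,4\}$, at least one of $\langle d_i,e_i\rangle$ and $\langle d_j,e_j\rangle$ satisfies $\prec_{X'}\langle -L_1,L_1\rangle$; hence $B(X')\subseteq\{1,3,5,6\}$ or $B(X')\subseteq\{2,4,5,6\}$. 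Each of these two containments is then handled using the groupings already set up in Claim~2 (the bounds $\max(c_1,0)+\max(c_3,0)\le 0.58$ and $\max(c_2,0)+\max(c_4,0)+\max(c_5,0)\le 0.92$, plus a further split on $a_3\gtrless 0.2$ to control $c_5$). Your identity-based approach to Claim~2 is a legitimate alternative for bounding $\sum_i c_i$, but note that $\sum_i c_i$ is very negative throughout the region, so the passage to $\sum_i\max(c_i,0)$ still requires a subset-by-subset check of the type the paper does via Claim~\ref{Cl:AuxC.1}, and its grouping is chosen precisely so that the pieces can be reused in Claim~3.
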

\medskip\noindent
	Observe that~\eqref{eq:31sneed2} is trivially implied by combination of the following two inequalities:
	\begin{equation}\label{eq:31saux1}
		\sum_{i \in B(X')} c_i \hpr{X'}{d_i}{e_i} \leq C \cdot \hpr{X'}{0}{L_2},
	\end{equation}
	with $C = \sum_{i \in B(X')} \max(c_i,0)$, and
	\begin{equation}\label{eq:31saux2}
		\sum_{i \not\in B(X')} c_i \hpr{X'}{d_i}{e_i} \leq (1-C)\hpr{X'}{0}{L_2} + \frac{1}{2}\hpr{X'}{-L_1}{L_1}.
	\end{equation}
	We deduce both inequalities from  Lemma~\ref{lem:31sfin}.
	Inequality~\eqref{eq:31saux1} follows immediately from the first item of Lemma~\ref{lem:31sfin}.
	We now reason about~\eqref{eq:31saux2}.
	
	Let $p = \min(\hpr{X'}{0}{L_2}, \hpr{X'}{-L_1}{L_1})$. Notice that the first item of Lemma~\ref{lem:31sfin}, together with the definition of $B(X')$, implies that
	\[
	\forall i \notin B(X'): \qquad \hpr{X'}{d_i}{e_i} \leq p.
	\]
	Hence, we may deduce~\eqref{eq:31saux2}, as
	\begin{equation}
	\begin{aligned}
		\sum_{i \not\in B(X')} c_i \hpr{X'}{d_i}{e_i} &\leq \sum_{i \not\in B(X')} \max(c_i, 0) \cdot p
		\stackrel{\mrm{(a)}}{\leq} (1-C)p + \frac{1}{2}p \leq
		\\
		& \stackrel{\mrm{(b)}}{\leq} (1-C)\hpr{X'}{0}{L_2} + \frac{1}{2}\hpr{X'}{-L_1}{L_1},
	\end{aligned}
	\end{equation}
	where inequality~(a) uses the second item of Lemma~\ref{lem:31sfin} and the definition of $C$, and inequality~(b) follows from the definition of $p$, via $1-C \geq 0$, which is a rephrasing of the third item of Lemma~\ref{lem:31sfin}. This completes the proof.


\section{Theorem~\ref{thm:main} for \tops{$a_1 \in [0.387, 0.5]$}, \tops{$a_1 + a_2 + a_3 \geq 1$}}\label{sec:39geq}

The proof of Theorem~\ref{thm:main} in this range is similar to -- but much easier than -- the proof in the range $a_1 \in (0.31,0.5), a_1+a_2+a_3 \leq 1$, presented in Appendix~\ref{sec:31big}. If $a_3$ is sufficiently small, then the Berry-Esseen argument of Appendix~\ref{sec:31big} yields the assertion. If $a_3$ is not very small, the proof goes by elimination of three variables and combination of Chebyshev-type inequalities and segment comparison. The difference from Appendix~\ref{sec:31big} is that the slightly different assumptions on the parameters allow for a simple Chebyshev-type argument to work. A similar argument fails in the range of Appendix~\ref{sec:31big} (specifically, the inequality~\eqref{eq:39gglue} below, does not hold there), and thus, an exhausting detour is needed.

\subsection{The case \tops{$a_3 \leq 0.15$}}
	
	By assumption, we have $a_1+a_2+a_3 \geq 1$ and $a_3 \leq a_2 \leq a_1 \leq 0.5$, and thus,
	$a_2 \geq 0.25$. Hence, if $a_3 \leq 0.15$ then the assertion $\pr[|X| \leq 1] \geq 1/2$ follows from the argument of Appendix~\ref{ssec:a3015}, which applies whenever $(a_1 \in (0.31,0.5)) \wedge (a_2 \geq 0.19) \wedge (a_3 \leq 0.15)$, as was shown at the end of Appendix~\ref{ssec:a3015}.

\subsection{The case \tops{$a_3 \geq 0.15$}}
	
	\paragraph{Elimination step.} By Lemma~\ref{cor:nm3}, it is sufficient to prove
	\begin{equation}\label{eq:39gneed}
	\begin{gathered}
	\hpr{X'}{-L_1}{L_2} + \hpr{X'}{0}{L_3} + \hpr{X'}{0}{L_4}
	\\
	\geq
	\\
	\hpr{X'}{R_1}{\infty} + \hpr{X'}{R_2}{\infty} + \hpr{X'}{R_3}{\infty} + \hpr{X'}{R_4}{\infty},
	\end{gathered}
	\end{equation}
	where $\sigma, X', L_1,L_2,L_3,L_4,R_1,R_2,R_3,R_4$ are as defined in Lemma~\ref{cor:nm3}.
	
	\paragraph{Auxiliary estimate.} To proceed, we use the following auxiliary inequality in $L_i,R_i$.
	\begin{equation}\label{eq:39gglue}
	\begin{gathered}
	\max\li\{ 1-L_3^2, 3/2-L_4^2, 1/2 \ri\} \isleq \min \li\{ \frac{R_1^2-1}{1}, \frac{R_2^2-1}{2}, \frac{R_3^2-1}{3}, \frac{R_4^2-1}{4} \ri\},\\
	\text{with:}
	\qquad a_1 \in [0.387, 0.5],
	\quad 0.15 \leq a_3 \leq a_2 \leq a_1,
	\quad a_1+a_2+a_3 \geq 1,
	\\
	\sigma = \sqrt{1-a_1^2-a_2^2-a_3^2},
	\\
	L_3 = \frac{1-a_1+a_2-a_3}{\sigma},
	\quad L_4 =	\frac{1+a_1-a_2-a_3}{\sigma},
	\quad R_1 = \frac{1-a_1+a_2+a_3}{\sigma},
	\\
	R_2 = \frac{1+a_1-a_2+a_3}{\sigma}
	\quad R_3 = \frac{1+a_1+a_2-a_3}{\sigma},
	\quad R_4 = \frac{1+a_1+a_2+a_3}{\sigma}.
	\end{gathered}
	\end{equation}
	The proof of~\eqref{eq:39gglue} is presented in Appendix~\ref{app:39gglue}.
	
	\paragraph{Chebyshev-type step.}
	Let
	\begin{equation}
	\begin{gathered}
	c = 1/\max\li\{ 1-L_3^2, 3/2 - L_4^2, 1/2 \ri\}, \qquad \mbox{and}\\
	d = 1/\min \li\{ R_1^2-1, (R_2^2-1)/2, (R_3^2-1)/3, (R_4^2-1)/4 \ri\}.
	\end{gathered}
	\end{equation}
	We show that
	\begin{equation}\label{eq:39glhs}
		\text{LHS of~\eqref{eq:39gneed}} \geq \frac{c}{2} \cdot \be[(1-X'^2) \one\{|X'| < 1\}],
	\end{equation}
	and
	\begin{equation}\label{eq:39grhs}
		\text{RHS of~\eqref{eq:39gneed}} \leq \frac{d}{2} \cdot \be[(X'^2-1) \one\{|X'| > 1\}].
	\end{equation}
	As by~\eqref{eq:39gglue} we have $c \geq d$, the assertion~\eqref{eq:39gneed} follows from~\eqref{eq:39glhs} and~\eqref{eq:39grhs} by the Chebyshev-type equality~\eqref{eq:basic_cheby}.
	
	\paragraph{Proving~\tops{\eqref{eq:39glhs}}.} We consider three sub-cases.
	
	\medskip \noindent \textbf{Sub-case~1: $L_4 \geq L_3 \geq 1$.} In this case, it is clear that
	\[
	\text{LHS of~\eqref{eq:39gneed}} \geq
	2\hpr{X'}{0}{1} \geq \be[(1-X'^2) \one\{|X'| < 1\}],
	\]
	implying~\eqref{eq:39glhs} (by noting $c/2=1$).
	
	\medskip \noindent \textbf{Sub-case~2: $L_4 \geq 1 > L_3$.} In this case, the Chebyshev-type inequality~\eqref{eq:cheby_lhs} with $c_0,c_1,c_2=0, L_3, 1$ yields
	\[
		2\hpr{X'}{0}{L_3} + 2(1-L_3^2)\hpr{X'}{L_3}{1}
		\geq \be[(1-X'^2) \one\{|X'| < 1\}].
	\]
	The left hand side can be bounded from above by
	\[
	\max(1, 2(1-L_3^2))\cdot \li( \hpr{X'}{0}{L_3} + \hpr{X'}{0}{L_4}\ri),
	\]
	and thus~\eqref{eq:39glhs} follows (by noting $c/2=1/\max(1, 2(1-L_3^2))$).
	
\medskip \noindent \textbf{Sub-case~3: $L_4 < 1$.} In this case,~\eqref{eq:cheby_lhs} with $c_0,c_1,c_2,c_3 = 0, L_3, L_4, 1$ yields
	\begin{equation}\label{eq:39aux1}
	\begin{gathered}
	2\hpr{X'}{0}{L_3} + 2(1-L_3^2)\hpr{X'}{L_3}{L_4} + 2(1-L_4^2)\hpr{X'}{L_4}{1}
	\\
	\geq
	\be[(1-X'^2) \one\{|X'| < 1\}].
	\end{gathered}
	\end{equation}
	We claim that $\hpr{X'}{L_4}{1} \leq \hpr{X'}{-L_3}{L_3} = 2\hpr{X'}{0}{L_3}$, and consequently, the LHS of~\eqref{eq:39aux1} is upper bounded by
	\[
		 \max\li\{1+2(1-L_4^2), 2(1-L_3^2)\ri\} \cdot \li( \hpr{X'}{0}{L_3} + \hpr{X'}{0}{L_4}\ri),
	\]
	thus implying~\eqref{eq:39glhs} (by noting $c/2 = 1/\max(3-2L_4^2, 2-2L_3^2)$).
	
	\medskip \noindent The claim $\hpr{X'}{L_4}{1} \leq \hpr{X'}{-L_3}{L_3}$ follows from a  segment comparison argument -- namely,  Lemma~\ref{lem:seg-compare1}, applied to $X'$ with the parameters $A,B,C,D,M = -L_3, L_3, L_4, 1, a_4/\sigma$. To see that the lemma can be applied, we have to check that $D-C+2M \leq B-A$. Using $M \leq a_3/\sigma$, this follows from
	\[
	1-L_4 + 2a_3 / \sigma \leq 2L_3.
	\]
	Rearranging, we have to prove $\sigma \leq 3 - a_1 + a_2 - 5a_3$. This indeed holds, since
	\[
		\sigma \leq \sqrt{1-3a_3^2} \leq 5/2 - 4a_3 \leq 3 - a_1 +a_2-5a_3,
	\]
	where the ultimate inequality holds as $a_1+a_3 \leq 1/2+a_2$, and the penultimate inequality follows from
	\[
		(5/2-4a_3)^2 - (1-3a_3^2) = \frac{(1-2a_3)(21-38a_3)}{4} \geq 0.
	\]

	\paragraph{Proving~\tops{\eqref{eq:39grhs}}.}
	The Chebyshev-type inequality~\eqref{eq:cheby_rhs} with the parameters
	\[d_0,d_1,d_2,d_3,d_4 = 1, \sqrt{1+1/d}, \sqrt{1+2/d}, \sqrt{1+3/d}, \sqrt{1+4/d},
	\]
	where $d$ is as defined above, yields
	\[
		\be[(X'^2-1) \one\{|X'| > 1\}] \geq \frac{1}{d} \sum_{i=1}^{4} \pr[|X'| \geq \sqrt{1+i/d}].
	\]
	This implies~\eqref{eq:39grhs}, provided that  $R_i \geq \sqrt{1+i/d}$ holds for all $i\in [4]$. The latter reads $1/d \leq (R_i^2-1)/i$, which indeed follows from the definition of $d$.

\section{Theorem~\ref{thm:main} for \tops{$a_1 \in [0.31, 0.387]$}, \tops{$a_1 + a_2 + a_3 \geq 1$}}\label{sec:0.31<a_1<0.387}
The proof of Theorem~\ref{thm:main} in this region is a bit more intricate than the proof in the other regions. A reason for the extra difficulty is that after eliminating the variables $x_1,x_2,x_3$ by applying Lemma~\ref{cor:nm3}, we have to show the LHS of~\eqref{eq:nm3} is $\geq$ the RHS, where the LHS includes the term $\pr[X' \in [-L_1, L_2]]$, with $L_1 < 0$. It is harder for us to exploit this quantity, as the segment $[-L_1, L_2]$ does not contain $0$, which is essential for an approach based on the Chebyshev-type inequality~\eqref{eq:basic_cheby}. In Appendix~\ref{sec:39geq} we overcome this difficulty by just neglecting the term $\pr[X \in [-L_1, L_2]]$. However, this is not possible in the case $a_1 \approx 1/3$, where this term is inherently needed.

In the proof, we consider two cases, which we further subdivide into two sub-cases each.
\begin{enumerate}
	\item \textbf{There are intermediate-sized weights.} The two sub-cases of this case are:
	\begin{itemize}
		\item There exists $k \geq 4$ with $a_k \in [a_1+a_2+a_3-1, 1-a_1-a_2]$.
		
		\item There exist $k > j > 1$ with $a_2+a_j+a_k \leq 1$ and $a_j,a_k \geq 1-2a_1$.
	\end{itemize}
	
	\item \textbf{There are no intermediate-sized weights.} The two sub-cases of this case are:
	\begin{itemize}
		\item There are at most 4 `large' weights of size $\geq 1-a_1-a_2$.
		
		\item There are at least 5 `large' weights.
	\end{itemize}
\end{enumerate}
The proof in the first three sub-cases goes through elimination of three variables, similarly to the proofs in Appendices~\ref{sec:31big} and~\ref{sec:39geq}. The only significant difference is in the \emph{segment comparison} step, which is somewhat more complex and differs between the three sub-cases. The strategy in the fourth sub-case is different. It goes through elimination of five variables and a combination of a Chebyshev-type inequality with a semi-inductive argument.

\subsection{Case~1: There are intermediate-sized weights}
	\subsubsection{Sub-case~1: There exists \tops{$k \geq 4$} with \tops{$a_k \in [a_1+a_2+a_3-1, 1-a_1-a_2]$}}\label{ssec:31mid}
		\paragraph{Elimination step.} By Lemma~\ref{cor:nm3}, it is sufficient to prove
		\begin{equation}\label{Eq:Aux10.1}
		\begin{gathered}
		\pr[X' \in [-L_1, L_2]] + \pr[X' \in [-L_3, L_4]] \\ \geq \\
		\pr\li[ X' > R_1\ri] + \pr\li[ X' > R_2\ri]
		+ \pr\li[ X' > R_3\ri] + \pr\li[ X' > R_4\ri],
		\end{gathered}
		\end{equation}
		where $\sigma, X', L_1,L_2,L_3,L_4,R_1,R_2,R_3,R_4$ are as defined in Lemma~\ref{cor:nm3}.
		
		\paragraph{Segment comparison step.} The following segment comparison lemma plays an important role in the proof, allowing us to take into account the segment $[-L_1, L_2]$ in a Chebyshev-type approach.
		\begin{lemma}\label{lem:mid-ai}
			Let $X = \sum_{i=1}^{n} a_i x_i$ be a Rademacher sum, such that $\var(X) = 1$, $a_1 + a_2 + a_3 \geq 1$, $0 \leq a_n \leq \ldots \leq a_2 \leq a_1 \leq 0.387$. Suppose there exists $k \geq 4$ with $a_k \in [a_1+a_2+a_3-1, 1-a_1-a_2]$.
			Let $\sigma = \sqrt{1-a_1^2-a_2^2-a_3^2}$, $a_i'=a_i/\sigma$, $X' = \sum_{i=4}^{n} a_i' x_i$ and
			\[
				L_1 = \frac{1-a_1-a_2-a_3}{\sigma}, \qquad L_2 = \frac{1-a_1-a_2+a_3}{\sigma}, \qquad L_4 = \frac{1+a_1-a_2-a_3}{\sigma}
			\]
			be as in Lemma~\ref{lem:31sfin}. Then
			\begin{equation}\label{eq:exotic}
				\Pr[X' \in (L_4,1)] \leq 2 \Pr[X' \in (-L_1,L_2]].
			\end{equation}
		\end{lemma}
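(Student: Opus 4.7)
The plan is to construct an explicit injection witnessing the inequality, exploiting the middle-sized coordinate $a_k$. The factor $2$ on the right-hand side matches the two possible signs of $x_k$, which is the key structural clue for how to build the map.

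First, I would write $X' = Y + a_k' x_k$ where $Y = \sum_{i \in \{4, \ldots, n\} \setminus \{k\}} a_i' x_i$. Conditioning on $v_k$, inequality~\eqref{eq:exotic} is equivalent to
\begin{equation*}
\pr[Y \in I_1] + \pr[Y \in I_2] \leq 2\pr[Y \in J_1] + 2\pr[Y \in J_2],
\end{equation*}
where $I_1 = (L_4 - a_k', 1 - a_k')$, $I_2 = (L_4 + a_k', 1 + a_k')$, $J_1 = (-L_1 - a_k', L_2 - a_k']$, and $J_2 = (-L_1 + a_k', L_2 + a_k']$. Each segment $I_j$ has length $1-L_4$, and each $J_j$ has length $L_1+L_2 = 2(1-a_1-a_2)/\sigma$.

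Next, I would aim to show $\pr[Y \in I_j] \leq \pr[Y \in J_1] + \pr[Y \in J_2]$ for each $j \in \{1, 2\}$ via explicit injections. The hypothesis $a_k \in [a_1+a_2+a_3-1, 1-a_1-a_2]$ translates to $a_k' \in [-L_1, (L_1+L_2)/2]$, so that $2a_k'$ ranges over $[-2L_1, L_1+L_2]$---precisely the relative shift one needs to interpolate between the two halves of $J_1 \cup J_2$. For each $I_j$, I would split it into two sub-intervals at a threshold tied to $a_k'$, and map each sub-interval into one of $J_1$ or $J_2$ by a prefix flip on the non-$k$ coordinates (Lemma~\ref{lem:prefix-flip}) with an appropriately chosen shift parameter $Q$. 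The two choices of target segment, $J_1$ vs. $J_2$, provide the factor $2$ on the right-hand side.

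The main obstacle is verifying the applicability of the segment comparison lemmas (Lemmas~\ref{lem:seg-compare1} and~\ref{lem:seg-compare2}), which require controlling the largest non-$k$ coefficient $M_Y$. To this end, I would select $k$ to be the smallest index $\geq 4$ with $a_k \leq 1-a_1-a_2$. When $k = 4$, one has $M_Y \leq a_5 \leq a_4 \leq 1-a_1-a_2$ and the verification of the relevant inequalities (e.g.\ $2M_Y \leq L_1 + L_2$) is clean. When $k > 4$, the coefficients $a_4, \ldots, a_{k-1}$ all exceed $1-a_1-a_2$; handling this regime requires a preprocessing step, such as applying the recursive flip $\RF$ from Section~\ref{ssec:inj-maps} to these `too large' coordinates before invoking the prefix flip. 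In both regimes, the analysis ultimately reduces to elementary inequalities in $a_1, \ldots, a_k, \sigma$ that follow from the assumptions $a_1 \leq 0.387$ and $a_1+a_2+a_3 \geq 1$.
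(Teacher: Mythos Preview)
Your high-level strategy---conditioning on $x_k$ and reading the factor $2$ as the two possible signs---matches the paper exactly, and your reformulation in terms of $Y$, $I_j$, $J_j$ is correct. But your implementation plan overcomplicates matters and leaves a real gap: the ``preprocessing with $\RF$'' for $k>4$ is neither spelled out nor necessary.

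The paper's route avoids your $M_Y$-difficulty entirely by a two-stage argument that never invokes a raw prefix flip on the non-$k$ coordinates. First, it applies Lemma~\ref{lem:seg-compare2} (not Lemma~\ref{lem:seg-compare1}) to $X''=X'-a_k'x_k$ with $A,B,C,D = L_1+a_k',\,L_2+a_k',\,L_4+a_k',\,1+a_k'$. The crucial condition $2M \leq C-A$ reads $2M \leq L_4-L_1 = 2a_1/\sigma$, which holds with $M \leq a_3/\sigma$ \emph{regardless of how large $a_4,\ldots,a_{k-1}$ are}; the condition $0\leq A$ is exactly $a_k \geq a_1+a_2+a_3-1$. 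This gives $\pr[X'\in(L_4,1)\wedge x_k=-1] \leq \pr[X'\in(L_1,L_2]\wedge x_k=-1]$. Second, a trivial single-coordinate flip of $x_k$ (not a prefix flip on the others) shows $\pr[X'\in(L_1,L_2]\wedge x_k=-1] \leq \pr[X'\in(-L_1,L_2]]$: whenever $X'(z)\in(L_1,-L_1]$ with $z_k=-1$, flipping $z_k$ shifts the value by $2a_k' \in [-2L_1,\,L_1+L_2]$ (precisely the hypothesis $a_k\in[a_1+a_2+a_3-1,\,1-a_1-a_2]$), landing in $(-L_1,L_2]$. The $b=1$ case reduces to $b=-1$ by one more $x_k$-flip. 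So the missing idea in your plan is to aim first for the \emph{larger} intermediate interval $(L_1,L_2]$ (conditioned on $x_k=-1$), where Lemma~\ref{lem:seg-compare2} applies painlessly, and only then drop the conditioning via the $k$-flip.
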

		\begin{proof}
			As $\Pr[X' \in (L_4,1)] = \sum_{b \in \spm} \pr[X' \in (L_4,1) \andd x_k = b]$, it is clearly sufficient to show
				\begin{equation}\label{eq:exotic-need}
				\forall b \in \spm \cc \pr[X' \in (L_4,1) \andd x_k = b] \leq \Pr[X' \in (-L_1,L_2]].
			\end{equation}
			We handle these two cases (corresponding to the value of $b$) separately.
			
			\paragraph{Proving~\eqref{eq:exotic-need} for $b=-1$.}
			Applying Lemma~\ref{lem:seg-compare2} to the Rademacher sum $X''=X'-a_k' x_k$, with the parameters $A,B,C,D=L_1+a_k', L_2+a_k', L_4+a_k', 1+a_k'$ and $M \leq a_3'$, we get
			\begin{equation}\label{Eq:Aux10.2}
				\pr[X' \in (L_4,1) \andd x_k = -1] \leq \pr[X' \in (L_1,L_2) \andd x_k = -1].
			\end{equation}
			Note that we apply the lemma with open segments instead of half-open segments; the lemma indeed holds in this setting, as is shown in Appendix~\ref{app:sub:other-types}. To verify that the assumptions of the lemma are satisfied, note that the assumption $0 \leq A \leq C$ holds since $a_k \geq a_1+a_2+a_3-1$, the assumption $2M \leq C-A$ holds since $a_1 \geq a_3$, and the assumption $D-C+D-B \leq B-A$ is equivalent to $a_2-a_3 \leq 1 - \sigma$, and follows from
			\[
				a_2 - a_3 \leq a_1 - (1-2a_1) \leq 3\cdot 0.387 - 1 < 1 - \sqrt{2/3} \leq 1-\sigma,
			\]
			where the first and last inequalities are implied by $a_1+a_2+a_3\geq 1$ (the latter, via the Cauchy-Schwarz inequality).
			By~\eqref{Eq:Aux10.2}, the assertion~\eqref{eq:exotic-need} for $b=-1$ will follow once we show
			\begin{equation}\label{Eq:Aux10.25}
			\pr[X' \in (L_1,L_2] \andd x_k = -1] \leq \pr[X' \in (-L_1,L_2]].
			\end{equation}
			We prove this by constructing an explicit injective map. Let
			\[
			\Omega = \set{z=(z_4,\ldots,z_n) \in \{-1,1\}^{n-3}}{X'(z) \in (L_1,L_2] \wedge z_k=-1},
			\]
			and
			\[
			\Omega' =\set{z=(z_4,\ldots,z_n) \in \{-1,1\}^{n-3}}{X'(z) \in (-L_1,L_2]}.
			\]
			We define $f: \Omega \to \Omega'$ by setting $(f(z))_i=z_i$ for all $i \neq k$, and
			\[
			(f(z))_k =
			\begin{cases}
			\phantom{-}z_k, & \quad X'(z) \in (-L_1,L_2] \\
			-z_k, & \quad X'(z)\not \in (-L_1,L_2]
			\end{cases}.
			\]
			That is, we flip the $k$'th coordinate of $z$ iff $X'(z) \not \in (-L_1,L_2]$, and leave the other coordinates unchanged.
			
			It is clear that $f$ is injective. To see that $\mathrm{Range}(f) \subset \Omega'$, notice that
			\begin{equation}\label{Eq:Aux10.3}
			-2L_1 \leq 2a_k' \leq L_2-(-L_1),
			\end{equation}
			where the first inequality holds since $a_k \geq a_1+a_2+a_3-1$ and the second inequality holds since $a_k \leq 1-a_1-a_2$. As when $X'(z) \not \in (-L_1,L_2]$ we have $X'(f(z))=X'(z)+2a_k'$, the inequality~\eqref{Eq:Aux10.3} implies
			\[
			X'(z) \in (L_1,-L_1] \Longrightarrow X'(f(z)) \in (-L_1,L_2].
			\]
			Since when $X'(z) \in (-L_1,L_2]$ we have $f(z)=z$, the assertion~\eqref{Eq:Aux10.25} follows.
			
			\paragraph{Proving~\eqref{eq:exotic-need} for $b=1$.} Notice that
			\[
			\pr[X' \in (L_4,1) \andd x_k = 1] = \pr[X' \in (L_4-2a_k',1-2a_k') \andd x_k = -1].
			\]
			We proceed by proving $\pr[X' \in (L_4-2a_k',1-2a_k') \andd x_k = -1] \leq \Pr[X' \in (-L_1,L_2]]$ by a slight variation of the proof of~\eqref{eq:exotic-need} for $b=-1$  presented above.
			By~\eqref{Eq:Aux10.25}, it suffices to prove
			\begin{equation}\label{Eq:Aux10.4}
				\pr[X' \in (L_4-2a_k',1-2a_k') \andd x_k = -1] \leq \pr[X' \in (L_1,L_2] \andd x_k = -1].
			\end{equation}
			This inequality is equivalent to
			$\pr[X'' \in (L_4-a_k',1-a_k')] \leq \pr[X'' \in (L_1+a_k',L_2+a_k']]$,
			where $X''=X'-a_k' x_k$, as defined above. By subtracting $\pr[X'' \in (L_4-a_k', L_2+a_k']]$ from both sides, the latter is equivalent to
			\begin{equation}\label{Eq:Aux10.5}
				\pr[X'' \in (L_2+a_k',1-a_k')] \leq
				\pr[X'' \in (L_1+a_k',L_4-a_k']].
			\end{equation}
			To prove~\eqref{Eq:Aux10.5}, we apply Lemma~\ref{lem:seg-compare2} to $X''$, with the parameters  $A,B,C,D=L_1+a_k',L_4-a_k', L_2+a_k', 1-a_k'$ and $M \leq a_3'$, and open segments instead of half-open segments (Appendix~\ref{app:sub:other-types}). To verify that the assumptions of the lemma are indeed satisfied, note that the assumption $0 \leq A \leq C$ holds since $a_k \geq a_1+a_2+a_3-1$, the assumption $2M \leq C-A = 2a_3'$ holds trivially, and the assumption $D-C+D-B \leq B-A$ is equivalent to $\sigma \leq 1+a_1-a_2$, being clear.
			This completes the proof.
		\end{proof}
		
		\paragraph{Auxiliary estimate.} To proceed, we use the following auxiliary inequality in $L_i,R_i$.
		\begin{equation}\label{eq:exotic-final}
		\begin{gathered}
				\max\li\{1/2, \frac{3-2L_4^2}{3}\ri\} \isleq \min\li\{R_1^2-1, \frac{R_2^2-1}{2}, \frac{R_3^2-1}{3}, \frac{R_4^2-1}{4}\ri\},
				\\
				\text{with:}
				\qquad a_1 \in [0.31, 0.387],
				\quad a_3 \leq a_2 \leq a_1,
				\quad a_1+a_2+a_3 \geq 1,
				\\
				\sigma = \sqrt{1-a_1^2-a_2^2-a_3^2},
				\quad L_4 =	\frac{1+a_1-a_2-a_3}{\sigma},
				\quad R_1 = \frac{1-a_1+a_2+a_3}{\sigma},
				\\
				R_2 = \frac{1+a_1-a_2+a_3}{\sigma}
				\quad R_3 = \frac{1+a_1+a_2-a_3}{\sigma},
				\quad R_4 = \frac{1+a_1+a_2+a_3}{\sigma},
				\end{gathered}
				\end{equation}
		The proof of~\eqref{eq:exotic-final} is presented in Appendix~\ref{app:exotic-final}.
		
		\paragraph{Chebyshev-type step.}
		Let
		\begin{equation}
		\begin{gathered}
		c = 1/\max\li\{ (3-2L_4^2)/3, 1/2 \ri\}, \qquad \mbox{and}\\
		d = 1/\min \li\{ R_1^2-1, (R_2^2-1)/2, (R_3^2-1)/3, (R_4^2-1)/4 \ri\}.
		\end{gathered}
		\end{equation}
		We show that
		\begin{equation}\label{eq:ex-lhs}
		\text{LHS of~\eqref{Eq:Aux10.1}} \geq \frac{c}{2} \cdot \be[(1-X'^2) \one\{|X'| < 1\}],
		\end{equation}
		and
		\begin{equation}\label{eq:ex-rhs}
		\text{RHS of~\eqref{Eq:Aux10.1}} \leq \frac{d}{2} \cdot \be[(X'^2-1) \one\{|X'| > 1\}].
		\end{equation}
		As by~\eqref{eq:exotic-final} we have $c \geq d$, the assertion~\eqref{Eq:Aux10.1} follows from~\eqref{eq:ex-lhs} and~\eqref{eq:ex-rhs} by the Chebyshev-type equality~\eqref{eq:basic_cheby}.
		
		\paragraph{Proving~\tops{\eqref{eq:ex-lhs}}.}
			First, we claim that
			\[
			L_3 \geq 1/\sqrt{2}.
			\]
			To see this, note that the assumption $a_1+a_2+a_3 \geq 1$ implies, via the Cauchy-Schwarz inequality, $a_1^2+a_2^2+a_3^2 \geq 1/3$, and thus $\sigma = \sqrt{1-a_1^2-a_2^2-a_3^2} \leq \sqrt{2/3}$. Using this, along with the assumptions $a_1 \leq 0.387$ and $a_2 \geq a_3$, we obtain
			\[
			L_3 = \frac{1-a_1+a_2-a_3}{\sigma} \geq \frac{1-a_1}{\sqrt{2/3}} \geq \frac{1-0.387}{\sqrt{2/3}} > \frac{1}{\sqrt{2}}.
			\]
			As $L_3 \geq 1/\sqrt{2}$, we have
			\begin{equation}\label{Eq:Aux10.6}
			\pr[X' \in [-L_3,L_4]] \geq 2\pr[X' \in \langle 0,L_3]] + 2\max\{0,1-L_3^2\} \cdot \pr[X' \in (L_3,L_4]].
			\end{equation}
			We consider two sub-cases.
			
			\paragraph{Sub-case~1: $L_4 \geq 1$.} Let $\bar{L}_3 = \min\{L_3,1\}$. The Chebyshev-type inequality~\eqref{eq:cheby_lhs}, applied with the parameters $c_0,c_1,c_2=0, \bar{L}_3, 1$, yields
			\begin{equation*}
			2\pr[X'\in \hleft 0, \bar{L}_3 \ri]] + 2(1-\bar{L}_3^2)\pr[X' \in (\bar{L}_3, 1)]
				\geq \be[(1-X'^2) \one\{|X'|<1\}],
			\end{equation*}
			which together with~\eqref{Eq:Aux10.6} and the definition of $c$ implies~\eqref{eq:ex-lhs}.
			
			\paragraph{Sub-case~2: $L_4 < 1$.} Since $L_3 \geq \sqrt{1/2}$, the Chebyshev-type inequality~\eqref{eq:cheby_lhs}, applied with the parameters $c_0,c_1,c_2,c_3=0, L_3, L_4, 1$, implies
			\[
				c \pr[X' \in \langle 0,L_3]] +   \pr[X' \in (L_3,L_4]] + c(1-L_4^2)\pr[X' \in (L_4,1)]  \geq \frac{c}{2} \cdot \be[(1-X'^2) \one\{|X'|<1\}].
			\]
			Thus, in order to deduce~\eqref{eq:ex-lhs} it suffices to show
			\begin{equation}\label{eq:ex-lhs2}
				2\pr[X' \in \langle 0,L_3]]
				+ \pr[X' \in (-L_1,L_2]] \geq c
				\pr[X' \in \langle 0,L_3]] + c(1-L_4^2) \pr[X' \in (L_4,1)].
			\end{equation}
			To prove this, note that  Lemma~\ref{lem:mid-ai} implies
			\[
			\pr[X' \in (L_4,1)] \leq 2\pr[X' \in (-L_1,L_2]] \leq 2\pr[X' \in \langle 0,L_3]].
			\]
			If $2c(1-L_4^2) \leq 1$, then~\eqref{eq:ex-lhs2} follows as $c \leq 2$ by definition. Otherwise, it suffices to check
			\[
				(2 - (2c(1-L_4^2)-1))\pr[X' \in \langle 0,L_3]] \geq c\pr[X' \in \langle 0,L_3]].
			\]
			This inequality follows from $(2 - (2c(1-L_4^2)-1)) \geq c$, which holds by the definition of $c$. This completes the proof.

 		\paragraph{Proving~\tops{\eqref{eq:ex-rhs}}.}
			Inequality~\eqref{eq:ex-rhs} is the same as~\eqref{eq:39grhs} and is proved in the same way. (Note that the slightly different assumptions on $X'$ here do not affect the proof.)

	\subsubsection{Sub-case~2: There exist \tops{$k > j > 1$} with \tops{$a_2+a_j+a_k \leq 1$} and \tops{$a_j,a_k \geq 1-2a_1$}}
	\label{ssec:31mid2}

			\paragraph{Elimination step.} By Lemma~\ref{cor:nm3}, it is sufficient to prove
			\begin{equation}\label{eq:mid-need}
				\hpr{X'}{-L_1}{L_2} + \hpr{X'}{0}{L_3} + \hpr{X'}{0}{L_4} \geq \sum_{i=1}^{4} \pr[X' > R_i],
			\end{equation}
			with
			\[
			X' = \sum_{i \in [n]\sm \{1,j,k\}} \frac{a_i}{\sigma}x_i, \qquad \sigma = \sqrt{1-a_1^2-a_j^2-a_k^2},
			\]
			and
			\begin{equation*}
			\begin{gathered}
				L_1, L_2, L_3, L_4 = \frac{1-a_1-a_j-a_k}{\sigma}, \frac{1-a_1-a_j+a_k}{\sigma}, \frac{1-a_1+a_j-a_k}{\sigma}, \frac{1+a_1-a_j-a_k}{\sigma} \\
				R_1, R_2, R_3, R_4 = \frac{1-a_1+a_j+a_k}{\sigma}, \frac{1+a_1-a_j+a_k}{\sigma}, \frac{1+a_1+a_j-a_k}{\sigma}, \frac{1+a_1+a_j+a_k}{\sigma}.
			\end{gathered}
			\end{equation*}
			
			\paragraph{Auxiliary inequalities.} To proceed, we use the following auxiliary inequalities in $L_i,R_i$.
			\begin{equation}\label{eq:L34Ri}
			\begin{gathered}
			L_4 \geq L_3 \isgeq \sqrt{1/2}, \qquad \forall i\in \{1,2,3,4\}\cc R_i \isgeq \sqrt{1+(i/2)} \\
			\text{with:}\qquad 1-2a_1 \leq a_k \leq a_j \leq a_1 \leq 0.387,\quad \sigma = \sqrt{1-a_1^2-a_j^2-a_k^2}, \\
			L_3 = \frac{1-a_1+a_j-a_k}{\sigma}, \quad L_4 = \frac{1+a_1-a_j-a_k}{\sigma}, \quad R_1 = \frac{1-a_1+a_j+a_k}{\sigma}, \\
			R_2 = \frac{1+a_1-a_j+a_k}{\sigma}, \quad R_3 = \frac{1+a_1+a_j-a_k}{\sigma}, \quad R_4 = \frac{1+a_1+a_j+a_k}{\sigma}.
			\end{gathered}
			\end{equation}
			The proof of~\eqref{eq:L34Ri} is presented in Appendix~\ref{app:L34Ri}.
			
			\paragraph{Segment comparison step.} By Lemma~\ref{lem:seg-compare2}, applied to the Rademacher sum $X'$ with the parameters $A,B,C,D = -L_1, L_2, L_4, 1$ and $M \leq a_2/\sigma$, we have
			\begin{equation}\label{Eq:Aux10.7}
			\hpr{X'}{L_4}{1} \leq \hpr{X'}{-L_1}{L_2}.
			\end{equation}
			To verify that the assumptions of the lemma are satisfied, note that the assumption $A \leq C$, $-A \leq C$ holds since $a_j+a_k < 1$, the assumption $2M \leq C-A$ (being $2a_2/\sigma \leq (2-2a_j-2a_k)/\sigma$) holds since $a_2+a_j+a_k \leq 1$, and the assumption $D-C+D-B \leq B-A$ is equivalent to $\sigma \leq 2-a_1-2a_j$, which holds as
			\[
			(2-a_1-2a_j)^2 - \sigma^2 = (a_k^2 - (1-2a_1)^2) + (a_1-a_j)(8-9a_1-5a_j) + 5(0.4-a_1)(2-3a_1) > 0.
			\]
			
			\paragraph{Chebyshev-type step.}
			Denote $\bar{L}_3=\min\{L_3,1\}$ and $\bar{L}_4=\min\{L_4,1\}$. The Chebyshev-type inequality~\eqref{eq:cheby_our}, applied with the parameters $c_0,c_1,c_2,c_3=0, \bar{L}_3, \bar{L}_4, 1$ and $d_0,d_1,d_2,d_3,d_4=1, \sqrt{3/2}, \sqrt{4/2}, \sqrt{5/2}, \sqrt{6/2}$, gives
			\begin{equation}\label{Eq:Aux10.8}
				\begin{gathered}
					2\hpr{X'}{0}{\bar{L}_3} + 2(1-\bar{L}_3^2)\hpr{X'}{\bar{L}_3}{\bar{L}_4} + 2(1-\bar{L}_4^2)\hpr{X'}{\bar{L}_4}{1}
					\\
					\geq 2 \cdot \sum_{i=1}^{4} \frac{1}{2} \pr[X' \geq \sqrt{1+(i/2)}].
				\end{gathered}
			\end{equation}
			By~\eqref{eq:L34Ri} and~\eqref{Eq:Aux10.7},
			\[
			\hpr{X'}{-L_1}{L_2} \geq \hpr{X'}{L_4}{1} \geq \max\{0,2(1-L_4^2)\}\hpr{X'}{L_4}{1}.
			\]
			Using in addition the inequality $L_3 \geq \sqrt{1/2}$ that holds by~\eqref{eq:L34Ri}, we obtain
			\begin{align*}
				\begin{split}
				\text{LHS of}~\eqref{eq:mid-need} &\geq 2\hpr{X'}{0}{L_3} + \max\{0,2(1-L_3^2)\}\hpr{X'}{L_3}{L_4} \\
				&\qquad \qquad+ \max\{0,2(1-L_4^2)\}\hpr{X'}{L_4}{1}\\ &\geq \text{LHS of}~\eqref{Eq:Aux10.8}.
				\end{split}
			\end{align*}	
			On the other hand, as $\forall i: R_i \geq \sqrt{1+(i/2)}$ by~\eqref{eq:L34Ri}, we have
			\begin{align*}
					\text{RHS of}~\eqref{Eq:Aux10.8}=\sum_{i=1}^{4} \pr[X' \geq \sqrt{1+(i/2)}] \geq \sum_{i=1}^{4} \pr[X' \geq R_i] \geq \text{RHS of}~\eqref{eq:mid-need}.
			\end{align*}	
			Therefore,~\eqref{eq:mid-need} follows from~\eqref{Eq:Aux10.8}.

	\subsection{Case~2: There are no intermediate-sized weights}
		In Appendix~\ref{ssec:31mid} we covered the case where there exists a weight $a_i$ with $a_i \in [a_1+a_2+a_3-1, 1-a_1-a_2]$. We hence assume the inexistence of such weights. That is, we may partition the weights into `big' and `small' ones, $B$ and $S$:
		\begin{itemize}
		\item $B = \set{i\in [n]}{a_i > 1-a_1-a_2}$.     (Notice that $\{1,2,3\} \subset B$, as $a_1+a_2+a_3 \geq 1$.)		
		\item $S = \set{i\in [n]}{a_i < a_1+a_2+a_3-1}$. (Notice that $S = [n] \sm B$.)
		\end{itemize}
		We divide this case into two sub-cases, according to the size of $B$.

		\subsubsection{Sub-case~1: \tops{$|B| \leq 4$}}
		The proof in this case is very similar to the proof in Appendix~\ref{ssec:31mid2}, except for a slightly more complicated segment comparison step.
		
		\paragraph{Elimination step.} By Lemma~\ref{cor:nm3}, it is sufficient to prove
		\begin{equation}\label{Eq:Aux10a.1}
		\begin{gathered}
		\hpr{X'}{-L_1}{L_2} + \hpr{X'}{0}{L_3} + \hpr{X'}{0}{L_4}
		\\
		\geq
		\\
		\hpr{X'}{R_1}{\infty} + \hpr{X'}{R_2}{\infty} + \hpr{X'}{R_3}{\infty} + \hpr{X'}{R_4}{\infty},
		\end{gathered}
		\end{equation}
		where $\sigma, X', L_1,L_2,L_3,L_4,R_1,R_2,R_3,R_4$ are as defined in Lemma~\ref{cor:nm3}.
		
		\medskip \noindent Note that by~\eqref{eq:L34Ri}, we have
		\begin{equation}\label{Eq:Aux10a.2}
		L_4 \geq L_3 \geq \sqrt{1/2} \qquad \mbox{and} \qquad \forall i \cc R_i \geq \sqrt{1+i/2}.
		\end{equation}
		(The assertion~\eqref{eq:L34Ri} applies whenever the weights $a_1,a_j,a_k$ of the three eliminated variables satisfy $1-2a_1 \leq a_k \leq a_j$. This holds for $a_1,a_2,a_3$, since $a_1+a_2+a_3 \geq 1$ by assumption.)
		
		\paragraph{Segment comparison step.} Like in Appendix~\ref{ssec:31mid2}, we claim that
		\begin{equation}\label{Eq:Aux10a.3}
		\hpr{X'}{L_4}{1} \leq \hpr{X'}{-L_1}{L_2}.
		\end{equation}
		We would like to deduce this inequality from Lemma~\ref{lem:seg-compare1}, but for using the lemma we need a good upper bound on the maximal weight of the Rademacher sum it is applied to. In order to obtain such a bound, we eliminate also the variable $x_4$ and use the fact that due to the definition of $B,S$ and the assumption $|B|\leq 4$, we have
		\begin{equation}\label{Eq:Aux10a.4}
		\forall i \geq 5 \cc a_i \leq a_1+a_2+a_3-1.
		\end{equation}
		The argument goes as follows. To prove~\eqref{Eq:Aux10a.3}, it suffices to check
		\[
		\forall b \in \spm \cc \pr[X' \in \hseg{L_4}{1} \andd x_4 = b] \leq \pr[X' \in \hseg{-L_1}{L_2} \andd x_4 = b].
		\]
		Defining $a_i'=a_i/\sigma$ and $X'' = X'-a'_4 x_4 = \sum_{i\geq 5} a_i' x_i$, this boils down to showing
		\[
		\forall b \in \spm \cc \hpr{X''}{L_4-b a_4'}{1-b a_4'} \leq \hpr{X''}{-L_1-b a_4'}{L_2-b a_4'}.
		\]
		This follows from Lemma~\ref{lem:seg-compare1}, applied to the Rademacher sum $X''$ with the parameters
		\[
		A,B,C,D,M = -L_1-b a_4', L_2-b a_4', L_4-b a_4', 1-b a_4', a_5'.
		\]
		Let us verify that the assumptions of the lemma are satisfied. By~\eqref{Eq:Aux10a.4}, all weights of $X''$ are indeed bounded by $M = a_5' \leq (a_1+a_2+a_3-1)/\sigma$. To verify the assumption $C \geq \min\{|A|,|B|\}$, it is sufficient to check $0 \leq B \leq C$. $B \leq C$ is equivalent to $L_2 \leq L_4$ which holds as $a_1 \geq a_3$, and $B > 0$ holds since $a_1+a_2-a_3+a_4 < 1$. Finally, the assumption
		$D-C + 2M \leq B-A$ is equivalent to $\sigma \leq 5-3a_1-5a_2-3a_3$, which holds since
		\begin{equation*}
			\begin{aligned}
				(5-3a_1-5a_2-3a_3)^2 - \sigma^2 =\  & (a_2-a_3)(30-18a_1-40a_2-10a_3) +\\
				& (a_1-a_2)(80-114a_1-66a_2) + (2-4a_1)(12-31a_1) > 0,
			\end{aligned}
		\end{equation*}
		where all expressions here are nonnegative as $a_3 \leq a_2 \leq a_1 \leq 0.387$.
		
		\paragraph{Chebyshev-type step.}
		The Chebyshev-type step is almost identical to the corresponding step in Appendix~\ref{ssec:31mid2}, and thus we describe it very briefly.
		
		\medskip \noindent Denoting $\bar{L}_3=\min\{L_3,1\}$ and $\bar{L}_4=\min\{L_4,1\}$, the Chebyshev-type inequality~\eqref{eq:cheby_our}, applied with $c_0,c_1,c_2,c_3=0, \bar{L}_3, \bar{L}_4, 1$ and $d_0,d_1,d_2,d_3,d_4=1, \sqrt{3/2}, \sqrt{4/2}, \sqrt{5/2}, \sqrt{6/2}$, gives
		\begin{equation}\label{Eq:Aux10a.5}
		\begin{gathered}
		2\hpr{X'}{0}{\bar{L}_3} + 2(1-\bar{L}_3^2)\hpr{X'}{\bar{L}_3}{\bar{L}_4} + 2(1-\bar{L}_4^2)\hpr{X'}{\bar{L}_4}{1}
		\\
		\geq 2 \cdot \sum_{i=1}^{4} \frac{1}{2} \pr[X' \geq \sqrt{1+(i/2)}].
		\end{gathered}
		\end{equation}
		The LHS of~\ref{Eq:Aux10a.1} is $\geq$ the LHS of~\eqref{Eq:Aux10a.5} due to~\eqref{Eq:Aux10a.2} and~\eqref{Eq:Aux10a.3}, while the RHS of~\eqref{Eq:Aux10a.1} is $\leq$ the RHS of~\eqref{Eq:Aux10a.5} due to~\eqref{Eq:Aux10a.2}. Therefore,~\eqref{Eq:Aux10a.1} follows from~\eqref{Eq:Aux10a.5}.

		\subsubsection{Sub-case~2: \tops{$|B|\geq 5$}}

\paragraph{Elimination step.} We begin with eliminating 5 variables. Let
\[
\sigma = \sqrt{1-\sum_{i=1}^{5} a_i^2} \qquad \mbox{and} \qquad X' = \sum_{i=6}^{n} \frac{a_i}{\sigma} x_i.
\]
By Lemma~\ref{lem:nm}, applied with $m=5$, it is sufficient to prove
\begin{equation}\label{eq:nm5}
\sum_{i=0}^{31} \pr[X' > T_k] \leq 8,
\end{equation}
where $\{T_k\}_{k=0}^{31}$ range over all options
\begin{equation*}
T_0, \ldots, T_{31} = \frac{1\pm a_1 \pm a_2 \pm a_3 \pm a_4 \pm a_5}{\sigma}.
\end{equation*}
			We order the $T_k$'s according to the bit-representation of $k$, that is:
			\[
				T_{k} = \frac{1 - \sum_{i=1}^{5} (-1)^{\lfloor k/2^{5-i} \rfloor} a_i}{\sigma},
			\]
			so that, for example
			\[
				T_{22} = \frac{1+a_1-a_2+a_3+a_4-a_5}{\sigma}.
			\]
			
\paragraph{Auxiliary inequalities.} To proceed, we use the following auxiliary inequalities in the $T_k$'s.
\begin{equation}\label{eq:everything}
\begin{gathered}
T_{12} \cdot \max(T_{10}, T_{17}) \isgeq 1, \\
T_{18}, T_{20}, T_{24} \isgeq 1, \\
T_{11}, T_{13}, T_{14} \isgeq \sqrt{1 + (3/3)}, \\
T_{19}, T_{21}, T_{22}, T_{25}, T_{26}, T_{28}  \isgeq \sqrt{1 + (9/3)}, \\
T_{15}, T_{23}, T_{27}, T_{29}, T_{30}, T_{31}  \isgeq \sqrt{1 + (15/3)}, \\
\text{with:}\qquad T_{k} = \frac{1 - \sum_{i=1}^{5} (-1)^{\lfloor k/2^i \rfloor} a_i}{\sigma}, \qquad \sigma = \sqrt{1-\sum_{i=1}^{5} a_i^2}, \\
1-a_2-a_4 \leq a_5 \leq a_4 \leq a_3 \leq a_2 \leq a_1 \leq 0.387.
\end{gathered}
\end{equation}
The proof of~\eqref{eq:everything} is presented in Appendix~\ref{app:everything}.

\medskip \noindent Note that in light of Appendix~\ref{ssec:31mid2}, we may assume $a_2 + a_j + a_k \geq 1$ for any distinct $j,k > 1$ in $B$. In particular, as $|B| \geq 5$, we may assume that $a_2+a_4+a_5 \geq 1$, and thus, the assumptions of~\eqref{eq:everything} hold true in our region.

\paragraph{Reduction step.}
			Note that if $a+b \geq 0$, then $\pr[X' > a] + \pr[X' > b] \leq 1$. Indeed, denoting $X'' = X' + \frac{a-b}{2}x_{n+1}$, where $x_{n+1}$ is a Rademacher random variable independent of $x_1,\ldots,x_n$, we have
			\begin{equation}\label{Eq:Aux10a.6}
				\pr[X' > a] + \pr[X' > b] = 2\pr \left[X'' > \frac{a+b}{2}\right] \leq 2\pr[X'' > 0] \leq 1.
			\end{equation}
			Since in our range, $\forall i,j \cc a_i + a_j < 1$, we have
			\begin{equation*}
			\begin{gathered}
				T_0 + T_7 > 0, \qquad T_1 + T_6 > 0, \qquad T_2 + T_9 > 0, \qquad T_4 + T_{10} > 0, \\
				T_4 + T_{17} > 0, \qquad T_8 + T_3 > 0, \qquad T_{16} + T_5 > 0.			
			\end{gathered}
			\end{equation*}
			Thus, by~\eqref{Eq:Aux10a.6},
			\[
				\forall (i,j) \in P \cc \pr[X' > T_i] + \pr[X' > T_j] \leq 1, \qquad \pr[X' > T_4] + \pr[X' > \min(T_{10}, T_{17})] \leq 1,
			\]
			where $P = \li\{ (0,7), (1,6), (2,9), (3,8), (5, 16)  \ri\}$.
			Hence, in order to prove~\eqref{eq:nm5}, it is sufficient to prove
			\begin{equation}
			\label{eq:nm51}
				\pr[X' > \max(T_{10}, T_{17})] + \sum_{i \neq 0,1,2,3,4,5,6,7,8,9,10,16,17} \pr[X' > T_i] \leq 2.
			\end{equation}
			
\paragraph{Semi-inductive step.}			
			We further claim that
			\begin{equation}\label{Eq:Aux10a.7}
			\pr[X' > \max(T_{10}, T_{17})] + \pr[X' > T_{12}] \leq 1/2.
			\end{equation}
			To see this, rewrite this inequality as
			\begin{equation}
			\label{eq:t1t}
				\pr[X' \in \hleft 0, T_{12} \ri]] \geq \pr[X' > \max(T_{10}, T_{17})].
			\end{equation}
			We deduce~\eqref{eq:t1t} from the assertion of Theorem~\ref{thm:main} for a Rademacher sums on $n-4$ variables, whose correctness we may assume by induction.
			
			\medskip \noindent
			Note that by Lemma~\ref{lem:nm}, applied with $m=1$, for any Rademacher sum $Z'$ with $\mathrm{Var}[Z']=1$ and any $0<t\leq 1$, the assertion \begin{equation}\label{Eq:Aux10a.8}
			\pr[Z' \in \hleft 0, t \ri]] \geq \pr[Z' > 1/t]
			\end{equation}
			follows from Tomaszewski's assertion $\Pr[|Z|\leq 1] \geq 1/2$ for the Rademacher sum $Z=b_1 x_1 + \sigma' Z'$, where $b_1=(1-t^2)/(1+t^2)$ and $\sigma=\sqrt{1-b_1^2}$ (see~\eqref{eq:nm1} and~\eqref{Eq:Prelim-Eliminate-one})). Furthermore, the range $0<t \leq 1$ in~\eqref{Eq:Aux10a.8} can be extended to all $t>0$, since the inequality $\pr[Z' \in \hleft 0, t \ri]] \geq \pr[Z' > 1/t]$ is equivalent to $\pr[Z' \in \hleft 0, 1/t \ri]] \geq \pr[Z' > t]$.
			
			\medskip \noindent Applying this to the Rademacher sum $X'$, with $t=T_{12}$, we deduce
			\begin{equation}\label{Eq:Aux10a.9}
			\pr[X' \in \hleft 0, T_{12} \ri]] \geq \pr[X' > 1/T_{12}]
			\end{equation}
			from Tomaszewski's assertion for Rademacher sums on $n-5+1=n-4$ variables, which holds by the induction hypothesis.
			
			\medskip \noindent As $\max\{T_{10}, T_{17}\} \geq 1/T_{12}$ by~\eqref{eq:everything}, the assertion~\eqref{eq:t1t} follows from~\eqref{Eq:Aux10a.9}.
			
			\paragraph{Chebyshev-type inequality step.} By combining~\eqref{eq:nm51} with~\eqref{Eq:Aux10a.7} and replacing $\Pr[X'>t']$ with $1/2-\Pr[X' \in \langle 0,t']]$ for $t'=T_{18},T_{20},T_{24}$, we are left with proving
			\begin{equation}
				\sum_{i \in \{11,13,14,15, 19, 21,22,23, 25,26,27,28,29,30,31 \}} \pr[X' > T_i] \leq \sum_{j \in \{18, 20, 24\}} \pr[X' \in \hleft 0, T_j \ri] ].
			\end{equation}
			Since $T_{18}, T_{20}, T_{24} \geq 1$ by~\eqref{eq:everything}, it is sufficient to prove
			\begin{equation}\label{eq:nm52}
			\Pr[X' \in \langle 0,1]] \geq \frac{1}{3} \sum_{i \in \{11,13,14,15, 19, 21,22,23, 25,26,27,28,29,30,31 \}} \pr[X' > T_i].
			\end{equation}
			The Chebyshev-type inequality~\eqref{eq:cheby_our}, applied to the Rademacher sum $X'$ with $c_0,c_1=0,1$ and the sequence $d_i=\sqrt{1+(i/3)}$, $i=1,2,3,\ldots$, yields
			\begin{equation}
			\label{eq:nm5cheby}
				\Pr[X' \in \langle 0,1]] \geq \frac{1}{3} \sum_{i=1}^{\infty} \pr[X' > \sqrt{1+(i/3)}].
			\end{equation}
			The assertion~\eqref{eq:nm52} follows  from~\eqref{eq:nm5cheby} instantly, via the lower bounds on the $T_i$'s proved in~\eqref{eq:everything}.
			
			\medskip \noindent This completes the proof of Theorem~\ref{thm:main}.

\section{Proofs of inequalities}
\label{app:inequalities}

In this appendix we prove a series of inequalities that are used at various stages of the proof of Theorem~\ref{thm:main}. 

\paragraph{Polynomial inequalities.} We usually choose to prove inequalities through \textit{Positivstellensatz}, i.e., representation as a combination of terms that are transparently positive. For example, in order to prove that $2a^2 - ab + 1/16 \geq 0$ holds for any $a \geq b \geq 0$, we just write
	\[
	2a^2 - ab + 1/16 = a(a-b+1/2) + (a-1/4)^2 \geq 0.
	\]

\subsection{Proof of Inequality~\tops{\eqref{eq:055fin}}}\label{app:055fin}
	Recall we have to prove the following, in the range $0< t \leq \sqrt{1-\sqrt{1/2}}$:
	\[
		1 + 2(1-t^2) + 4\sum_{k=2}^{\lceil 1/t \rceil -1} \li( 1-(kt)^2\ri) \leq (1/t^2) - 1.
	\]
	The proof splits into three simple cases.
	
	\paragraph{Case~1: $1/2 \leq t \leq \sqrt{1-\sqrt{1/2}}$.}
		In this case we have to prove $1+2(1-t^2) \leq 1/(t^2) - 1$, which is equivalent to
		\[
		2t^4 - 4t^2+1 \geq 0.
		\]
		This is a simple quadratic inequality in $t^2$, which holds in particular when $t^2 \leq 1-\sqrt{1/2}$, as required.
	
	\paragraph{Case~2: $1/3 \leq t < 1/2$.}
		In this case the inequality states $1+2(1-t^2)+4(1-4t^2)\leq 1/t^2-1$, which is equivalent to
		\[
		18t^4 - 8t^2 + 1 \geq 0.
		\]
		Applying the inequality $2ab \leq a^2+b^2$, we see that $8t^2 \leq \sqrt{72t^4} \leq 1+18t^4$, as required.
	\paragraph{Case~3: $0<t < 1/3$.}
		Notice that
		\[
		\sum_{k=2}^{\lceil 1/t \rceil-1} \li( 1-(kt)^2\ri) \leq \int_{1}^{1/t} (1-(xt)^2) dx=\frac{1}{t}-1-\frac{1}{3}\li(\frac{1}{t}-t^2\ri).
		\]
		Thus, it is sufficient to prove $1 + 2(1-t^2) + 4(1/t-1-(1/t-t^2)/3) \leq 1/t^2 - 1$. Rearranging, this is equivalent to $2t^4-8t+3 \geq 0$. Since $t< 1/3$, one trivially has $3-8t\geq 0$, as required.

		\subsection{Proof of Inequality~\tops{\eqref{eq:05saux}}}\label{app:05saux}
		
		Recall we have to prove $\frac{2a_2}{\sigma} \leq t$ and $(1 - 3t/2) + \frac{2a_2}{\sigma} \leq t$, with
		\[
		a_1 \in [0.5, 0.55], \qquad \sigma = \sqrt{1-a_1^2}, \qquad t=\sqrt{\frac{1-a_1}{1+a_1}}, \qquad a_2 \leq \frac{a_1-3+\sqrt{25+10a_1-63a_1^2}}{8}.
		\]
		Note the first inequality follows from the second one, as $t \leq \sqrt{1/3}<2/3$ in our parameters range. Rewriting the second inequality to depend only on $a_1$ (by writing $\sigma,t$ in terms of $a_1$), we are required to prove:
		\[
		4\sqrt{1-a_1^2} + \sqrt{25+10a_1-63a_1^2} \leq 13 - 11a_1.
		\]
		Notice $\sqrt{1-a_1^2} \leq 1-a_1^2/2$, and so after rearranging, it is sufficient to show
		\[
		25+10a_1-63a_1^2 \leq (2a_1^2 - 11a_1 + 9)^2,
		\]
		or equivalently, $a_1^4 - 11a_1^3 + 55a_1^2 - 52a_1  + 14\geq 0$. We present it as a sum of squares:
		\[
		a_1^4 - 11a_1^3 + 55a_1^2 - 52a_1  + 14 = \li( a_1^2 - \frac{11}{2}a_1 + 3\ri)^2 + \frac{75}{4}\li( a_1-\frac{38}{75}\ri) ^ 2 + \frac{14}{75}>0.
		\]
		
		\subsection{Proof of Inequality~\tops{\eqref{eq:05rep2}}}\label{app:05fin2}
		
		Recall we have to prove
		\[
		1 + (1-t^2) + (1-(3t/2)^2) \leq 1/t^2-1,
		\]
		in the range $t^2 \in (0, 1/3]$. This inequality is equivalent to $13t^4-16t^2+4 \geq 0$. This latter inequality is quadratic in $t^2$, and holds whenever $t^2 \leq (8-\sqrt{12})/13$. In particular, it holds for $t^2 \leq 1/3$, as asserted.

\subsection{Proof of Inequality~\tops{\eqref{eq:05fin}}}\label{app:05fin}
	Recall we are required to prove
	\begin{equation*}
	\begin{gathered}
		L_2 \isgeq \frac{2}{3}, \qquad R_1 \isgeq \max(\sqrt{3 - L_2^2},\sqrt{2}), \qquad R_2 \isgeq \max(\sqrt{5 - 2L_2^2},\sqrt{3}) \\
		\text{with:}\qquad L_2 = \frac{1-a_1+a_2}{\sigma},\quad R_1 = \frac{1+a_1-a_2}{\sigma},\quad R_2 = \frac{1+a_1+a_2}{\sigma}, \quad
		\sigma = \sqrt{1-a_1^2-a_2^2},
	\end{gathered}
	\end{equation*}
	where $a_1 \in [0.5, 0.55]$, $a_1+a_2 < 1$, and $a_2 \geq \li( a_1-3 + \sqrt{25+10a_1-63a_1^2} \ri)/8$.

	\subsubsection{Proving \tops{$R_1 \geq \max(\sqrt{3 - L_2^2},\sqrt{2})$}} In the inequality $R_1 \geq \sqrt{3 - L_2^2}$, both sides are positive, and squaring shows  equivalence to the inequality
	\begin{equation}\label{Eq:AuxB4.1}
	5a_2^2 - 4a_1 a_2 + (5a_1^2-1) \geq 0.
	\end{equation}
	Considering the left hand side as a quadratic function of $a_2$, we find it has no zeros, as
	\[
	\Delta = (-4a_1)^2 - 4\cdot 5\cdot (5a_1^2-1) = 20 - 84a_1^2<0,
	\]
	where the ultimate inequality holds since $a_1 \geq 1/2$. Thus,~\eqref{Eq:AuxB4.1} holds for any value of $a_2$.
	
	\medskip \noindent In a similar way, in the inequality $R_1 \isgeq \sqrt{2}$, squaring shows equivalence to the inequality
	\begin{equation}\label{Eq:AuxB4.2}
	3a_2^2-2(1+a_1) a_2 + 3a_1^2+2a_1-1 \isgeq 0.
	\end{equation}
	Considering the left hand side as a quadratic function of $a_2$, we have
	\[
	\Delta = 4(1+a_1)^2 - 4\cdot 3\cdot (3a_1^2+2a_1-1) = 16 - 16a_1- 32a_1^2 \leq 0,
	\]
	where the ultimate inequality holds since $a_1 \geq 1/2$. Thus,~\eqref{Eq:AuxB4.2} holds for any value of $a_2$.
	
	\subsubsection{Proving \tops{$R_2 \geq \max (\sqrt{5 - 2L_2^2}, \sqrt{3})$}} The inequality $R_2 \geq \sqrt{3}$ holds since the assumption $a_1 \geq 1/2$ implies
	\[
	R_2 = \frac{1+a_1+a_2}{\sqrt{1-a_1^2-a_2^2}} \geq \frac{1+1/2}{\sqrt{1-(1/2)^2}}= \sqrt{3}.
	\]
	Regarding the inequality $R_2 \isgeq \sqrt{5 - 2L_2^2}$, squaring shows equivalence to $8a_2^2 + (6-2a_1) a_2 + (8a_1^2-2a_1-2) \geq 0$. Solving this quadratic inequality in $a_2$ shows that it holds whenever
	\[
		a_2 \geq \frac{(2a_1 - 6) + \sqrt{(6-2a_1)^2 - 32(8a_1^2-2a_1-2)}}{16},
	\]
	or equivalently, $a_2 \geq \li( a_1-3 + \sqrt{25+10a_1-63a_1^2} \ri)/8$, which is precisely the assumption we made on $a_2$.
	
	\subsubsection{Proving \tops{$L_{2} \geq 2/3$}} By squaring and rearranging, one sees the inequality is equivalent to
	\[
		13a_2^2 + (18-18a_1) a_2 + (13 a_1^2-18 a_1 + 5) \geq 0.
	\]
	We already observed (in the proof of $R_2 \geq \sqrt{5 - 2L_2^2}$) that $8a_2^2 + (6-2a_1) a_2 + (8a_1^2-2a_1-2) \geq 0$ holds in our range of parameters. Thus, it is sufficient to prove
	\[
		13a_2^2 + (18-18a_1) a_2 + (13 a_1^2-18 a_1 + 5) - \frac{3}{2}(8a_2^2 + (6-2a_1) a_2 + (8a_1^2-2a_1-2)) \geq 0,
	\]
	or equivalently, $a_2^2 + (9-15a_1)a_2 + (a_1^2 - 15a_1 + 8) \geq 0$. This indeed holds in our range, since $a_2^2 \geq 0$ holds trivially, $9-15a_1 \geq 0$ holds as $a_1 \leq 0.55$, and $a_1^2 - 15a_1 + 8 \geq 0$ holds whenever $a_1 \leq 0.553$, whereas we assume $a_1 \leq 0.55$. This completes the proof.

\subsection{Proof of Inequality~\tops{\eqref{eq:31a2s}}}\label{app:31a2s}
	Recall we have to prove
	\begin{equation}\label{eq:31a2srecall}
		\pr[Z > t] + \pr[Z > 1/t] \isleq 0.324,
	\end{equation}
	for	$a_1 \in (0.31, 0.5)$, $t=\sqrt{\frac{1-a_1}{1+a_1}}$, and a standard Gaussian $Z\sim N(0,1)$. We claim that the function $f:t \mapsto \pr[Z > t] + \pr[Z > 1/t]$ is decreasing in $(0,1)$, and thus, it is sufficient to verify~\eqref{eq:31a2srecall} only at the maximal $t=\sqrt{1/3}$, achieved at $a_1 = 1/2$. For this value of $t$, we indeed have $\pr[Z > \sqrt{1/3}] + \pr[Z > \sqrt{3}] \leq 0.324$, as required.
	
	\medskip \noindent To see that $f$ is decreasing in $(0, 1)$, note that
	\[
		f'(t) = \frac{1}{\sqrt{2\pi}}\li( \frac{e^{-1/(2t^2)}}{t^2} - e^{-t^2/2} \ri).
	\]
	Letting $s=1/t^2$ so that $s \geq 1$, we see that the assertion $f'(t) \leq 0$ is equivalent to $\log(s) \leq (s - \frac{1}{s})/2$. This indeed holds for all $s \geq 1$, as at $s=1$ the two sides are equal, and the derivative of the l.h.s.~(namely, $1/s$) is never larger than the derivative of the r.h.s.~(namely, $(1+1/s^2)/2$), by the arithmetic vs. geometric means inequality. This completes the proof.

\subsection{Proof of Inequality~\tops{\eqref{eq:31a3s}}}\label{app:31a3s}
	Recall we have to prove
	\begin{equation}\label{eq:31a3srecall}
		\begin{gathered}
		\sum_{k=1}^{4} \pr[Z > T_k] \isleq 0.664, \\
		\text{with:}\quad Z\sim N(0,1), \quad 0.19 \leq a_2 \leq a_1 \leq 1/2, \quad \sigma = \sqrt{1-a_1^2-a_2^2}, \quad T_k = \frac{1\pm a_1 \pm a_2}{\sigma}.
	\end{gathered}
	\end{equation}
	\indent Let
	$
		f(a_1, a_2) = \sum_{k=1}^{4} \pr[Z > T_k],
	$
	so that we wish to prove $f(a_1, a_2) \leq 0.664$ in the region $\mathcal{D} = [0.19,1/2]^2$. Experimental evidence strongly suggests that in this region, $f$ is maximized at $(a_1, a_2) = (1/2, 1/2)$, where $f(1/2, 1/2) = 0.6596 \pm 10^{-4} < 0.664$. However, we could not find a short analytic proof of this, and so we (rigorously) prove~\eqref{eq:31a3srecall} numerically.
	
	Using a bound on the gradient of $f$, we verify $f(a_1, a_2) \leq 0.664$ for all $(a_1, a_2) \in \mathcal{D}$, by sampling a sufficiently fine net. Specifically, by Lagrange's mean value theorem, if for some function $g:\mathcal{D} \rightarrow \mathbb{R}$ that is continuous in $\mathcal{D}$ and differentiable in the interior $\mathcal{D}^{\circ}$, the gradient $\nabla g$ satisfies $\norm{\nabla g (\bar{a})}_2 \leq C$ for any $\bar{a}=(a_1,a_2) \in \mathcal{D}^{\circ}$, then for any $\bar{a},\bar{a}' \in \mathcal{D}$ we have
	\begin{equation}\label{eq:deriv-bound}
	g(\bar{a}) \leq g(\bar{a}') + C\norm{\bar{a}-\bar{a}'}_2.
	\end{equation}
	
	\paragraph{Bounding the gradient of $f$.} Recall $f = \sum_{k=1}^{4} \pr[Z > T_k]$ for a Gaussian $Z \sim N(0,1)$.
	Hence, for $i=1,2$ we have
	\[
		\frac{\partial f}{\partial a_i} = \frac{-1}{\sqrt{2\pi}} \sum_{k=1}^{4} \frac{\partial T_k}{\partial a_i} \exp(-T_k^2/2).
	\]
	For $T_k = (1 + a_1 y_1 + a_2 y_2) / \sigma$ with $y_1,y_2 \in \{-1, 1\}$, we have
	\[
		\frac{\partial T_k}{\partial a_i} = \frac{a_i T_k}{\sigma^2} + \frac{y_i}{\sigma}.
	\]
	Hence, using $0 \leq a_1, a_2 \leq 1/2$, $\sigma \geq 1/\sqrt{2}$ and $\forall k: T_k \geq 0$, we obtain
	\begin{equation*}
	\begin{aligned}
		\li| \frac{\partial f}{\partial a_i} \ri|
		& \leq
		\frac{4}{\sqrt{2\pi}} \max_k \li(\exp(-T_k^2/2) \li(\frac{\max(a_1, a_2)}{\sigma^2}T_k + \frac{1}{\sigma} \ri) \ri)
		\\
		& \leq \frac{4}{\sqrt{2\pi}} \max_k (\exp(-T_k^2/2) (T_k + \sqrt{2} )).
	\end{aligned}
	\end{equation*}
	A simple analysis shows that the function $T_k \mapsto \exp(-T_k^2/2) (T_k + \sqrt{2} )$ has a global maximum at $T_k = (\sqrt{6}-\sqrt{2})/2$, where it attains a value $< 1.69$. Thus, for $i=1,2$ we have $\li| \frac{\partial f}{\partial a_i} \ri| < 2.7$, and therefore,
	\[
	\norm{\nabla f}_2 \leq \sqrt{2}\cdot 2.7 < 4.
	\]
	\paragraph{Sampling $f$.} Consider the set of points $A^2$, where $A = \set{0.185 + 0.0015 \ell}{\ell \in \integers, 0\leq \ell \leq 210}$. Any point $\bar{a} \in [0.19, 0.5]^2$ has some $\bar{a}' \in A^2$ with $\norm{\bar{a}-\bar{a}'}_2 \leq 0.0015 / \sqrt{2}$. Hence, applying~\eqref{eq:deriv-bound} (with the bound $\norm{\nabla f}_2 < 4$), we get
	\[
		f(\bar{a}) < f(\bar{a}') + 4\cdot \frac{0.0015}{\sqrt{2}} < f(\bar{a}') + 0.0043.
	\]
	Therefore, to deduce~\eqref{eq:31a3srecall} it is sufficient to verify
	\begin{equation}\label{eq:net-bound}
		\forall \bar{a}' \in A^2 \cc f(\bar{a}') \leq 0.6597.
	\end{equation}
	A computer program which verifies the $\binom{211}{2}$ inequalities included in~\eqref{eq:net-bound} is provided in \url{https://github.com/IamPoosha/tomaszewski-problem/blob/master/formal_verification.py}.

\subsection{Proof of \tops{Inequality~\eqref{eq:31check-cheby}}}\label{app:31check-cheby}
	Recall we have to prove
	\begin{equation*}
	\begin{gathered}
		L_3 \isgeq 0, \qquad L_4 \isgeq \sqrt{1/2},\qquad \forall i \in \{1,2,3,4\}\cc R_i \isgeq \sqrt{(1+i)/2},
		\\
		\text{with:}
		\qquad a_1 \in [0.31, 0.5],
		\quad a_2 \in [0.19, a_1],
		\quad a_3 \in [0.15, a_2],
		\quad a_1+a_2+a_3 \leq 1,
		\\
		\sigma = \sqrt{1-a_1^2-a_2^2-a_3^2},
		\\
		L_3 = \frac{1-a_1+a_2-a_3}{\sigma},
		\quad L_4 =	\frac{1-|a_1-a_2-a_3|}{\sigma},
		\quad R_1 = \frac{1+|a_1-a_2-a_3|}{\sigma},
		\\
		R_2 = \frac{1+a_1-a_2+a_3}{\sigma}
		\quad R_3 = \frac{1+a_1+a_2-a_3}{\sigma},
		\quad R_4 = \frac{1+a_1+a_2+a_3}{\sigma},
	\end{gathered}
	\end{equation*}
	\subsubsection*{Proving $L_3 \geq 0$}
		Follows from $a_1 + a_2 + a_3 \leq 1$ and $0 < a_2$.

	\subsubsection*{Proving $L_4 \geq \sqrt{1/2}$}
		We wish to prove $1-|a_1-a_2-a_3| \geq \sigma/\sqrt{2}$. Notice we have \[
		|a_1-a_2-a_3| \leq 1/3,
		\]
		as $a_1-a_2-a_3 \leq 0.5-0.19-0.15 \leq 0.16$ and
		\[-(a_1-a_2-a_3) = a_3-(a_1-a_2) \leq a_3 \leq (a_1+a_2+a_3)/3 \leq 1/3.
		\]
		Notice also $\sigma^2 \leq 1-0.31^2-0.19^2-0.15^2 \leq 0.8453$. Hence, we conclude with
		\[
			1-|a_1-a_2-a_3| \geq 2/3 \geq \sqrt{0.8453/2} \geq \sigma/\sqrt{2}.
		\]

	\subsubsection*{Proving $R_1 \geq 1$}
		We verify the stronger inequality $1-a_1+a_2+a_3 \geq \sigma$. Notice that as we assume $a_2 \geq 0.19$ and $a_3 \geq 0.15$, we have
		\[
		1-a_1+a_2+a_3 \geq 1.34-a_1 \qquad \mbox{and} \qquad \sigma^2 \leq 0.9414-a_1^2.
		\]
		Thus, it is sufficient to check $(1.34-a_1)^2 \geq 0.9414-a_1^2$. This indeed holds, as
		\[
			(1.34-a_1)^2 - (0.9414-a_1^2) = (1-2a_1)(0.84-a_1) + 0.0142 > 0.
		\]

	\subsubsection*{Proving $R_2 \geq \sqrt{3/2}$}
		Recall $\sigma\leq \sqrt{1-0.31^2-0.19^2-0.15^2}$, hence $R_2 \geq (1+a_3)/\sigma \geq 1.15/\sqrt{0.8453} > \sqrt{3/2}$.

	\subsubsection*{Proving $R_3 \geq \sqrt{2}$}
		Recall $\sigma\leq \sqrt{1-0.31^2-0.19^2-0.15^2}$, hence $R_3 \geq (1+a_1)/\sigma \geq 1.31/\sqrt{0.8453} > \sqrt{2}$.

	\subsubsection*{Proving $R_4 \geq \sqrt{5/2}$}
		Simply, $R_4 \geq 1+0.31+0.19+0.15 > \sqrt{5/2}$.
	
\subsection{Proof of Inequality~\tops{\eqref{eq:39gglue}}}\label{app:39gglue}
	Recall we have to prove
	\begin{equation}
		\begin{gathered}
			\max\li\{ 1-L_3^2, 3/2-L_4^2, 1/2 \ri\} \isleq \min \li\{ \frac{R_1^2-1}{1}, \frac{R_2^2-1}{2}, \frac{R_3^2-1}{3}, \frac{R_4^2-1}{4} \ri\},\\
			\text{with:}
			\qquad a_1 \in [0.387, 0.5],
			\quad 0.15 \leq a_3 \leq a_2 \leq a_1,
			\quad a_1+a_2+a_3 \geq 1,
			\\
			\sigma = \sqrt{1-a_1^2-a_2^2-a_3^2},
			\\
			L_3 = \frac{1-a_1+a_2-a_3}{\sigma},
			\quad L_4 =	\frac{1+a_1-a_2-a_3}{\sigma},
			\quad R_1 = \frac{1-a_1+a_2+a_3}{\sigma},
			\\
			R_2 = \frac{1+a_1-a_2+a_3}{\sigma}
			\quad R_3 = \frac{1+a_1+a_2-a_3}{\sigma},
			\quad R_4 = \frac{1+a_1+a_2+a_3}{\sigma},
		\end{gathered}
	\end{equation}

In principle, a proof that $\max A \leq \min B$ for two sets $A,B$ consists of $|A|\cdot|B|$ comparisons. In our case, $|A| = 3$ and $|B|=4$. We
effectively reduce $|B|$ to 3 by showing that  $(R_3^2-1)/3 \leq (R_4^2-1)/4$, and then we verify the $3 \cdot 3=9$ remaining inequalities.

	\subsubsection*{Proving \tops{$(R_3^2-1)/3 \leq (R_4^2-1)/4$}}
	Recall $0.15 \leq a_3 \leq a_2 \leq a_1 \leq 1/2$. Hence,
	\begin{equation*}
	\begin{aligned}
	(3(R_4^2-1) - 4(R_3^2-1))\sigma^2 = \ & \sigma^2 + (1-a_1-a_2)(0.9+a_1+a_2) + \\
										  & +(a_3-0.15)(13.85+14a_1+14a_2-a_3)+0.1775 > 0.
	\end{aligned}
	\end{equation*}

	\subsubsection*{Proving \tops{$1-L_3^2   \leq R_1^2-1$}}
	This inequality reads as $2 \leq L_3^2 + R_1^2$, or equivalently, $\sigma^2 \leq (1-a_1+a_2)^2 + a_3^2$. As $a_1+a_2+a_3\geq 1$, it is sufficient to prove the inequality
	\[
	1-a_1^2-a_2^2 \leq (1-a_1+a_2)^2 + 2(1-a_1-a_2)^2.
	\]
	Since $0.25 \leq a_2 \leq a_1 \leq 1/2$, we have
	\[
		(1-a_1+a_2)^2 + 2(1-a_1-a_2)^2-(1-a_1^2-a_2^2) = (1-2a_1)(2-a_2-2a_1)+a_2(4a_2-1) \geq 0.
	\]

	\subsubsection*{Proving \tops{$1-L_3^2   \leq (R_2^2-1)/2$}}
	This inequality reads as $(1+a_1-a_2+a_3)^2 + 2(1-a_1+a_2-a_3)^2 - 3\sigma^2 \geq 0$.
	By the Cauchy-Schwarz inequality, we have
	\[
	a_1^2+a_2^2+a_3^2 \geq (a_1+a_2+a_3)^2/3 \geq 1/3,
	\]
	and hence, $\sigma^2 \leq 2/3$. Thus, it suffices to show $(1+a_1-a_2+a_3)^2 + (1-a_1+a_2-a_3)^2 - 2 \geq 0$, which clearly holds as $(1+A)^2+(1-A)^2 \geq 2$ for any $A$.

	\subsubsection*{Proving \tops{$1-L_3^2   \leq (R_3^2-1)/3$}}
	This inequality reads as $4\sigma^2 \leq 3(1-a_1+a_2-a_3)^2 + (1+a_1+a_2-a_3)^2$. Since $\sigma^2 \leq 2/3$ and $a_3 \leq a_2$, it suffices to prove $3(1-a_1)^2 + (1+a_1)^2 \geq 8/3$. This indeed holds, as
	\[
		3(1-a_1)^2 + (1+a_1)^2 = 4(a_1-1/2)^2 + 3 > 8/3.
	\]

	\subsubsection*{Proving \tops{$3/2-L_4^2 \leq (R_1^2-1)/2$}, which implies \tops{$3/2-L_4^2 \leq \min(R_1^2-1, (R_2^2-1)/2)$}}
	This inequality reads as $2\sigma^2 \leq (1-A)^2 + (1+A)^2/2$, with $A=-a_1+a_2+a_3$. Since $\sigma^2 \leq 2/3$, it suffices to show
	\[
	(1-A)^2 + (1+A)^2/2 \geq 4/3.
	\]
	This indeed holds, as $(1-A)^2 + (1+A)^2/2 = 4/3 + (3A-1)^2/6$.

	\subsubsection*{Proving \tops{$3/2-L_4^2 \leq (R_3^2-1)/3$}}
	This inequality reads as $11\sigma^2/2 \leq 3(1+a_1-a_2-a_3)^2+(1+a_1+a_2-a_3)^2$. Set $\alpha = 141/365$, then
	\begin{equation*}
	\begin{gathered}
		6(1+a_1-a_2-a_3)^2+2(1+a_1+a_2-a_3)^2 - 11\sigma^2 \\
		= \\
		(a_1-\alpha) (16+19\alpha+19a_1-8a_2-16a_3) + 4(a_2-a_3)(1+\alpha-a_2+a_3) + \\
		+ 23(a_2-6(1+\alpha)/23)^2 + 23(a_3-6(1+\alpha)/23)^2 > 0,
	\end{gathered}
	\end{equation*}
	where the last inequality follows from the assumptions $a_3 \leq a_2 \leq a_1 \leq 1/2$ and $a_1 \geq 0.387 > \alpha$.
	
	\subsubsection*{Proving \tops{$1/2 \leq R_1^2-1$}}
	This inequality reads as $\frac{3}{2} \cdot \sigma^2 \leq (1-a_1+a_2+a_3)^2$. Recall $\sigma^2 \leq 2/3$, so we conclude with
	\[
		(1-a_1+a_2+a_3)^2 = (1+(a_1+a_2+a_3-1)+(1-2a_1))^2 \geq 1 \geq \frac{3}{2} \cdot \sigma^2.
	\]

	\subsubsection*{Proving \tops{$1/2 \leq (R_2^2-1)/2$}}
	This inequality reads as $(1+a_1-a_2+a_3)^2 - 2\sigma^2 \geq 0$. This indeed holds, since
	\[
		(1+a_1-a_2+a_3)^2 - 2\sigma^2 = (a_1-a_2)(2-a_1-3a_2+2a_3)+((2a_1)^2-(1-a_3)^2)+4a_3^2 \geq 0,
	\]
	where the last inequality follows from the assumptions $0 \leq a_3 \leq a_2\leq a_1 \leq 1/2$ and $2a_1 \geq a_1+a_2 \geq 1-a_3$.
	
	\subsubsection*{Proving \tops{$1/2 \leq (R_3^2-1)/3$}}
	This inequality reads as $\frac{5}{2} \cdot \sigma^2 \leq (1+a_1+a_2-a_3)^2$. Recall $\sigma^2 \leq 2/3$. Using the inequality $a_1+a_2-a_3\geq a_1 \geq 0.387$, we get
	\[
		(1+a_1+a_2-a_3)^2 \geq 1.387^2 > \frac{5}{3} \geq \frac{5}{2} \cdot \sigma^2.
	\]

\subsection{Proof of Inequality~\tops{\eqref{eq:exotic-final}}}\label{app:exotic-final}
We prove a slight strengthening of the inequality:
	\begin{equation*}
		\begin{gathered}
			\max\li\{ 1/2, \frac{3-2L_4^2}{3}\ri\} \isleq \frac{10}{27\sigma^2} \isleq \min\li\{R_1^2-1, \frac{R_2^2-1}{2}, \frac{R_3^2-1}{3}, \frac{R_4^2-1}{4}\ri\},
			\\
			\text{with:}
			\qquad a_1 \in [0.31, 0.387],
			\quad a_3 \leq a_2 \leq a_1,
			\quad a_1+a_2+a_3 \geq 1,
			\\
			\sigma = \sqrt{1-a_1^2-a_2^2-a_3^2},
			\quad L_4 =	\frac{1+a_1-a_2-a_3}{\sigma},
			\quad R_1 = \frac{1-a_1+a_2+a_3}{\sigma},
			\\
			R_2 = \frac{1+a_1-a_2+a_3}{\sigma}
			\quad R_3 = \frac{1+a_1+a_2-a_3}{\sigma},
			\quad R_4 = \frac{1+a_1+a_2+a_3}{\sigma},
	\end{gathered}
	\end{equation*}
	Proving this involves checking 6 inequalities, and so we proceed.
	\subsubsection*{Proving $1/2 \leq 10/(27\sigma^2)$}
		This inequality is equivalent to $a_1^2+a_2^2+a_3^2 \geq 7 / 27$. This indeed holds, as we have $a_1^2+a_2^2+a_3^2 \geq (a_1+a_2+a_3)^2/3 \geq 1/3$ by the Cauchy-Schwarz inequality.

	\subsubsection*{Proving $(3-2L_4^2)/3 \leq 10/(27\sigma^2)$}
		This inequality is equivalent to $27\sigma^2-18(1+a_1-a_2-a_3)^2 \leq 10$. This indeed holds, as
		\begin{equation*}
		\begin{aligned}
			10 - 27\sigma^2 + 18(1+a_1-a_2-a_3)^2 =\ & 9(a_1-a_3)(4-4a_2-a_1-5a_3) + \\
													 & + 9(a_1-a_2)(4-5a_1-5a_2)+(3a_1-1)(33a_1-1) \geq 0,
		\end{aligned}
		\end{equation*}
		where the ultimate inequality uses the assumptions $a_3 \leq a_2 \leq a_1 \leq 0.4$ and $a_1 \geq 1/3$.

	\subsubsection*{Proving $10/(27\sigma^2) \leq (R_1^2-1)/2$, which implies $10/(27\sigma^2) \leq \min\{R_1^2-1,(R_2^2-1)/2\}$}
		This inequality reads as $27(1-a_1+a_2+a_3)^2 - 27\sigma^2 \geq 20$, and is satisfied, since
		\[
			27(1-a_1+a_2+a_3)^2 - 27\sigma^2 \geq 27\cdot (1+0.2)^2 - 27\cdot 2/3 > 20,
		\]
		where the first inequality uses $a_2+a_3 - a_1 = (a_1+a_2+a_3)-2a_1 \geq 1-2a_1 > 0.2$ and $a_1^2+a_2^2+a_3^2 \geq 1/3$.

	\subsubsection*{Proving $10/(27\sigma^2) \leq (R_3^2-1)/3$}
		This inequality is equivalent to $27(1+a_1+a_2-a_3)^2 - 27\sigma^2 \geq 30$. This indeed holds, as
		\[
			27(1+a_1+a_2-a_3)^2 - 27\sigma^2 \geq 27(1+1/3)^2 - 27\cdot 2/3 = 30,
		\]
		where the inequality uses $a_1 \geq 1/3$, $a_2 \geq a_3$ and $a_1^2+a_2^2+a_3^2 \geq 1/3$.

	\subsubsection*{Proving $10/(27\sigma^2) \leq (R_4^2-1)/4$}
		This inequality is equivalent to $27(1+a_1+a_2+a_3)^2 - 27\sigma^2 \geq 40$. This indeed holds, as
		\[
			27(1+a_1+a_2+a_3)^2 - 27\sigma^2 \geq 27\cdot 2^2 - 27\cdot 2/3 = 90,
		\]
		where the inequality uses $a_1+a_2+a_3\geq 1$ and $a_1^2+a_2^2+a_3^2 \geq 1/3$.

\subsection{Proof of Inequality~\tops{\eqref{eq:L34Ri}}}\label{app:L34Ri}
		Recall we are required to prove:
		\begin{equation*}
			\begin{gathered}
				L_4 \isgeq L_3 \isgeq \sqrt{1/2}, \qquad \forall i\in \{1,2,3,4\}\cc R_i \isgeq \sqrt{1+(i/2)}, \\
				\text{with:}\qquad 1-2a_1 \leq a_k \leq a_j \leq a_1 \leq 0.387,\quad \sigma = \sqrt{1-a_1^2-a_j^2-a_k^2}, \\
				L_3 = \frac{1-a_1+a_j-a_k}{\sigma}, \quad L_4 = \frac{1+a_1-a_j-a_k}{\sigma}, \quad R_1 = \frac{1-a_1+a_j+a_k}{\sigma}, \\
				R_2 = \frac{1+a_1-a_j+a_k}{\sigma}, \quad R_3 = \frac{1+a_1+a_j-a_k}{\sigma}, \quad R_4 = \frac{1+a_1+a_j+a_k}{\sigma}.
		\end{gathered}
		\end{equation*}

		\subsubsection*{Proving $L_4 \geq L_3 \geq \sqrt{1/2}$}
		
		The inequality $L_4 \geq L_3$ holds trivially, being equivalent to $a_j \leq a_1$. The inequality
		$L_3 \geq \sqrt{1/2}$ reads
		\[
		2(1-a_1+a_j-a_k)^2\geq 1-a_1^2-a_j^2-a_k^2.
		\]
		As $a_j \geq a_k \geq 1-2a_1$, it suffices to check $2(1-a_1)^2 \geq 1-a_1^2-2(1-2a_1)^2$, or equivalently, \[
		11a_1^2-12a_1+3 \geq 0.
		\]
		This indeed holds, as the two roots of the quadratic $11a_1^2-12a_1+3$ are $> 0.387$ (just barely).

		\subsubsection*{Proving $R_1 \geq \sqrt{3/2}$}

		This inequality is equivalent to $2(1-a_1+a_j+a_k)^2 - 3\sigma^2 \geq 0$, which holds as
		\begin{equation*}
		\begin{aligned}
			2(1-a_1+a_j+a_k)^2 - 3\sigma^2 =\  & (a_j-(1-2a_1))(9-14a_1+5a_j+4a_k) + \\
				                                 & + (a_k-(1-2a_1))(13-22a_1+5a_k) + 7(11a_1^2-12a_1+3) > 0.
		\end{aligned}
		\end{equation*}
		
		\subsubsection*{Proving $R_2 \geq \sqrt{4/2}$}
		
		This inequality is equivalent to $(1+a_1-a_j+a_k)^2 - 2\sigma^2 \geq 0$, which holds as
		\begin{equation*}
		\begin{aligned}
			(1+a_1-a_j+a_k)^2 - 2\sigma^2 =\  & (a_k - (1-2a_1))(5-4a_1-2a_j + 3a_k) + \\
			 								& + (a_1-a_j)(4-5a_1-3a_j) + (4a_1-2)^2 > 0.
		\end{aligned}
		\end{equation*}
		
		\subsubsection*{Proving $R_3 \geq \sqrt{5/2}$}
		
		This inequality is equivalent to $2(1+a_1+a_j-a_k)^2 - 5\sigma^2 \geq 0$, which holds since
		\[
			2(1+a_1+a_j-a_k)^2 - 5\sigma^2 = (a_j-a_k)(4+4a_1-3a_j-7a_k) + 10(a_j^2-(1-2a_1)^2) + (47a_1^2-36a_1+7),
		\]
		where the inequality $47a_1^2-36a_1+7 > 0$ holds for all $a_1$ as its discriminant is $36^2-4\cdot 7 \cdot 47 < 0$.
		
		\subsubsection*{Proving $R_4 \geq \sqrt{6/2}$}
		
		This inequality is equivalent to $(1+a_1+a_j+a_k)^2-3\sigma^2 \geq 0$, which holds since $a_1+a_j+a_k \geq a_1 + 2(1-2a_1) \geq 2-3a_1 > 0.8$, and thus,
		\[
			(1+a_1+a_j+a_k)^2-3\sigma^2 \geq 1.8^2 - 3 > 0.
		\]

\subsection{Proof of Inequality~\tops{\eqref{eq:everything}}}\label{app:everything}
	Recall we have to prove
	\begin{equation*}
	\begin{gathered}
			T_{12} \cdot \max(T_{10}, T_{17}) \isgeq 1, \\
			T_{18}, T_{20}, T_{24} \isgeq 1, \\
			T_{11}, T_{13}, T_{14} \isgeq \sqrt{1 + (3/3)}, \\
			T_{19}, T_{21}, T_{22}, T_{25}, T_{26}, T_{28}  \isgeq \sqrt{1 + (9/3)}, \\
			T_{15}, T_{23}, T_{27}, T_{29}, T_{30}, T_{31}  \isgeq \sqrt{1 + (15/3)}, \\
			\text{with:}\qquad T_{k} = \frac{1 - \sum_{i=1}^{5} (-1)^{\lfloor k/2^i \rfloor} a_i}{\sigma}, \qquad \sigma = \sqrt{1-\sum_{i=1}^{5} a_i^2}, \\
			1-a_2-a_4 \leq a_5 \leq a_4 \leq a_3 \leq a_2 \leq a_1 \leq 0.387.
	\end{gathered}
	\end{equation*}
	
	\paragraph{Simplification.} As $0<a_5 \leq a_4 \leq a_3 \leq a_2 \leq a_1$, it is easy to check that
	\begin{equation*}
		\begin{gathered}
			T_{18} \leq T_{20} \leq T_{24}, \\
			T_{11} \leq T_{13} \leq T_{14}, \\
			T_{19} \leq T_{21} \leq \min(T_{22}, T_{25}) \leq T_{26} \leq T_{28}, \\
			T_{15} \leq T_{23} \leq T_{27} \leq T_{29} \leq T_{30} \leq T_{31}.
		\end{gathered}
	\end{equation*}
	Hence, it is sufficient to verify the five inequalities
	$T_{12} \cdot \max(T_{10}, T_{17}) \geq 1$,
	$T_{18} \geq 1$, $T_{11} \geq \sqrt{2}$, $T_{19} \geq \sqrt{4}$, $T_{15} \geq \sqrt{6}$.
	
	\subsubsection*{Proving $T_{12} \cdot \max(T_{10}, T_{17}) \geq 1$}
	
	Note that the assumption $a_2+a_4+a_5 \geq 1$ implies $a_2 \geq a_4 \geq (1-a_2)/2$, and thus, $a_2 \geq 1/3$. Furthermore, using the assumptions $a_3 \geq a_4 \geq a_5$, $a_2+a_4+a_5 \geq 1$ and $a_2 \leq 0.4$, we have
	\[
	a_3+a_4+a_5 \geq \frac{3}{2} \cdot (a_4 +a_5) \geq \frac{3}{2} \cdot (1-a_2) \geq \frac{3}{2} \cdot 0.6 \geq a_2.
	\]
	Thus, we have
	\begin{equation}\label{Eq:Aux-Every1}
	a_2 \geq 1/3 \qquad \mbox{and} \qquad a_3+a_4+a_5-a_2 \geq 0.
	\end{equation}
	Now we can prove the required inequality. Since $\max(T_{10}, T_{17}) \geq (T_{10}+T_{17})/2$, it suffices to prove $T_{12} (T_{10}+T_{17})/2 \geq 1$, which reads $(1-a_1+a_2+a_3-a_4-a_5)(1-a_3) \geq \sigma^2$. This indeed holds, as
	\begin{equation*}
	\begin{gathered}
		(1-a_1+a_2+a_3-a_4-a_5)(1-a_3) -  \sigma^2 = \\
		(a_2+a_4+a_5-1)(-a_2+a_3+a_4+a_5) + 2(a_2 a_3-a_4 a_5) + \\
		+ (a_2-a_3)(3 a_2-1) + (a_1-a_2)(a_1+a_2+a_3-1) \geq 0,
	\end{gathered}
	\end{equation*}
	where all terms in the left hand side of the ultimate inequality are nonnegative by the assumptions $a_2+a_4+a_5 \geq 1$, $a_1 \geq a_2 \geq a_3 \geq a_4 \geq a_5$, and the inequality~\eqref{Eq:Aux-Every1}.
	
	\subsubsection*{Proving $T_{18} \geq 1$}
	
	This inequality holds since
	\[
		(1+a_1-a_2-a_3+a_4-a_5)^2 - \sigma^2 \geq (1-a_3)^2 - (1-2a_3^2-1/3) = 3(a_3-1/3)^2 \geq 0.
	\]
	Here, the first inequality uses the assumptions $a_1 \geq a_2$, $a_4 \geq a_5$, $a_1^2 \geq a_3^2$, and the inequality $a_2^2+a_4^2+a_5^2 \geq 1/3$, which follows from the assumption $a_2+a_4+a_5 \geq 1$ via the Cauchy-Schwarz inequality.
	
	\subsubsection*{Proving $T_{11} \geq \sqrt{2}$}
	
	This inequality holds since
	\[
		(1-a_1+a_2-a_3+a_4+a_5)^2 - 2\sigma^2 \geq (2-2\cdot 0.387)^2 - 2\cdot (1-1/3) > 0,
	\]
	where the first inequality uses $a_2+a_4+a_5 \geq 1$, $a_1+a_3 \leq 2 \cdot 0.387$, and $a_2^2+a_4^2+a_5^2 \geq 1/3$.
	
	\subsubsection*{Proving $T_{19} \geq \sqrt{4}$}
	
	This inequality holds since
	\begin{equation*}
	\begin{aligned}
		(1+a_1-a_2-a_3+a_4+a_5)^2 - 4\sigma^2 & \geq (2-a_2-a_3)^2 - 4(2/3-a_2^2-a_3^2) =  \\
											  & = 3(a_2+a_3-2/3)^2 + 2(a_2-a_3)^2 \geq 0,
	\end{aligned}
	\end{equation*}
	where the first inequality uses $a_1+a_4+a_5 \geq 1$ and $a_1^2+a_4^2+a_5^2 \geq 1/3$.
	
	\subsubsection*{Proving $T_{15} \geq \sqrt{6}$}
	
	Note that the assumption $a_2+a_4+a_5 \geq 1$ implies $a_3 \geq (a_4+a_5)/2 \geq (1-a_2)/2$. As $a_2 \leq a_1<0.4$, this in turn implies
	\[
	a_1-a_3 \leq a_1 - (1-a_2)/2 \leq 0.4-(1-0.4)/2 = 0.1.
	\]
	Therefore, we have $T_{15} \geq \sqrt{6}$, as
	\[
		\sigma^2 (T_{15}^2-6) = (1-a_1+a_2+a_3+a_4+a_5)^2 - 6\sigma^2 \geq (2-a_1+a_3)^2 - 6(2/3-a_1^2) \geq 1.9^2 - 6\cdot (5/9) > 0,
	\]
	where the first inequality uses $a_2+a_4+a_5 \geq 1$, $a_2^2+a_4^2+a_5^2 \geq 1/3$, and $a_3^2 \geq 0$, and the second inequality uses $a_1^2 \geq 1/9$ (which holds since $3a_1 \geq a_1+a_2+a_3\geq 1$) and $a_1-a_3 \leq 0.1$.
	
\section{Proof of Lemma~\ref{lem:31sfin}}\label{app:31sfin}
	Let us recall the assertion we have to prove. Let $X=\sum_{i=1}^n a_i x_i$ be a Rademacher sum with variance~$1$, where the weights $\{a_i\}$ satisfy $a_3 \leq a_2 \leq a_1 \leq 1/2$, $a_1+a_2+a_3 \leq 1$, $a_1 \geq 0.31$, $a_2\geq 0.19$, $a_3 \geq 0.15$. Let $X' = \sum_{i=4}^{n} a_i' x_i$, where $\forall i \cc a_i' = a_i / \sigma$ and $\sigma = \sqrt{1-a_1^2-a_2^2-a_3^2}$. Denote
	\[
		L_1, L_2, L_3, L_4 = \frac{1-a_1-a_2-a_3}{\sigma},
		\frac{1-a_1-a_2+a_3}{\sigma}, \frac{1-a_1+a_2-a_3}{\sigma}, \frac{1-|a_1-a_2-a_3|}{\sigma}
	\]
	and
	\[
		R_1, R_2, R_3, R_4 = \frac{1+|a_1-a_2-a_3|}{\sigma}, \frac{1+a_1-a_2+a_3}{\sigma},
		\frac{1+a_1+a_2-a_3}{\sigma}, \frac{1+a_1+a_2+a_3}{\sigma}.
	\]
	Furthermore, let
	\begin{equation*}
	\begin{gathered}
		(c_1, d_1, e_1) = (1-2L_3^2, L_3, \sqrt{1/2})\\
		(c_2, d_2, e_2) = (2-2L_4^2, L_4, \sqrt{2/2})\\
		(c_3, d_3, e_3) = (3-2R_1^2, R_1, \sqrt{3/2})\\
		(c_4, d_4, e_4) = (4-2R_2^2, R_2, \sqrt{4/2})\\
		(c_5, d_5, e_5) = (5-2R_3^2, R_3, \sqrt{5/2})\\
		(c_6, d_6, e_6) = (6-2R_4^2, R_4, \sqrt{6/2}).
	\end{gathered}
	\end{equation*}
	We have to prove three statements:
	\begin{enumerate}
		\item $\forall i\cc \hseg{d_i}{e_i} \prec_{X'} \hseg{0}{L_2}$, and thus by Theorem~\ref{thm:seg_compare}, $\hpr{X'}{d_i}{e_i} \leq \hpr{X'}{0}{L_2}$.
		\item $\sum_i \max(c_i, 0) \leq 3/2$.
		\item $\sum_{i\in B(X')} \max(c_i, 0) \leq 1$ where $B(X') = \li\{\given{i \in [6]}{\hseg{d_i}{e_i} \not \prec_{X'} \hseg{-L_1}{L_1}}\ri\}$.
	\end{enumerate}
	Unfortunately, the proof we present below is grueling and not enlightning.

\subsection{Proving \tops{$\hseg{d_i}{e_i} \prec_{X'} \hseg{0}{L_2}$}}
		First, note that if for some $i$, we have $e_i < d_i$, then clearly, $\hpr{X'}{d_i}{e_i} \leq \hpr{X'}{0}{L_2}$. Thus, we henceforth assume $d_i \leq e_i$.
		
		\medskip In order to prove $\hseg{d_i}{e_i} \prec_{X'} \hseg{0}{L_2}$, we have to verify:
		\begin{enumerate}
			\item[(a)] $0 \leq \min(L_2,d_i)$;
			
			\item[(b)] $2M \leq d_i-0$;
			
			\item[(c)] $e_i-d_i+\min(2M,e_i-L_2) \leq L_2-0$,
		\end{enumerate}
		where $M=\max_i \{ a_i'\} \leq a_3/\sigma$.

	\medskip \noindent The assertion~(a) holds trivially, as the assumption $a_1+a_2+a_3 \leq 1$ implies $L_i,R_i \geq 0$ for all $i$.
	
	\medskip \noindent The assertion~(b) holds for all $i$, since
		\[
			2M \leq \frac{2a_3}{\sigma} \leq \frac{2a_3}{\sigma} + \frac{1-a_1-a_2-a_3}{\sigma} = L_2 \leq d_i,
		\]
		where the second inequality uses again the assumption $a_1+a_2+a_3 \leq 1$.
	
	\medskip \noindent In the following paragraphs, we verify the assertion~(c) for each $i \in [6]$.
		
	\subsubsection*{Proving~(c) for~\tops{$i=1$}: Checking \tops{$\sqrt{1/2}-L_3 + (\sqrt{1/2}-L_2) \leq L_2$}}
		The inequality is equivalent to $\sqrt{2}\sigma \leq 3-3a_1-a_2+a_3$. Both sides are positive, so after squaring we should prove
		\[
		Q \defeq (3-3a_1-a_2+a_3)^2-2(1-a_1^2-a_2^2-a_3^2) \geq 0.
		\]
		This indeed follows from the assumptions $a_1+a_2 \leq 1$, $a_1\leq 1/2$, and $0 \leq a_3 \leq a_2$, as
		\[
			Q = (1-a_1-a_2)(3-3a_1-3a_2+6a_3) + 4(1-a_1)(1-2a_1) + a_3(4a_2+3a_3) \geq 0.
		\]
		
	\subsubsection*{Proving~(c) for~\tops{$i=2$}: Checking \tops{$1-L_4 + \min(2M,1-L_2) \leq L_2$}}
		We split into two sub-cases according to whether $a_3 \isleq 1/5$.
		\paragraph{Sub-Case~1: $a_3 \geq 1/5$.} We show that in this case, $1-L_4 + (1-L_2) \leq L_2$. This inequality reads $2 \leq 2L_2 + L_4$, or equivalently,
		\[
		2\sigma \leq 3-2a_1-2a_2+2a_3-|a_1-a_2-a_3|.
		\]
		Notice $-a_3\leq a_1-a_2-a_3 \leq 0.5-2\cdot 0.2\leq a_3$, and thus, $|a_1-a_2-a_3| \leq a_3$.
		Hence, it is sufficient to prove $2\sigma \leq 3-2a_1-2a_2+a_3$, or equivalently (since both sides are positive),
		\[
			Q \defeq (3-2a_1-2a_2+a_3)^2 - 4(1-a_1^2-a_2^2-a_3^2) \geq 0.
		\]
		This assertion indeed follows from the assumptions $a_1+a_2+a_3 \leq 1$, $a_3 \geq 1/5$, as
		\[
			Q = 6(1-a_1-a_2-a_3)^2 + 16a_3(1-a_1-a_2-a_3) + 2(a_1-a_2)^2 + (3a_3+1)(5a_3-1) \geq 0.
		\]

		\paragraph{Sub-Case~2: $a_3 \leq 1/5$.} We show that in this case,
		\[
		1-L_4 +2M \leq 1-L_4 + 2a_3/\sigma \leq L_2.
		\]
		Unfolding this, we have to prove $\sigma + a_1+a_2+a_3+|a_1-a_2-a_3| \leq 2$.
		
		\medskip \noindent If $a_1 \geq a_2+a_3$, this inequality reads $\sigma + 2a_1 \leq 2$, which holds since $a_1 \leq 1/2$ and $\sigma \leq 1$.
		
		\medskip \noindent If $a_1 \leq a_2+a_3$, we should prove $\sigma \leq 2 - 2a_2-2a_3$, or equivalently (as both sides are positive),
		\[
			Q \defeq 4(1-a_2-a_3)^2 - (1-a_1^2-a_2^2-a_3^2) \geq 0.
		\]
		This indeed follows from the assumptions $a_1+a_2+a_3\leq 1$, $0\leq a_3 \leq a_2 \leq a_1$, and $a_3 \leq 1/5$, as
		\[
			Q = (a_1-a_2)(a_1+a_2) + (1-5a_3)(1-a_2-a_3) + (2-3a_2)(1-2a_2-a_3) \geq 0.
		\]
		
	\subsubsection*{Proving~(c) for~\tops{$i=3$}: Checking \tops{$\sqrt{3/2}-R_1 + 2M \leq L_2$}}
	We prove the stronger assertion
	\[
	\sqrt{3/2} - \frac{1}{\sigma} + \frac{2a_3}{\sigma} \leq L_2,
	\]
	which is equivalent to the inequality $\sqrt{3/2}\sigma \leq 2-a_1-a_2-a_3$. Denote $t \defeq a_1+a_2+a_3 \leq 1$. By the Cauchy-Schwarz inequality, we have $a_1^2 + a_2^2 + a_3^2 \geq t^2/3$. Hence, the above inequality follows from
	\[
	\sqrt{\frac{3}{2}\cdot(1-t^2/3)} \leq 2-t.
	\]
	By squaring, the latter inequality is equivalent to $3t^2-8t+5\geq 0$, which indeed holds for any $t \leq 1$, as $3t^2-8t+5 = (1-t)(5-3t)\geq 0$.
	
	\subsubsection*{Proving~(c) for~\tops{$i=4$}: Checking \tops{$\sqrt{2}-R_2 + 2M \leq L_2$}}
	We prove the stronger assertion $\sqrt{2}-R_2 + 2a_3/\sigma \leq L_2$, or equivalently, $\sigma \leq \sqrt{2} (1-a_2)$. By squaring, we should prove
	\[
	Q\defeq 2(1-a_2)^2-(1-a_1^2-a_2^2-a_3^2) \geq 0.
	\]
	This indeed holds, as $Q = (1-2a_2)^2 + (a_1^2-a_2^2)+a_3^2 \geq 0$.

	\subsubsection*{Proving~(c) for~\tops{$i=5$}: Checking \tops{$\sqrt{5/2}-R_3 + 2M \leq L_2$}}
	We prove the stronger assertion $\sqrt{5/2}-R_3 + 2a_3/\sigma \leq L_2$, or equivalently,
	\[
	\sqrt{5/2} \cdot \sigma \leq 2 (1-a_3).
	\]
	As $a_3 \leq a_2 \leq a_1$, this inequality follows from
	\[
	\sqrt{5/2} \cdot \sqrt{1-3a_3^2} \leq 2(1-a_3).
	\]
	The latter inequality is equivalent (via squaring) to the quadratic inequality  $23a_3^2-16a_3+3 \geq 0$, that holds for any $a_3 \in \mathbb{R}$, as its discriminant is $\Delta = -20 < 0$.
	
	\subsubsection*{Proving~(c) for~\tops{$i=6$}: Checking \tops{$\sqrt{3}-R_4 + 2M \leq L_2$}}
	This inequality follows from the stronger inequality $\sqrt{3}-R_4 + 2a_3/\sigma \leq L_2$, that is equivalent to $\sqrt{3}\sigma \leq 2$. The latter inequality is clear, as $\sigma <1 <2/\sqrt{3}$.
		
\subsection{Proving \tops{$\sum_i \max(c_i, 0) \leq 3/2$}}
	
		\paragraph{Reminder of the assumptions and notation.}
		Recall the region we consider is $0.15 \leq a_3 \leq a_2 \leq a_1 \leq 1/2$, $a_1+a_2+a_3\leq 1$, $a_2 \geq 0.19$, $a_1 \geq 0.31$. In particular, in our region we have \begin{equation}\label{Eq:Aux-C2}
		\sigma^2 \leq 1-0.31^2 - 0.19^2-0.15^2 = 0.8453.
		\end{equation}
		Throughout the proof below, we write $c_i' = \sigma^2 c_i$.
	
\paragraph{Proof strategy.}	We prove the following three statements, which together clearly imply the assertion.
	\begin{enumerate}
		\item[(a)] $\max(c_1,0)+\max(c_3,0) \leq 0.58$;
		
		\item[(b)] $\max(c_2,0)+\max(c_4,0) +\max(c_5,0) \leq 0.92$;
		
		\item[(c)] $c_6 \leq 0$.
	\end{enumerate}
	
\noindent The assertion~(c) is immediate. Indeed, as $a_1 \geq 0.31$, $a_2 \geq 0.19$, $a_3 \geq 0.15$, we have
\[
c_6' = 6(1-a_1^2-a_2^2-a_3^2) - 2(1+a_1+a_2+a_3)^2 < 6(1-0.31^2) - 2(1+0.31+0.19+0.15)^2 < 0,
\]
and hence, $c_6 = c_6'/\sigma^2 \leq 0$. It is thus left to prove~(a) and~(b).
	
\medskip \noindent In the proof we use the following obvious claim.
	\begin{claim}\label{Cl:AuxC.1}
	Let $T$ be a finite set, and let $\{c_i\}_{i \in T}$ be a set of values, and $B > 0$. In order to prove $\sum_{i\in T} \max(c_i, 0) \leq B$, it is sufficient to verify $2^{|T|}-1$ inequalities:
	\[
	\forall S \seq T,\ S \neq \emptyset \cc \sum_{i \in S} c_i \leq B.
	\]
	\end{claim}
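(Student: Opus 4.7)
The plan is to prove this by picking out the set of indices where $c_i$ is actually positive and applying the hypothesis to exactly that subset. Specifically, I would define $S^* = \{i \in T : c_i > 0\}$, so that by construction $\sum_{i \in T} \max(c_i, 0) = \sum_{i \in S^*} c_i$, with the convention that an empty sum equals $0$.

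The argument then splits into two cases. If $S^* = \emptyset$, then $\sum_{i \in T} \max(c_i, 0) = 0 \leq B$, where the inequality uses the assumption $B > 0$. Otherwise, $S^*$ is a nonempty subset of $T$, so the hypothesis applies to it, giving $\sum_{i \in S^*} c_i \leq B$, which is precisely what we need.

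There is no main obstacle here; the claim is essentially tautological once one observes that only the positive $c_i$'s contribute to $\sum_{i \in T} \max(c_i, 0)$, and the hypothesis already covers the single subset $S^*$ that matters. The reason the claim is stated in this form is purely practical: in the applications below we will verify the $2^{|T|}-1$ inequalities directly (or a sufficient subset of them), and this claim just certifies that doing so is enough.
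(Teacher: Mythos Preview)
Your proof is correct and is exactly the obvious argument the paper has in mind; the paper itself does not spell out a proof, simply calling the claim ``obvious,'' and your reduction to the single subset $S^* = \{i : c_i > 0\}$ is the natural justification.
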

	
	\subsubsection{Proving \tops{$\max(c_1, 0) + \max(c_3, 0) \leq 0.58$}}\label{ssec:1358}
		By Claim~\ref{Cl:AuxC.1}, it is sufficient to verify $c_1 \leq 0.58$, $c_3 \leq 0.58$ and $c_1 + c_3 \leq 0.58$.
		
	\subsubsection*{Checking $c_1 \leq 0.58$}
		This inequality is equivalent to $0.58\sigma^2 - c_1' \geq 0$. We have
		\[
			0.58\sigma^2 - c_1' = 2 (1-a_1+a_2-a_3)^2 - 0.42\sigma^2 \geq 2(1-a_1)^2 - 0.42 \geq 0.08>0,
		\]
		where we used the assumptions $a_3 \leq a_2$, $a_1 \leq 1/2$, and $\sigma \leq 1$.
		
	\subsubsection*{Checking $c_3 \leq 0.58$}
		This inequality is equivalent to $0.58\sigma^2 - c_3' \geq 0$. That is, we should prove
		\[
			0.58\sigma^2 - c_3' = 2(1 + |a_1-a_2-a_3|)^2 -2.42\sigma^2 \geq 0.
		\]
		If $a_1 \geq 0.34$, then
		\[
			2-2.42\sigma^2 \geq 2-2.42(1-0.34^2-0.19^2-0.15^2) > 0,
		\]
		and the assertion follows.
		
		\medskip \noindent If $a_1 \leq 0.34$ then
		$|a_1-a_2-a_3| \geq 0.34-a_1$, and thus, it is sufficient to prove
		\[
			2(1.34-a_1)^2 - 2.42(1-a_1^2-0.19^2-0.15^2) > 0.
		\]
		This quadratic inequality indeed holds for all $a_1 \leq 0.34$.

	\subsubsection*{Checking \tops{$c_1+c_3 \leq 0.58$}}
		This inequality is equivalent to $0.58\sigma^2 - c_3' - c_1' \geq 0$. Since $a_2 \geq 0.19$, $a_3 \geq 0.15$, we have
		\begin{align*}
			0.58\sigma^2 - c_3' - c_1'
			& = 2(|a_1-a_2-a_3|+1)^2 + 2(1-a_1+a_2-a_3)^2 - 3.42 \sigma^2 \\
			& \geq 2 + 2(1-a_1)^2 - 3.42(1-a_1^2-0.19^2-0.15^2)\\
			& = 5.42(a_1-100/271)^2 + 2872913/67750000 > 0.
		\end{align*}

	\subsubsection{Proving \tops{$\max(c_2, 0) + \max(c_4, 0) + \max(c_5, 0) \leq 0.92$}}\label{ssec:c24592}
		By Claim~\ref{Cl:AuxC.1}, it is sufficient to verify the seven inequalities $c_2 \leq 0.92$, $c_4 \leq 0.92$, $c_5 \leq 0.92$, $c_2 + c_4 \leq 0.92$, $c_2 + c_5 \leq 0.92$, $c_4 + c_5 \leq 0.92$, $c_2 + c_4 + c_5 \leq 0.92$.
		\subsubsection*{Checking $c_2 \leq 0.92$}
			This inequality is equivalent to $0.92\sigma^2 - c_2' \geq 0$, which is in turn equivalent to
			\[
			2(1 - |a_1-a_2-a_3|)^2 - 1.08\sigma^2 \geq 0.
			\]
			In our region, $a_1-a_2-a_3 \leq 0.16 \leq a_3+0.01$ and $-a_1+a_2+a_3 \leq a_3$, and thus, $|a_1-a_2-a_3|\leq a_3+0.01$. As $\sigma^2 \leq 1-0.31^2-0.19^2-a_3^2$, we have
			\[
				2(1 - |a_1-a_2-a_3|)^2 - 1.08\sigma^2 \geq 2(0.99-a_3)^2 - 1.08 (0.87-a_3^2) > 0,
			\]
			 where the ultimate inequality holds since $a_3 \leq (a_1+a_2+a_3) / 3 \leq 1/3$.
		
		\subsubsection*{Checking $c_4 \leq 0.92$}
			This inequality is equivalent to $0.92\sigma^2 - c_4' \geq 0$, which is in turn equivalent to
			\[
			2(1+a_1-a_2+a_3)^2-3.08\sigma^2 \geq 0.
			\]
			As $\sigma^2 \leq 0.8453$ by~\eqref{Eq:Aux-C2} and $1+a_1-a_2+a_3 \geq 1+a_3 \geq 1.15$, we have
			\[
			2(1+a_1-a_2+a_3)^2-3.08\sigma^2 \geq 2\cdot 1.15^2-3.08\cdot 0.8453 > 0.
			\]
		\subsubsection*{Checking $c_5 \leq 0.92$}
			This inequality is equivalent to $0.92\sigma^2 - c_5' \geq 0$, which is in turn equivalent to
			\begin{equation}\label{Eq:Aux-C3}
			2(1+a_1+a_2-a_3)^2-4.08\sigma^2 \geq 0.
			\end{equation}
			Write $d = a_2 - a_3$ and recall $d \geq 0$. We claim that
			\[
			a_3 \geq |0.19-d|.
			\]
			Indeed, on the one hand we have $a_3=a_2-d \geq 0.19-d$, since $a_2 \geq 0.19$. On the other hand, as $a_1+a_2+a_3 \leq 1$, we have $a_2 \leq (1-a_3)/2$, and thus,
			\[
			a_3-(d-0.19)=2a_3-a_2+0.19 \geq 2a_3-\frac{1-a_3}{2}+0.19=2.5a_3-0.31 > 0,
			\]
			where the ultimate inequality holds since $a_3 \geq 0.15$. As $a_1 \geq 0.31$, in order to prove~\eqref{Eq:Aux-C3} it is sufficient to show
			\[
			2(1.31+d)^2-4.08(1-0.31^2-0.19^2-(0.19-d)^2) \geq 0.
			\]
			This inequality, which reads $6.08d^2 + 3.6896d + 0.038864 \geq 0$, indeed holds for all $d \geq 0$.

		\subsubsection*{Checking $c_2 + c_4 \leq 0.92$}
			This inequality is equivalent to $0.92\sigma^2 - c_2' - c_4' \geq 0$, which is in turn equivalent to the inequality
			\[
				2(1+a_1-a_2+a_3)^2 + 2(1-|a_1-a_2-a_3|)^2-5.08\sigma^2 \geq 0.
			\]
			As in the proof of $c_2 \leq 0.92$ above, we may use the bound $|a_1-a_2-a_3| \leq 0.01 + a_3$, and so, it is sufficient to prove
			\[
			2(1+a_1-a_2+a_3)^2 + 2(0.99-a_3)^2-5.08\sigma^2 \geq 0.
			\]
			Differentiating the left hand side with respect to $a_3$, we get $4(0.01+a_1-a_2+4.54a_3)$ which is clearly positive. Hence, it suffices to verify the inequality for the minimal possible $a_3$ in our region, that is, for $a_3=0.15$.
			
			\medskip \noindent Write $d=a_1-a_2$. We clearly have $0 \leq 0.31-d \leq a_2$. Hence, it is sufficient to prove
			\[
				2(1+d+0.15)^2 + 2(0.99-0.15)^2 - 5.08(1-0.31^2-(0.31-d)^2-0.15^2) \geq 0,
			\]
			or equivalently, $7.08d^2 + 1.4504d + 0.066876 \geq 0$, which indeed holds for all $d\geq 0$.

		\subsubsection*{Checking $c_2 + c_5 \leq 0.92$}
			This inequality is equivalent to $0.92\sigma^2 - c_2' - c_5' \geq 0$, which is in turn equivalent to
			\begin{equation}\label{Eq:Aux-C4}
				2(1+a_1+a_2-a_3)^2 + 2(1-|a_1-a_2-a_3|)^2 - 6.08\sigma^2 \geq 0.
			\end{equation}
			The proof splits into two sub-cases according to whether $a_1 \isgeq a_2+a_3$.
			
			\paragraph{Sub-case~1: $a_1 \geq a_2+a_3$.} In this case we have $1+a_1+a_2-a_3 \geq 1+2a_2 \geq 1.38$ and $|a_1-a_2-a_3| \leq 0.5-0.19-0.15=0.16$. Recalling $\sigma \leq 0.8453$, we deduce
			\begin{align*}
				0.92\sigma^2 - c_2' - c_5' &= 2(1+a_1+a_2-a_3)^2 + 2(1-|a_1-a_2-a_3|)^2 - 6.08\sigma^2\geq \\
				&\geq 2\cdot 1.38^2 + 2(1-0.16)^2 - 6.08 \cdot 0.8453 > 0.
			\end{align*}
			
			\paragraph{Sub-case~2: $a_1 \leq a_2+a_3$.} In this case, the inequality~\eqref{Eq:Aux-C4} reads
			\[
			2(1+a_1+a_2-a_3)^2 + 2(1+a_1-a_2-a_3)^2 - 6.08\sigma^2 \geq 0,
			\]
			or equivalently, $4(1+a_1-a_3)^2+4a_2^2 - 6.08\sigma^2 \geq 0$.
			Write $d=a_1-a_3$, so that $a_2 \geq a_3 \geq |0.31-d|$. We have
			\begin{equation*}
			\begin{aligned}
				0.92\sigma^2 - c_2' - c_5'
				&= 4(1+a_1-a_3)^2+4a_2^2 - 6.08\sigma^2 \\
				&\geq 4(1+d)^2+4(0.31-d)^2-6.08(1-0.31^2-2(0.31-d)^2) \\
				&= 20.16d^2 - 2.0192d + 0.057264 > 0,
			\end{aligned}
			\end{equation*}
			where the ultimate inequality holds since the quadratic polynomial in $d$ has $\Delta < -0.54 < 0$.

		\subsubsection*{Checking $c_4 + c_5 \leq 0.92$}
			This inequality is equivalent to $0.92\sigma^2 - c_4' - c_5' \geq 0$. We have
			\[
			0.92\sigma^2 - c_4' - c_5' = 2(1+a_1-a_2+a_3)^2+2(1+a_1+a_2-a_3)^2-8.08\sigma^2=4(1+a_1)^2 + 4(a_2-a_3)^2 - 8.08 \sigma^2.
			\]
			Using the inequalities $1+a_1 \geq 1.31$ and $\sigma^2 \leq 0.8453$, we get
			\[
				0.92\sigma^2 - c_4' - c_5' = 4(1+a_1)^2 + 4(a_2-a_3)^2 - 8.08 \sigma^2 \geq 4\cdot 1.31^2 - 8.08\cdot 0.8453 > 0.
			\]
			
		\subsubsection*{Checking $c_2 + c_4 + c_5 \leq 0.92$}
			This inequality is equivalent to $0.92\sigma^2 - c_2' - c_4' - c_5' \geq 0$, which is in turn equivalent to
			\[
				4(1+a_1)^2 + 4(a_2-a_3)^2 + 2(1-|a_1-a_2-a_3|)^2 - 10.08 \sigma^2 \geq 0.
			\]
			The proof splits to two sub-cases, according to whether $a_1 \isgeq a_2+a_3$.
			
			\paragraph{Sub-case~1: $a_1 \geq a_2+a_3$.} In this case, $|a_1 - a_2-a_3| \leq 0.5-0.19-0.15 \leq 0.16$, and hence, it is sufficient to verify
			\[
			4(1+a_1)^2+2\cdot0.84^2-10.08\sigma^2\geq 0.
			\]
			Since $a_1 \geq a_2+a_3 \geq 0.34$ and $\sigma^2 \leq 0.8453$, we have
			\[
			4(1+a_1)^2+2\cdot0.84^2-10.08\sigma^2 \geq 4 \cdot 1.34^2 +2 \cdot 0.84^2-10.08 \cdot 0.8453 >0.
			\]
			\paragraph{Sub-case~2: $a_1 \leq a_2+a_3$.} In this case, we should prove
			\[
				Q \defeq 4(1+a_1)^2 + 4(a_2-a_3)^2 + 2(1+a_1-a_2-a_3)^2 - 10.08 (1-a_1^2-a_2^2-a_3^2) \geq 0.
			\]
			This indeed holds, as
			\begin{equation*}
			\begin{aligned}
				Q =\ &(a_1-0.3)(16.08 a_1-4a_2-4a_3+16.824) + 7.04(a_2+a_3-65/176)^2 + \\
					 & + 9.04 (a_2-a_3)^2 + 767/110000 > 0.
			\end{aligned}
			\end{equation*}

\paragraph{Summarizing.} Combining the above bounds, we have
\begin{align*}
\sum_{i=1}^6 \max(c_i,0) &= \left(\max(c_1,0)+\max(c_3,0)\right)+\left(\max(c_2,0)+\max(c_4,0)+\max(c_5,0)\right) + \max(c_6,0) \\
&\leq 0.58+0.92+0=1.5,
\end{align*}
as asserted.

\subsection{Proving \tops{$\sum_{i\in B(X')} \max(c_i, 0) \leq 1$}}
	
	The proof consists of two steps. First, we show that either $B(X') \subseteq \{1,3,5,6\}$ or $B(X') \subseteq \{2,4,5,6\}$. Then, we verify the assertion in each of these cases separately.
	
	\subsubsection{Proving that \tops{$B(X') \subseteq \{1,3,5,6\}$} or \tops{$B(X') \subseteq \{2,4,5,6\}$}}
	To prove this, it is sufficient to show that for any $(i,j) \in \{1,3\}\times \{2,4\}$ we have
	\begin{equation}\label{Eq:Aux-C5}
	\hseg{d_i}{e_i} \prec_{X'} \hseg{-L_1}{L_1} \qquad \mbox{or} \qquad \hseg{d_j}{e_j} \prec_{X'} \hseg{-L_1}{L_1}.
	\end{equation}
	By Lemma~\ref{lem:seg-compare1}, in order to prove
	$\hseg{d_k}{e_\ell} \prec_{X'} \hseg{-L_1}{L_1}$
	for some $k,\ell$, it is sufficient to show
	\[
	e_\ell \leq d_k \qquad \mbox{or} \qquad e_\ell-d_k+2\frac{a_3}{\sigma} \leq 2L_1.
	\]
	(Note that the other assumption of Lemma~\ref{lem:seg-compare1}, namely, $L_1 \leq d_k$, holds in our case for all $k$, by the definition of the $d_k$'s.) The latter condition can be rewritten as
	\begin{equation}\label{Eq:Aux-C5.5}
	e_\ell-d_k \leq 2L', \qquad \mbox{with} \qquad L'=\frac{1-a_1-a_2-2a_3}{\sigma}.
	\end{equation}
	Hence, in order to verify~\eqref{Eq:Aux-C5} for some $(i,j)$, we have to show
	\begin{equation}\label{Eq:Aux-C6}
	\min(e_i - d_i, e_j - d_j) \leq \max(0, 2L').
	\end{equation}
	In the following paragraphs we show this for all $(i,j) \in \{1,3\}\times \{2,4\}$.
	
	\subsubsection*{Case~1: $i=1$ and $j=2$}
		We prove a stronger inequality: $\min(e_1-d_1, e_2-d_2) \leq 0$, or equivalently,
		\[
			\max\li( 2(1-a_1+a_2-a_3)^2, (1-|a_1-a_2-a_3|)^2 \ri) \geq \sigma^2.
		\]
		If $a_1 \geq a_2 + a_3$, it is sufficient to show $(1-a_1+a_2+a_3)^2 - \sigma^2 \geq 0$. This indeed holds, as
		\begin{equation*}
		\begin{aligned}
			(1-a_1+a_2+a_3)^2 - \sigma^2 =\  & (a_3-0.15)(2.3+2a_3+2a_2-2a_1) + (a_2-0.19)(2.68+2a_2-2a_1) \\
										   & + (1-2a_1)(0.84-a_1) + 0.0142 > 0.
		\end{aligned}
		\end{equation*}
		
		\noindent If $a_1 \leq a_2 + a_3$, it is sufficient to prove
		\[
		Q \defeq (1-a_1+a_2-a_3)^2 + \frac{1}{2}(1+a_1-a_2-a_3)^2 - \sigma^2 \geq 0,
		\]
		as a maximum between two quantities is no smaller than their average. This indeed holds, as
		\[
			Q = 2(1-a_3)(a_2-a_3) + 2(a_1-a_2)^2 + (1-a_1-a_2-a_3)^2/2 \geq 0.
		\]
		
		\subsubsection*{Case~2: $i=1$ and $j=4$}
			We prove a stronger inequality: $e_1-d_1+e_4-d_4 \leq 0$, or equivalently,
			$
				\frac{3}{\sqrt{2}} - \frac{2}{\sigma} \leq 0.
			$
			Since $\sigma \leq 0.8453$, we have
			\[
			\frac{3}{\sqrt{2}} - \frac{2}{\sigma} \leq \frac{3}{\sqrt{2}}-\frac{2}{0.8453} < 0.
			\]
		
		\subsubsection*{Case~3: $i=3$ and $j=2$}
			We prove a stronger inequality: $e_3-d_3 + e_2-d_2 \leq L'/2$, or equivalently,
			\[
			\sqrt{3/2}+1-2/\sigma \leq (1-a_1-a_2-2a_3)/(2\sigma).
			\]
			This latter is further equivalent to $(\sqrt{6}+2)\sigma \leq 5-a_1-a_2-2a_3$. By squaring and using the assumption $a_2 \geq a_3$, it is sufficient to prove
			\[
				f(a_1, a_2, a_3) \defeq (5-a_1-1.5(a_2+a_3))^2 - (10+4\sqrt{6}) \sigma^2 \geq 0.
			\]
			It is easy to verify that $\frac{\partial f}{\partial a_1} = (8\sqrt{6}+22)a_1 + 3a_2 + 3a_3 -10$, which is positive in our region since $a_1 \geq 0.31$ and $a_2,a_3 > 0$. Hence, $f(0.31, a_2, a_3) \leq f(a_1, a_2, a_3)$. Furthermore, by the definition of $\sigma$ and the convexity of the function $x \mapsto x^2$, we have
			\[
			f(a_1, (a_2+a_3)/2, (a_2+a_3)/2) \leq f(a_1, a_2, a_3).
			\]
			Hence, writing $t=(a_2+a_3)/2$, we conclude with
			\[
				f(a_1, a_2, a_3) \geq f(0.31, t, t) = (29+8\sqrt{6})\li(t-\frac{14.07}{8\sqrt{6}+29}\ri)^2+\frac{180.4126 - 68.61 \cdot \sqrt{6}}{457} > 0.
			\]

		\subsubsection*{Case~4: $i=3$ and $j=4$}
		We prove a stronger inequality: $e_3-d_3 + e_4-d_4 \leq L'$, or equivalently, $3 - 2a_2 - a_3 + |a_1-a_2-a_3| \geq ((2+\sqrt{3})/\sqrt{2}) \cdot \sigma$. By squaring, this is equivalent to
		\[
			(3 - 2a_2 - a_3 + |a_1-a_2-a_3|)^2 - (\sqrt{12} + 7/2)\sigma^2 \geq 0.
		\]
		If $a_1 \geq a_2 + a_3$, we should prove
		\[
		f(a_1, a_2, a_3) \defeq (3+a_1-3a_2-2a_3)^2 - (\sqrt{12} + 7/2)\sigma^2 \geq 0.
		\]
		It is clear that $f(a_1, a_2, a_3) \geq f(a_2+a_3, a_2, a_3)$, and thus, we may assume $a_1 = a_2+a_3$, which is covered in the other case.
		
		\medskip \noindent If $a_1 \leq a_2 + a_3$, the inequality reads
		\[
		f(a_1, a_2, a_3) \defeq (3-a_1-a_2)^2-(\sqrt{12} + 7/2)\sigma^2 \geq 0.
		\]
		It is clear that $f(a_1, a_2, 0.15) \leq f(a_1, a_2, a_3)$. Furthermore, by the convexity of the function $x \mapsto x^2$, we have  $f((a_1+a_2)/2, (a_1+a_2)/2, a_3) \leq f(a_1, a_2, a_3)$.
		Hence, writing $t = (a_1+a_2)/2$, we conclude with
		\[
			f(a_1, a_2, a_3) \geq f(t,t,0.15) = (11+4\sqrt{3})\li(t-\frac{6}{4\sqrt{3} + 11}\ri)^2 + \frac{10.28 \cdot \sqrt{3}+89.99}{584} > 0.
		\]
	
	\subsubsection{Proving \tops{$\sum_{i\in B(X')} \max(c_i, 0) \leq 1$} in the case \tops{$B(X') \subseteq \{2,4,5,6\}$}}
	By the inequality $\sum_{i \in \{2,4,5\}} \max(c_i,0) \leq 0.92$ proved in Subsection~\ref{ssec:c24592} and the easy inequality $c_6 \leq 0$ proved above, we see that $B(X') \subseteq \{2,4,5,6\}$ implies
	\[
	\sum_{i\in B(X')} \max(c_i, 0) \leq \max(c_2, 0)+\max(c_4, 0)+\max(c_5, 0)+\max(c_6, 0) \leq 0.92 < 1,
	\]
	as asserted.
	
	\subsubsection{Proving \tops{$\sum_{i\in B(X')} \max(c_i, 0) \leq 1$} in the case \tops{$B(X') \subseteq \{1,3,5,6\}$}, \tops{$a_3 \leq 0.2$}}
	We show that in this case, we actually have $5 \notin B(X')$, and hence $B(X') \seq \{1,3,6\}$. Thus, the inequality $\max(c_1,0)+\max(c_3,0) \leq 0.58$ proved in Subsection~\ref{ssec:1358} and the inequality $c_6 \leq 0$ imply $\sum_{i\in B(X')} \max(c_i, 0) \leq 0.58 < 1$, as required.
	
	\paragraph{Showing $5 \not \in B(X')$.}
	
	To show this, we have to verify $\hseg{d_5}{e_5} \prec_{X'} \hseg{-L_1}{L_1}$.
	By~\eqref{Eq:Aux-C5.5}, it is sufficient to show $e_5-d_5 \leq \max(0, 2L')$. We show the stronger inequality $e_5-d_5 \leq 1.5L'$. The inequality reads
	\[
		\sqrt{5/2} - \frac{1+a_1+a_2-a_3}{\sigma} \leq \frac{3(1-a_1-a_2-2a_3)}{2\sigma},
	\]
	or equivalently, $(5-a_1-a_2-8a_3)^2 - 10\sigma^2 \geq 0$. This indeed holds, as the left hand side is
	\[
		(0.2-a_3)(65.2-16a_1-16a_2-74a_3) + 6(a_1+a_2-17/30)^2+5(a_1-a_2)^2+1/30 > 0.
	\]

\subsubsection{Proving \tops{$\sum_{i\in B(X')} \max(c_i, 0) \leq 1$} in the case \tops{$B(X') \subseteq \{1,3,5\}$}, \tops{$a_3 \geq 0.2$}}
	As $c_6 \leq 0$, it follows from Claim~\ref{Cl:AuxC.1} that in order to prove the assertion, it is sufficient to verify the seven inequalities $c_1\leq 1$, $c_3\leq 1$, $c_5\leq 1$, $c_1+c_3 \leq 1$, $c_1+c_5 \leq 1$, $c_3+c_5 \leq 1$, $c_1+c_3+c_5 \leq 1$.
	
	\subsubsection*{Proving $c_1\leq 1$, $c_3\leq 1$, $c_5\leq 1$, $c_1+c_3 \leq 1$} This was already done in Subsections~\ref{ssec:1358} and ~\ref{ssec:c24592}.
	
	\subsubsection*{Proving $c_1+c_5 \leq 1$} This inequality is equivalent to $\sigma^2-c_1'-c_5' \geq 0$, which is in turn equivalent to
	\[
		2(1+a_1+a_2-a_3)^2 + 2(1-a_1+a_2-a_3)^2 - 5\sigma^2 \geq 0.
	\]
	By rearranging, we have to show $4(1+a_2-a_3)^2 + 4a_1^2 - 5\sigma^2 \geq 0$. The left hand side is at least $4+4\cdot 0.31^2 - 5 \cdot 0.8453 > 0.15 > 0$, as required.

	\subsubsection*{Proving $c_3+c_5 \leq 1$} This inequality is equivalent to $\sigma^2-c_3'-c_5' \geq 0$, which is in turn equivalent to
	\[
		2(1+a_1+a_2-a_3)^2+2(1+|a_1-a_2-a_3|)^2-7\sigma^2 \geq 0.
	\]
	Thus, it is sufficient to show $2(1+a_1)^2+2(1-a_1+a_2+a_3)^2-7\sigma^2 \geq 0$. Since $a_2 \geq a_3 \geq 0.2$, it is enough to verify
	\[
	2(1+a_1)^2+2(1.4-a_1)^2-7(0.92-a_1^2)\geq 0.
	\]
	This indeed holds for all $a_1 \geq 0.31$, as the left hand side is equal to  $(a_1-0.31)(11a_1+1.81)+0.0411 > 0$.
	
	\subsubsection*{Proving $c_1+c_3+c_5 \leq 1$}
	This inequality is equivalent to $\sigma^2-c_1'-c_3'-c_5' \geq 0$, which is in turn equivalent to
	\[
		2(1+a_1+a_2-a_3)^2 + 2(1-a_1 + a_2 - a_3)^2 + 2(1+|a_1-a_2-a_3|)^2 -8\sigma^2 \geq 0.
	\]
	As $a_2 \geq a_3 \geq 0.2$, it is sufficient to verify
	\[
	(1+a_1)^2+(1-a_1)^2+(1.4-a_1)^2-4(0.92-a_1^2) \geq 0.
	\]
	This holds for all $a_1$, as the left hand side is equal to $7a_1^2-2.8a_1+0.28 = 7(a_1-0.2)^2 \geq 0$.

\end{appendices}

\end{document}